\numberwithin{equation}{section}
\let\al=\alpha
\let\g=\gamma
\let\e=\varepsilon
\let\la=\lambda
\let\om=\omega
\let\G= \Gamma
\let\wt=\widetilde
\let\th=\theta
\let\pa=\partial
\let\var=\varphi
\def\cA{{\mathcal A}}
\def\cB{{\mathcal B}}
\def\cD{{\mathcal D}}
\def\cE{{\mathcal E}}
\def\cF{{\mathcal F}}
\def\cG{{\mathcal G}}
\def\cH{{\mathcal H}}
\def\cI{{\mathcal I}}
\def\cJ{{\mathcal J}}
\def\cL{{\mathcal L}}
\def\cM{{\mathcal M}}
\def\cN{{\mathcal N}}
\def\cO{{\mathcal O}}
\def\cP{{\mathcal P}}
\def\cQ{{\mathcal Q}}
\def\cS{{\mathcal S}}
\def\cT{{\mathcal T}}
\def\C{\mathbb C}
\def\R{\mathbb R}
\def\Z{\mathbb Z}
\def\N{\mathbb N}
\def\e{\mathrm e}
\def\ii{\text{i}}
\def\ds{\displaystyle}
\def\pv{\operatorname{p.v.}}
\newcommand{\norm}[1]{\left\|#1\right\|}
\newcommand{\beq}{\begin{equation}}
	\newcommand{\eeq}{\end{equation}}
\newcommand{\ben}{\begin{eqnarray}}
	\newcommand{\een}{\end{eqnarray}}
\newcommand{\beno}{\begin{eqnarray*}}
	\newcommand{\eeno}{\end{eqnarray*}}
\newtheorem{theorem}{Theorem}[section]
\newtheorem{lemma}[theorem]{Lemma}
\newtheorem{proposition}[theorem]{Proposition}
\newtheorem{corollary}[theorem]{Corollary}
\theoremstyle{remark}
\newtheorem{remark}[theorem]{Remark}
\begin{document}

\title[Self-similar algebraic spiral vortex sheets]
{Self-similar algebraic spiral vortex sheets of 2-D incompressible Euler equations}
		
\author[F. Shao]{Feng Shao}
\address{School of Mathematical Sciences, Peking University, Beijing 100871,  China}
\email{fshao@stu.pku.edu.cn}
		
\author[D. Wei]{Dongyi Wei}
\address{School of Mathematical Sciences, Peking University, Beijing 100871,  China}
\email{jnwdyi@pku.edu.cn}
	
\author[Z. Zhang]{Zhifei Zhang}
\address{School of Mathematical Sciences, Peking University, Beijing 100871, China}
\email{zfzhang@math.pku.edu.cn}
		
\date{\today}

\begin{abstract}
This paper provides the first rigorous construction of the self-similar algebraic spiral vortex sheet solutions to the 2-D incompressible Euler equations. These solutions are believed to represent the typical roll-up pattern of vortex sheets after the formation of curvature singularities. The most challenging part of this paper is to handle the Cauchy integral for the algebraic spiral curve, which falls outside the classical theory of singular integral operators.  
\end{abstract}

\maketitle


\section{Introduction}

In this paper, we consider  the 2-D incompressible Euler equations on $(t,x)\in (0, +\infty)\times\R^2$:
\begin{equation}\label{Eq.Euler-velocity}
	\pa_t u+u\cdot\nabla u+\nabla P=0,\quad \operatorname{div} u=0,\quad u|_{t=0}=u_0,
\end{equation}
where $u=(u^1, u^2)$ is the velocity of an inviscid fluid and $P$ is the pressure function. The vorticity-stream formulation of \eqref{Eq.Euler-velocity}  takes
\begin{equation}\label{Eq.Euler-vorticity}
	\pa_t\omega+u\cdot\nabla\omega=0,\quad\Delta \psi=\omega,\quad u=\nabla^\perp\psi=K_2*\omega,
\end{equation}
where $\omega=\pa_1u^2-\pa_2u^1$ is the vorticity, $\psi$ is the stream function, $\nabla^\perp=(-\pa_2,\pa_1)$ and
\begin{equation}\label{Eq.kernel-K2}
	K_2(x)=\frac1{2\pi}\frac{x^\perp}{|x|^2},\quad x^\perp:=(x_1,x_2)^\perp=(-x_2,x_1).
\end{equation}

For smooth initial data, the 2-D Euler equations are globally well-posed due to  the conservation of the $L^\infty$ norm of the vorticity, $\|\omega(t)\|_{L^\infty}\le \|\omega_0\|_{L^\infty}$. However, the long-time behavior of smooth solutions remains a long-standing problem \cite{Sve}; see \cite{KS, BM, IJ, Bou, WZZ} for recent progress.  For non-smooth data, the pioneering work by Yudovich \cite{Yudovich} proved the global existence and uniqueness of weak solutions for the initial vorticity $\omega_0\in L^1\cap L^\infty(\R^2)$, which is related to vortex patch solutions. The global existence of weak solutions for $\om_0\in L^1\cap L^p(\R^2)$, for $1<p\leq+\infty$, was later proved by Diperna and Majda \cite{DM1987} (see also \cite[Chapter 10]{MB}). Since then, a longstanding open problem in the field has been whether the uniqueness of weak solutions holds when $p<+\infty$. Recently, in the works \cite{Vishik1,Vishik2}, Vishik proved the non-uniqueness of weak solutions by introducing an external force $f\in L_t^1(L_x^1\cap L_x^p)$; see \cite{ABCDGJK} for a review of Vishik's proof. We also refer to \cite{EJ2020CPAM, EJ2023, EJ2020} for recent works on singular vortex patch solutions, and  \cite{Che, MB, MP} for more classical results on the Euler equations.

In this paper, we focus on vortex sheet solutions of \eqref{Eq.Euler-velocity}, where the velocities are discontinuous across the sheet, resulting in a concentrated distribution of vorticity along the sheet. This vorticity is typically represented mathematically as a delta function along a 1-D curve. In the pioneering work \cite{Delort1991}, Delort proved the global existence of weak solutions to \eqref{Eq.Euler-velocity} for vorticity being a Radon measure with distinguished sign. The quantitative behavior of Delort's solutions remains an important question.

The more natural point of view in the mathematical description of a vortex sheet is to explicitly propagate the sheet itself using a time-dependent parametrization. Assume that the vorticity is supported on a curve $Z(t,\Gamma)$ parameterized by the total circulation $\Gamma$. It is well-known that $Z$ solves the following Birkhoff-Rott equation:
\begin{equation}\label{Eq.BR-normal}
	\pa_tZ^*(t,\Gamma)=\frac1{2\pi \ii}\operatorname{p.v.}\int\frac{\mathrm d\Gamma'}{Z(t,\Gamma)-Z(t,\Gamma')},
\end{equation}
where we have switched to complex variable notation for the position of the sheet, $Z^*$ denotes the complex conjugate and the integral is to be understood in the principal value sense
at the singularity $\Gamma'=\Gamma$. This equation was originally derived by Birkhoff \cite{Birkhoff} and it is implicit in the work of Rott \cite{Rott}. See also \cite[Chapter 9]{MB}. The well-known Kelvin-Helmholtz instability \cite{Helmholtz,Kelvin} suggests ill-posedness of \eqref{Eq.BR-normal} in Sobolev spaces. The local well-posedness of \eqref{Eq.BR-normal} for analytic initial data was established in \cite{CO1986,SSBF1981}, based on the abstract Cauchy-Kovalevskaya theorem. An asymptotic analysis by Moore \cite{Moore1979} suggests that for an analytic vortex sheet, a curvature singularity forms at a finite critical time. In \cite{CO1989}, Caflisch and Orellana constructed exact solutions of \eqref{Eq.BR-normal} that are analytic and periodic for $t<0$ and have a curvature singularity at $t=0$.

\begin{figure}[htbp]
	\centering
	\includegraphics[width=0.7\textwidth]{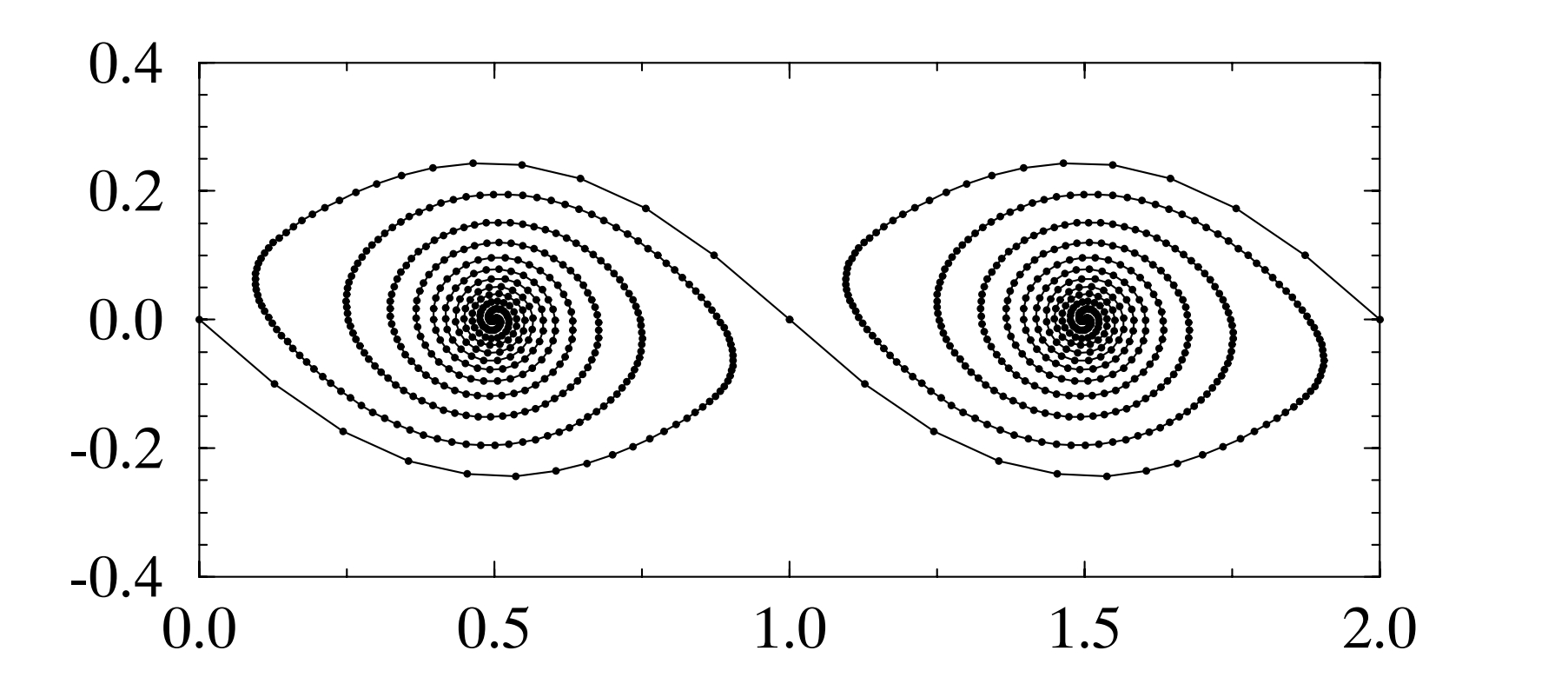}
	\caption{Computation of the periodic vortex sheet after singularity by means of a desingularization of the kernel \eqref{Eq.kernel-K2}. The computation is shown at $t=3.9$, well after the initial singularity. Note the approximation to a double-branched spiral. This picture is \cite[Figure 9.3]{MB}.}
	\label{Fig.Majda}
\end{figure}

A longstanding open problem in the field concerns the behavior of the vortex sheet after singularity formation. Generally, one expects vortex sheet roll-up after the initial singularity \cite{MB,MP,Moore1979,MS1973,Saffman1992}. In \cite{Krasny1986}, Krasny presented compelling numerical evidence that a double-branched spiral vortex sheet forms after singularity formation for periodic perturbations of a straight sheet, as illustrated in Figure \ref{Fig.Majda}, which is taken from \cite{MB} (see also \cite[Figure 3]{Krasny1986}). Another numerical result by Krasny \cite{Krasny1987} showed that the roll-up is in good agreement with Kaden’s asymptotic spiral \cite{Kaden}, an infinite-length algebraic spiral. Based on Pullin’s numerical results on self-similar algebraic spiral vortex sheets \cite{Pullin1978,Pullin1989} and the method of asymptotic analysis, Pullin and Shen \cite{PS2023} constructed a vortex sheet solution after its singularity time and observed the tearing-up behavior.
Moreover, the recent work of Wu \cite{Wu2006} indicates that, after the singularity time, the vortex sheet will fail to be in the class of chord-arc curves that do not roll up too fast near the singularity. This naturally leads one to study self-similar algebraic spiral-shaped vortex sheets, for which no rigorous existence was known prior to the present work.
Let us mention recent works concerning the construction of self-similar algebraic spiral vortex patches, as reported in \cite{E1, E2, GG, SWZ}.

\subsection{Main results}
Now we state our main results. 
 
\begin{theorem}\label{mainthm}
    Let $\mu>1/2$, there exists $m_*\in\Z$ such that for all $m\in \Z\cap(m_*,+\infty)$, the following Birkhoff-Rott equation
		\begin{equation}\label{BR}
			\partial_t Z_m^*(t,\G)=\frac1{2m\pi\mathrm{i}}\sum_{k=0}^{m-1}\pv\int_{-\infty}^0\frac{\mathrm d\Gamma'}{Z_m(t,\Gamma)-\xi_m^kZ_m(t,\Gamma')},
		\end{equation}
		where $\xi_m:=\e^{\frac{2\pi\mathrm{i}}{m}}$ is the $m$-th unit root, with the initial data ($Z_m^*$ denoting the complex conjugate of $Z_m$)
		\begin{equation}\label{Eq.initial-mainthm}
			Z_m(0,\Gamma)=\left(-\frac{2\mu-1}{\mu}\Gamma\right)^{\frac{\mu}{2\mu-1}},
		\end{equation}
        possesses a solution $Z_m(t,\Gamma)$ with the following properties.
        \begin{itemize}
            \item The solution $Z_m$ is self-similar: $Z_m(t,\Gamma)=t^\mu z_m(t^{1-2\mu}\Gamma)$ for all $t>0$ and $\Gamma<0$.
            \item If we write $z_m(\g)=r_m(\g)\mathrm e^{\mathrm i\th_m(\g)}$, then $r_m:(-\infty,0)\to\R_+$ and $\th_m:(-\infty,0)\to\R_+$ are both $C^1$, strictly monotone and bijective. Moreover, we have $\th^\mu r_m(\th_m^{-1}(\th))\in L^\infty$, $\th^{\mu+1}(r_m\circ\th_m^{-1})'\in L^\infty$, and
            \begin{equation}\label{Eq.center-behavior}
			\lim_{\theta\to+\infty}\th^\mu r_m(\th_m^{-1}(\th))=\wt\beta_m,\quad \lim_{\theta\to+\infty}\th^{\mu+1}(r_m\circ\th_m^{-1})'(\th)=-\mu\wt\beta_m
		\end{equation}	
        for some $\wt\beta_m>0$ (see \eqref{Eq.tilde-beta_m}).
        \end{itemize}
        See Figure \ref{Fig.solution} for the shape of the sheets at different times.
\end{theorem}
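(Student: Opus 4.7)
My first step is to substitute the self-similar ansatz $Z_m(t,\Gamma)=t^\mu z_m(\gamma)$, $\gamma=t^{1-2\mu}\Gamma$, into the Birkhoff--Rott equation \eqref{BR}. Using $d\Gamma'=t^{2\mu-1}d\gamma'$ together with the overall $t^{-\mu}$ factor coming from the Cauchy kernel, both sides carry a common factor $t^{\mu-1}$ which cancels, reducing \eqref{BR} to the time-independent profile equation
\begin{equation*}
\mu z_m^*(\gamma)+(1-2\mu)\gamma(z_m^*)'(\gamma)=\frac{1}{2m\pi\mathrm i}\sum_{k=0}^{m-1}\pv\int_{-\infty}^0\frac{d\gamma'}{z_m(\gamma)-\xi_m^k z_m(\gamma')}.
\end{equation*}
A direct scaling check shows that the initial condition \eqref{Eq.initial-mainthm} is recovered automatically once $z_m(\gamma)\sim(-\tfrac{2\mu-1}{\mu}\gamma)^{\mu/(2\mu-1)}$ as $\gamma\to-\infty$, while the spiral center corresponds to $\gamma\to 0^-$, where the target behavior is the Kaden-type asymptotics $z_m\sim\tilde\beta_m\theta^{-\mu}e^{\mathrm i\theta}$ with $\theta=\theta_m(\gamma)\to+\infty$.

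\textbf{Perturbation around a mean-field spiral.} Passing to polar coordinates $z_m=r_m e^{\mathrm i\theta_m}$, I would turn the profile equation into a first-order system in $\gamma$ for $(r_m,\theta_m)$ coupled to the singular integral. The key mechanism that makes large $m$ a small parameter is the identity
\begin{equation*}
\frac{1}{m}\sum_{k=0}^{m-1}\frac{1}{z-\xi_m^k w}=\frac{z^{m-1}}{z^m-w^m}=\frac{1}{z}\,\frac{1}{1-(w/z)^m},
\end{equation*}
whose $m\to+\infty$ limit concentrates, pointwise outside $|w|=|z|$, on $\frac{1}{z}\mathbf 1_{\{|w|<|z|\}}$. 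This produces an explicit ``mean-field'' profile equation that admits a Kaden algebraic spiral as an exact self-similar solution meeting the prescribed asymptotics at both $-\infty$ and $0^-$. My plan is therefore to build an explicit approximate profile from this mean-field spiral and to linearize the full equation around it in a weighted Banach space tailored to enforce the decay $\theta^\mu r_m,\theta^{\mu+1}(r_m\circ\theta_m^{-1})'\in L^\infty$. A contraction/fixed-point argument then yields $z_m$ provided $m$ is large enough, which produces the threshold $m_*$. The strict monotonicity and bijectivity of $r_m$ and $\theta_m$ are enforced via quantitative positivity of $r_m'$ and $\theta_m'$ built into the function space.

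\textbf{The Cauchy integral on the spiral.} The heart of the analysis, flagged in the abstract, is the quantitative control of the singular integral along the spiral itself. Because the curve winds infinitely many times near $\gamma=0^-$, it fails to be chord-arc and classical Calderón--Zygmund theory on Lipschitz curves is unavailable. I plan to split the integral into three regimes: (i) a diagonal window $|\gamma'-\gamma|\ll 1$, where the principal value is tamed by the standard cancellation against $z_m'(\gamma)$; (ii) a tail regime $|\gamma'|\gg|\gamma|$, where the growth of $|z_m|$ yields absolute integrability; and (iii) the delicate center regime $|\gamma'|\ll|\gamma|$, where the spiral wraps infinitely many times on scales below $|z_m(\gamma)|$ and must be summed. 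In regime (iii) the closed-form identity for $\sum_k(z-\xi_m^k w)^{-1}$ together with the approximate scaling invariance of the Kaden spiral reorganizes the contribution into an explicitly computable cotangent-type object in $\xi_m^k$, leading to uniform-in-$m$ bounds and Fréchet differentiability of the nonlinear operator with respect to the profile. This is where I expect the hardest estimates to lie. Once these estimates are in hand, the fixed point yields the desired $z_m$, and the asymptotics \eqref{Eq.center-behavior}, together with the explicit value of $\tilde\beta_m$, follow by reading off the leading mean-field behavior as $\theta\to+\infty$.
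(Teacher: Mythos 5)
Your overall roadmap is right: substitute the self-similar ansatz, use the roots-of-unity identity to collapse the $m$ sheets into a single Cauchy kernel with $(w/z)^m$, recognize Kaden's algebraic spiral as the $m\to+\infty$ ``mean-field'' limit, and set up a fixed-point argument with $1/m$ as the small parameter. But there are two gaps, both substantive.

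First, your plan to ``linearize the full equation around [the mean-field] profile and apply a contraction/fixed-point argument'' is precisely the route the paper identifies as unworkable. After writing the perturbation equation in the form $\mathcal N[r,\gamma]=\mathcal I_m[r,\gamma]$, the key structural observation is that Kaden's spiral solves the \emph{local}, $m$-independent differential equation $\mathcal N=0$ exactly, while it does \emph{not} solve the full equation for any finite $m$. One then inverts only $\mathcal N$, whose linearization $\mathcal L$ is an explicit first-order ODE system with a computable inverse (Lemma \ref{Lem.L-expression}, Lemma \ref{Lem.inversion_L}), and treats $\mathcal I_m$ as an $O(1/m)$ perturbation so that $\mathcal N^{-1}\circ\mathcal I_m$ is a contraction. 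Linearizing the singular integral $\mathcal I_m$ itself with respect to the curve---the Fr\'echet derivative of a Cauchy integral along a non-chord-arc spiral in the curve variable---is an object the authors explicitly avoid because it is far harder to control than the difference bounds you actually need; your proposal would get stuck here.

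Second, the three-regime split (diagonal/tail/center) together with a ``cotangent-type object'' in the center is not what makes the singular integral tractable, and I do not see what explicit object you have in mind: the only closed form in play is $\sum_k(z-\xi_m^k w)^{-1}=mz^{m-1}/(z^m-w^m)$, which you already used. The mechanism the paper actually exploits is a two-sided geometric-series expansion of $1/(1-(w/z)^m)$ across the diagonal, rewriting $\mathcal I_m$ as $\sum_{n\geq 1}\mathcal Q_{mn}$ where each $\mathcal Q_n$ carries an explicit oscillation $e^{\pm \mathrm{i}n(\tilde\theta-\theta)}$ and a power weight $(\tilde\theta/\theta)^{\pm\mu n}$. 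The crucial gain is that this converts a weighted H\"older bound for $\mathcal I_m$ into weighted $W^{1,\infty}$ bounds for each $\mathcal Q_n$ (Proposition \ref{Prop.P_n_est}), obtained by direct differentiation and stationary-phase-type integration by parts against the oscillatory factor, with $O(1/n^{1+\alpha})$ decay making the series summable. Without this expansion, your diagonal-window and center-regime estimates have no stated mechanism to beat the infinite winding of the spiral.
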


\begin{figure}[htbp]
	\centering
	\subfigure[$t=0$]{\includegraphics[width=0.3\textwidth]{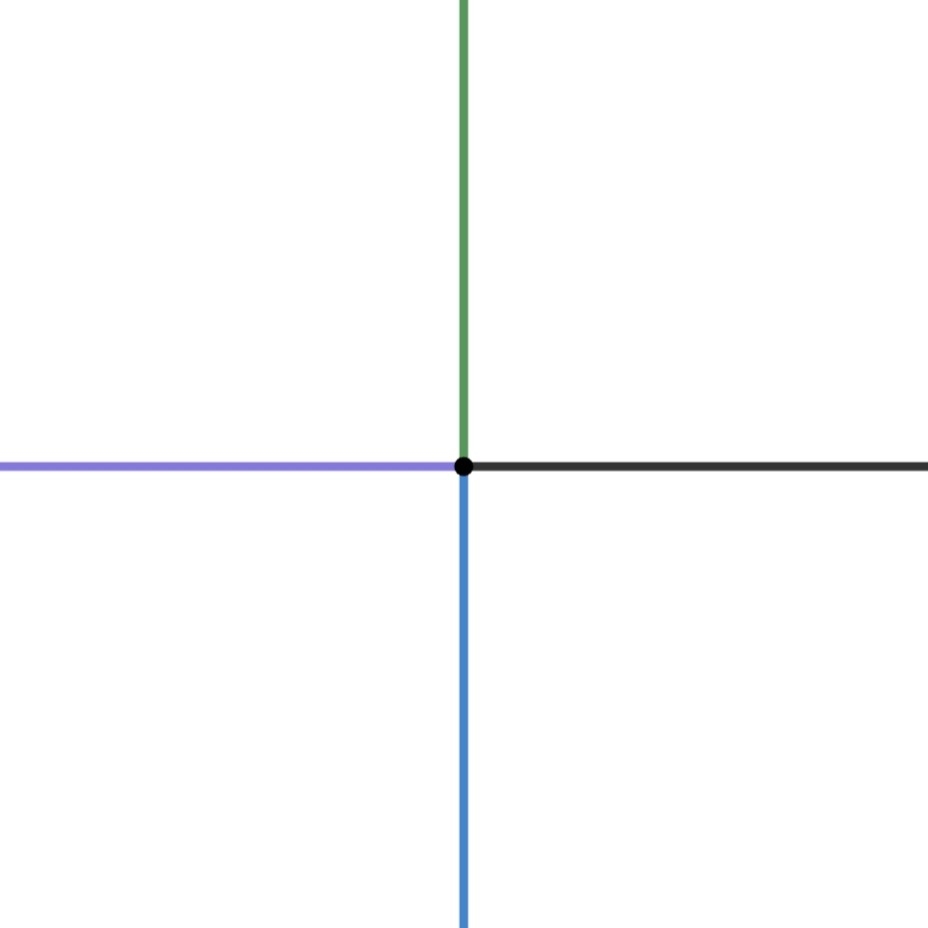}}
	\subfigure[$t=1$]{\includegraphics[width=0.3\textwidth]{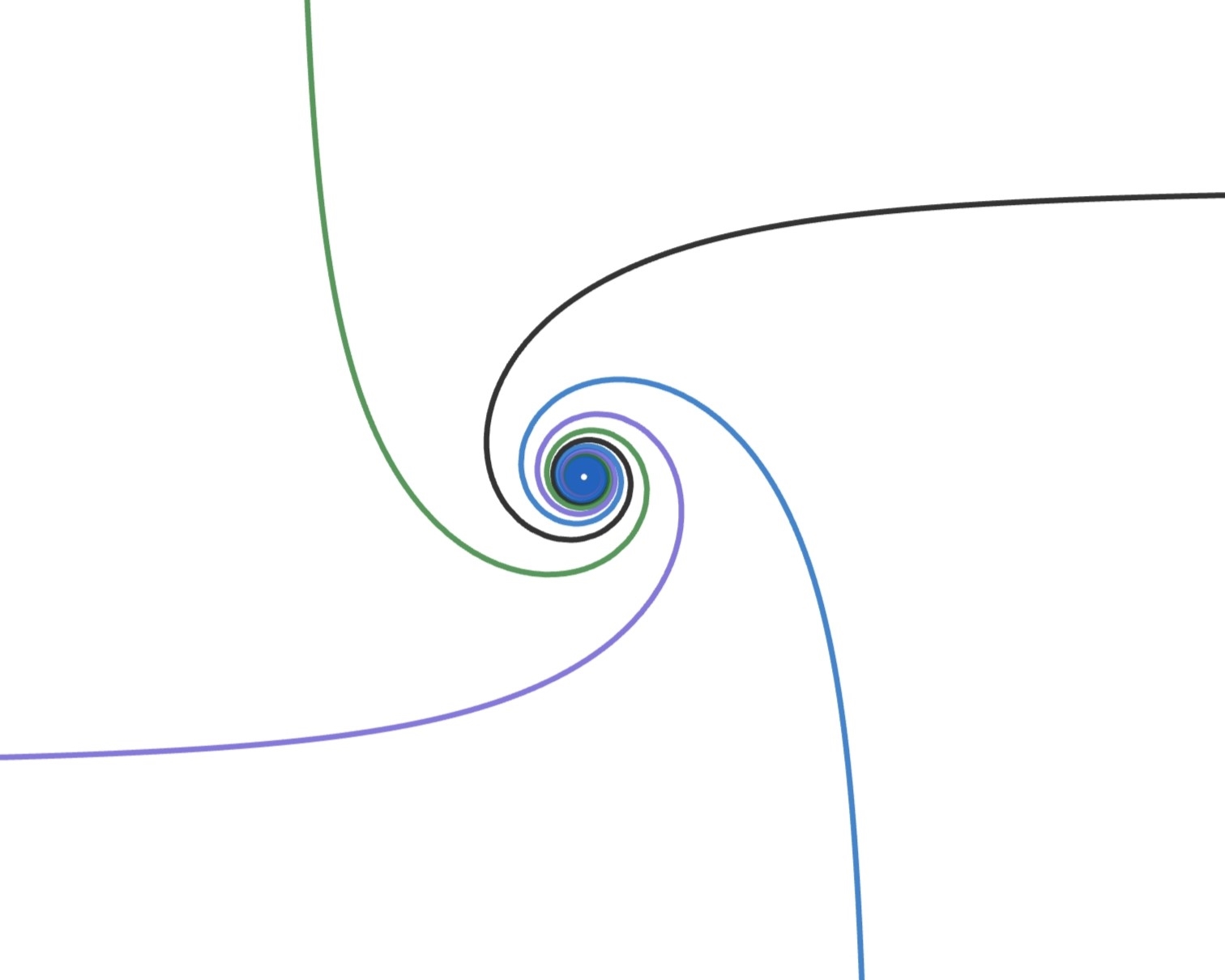}}
	\subfigure[Close-up near the center]{\includegraphics[width=0.3\textwidth]{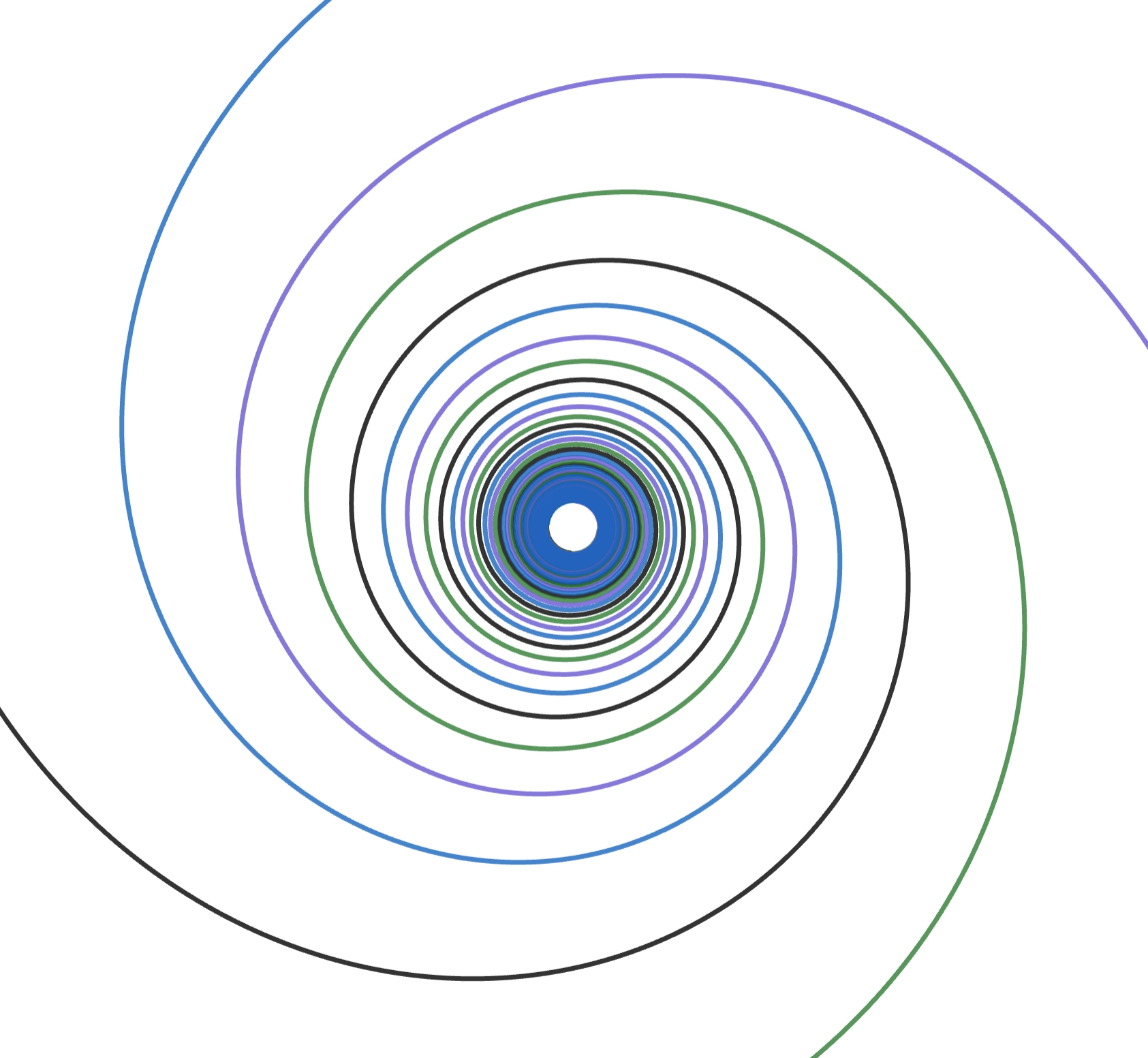}}
	\caption{Symmetric spiral vortex sheets: $m=4$.}
	\label{Fig.solution}
\end{figure}

Several remarks are in order.

\begin{enumerate}[1.]
	\item In view of \eqref{Eq.center-behavior}, our solutions exhibit algebraic spiral behavior near the origin. To the best of our knowledge, this is the first rigorous result in the literature constructing algebraic spiral vortex sheet solutions.

	\item Our solution has the feature that it is invariant under the $\frac{2\pi}m$-rotation, hence it is referred to as {\it $m$-fold}. This kind of solutions form a subclass of the so-called {\it $m$-branched} vortex sheets, which consist of vortex sheets supported on $m$ different curves. Here we require $m$ to be sufficiently large to facilitate a perturbation argument. 
We conjecture that our theorem holds for all $m\in \Z\cap[2,+\infty)$. This will be investigated in future work.

\item Let us mention another type of self-similar spiral vortex sheets: logarithmic spirals, where $r\sim \e^{-a\th}$ near the spiral center ($a>0$, $\theta\to+\infty$). Logarithmic spiral solutions were first discovered by Prandtl \cite{Prandtl1922} (before Kaden \cite{Kaden}), then extended by Alexander \cite{Alexander1971} to families of multi-branched spirals, and were further studied by Kambe \cite{Kambe1989} and Kaneda \cite{Kaneda1989}. Recently, the justification of logarithmic spirals as weak solutions to the 2-D Euler equations was considered by Elling and Gnann \cite{EG2019}, which inspired a lot of subsequent works \cite{Cho2023,CKO2021,CKO2022,CKO2024,JS2024}. Prandtl's logarithmic spirals provided insights into the process of vortex sheet roll-up, but these flows are not simply related to the problems with initial conditions. Moreover, it is known that logarithmic spirals are chord-arc curves, hence Wu's result \cite{Wu2006} excludes the possibility of using logarithmic spirals to describe the behavior of vortex sheets after singularity.

\end{enumerate}

In the following second result, we prove that our constructed vortex sheet solutions are weak solutions to \eqref{Eq.Euler-velocity}.

\begin{theorem}\label{mainthm2}
    Assume that $\mu>1/2$ and $m_*\in\Z$ is given by Theorem \ref{mainthm}. For each $m\in \Z\cap(m_*,+\infty)$, let $Z_m=Z_m(t,\Gamma)$ be the solution to \eqref{BR} with initial data \eqref{Eq.initial-mainthm}. Define $u_m=u_m(t,x)$ by
    \begin{equation}
        u_m(t,x):=\frac1m\sum_{k=0}^{m-1}\int_{-\infty}^0 K_2\left(x-\xi_m^kZ_m(t,\Gamma)\right)\,\mathrm d\Gamma,\quad \forall\ t\geq0,\ x\notin S_m(t)\cup\{0\},
    \end{equation}
    where $S_m(t):=\left\{\xi_m^kZ_m(t,\Gamma): k\in\Z\cap[0,m-1], \Gamma<0\right\}$. Define $\omega_m=\omega_m(t,\cdot)\in \cM(\R^2)$ by
    \begin{equation}\label{Eq.omega-def}
        \langle \omega_m(t,\cdot),\varphi\rangle=\frac1m\sum_{k=0}^{m-1}\int_{-\infty}^{0}\varphi\left(\xi_m^kZ_m(t,\Gamma)\right)\,\mathrm d\Gamma,\quad\forall\ t\geq0,\ \forall\ \varphi\in C_c(\R^2).
    \end{equation}
    Then $u_m$ is a weak solution to \eqref{Eq.Euler-velocity} with the vortex sheet initial data
    \begin{align}
		\omega_m(0,x)&=\frac1m\sum_{k=0}^{m-1}|x|^{1-\frac1\mu}\delta_{\{\xi_m^k\Sigma_0\}}(x),\quad x\in\R^2,\label{Eq.initial-vorticity}\\
		u_m(0,x)=K_2*\omega_m(0,x)&=\frac1m\sum_{k=0}^{m-1}\int_0^\infty K_2\left(x-\xi_m^k(r,0)\right)r^{1-\frac1\mu}\,\mathrm dr,\quad\mathrm{a.e.}\  x\in\R^2,\label{Eq.initial-velocity}
	\end{align}
	where $\delta$ is the 2-D Dirac measure, and $\Sigma_0:=\{x=(x_1,0)\in\R^2|x_1>0\}$ is the semi-infinite initial sheet. Hereafter we also identify $\R^2\simeq\C$. Moreover, 
    \begin{itemize}
        \item $u_m\in C\left([0,+\infty); L^2_{\mathrm{loc}}(\R^2)\right)$ and $\omega_m\in C\left([0,+\infty); H^{-1}_{\mathrm{loc}}(\R^2)\right)\cap C_{\mathrm w}\left([0,+\infty); \cM(\R^2)\right)$.
        \item For all $t\geq0$, $\omega_m(t)=\operatorname{curl}u_m(t)$  is supported on $S_m(t)$.
        \item It is self-similar: $u_m(t,x)=t^{\mu-1}v_m(t^{-\mu}x)$ for $t>0$ and $x\in\R^2$, with $v_m\in L^2_{\operatorname{loc}}(\R^2)$.
    \end{itemize}
\end{theorem}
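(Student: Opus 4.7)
The plan is to leverage the self-similar structure $Z_m(t,\Gamma) = t^\mu z_m(t^{1-2\mu}\Gamma)$ together with the homogeneity $K_2(\lambda x) = \lambda^{-1} K_2(x)$ to reduce everything to stationary objects depending only on $z_m$. Substituting into the definition of $u_m(t,x)$ and performing the change of variables $\Gamma \mapsto t^{2\mu-1}\Gamma$ yields
\[
u_m(t,x) = t^{\mu-1}\, v_m(t^{-\mu} x), \qquad v_m(y) := \frac{1}{m}\sum_{k=0}^{m-1}\int_{-\infty}^0 K_2\bigl(y - \xi_m^k z_m(\gamma)\bigr)\, d\gamma,
\]
which establishes the self-similar form. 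For the initial-data identification, the map $\Gamma \in (-\infty,0) \mapsto Z_m(0,\Gamma)$ traces $\Sigma_0$ by \eqref{Eq.initial-mainthm}, and substituting $r = Z_m(0,\Gamma)$ gives $d\Gamma = -r^{1-1/\mu}\, dr$, which converts \eqref{Eq.omega-def} and the integral defining $u_m$ at $t=0$ into \eqref{Eq.initial-vorticity} and \eqref{Eq.initial-velocity} respectively.

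The main technical step is to establish $v_m \in L^2_{\operatorname{loc}}(\R^2)$, from which $u_m \in L^\infty_{\operatorname{loc}}([0,\infty); L^2_{\operatorname{loc}}(\R^2))$ follows by rescaling. Away from the union of sheets $\bigcup_k \xi_m^k z_m((-\infty,0))$ and the origin, $v_m$ is smooth; near the outer part of the sheet the defining integral has at worst logarithmic blow-up transverse to the sheet, which is harmless in $L^2$. The critical region is near $y=0$, where the algebraic spiral winds infinitely often with radius $r_m \sim \wt\beta_m\, \theta^{-\mu}$ and inter-arm spacing $\sim \mu\wt\beta_m\, \theta^{-\mu-1}$ by \eqref{Eq.center-behavior}. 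I would split $v_m(y)$ into contributions from arms at distance $\gg |y|$ (controlled by direct kernel bounds summed over arms), from arms at distance $\ll |y|$ (where the total enclosed vorticity inside $|z_m(\gamma)| \leq |y|$ scales like $|y|^{2-1/\mu}$, yielding a Biot-Savart contribution $\lesssim |y|^{1-1/\mu}$), and from arms at distance comparable to $|y|$ (handled via the $m$-fold rotational symmetry and adjacent-arm cancellation). The outcome is a pointwise bound $|v_m(y)| \lesssim |y|^{1-1/\mu}$ modulo logarithms, which lies in $L^2_{\operatorname{loc}}$ precisely because $\mu > 1/2$. The properties $\operatorname{div} u_m = 0$ and $\omega_m = \operatorname{curl} u_m \in C([0,\infty); H^{-1}_{\operatorname{loc}})$ are immediate from $u_m = K_2 * \omega_m$, while weak-$*$ continuity $\omega_m \in C_{\mathrm{w}}([0,\infty); \cM(\R^2))$ follows from \eqref{Eq.omega-def} by dominated convergence on test functions.

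For the weak Euler formulation, I would first verify the vorticity transport $\partial_t \omega_m + \operatorname{div}(u_m \omega_m) = 0$ in distributions: testing against $\eta \in C_c^\infty([0,\infty)\times\R^2)$ and expanding via \eqref{Eq.omega-def} reduces to the identity $\frac{d}{dt}\eta(t, \xi_m^k Z_m(t,\Gamma)) = \partial_t\eta + u_m^{\operatorname{BR}}\cdot\nabla\eta$ along each branch, where $u_m^{\operatorname{BR}}$ denotes the Birkhoff-Rott (principal-value averaged) velocity on the sheet. The $m$-fold rotational symmetry makes $u_m^{\operatorname{BR}}$ at $\xi_m^k Z_m$ equal to $\xi_m^k\, \overline{\partial_t Z_m^*}$, and then \eqref{BR} closes the identity. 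Combined with the Biot-Savart representation and $u_m \in L^2_{\operatorname{loc}}$, this converts (via standard manipulations for vortex sheets as in \cite[Chapter 9]{MB}) into the weak velocity formulation of \eqref{Eq.Euler-velocity} with $u_m\otimes u_m \in L^1_{\operatorname{loc}}$, tested against divergence-free fields. Continuity $u_m \in C([0,\infty); L^2_{\operatorname{loc}})$ at $t=0^+$ follows from the self-similar rescaling together with the asymptotic matching of $v_m(y)$ to $u_m(0,x)$ as $|y|\to\infty$, which is consistent with the scaling relation $t^{\mu-1}|t^{-\mu}x|^{1-1/\mu} = |x|^{1-1/\mu}$.

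The hardest part is the $L^2_{\operatorname{loc}}$ estimate of $v_m$ near the spiral center, where the infinite roll-up of the sheet demands delicate estimates exploiting the precise asymptotic decay \eqref{Eq.center-behavior}; once this is in hand, the remainder of the argument follows the standard vortex sheet framework.
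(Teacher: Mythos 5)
Your overall architecture matches the paper's: reduce to the self-similar profile $v_m$, prove a pointwise estimate near the spiral center, identify the initial data by the change of variables $\mathrm d\Gamma=-r^{1-1/\mu}\mathrm dr$, and verify the weak formulation near the origin by a cut-off argument, the region away from the origin being covered by the classical equivalence of Birkhoff--Rott and weak Euler for regular sheets. However, there is a genuine gap at the decisive step. You aim only for the magnitude bound $|v_m(y)|\lesssim|y|^{1-1/\mu}$ (modulo logarithms). This suffices for $v_m\in L^2_{\mathrm{loc}}$, but it is \emph{not} enough to pass the nonlinear term across the origin when $1/2<\mu\le1$: testing $v_m\otimes v_m$ against $\nabla\nabla^\perp\rho_\delta$, with $|\nabla\nabla^\perp\rho_\delta|\sim\delta^{-2}$ supported on an annulus of area $\sim\delta^2$, gives only
\[
\Big|\int_{\R^2}(v_m\otimes v_m):\nabla\nabla^\perp\rho_\delta\,\mathrm dx\Big|\lesssim \delta^{-2}\cdot\delta^{2}\cdot\delta^{2-2/\mu}=\delta^{2-2/\mu},
\]
which does not tend to $0$ unless $\mu>1$. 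The same obstruction reappears if you route through the vorticity transport equation, since converting $\partial_t\omega_m+\operatorname{div}(u_m\omega_m)=0$ into the velocity formulation requires controlling precisely this quadratic term near the spiral center, where the sheet is not chord-arc and the equivalence results of \cite{LNS2007} do not apply.

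What is actually needed---and what the paper proves in Proposition \ref{Prop.v_m-est}---is the finer asymptotic $v_m(z)^*=\frac{\wt\g_m(\th_0)}{2\pi\ii z}+O\big(|z|(1+|\ln|z||)\big)$ with $\wt r_m(\th_0)=|z|$: the leading term is purely azimuthal and constant on circles, so in polar coordinates $|v_m^r|+|v_m^\th-f_0(r)|\lesssim r(1+|\ln r|)$ as in \eqref{Eq.v_m^r-th-est}. It is this structure---a radial component of size $O(r(1+|\ln r|))$ rather than $O(r^{1-1/\mu})$, an angular component that is radial to leading order (so its square integrates to zero against $\sin\th$), together with $\int_{\mathbb T}v_m^r\,\mathrm d\th=0$ from incompressibility---that makes \eqref{Eq.convergence1} and \eqref{Eq.convergence2} hold for all $\mu>1/2$. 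Your ``adjacent-arm cancellation'' is invoked only in service of the magnitude bound, not of this leading-order identification, so the plan as written cannot close for $\mu\le1$. Extracting the leading term is the real content of Section \ref{Sec.Proof-weak-sol}: a near/far splitting of the Cauchy integral around $\th_0$, with a geometric-series expansion plus integration by parts exploiting oscillation in the far region, and explicit logarithmic antiderivatives in the near region.
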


Several remarks are in order.

\begin{enumerate}[1.]
    \item In \cite{SWZ}, we construct a class of solutions to \eqref{Eq.Euler-velocity} with the following initial vorticity:
    \begin{equation*}
    	\omega(0,x)=r^{-1/\mu}\mathring\omega(\th), \quad x=(r\cos\theta, r\sin\theta),\quad \th\in\mathbb T:=\R/(2\pi\Z),
    \end{equation*}
    where $\mu>1/2$, for $m$-fold ($m\geq 2$) $\mathring \omega\in L^1(\mathbb T)$ satisfying a dominant condition. Thanks to the $L^1$ result, using a standard compactness argument, we \cite{SWZ} proved the existence of solutions if $\mathrm \omega\in \cM(\mathbb T)$ is a Radon measure that is $m$-fold and satisfies the corresponding dominant condition (see \cite[Corollary 1.4]{SWZ}). However, the quantitative behavior of these solutions was not obtained in \cite{SWZ}. Here in the present paper, for $m$ sufficiently large, our initial data \eqref{Eq.initial-vorticity} satisfies all the conditions of \cite[Corollary 1.4]{SWZ}, and our Theorem \ref{mainthm2} gives the quantitative behavior of such solutions.
    
    \item We verify the weak formulation of \eqref{Eq.Euler-velocity} for {\it all} $\mu>1/2$, which is consistent with our previous work \cite{SWZ}. Nevertheless, a fundamental question remains unresolved: whether our algebraic spiral vortex sheet solutions constructed in the present paper coincide with \cite[Corollary 1.4]{SWZ} or not?

    \item The uniqueness of our solution in the class of Delort's weak solutions remains open. Numerical results by Pullin \cite{Pullin1989} suggest possible non-uniqueness, especially for $m=2$.

    \item Due to the scale invariant nature of \eqref{Eq.Euler-velocity}, the same result holds for initial data
    \[\omega_m(0,x)=\frac Am\sum_{k=0}^{m-1}|x|^{1-\frac1\mu}\delta_{\xi_m^k\Sigma_0}(x)\]
    instead of \eqref{Eq.initial-vorticity}, where $A\in\R$, with \eqref{Eq.initial-mainthm} and $\wt\beta_m$ in \eqref{Eq.center-behavior} adapted accordingly. In the proof, for simplicity, we choose appropriate initial data such that $\wt\beta_m=1$ in \eqref{Eq.center-behavior}, which corresponds to $Z_m(0,\G)=d_m(-\G)^{\frac{\mu}{2\mu-1}}$ for some $d_m>0$ such that $\lim_{m\to\infty}d_m=(2\pi)^{-\mu/(2\mu-1)}$. Using the scaling property of \eqref{ss_BR1}, if the initial sheet is given by $Z_m(0,\G)=(-\G)^{\frac{\mu}{2\mu-1}}$, then \eqref{Eq.center-behavior} should be adapted to
    \begin{equation}\label{Eq.beta_m}
			\lim_{\theta\to+\infty}\th^\mu r_m(\th_m^{-1}(\th))=\beta_m,\quad \lim_{\theta\to+\infty}\th^{\mu+1}(r_m\circ\th_m^{-1})'(\th)=-\mu\beta_m
    \end{equation}
    with $\beta_m=d_m^{2\mu-1}$, thus $\lim_{m\to\infty}\beta_m=(2\pi)^{-\mu}$. This relates to the Betz constant \cite{Betz,MS1973}.
\end{enumerate}

\subsection{Difficulties and key ideas}  
Our goal is to find an algebraic spiral solution to the $m$-fold Birkhoff-Rott equation \eqref{BR}. The most serious difficulty is to handle the Cauchy integral for the algebraic spiral curve, which falls outside the classical theory of singular integral operators.  

The first important observation is that the Birkhoff-Rott equation \eqref{BR}, in the self-similar form \eqref{maineq}, can be approximated (formally) by the limiting equation as $m\to+\infty$.  This approximation precisely yields Kaden's algebraic spiral solutions \eqref{special_sol}. This motivates us to rewrite \eqref{maineq} in the form(see \eqref{Eq.perturbation})
$$\mathcal N[r,\g]=\cI_m[r,\g],$$ 
where $\cN$ is a nonlinear differential operator (hence $\cN$ is local), and $\cI_m$ is given by a complicated Cauchy integral.  A natural idea is to consider $\mathcal F_m=\cN-\cI_m$, and prove that the Fr\'echet derivative $\mathcal F_m'$ near Kaden's spiral \eqref{special_sol} is (uniformly) invertible in some well-chosen functional spaces, and then apply the Newton-Kantorovich method. Here we emphasize that Kaden's spiral does NOT solve $\mathcal F_m=0$ for any $m<+\infty$. However, this approach seems overly ambitious, as the linearization of the integral  $\cI_m$ is quite difficult  to analyze. Our observation is that Kaden's spiral indeed solves $\cN=0$, and when $m$ is sufficiently large, $\cI_m$ can be viewed as a perturbation; see Proposition \ref{Prop.I}. The proof of Proposition \ref{Prop.I} is much involved. At this stage, in order to solve $N[r,\g]=\cI_m[r,\g]$, it suffices to apply Banach's fixed point theorem to the nonlinear map $\cN^{-1}\circ\cI_m$.

The proof of the invertibility of $\cN$ is also highly non-trivial. For this, we need to prove the invertibility of the linearized operator $\mathcal L$ of $\cN$ at Kaden's spiral, which takes (see Lemma \ref{Lem.L-expression})
\begin{equation*}
	\mathcal L[r,\g](\th)=\begin{pmatrix}
		\mu\th^{-\mu}r(\th)+\th^{1-\mu}r'(\th)+\frac\mu{2\pi}\th^{-1}\g(\th)+\frac\mu{2\pi(2\mu-1)}\g'(\th)\\
		-2\th^{1-\mu}r(\th)+\frac{\th\g'(\th)}{2\pi(2\mu-1)}
	\end{pmatrix}.
\end{equation*}
By carefully exploring the structure of $\cL$, we are able to construct the inverse of $\cL$(see Lemma \ref{Lem.inversion_L}), thereby proving the invertibility of $\cN$.

Finally, the analysis of the Cauchy integral $\cI_m$ is the most challenging part of this paper. This is because the curve $\th\mapsto r_0(\th)$ in \eqref{Eq.I_m} is given by Kaden's spiral \eqref{special_sol}, which is not a chord-arc curve. Our approach is to decompose $\cI_m$ into a summation of oscillation integrals and then perform weighted $W^{1,\infty}$ estimates on these integrals; see Proposition \ref{Prop.P_n_est}. Our method provides new insights into the analysis of singular integrals along singular curves.

\subsection{Notations and conventions}

\begin{itemize}
	\item $\R_+:=(0,+\infty)$, $\Z_+:=\Z\cap \R_+$. $\langle\th\rangle:=(1+\th^2)^{1/2}$ for $\th\in\R$.
	
	\item Throughout this paper, we identify $\R^2\simeq\C$. For a complex number $z\in\C$, we denote its real part by $\operatorname{Re}z$, its imaginary part by $\operatorname{Im}z$, and its complex conjugate by $z^*$. 
	
	\item The logarithmic function, denoted by $\ln$, while acting on some complex number, should be understood as the principal branch unless otherwise specified.
	
	\item In a Banach space $X$, we denote the open ball $B_X(x_0, R):=\{x\in X: \|x-x_0\|<R\}$ for $x_0\in X$ and $R>0$. We abbreviate $B_X(0, R)$ to $B_X(R)$. Additionally, $\overline{B_X(x_0, R)}$ denotes the closed ball centered at $x_0\in X$ with radius $R$. Similarly for $\overline{B_X(R)}$.
	
	\item Let $X$ and $Y$ be two Banach spaces, we denote $L(X, Y)$ as the set consisting of all bounded linear operators from $X$ to $Y$. 
	
	\item For a map $\mathcal F: U\subset X\to Y$ between two functional spaces (where $U\subset X$ is a subset), we use the square bracket symbol $\mathcal F[x]$ (or just $\cF x$ for simplicity) to denote the image of $x\in U$ under the map $\mathcal F$. Hence, $\mathcal F[x]=\cF x\in Y$ is a function. We use the parenthesis symbol $\mathcal F[x](\th)$ to denote the value of the function $\cF[x]$ at some point $\th$. The Fr\'echet derivative of $\cF$ is denoted by $\cF'$, which is a map from $U$ to $L(X,Y)$. Hence, $\cF[x_0][x]\in Y$ for $x_0\in U$ and $x\in X$. The second-order derivative is $\cF'': U\to L(X\times X, Y)=L(X, L(X, Y))$. Hence, $\cF''[x_0][x_1, x_2]\in Y$ for $x_0\in U$ and $x_1, x_2\in X$. 
	
	\item There is an ambiguity in the use of $(r,\g)$ as functions in $\theta$. We aim to solve \eqref{maineq} for $(r,\g)$ near the approximation solution $(r_0,\g_0)$. However, in the solving process, we still denote the perturbation by $(r,\g)$ (which satisfies \eqref{Eq.perturbation}). Hence,  $(r+r_0,\g+\g_0)$ is a solution to \eqref{maineq}. The context will clarify which usage is intended.	
\end{itemize}


\section{A road map of the proof}\label{Sec.proof-sketch}
In this section, we outline a road map for proving Theorem \ref{mainthm} and Theorem \ref{mainthm2}, starting with a brief derivation of the Birkhoff-Rott equation \eqref{BR}.

\subsection{Birkhoff-Rott equation}\label{Subsec.BR}

In the study of vortex sheets, it is natural to explicitly propagate the sheet itself using a time-dependent parametrization. Since the seminal contributions of Birkhoff \cite{Birkhoff}, and Rott \cite{Rott}, it has been well-known that the dynamical evolution of a vortex sheet is precisely governed by a nonlinear integro-differential equation, which is called the {\it Birkhoff-Rott equation}. In this subsection, we set out to formally derive the Birkhoff-Rott equation corresponding to the m-fold vortex sheet solutions that are the focus of our investigation, with the initial data given by \eqref{Eq.initial-vorticity}.

We should remark that the derivation process presented in this subsection closely mirrors the conventional approach for vortex sheet solutions with vorticity supporting on a single curve, a method widely elaborated in numerous classical books ({\it e.g.} \cite{MB,MP}). Given the prevalence of this method in standard references, we shall streamline our exposition by omitting some of the more detailed steps. Readers seeking a comprehensive understanding are directed to consult \cite[Section 9.2]{MB} for further clarification and in-depth analysis.

Since the initial sheet $\Sigma_0$ is the right semi-infinite horizontal axis, it is natural to parameterize $\Sigma_0$ by the distance from the origin, i.e.,
\begin{equation}\label{Eq.Sigma0}
	\Sigma_0: X(0,r)=r,\quad r>0.
\end{equation}
Here we emphasize again that we have identified $\R^2\simeq\C$ throughout this paper. With the parametrization \eqref{Eq.Sigma0}, it follows from \eqref{Eq.initial-vorticity} that
\begin{equation*}
	\int_{\R^2}\varphi(x)\,\mathrm d\omega_0(x)=\frac1m\sum_{k=0}^{m-1}\int_0^\infty \varphi\left(\xi_m^kX(0,r)\right)w(0,r)|X_r(0,r)|\,\mathrm dr,\quad\forall\ \varphi\in C_c(\R^2),
\end{equation*}
where $w(0,r)=r^{1-\frac1\mu}$ and $X_r(0,r)=1$ is the derivative of $X(0,r)$ with respect to $r$. 

Assume that our solution $\omega(t)$ pertains the $m$-fold symmetry and supports on $m$ curves $\{\xi_m^k\Sigma_t: k\in\Z\cap[0, m-1]\}$ for $t>0$, where $\Sigma_t$ is parameterized by $X(t,r)$ in a Lagrangian sense. This is to say, for a particle lying initially at $X(0,r)$, its position at time $t>0$ is $X(t,r)$ (thus $|X(t,r)|\neq r$ in general for $t>0$). We denote the vortex strength at $X(t,r)$ by $w(t,r)\in\R$, then
\begin{equation}\label{Eq.omega(t)-pairing}
	\int_{\R^2}\varphi(x)\,\mathrm d\omega(t,x)=\frac1m\sum_{k=0}^{m-1}\int_0^\infty \varphi\left(\xi_m^kX(t,r)\right)w(t,r)|X_r(t,r)|\,\mathrm dr,\quad\forall\ \varphi\in C_c(\R^2).
\end{equation}
By Biot-Savart law and \eqref{Eq.omega(t)-pairing}, for any $t>0$ and $x\notin \{\xi_m^k\Sigma_t: k\in\Z\cap[0, m-1]\}$, we have
\begin{align}
	u(t,x)&=\frac1m\sum_{k=0}^{m-1}\int_0^\infty K_2\left(x-\xi_m^kX(t,r)\right)w(t,r)|X_r(t,r)|\,\mathrm dr\nonumber\\
	&=\frac1m\sum_{k=0}^{m-1}\int_0^\infty\frac{\ii}{2\pi}\frac{w(t,r)|X_r(t,r)|\,\mathrm dr}{\left(x-\xi_m^kX(t,r)\right)^*}.\label{Eq.velocity-expression}
\end{align}
By means of standard complex variables, the limit as $x$ approaches a point on $\Sigma_t$ from either side gives two different velocities $u_+$ and $u_-$ with a same normal component. Using the fact that the sheet advects with the speed $(u_++u_-)/2$, we obtain
\begin{equation}\label{Eq.X-eq}
	\pa_tX(t,r)=\frac1m\sum_{k=0}^{m-1}\pv\int_0^\infty\frac{\ii}{2\pi}\frac{w(t,r')|X_r(t,r')|\,\mathrm dr'}{\left(X(t,r)-\xi_m^kX(t,r')\right)^*},\quad\forall\ t>0,\ r>0.
\end{equation}
Using Bernoulli's law (see \cite[Section 9.2]{MB}) for the potential function, one has the conservation of circulation along particle paths of the flow, i.e.,
\begin{equation}\label{Eq.conservation-cirulation}
	w(t,r)|X_r(t,r)|=w(0,r)|X_r(0,r)|=r^{1-\frac1\mu},\quad\forall\ t>0,\ r>0.
\end{equation}
We introduce another parameter (circulation) $\Gamma$ by $\Gamma=-\frac{\mu}{2\mu-1}r^{2-\frac1\mu}$, then $\mathrm d\Gamma=-r^{1-\frac{1}{\mu}}\mathrm dr$, which along with 
\eqref{Eq.X-eq} and \eqref{Eq.conservation-cirulation} implies that
\begin{equation*}
	\partial_t Z^*(t,\G)=\frac1{2m\pi\mathrm{i}}\sum_{k=0}^{m-1}\pv\int_{-\infty}^0\frac{\mathrm d\Gamma'}{Z(t,\Gamma)-\xi_m^kZ(t,\Gamma')},
\end{equation*}
where $Z(t,\Gamma)=X(t,r)=X\left(t,\left(-\frac{2\mu-1}{\mu}\Gamma\right)^{\frac\mu{2\mu-1}}\right)$. This is our Birkhoff-Rott equation \eqref{BR}.

The rigorous justification of the equivalence between velocity formulation \eqref{Eq.Euler-velocity} (in a weak sense) and Birkhoff-Rott equation \eqref{Eq.BR-normal} for $C^1$ graphs $Z(t,\cdot)$ and $C^1$ vortex strength can be found in \cite{MP}. The proof can be trivially extended to our scenario \eqref{BR} for $m$ sheets. It was shown in \cite{LNS2007} that the equivalence holds if the sheets are regular curves and the strength is (locally) square integrable with respect to arc-length. So, if $Z(t,\cdot)$ is a $C^1$ solution to \eqref{BR} satisfying \eqref{Eq.center-behavior}, then 
\begin{equation}
	u(t,x)=\frac{\ii}{2\pi m}\sum_{k=0}^{m-1}\int_{-\infty}^0\frac{\mathrm d\Gamma}{\left(x-\xi_m^kZ(t,\Gamma)\right)^*},\quad\forall\ x\notin \{\xi_m^k\Sigma_t: k\in\Z\cap[0, m-1]\}
\end{equation}
is a weak solution to \eqref{Eq.Euler-velocity} in $\R^2\setminus\{0\}$. It suffices to prove that $u$ is a weak solution to \eqref{Eq.Euler-velocity} in the whole plane $\R^2$, which requires some quantitive estimates of the velocity fields, see Section \ref{Sec.Proof-weak-sol} for details. 

\subsection{Self-similar formulation}

We introduce the self-similar variables:
\begin{equation}\label{ss_variable}
	Z(t,\G)=t^\mu z(\g),\qquad \G=t^{2\mu-1}\g,
\end{equation}
where $\mu>0$. Inserting \eqref{ss_variable} into \eqref{BR} gives the equation for $z(\g)$:
\begin{equation}\label{ss_BR1}
	\mu z^*(\g)+(1-2\mu)\g\left(z^*\right)'(\g)=\frac1{2\pi\ii}\frac1m\sum_{k=0}^{m-1}\pv\int_{-\infty}^0 \frac{\mathrm d\wt{\g}}{z(\g)-\xi_m^k z(\wt\g)}.
\end{equation}
Using the elementary identity
\[\sum_{k=0}^{m-1}\frac1{z-\xi_m^kw}=\frac{mz^{m-1}}{z^m-w^m},\]
the self-similar equation \eqref{ss_BR1} is converted to
\begin{equation}\label{ss_BR2}
	\mu z^*(\g)+(1-2\mu)\g\left(z^*\right)'(\g)=\frac1{2\pi\ii}\pv\int_{-\infty}^0\frac{\left(z(\g)\right)^{m-1}}{\left(z(\g)\right)^m-\left(z(\wt\g)\right)^m}\,\mathrm d\wt\g.
\end{equation}

Now we consider the equation in the polar coordinates. Write $z(\g)=r(\theta(\g))\e^{\ii\theta(\g)}$, then
\[z'(\g)=r'(\theta)\frac1{\g'(\theta)}\e^{\ii\theta}+r(\theta)\ii \e^{\ii\theta}\frac1{\g'(\theta)}.\]
Hence, \eqref{ss_BR2} becomes
\[\left(\mu r\e^{\ii\theta}+(1-2\mu)\g\frac{r'(\theta)+\ii r(\theta)}{\g'}\e^{\ii\theta}\right)^*=\frac1{2\pi\ii}\pv\int_0^\infty\frac{\left(r(\theta)\right)^{m-1}e^{\ii(m-1)\theta}\g'(\wt\theta)}{\left(r(\theta)\right)^me^{\ii m\theta}-\left(r(\wt\theta)\right)^me^{\ii m\wt\theta}}\,\mathrm d\wt\theta,\]
i.e.,
\begin{equation}\label{maineq}
	\mu r(\theta)^2+(1-2\mu)\frac{\g(\theta)}{\g'(\theta)}\left(r(\theta)r'(\theta)-\ii r(\theta)^2\right)=\frac1{2\pi\ii}\pv\int_0^\infty\frac{\g'(\wt\theta)\,\mathrm d\wt\theta}{1-\left(\frac{r(\wt\theta)}{r(\theta)}\right)^m\e^{\ii m(\wt\theta-\theta)}}.
\end{equation}
This is the main equation we want to solve. Note that for fixed $m\in\N$, we have two unknowns $r(\theta), \g(\theta)$ and two equations: the real part of \eqref{maineq} and the imaginary part of \eqref{maineq}.

\subsection{Kaden's algebraic spiral solutions}

We consider the limit $m\rightarrow \infty$. Assume that $r'(\th)<0$, then we formally get the limiting equation of \eqref{maineq}:
\begin{equation}\label{eq_limit}
	\mu r(\theta)^2+(1-2\mu)\frac{\g(\theta)}{\g'(\theta)}\left(r(\theta)r'(\theta)-\ii r(\theta)^2\right)=\frac1{2\pi\ii}\pv\int_\theta^\infty \g'(\wt\th)\,\mathrm d\wt\th=\frac{-\g(\th)}{2\pi\ii}.
\end{equation}
Letting $f=r^2$, the equation \eqref{eq_limit} becomes
\[\mu f+(1-2\mu)\frac\g{\g'}\left(\frac12f'-\ii f\right)=-\frac\g{2\pi\ii},\]
which is equivalent to 
\[\mu f+\frac{1-2\mu}2\frac\g{\g'}f'=0\  (\text{real part})\qquad \text{and} \qquad (2\mu-1)\frac\g{\g'}f=\frac\g{2\pi}\ (\text{imaginary part}).\]
Rearranging them, we obtain
\[\frac{2\mu-1}{2\mu}\frac{f'}f=\frac{\g'}\g\qquad \text{and} \qquad 2\pi(2\mu-1)f=\g'.\]
It follows from $\frac{2\mu-1}{2\mu}\frac{f'}f=\frac{\g'}\g$ that $\g=C_1 f^{\frac{2\mu-1}{2\mu}}$ for some constant $C_1$. Combining this with $2\pi(2\mu-1)f=\g'$ gives that
\[C_1\frac{2\mu-1}{2\mu} f^{-\frac1{2\mu}}f'=2\pi(2\mu-1)f,\]
i.e.,
\[\frac1{2\mu}f^{-1-\frac1{2\mu}}f'=\frac{2\pi}{C_1}\qquad\Longrightarrow\qquad \left(f^{-\frac1{2\mu}}\right)'=-\frac{2\pi}{C_1}.\]
Thus, there exists a constant $C_2$ such that
\[f^{-\frac1{2\mu}}=-\frac{2\pi}{C_1}\th+C_2\qquad\Longrightarrow\qquad f(\th)=\left(-\frac{2\pi}{C_1}\th+C_2\right)^{-2\mu}.\]
Therefore, the solution to \eqref{eq_limit} is given by
\begin{equation}\label{gensol_eq_limit} 
	r(\th)=\left(-\frac{2\pi}{C_1}\th+C_2\right)^{-\mu},\qquad \g(\th)=C_1\left(-\frac{2\pi}{C_1}\th+C_2\right)^{1-2\mu}.
\end{equation}
Choosing $C_1=-2\pi, C_2=0$, we obtain the special solution
\begin{equation}\label{special_sol}
	r_0(\th)=\th^{-\mu},\qquad \g_0(\th)=-2\pi\th^{1-2\mu}.
\end{equation}
This is exactly Kaden's algebraic spiral solution \cite{Kaden,MS1973}.

\subsection{Existence results for self-similar Birkhoff-Rott}
We seek solutions to \eqref{maineq} near $(r_0, \g_0)$. We denote the perturbation still by $(r,\g)$. Then $(r,\g)$ solves the following nonlinear equation (thus $(r+r_0,\g+\g_0)$ solves \eqref{maineq}):
\begin{equation}\label{Eq.perturbation}
	\mathcal N[r,\g]=\mathcal I_m[r,\g],
\end{equation}
where
\begin{align}
		\mathcal N[r,\g](\theta):=\mu&\left(r(\theta)+r_0(\theta)\right)^2+\frac{\g(\th)+\g_0(\th)}{2\pi\ii}\label{Eq.N}\\
		+(1-2\mu)&\frac{\g(\th)+\g_0(\th)}{\g'(\th)+\g_0'(\th)}\left[\left(r(\th)+r_0(\th)\right)\left(r'(\th)+r_0'(\th)\right)-\ii\left(r(\th)+r_0(\th)\right)^2\right],\nonumber\\
		\mathcal I_m[r,\g](\th):=\frac1{2\pi\ii}&\left[\pv\int_0^\infty\frac{\left(\g'(\wt\th)+\g_0'(\wt\th)\right)\,\mathrm d\wt\th}{1-\left(\frac{r(\wt\th)+r_0(\wt\th)}{r(\th)+r_0(\th)}\right)^m\mathrm e^{\ii m(\wt\th-\th)}}-\int_\th^\infty\left(\g'(\wt\th)+\g_0'(\wt\th)\right)\,\mathrm d\wt\th\right].\label{Eq.I_m}
\end{align}

It is noteworthy that the nonlinear operator $\mathcal{N}$ is independent of the parameter $m$, which endows it with a more straightforward analytical framework compared to $\mathcal{I}_m$. The analysis of $\mathcal{I}_m$ is complicated due to the presence of a principal value integral. Furthermore, we observe that $\mathcal{N}[0,0] = 0$, a conclusion that follows naturally from the fact that $(r_0, \gamma_0)$ is a solution to the limiting equation \eqref{eq_limit}.

Our approach to solving equation \eqref{Eq.perturbation} relies on an application of Banach's fixed point theorem in some appropriate Banach spaces $X$ and $Y$. The first goal is to establish the invertibility of the nonlinear operator $\mathcal{N}$ when mapping a small ball within $X$ to $Y$. By achieving this, the equation \eqref{Eq.perturbation} can be reformulated into a fixed-point form\footnote{\label{Footnote.1}In the definitions provided by \eqref{Eq.N} and \eqref{Eq.I_m}, the operators $\mathcal{N}$ and $\mathcal{I}_m$ are initially formulated to map a pair of real-valued functions $(r, \gamma)$ to a complex-valued function of the structure $f + \ii g$. Throughout the subsequent development, we reinterpret these operators as mappings from $(r, \gamma)$ to another pair $(f, g)$, where $f$ is the real part and $g$ the imaginary part. This alternative perspective is adopted starting from \eqref{Eq.fixed-point}.}
\begin{equation}\label{Eq.fixed-point}
	(r,\g)=(\mathcal N^{-1}\circ\mathcal I_m)[r,\g]=:\mathcal F_m[r,\g].
\end{equation}
Then we establish the required estimates to apply the classical Banach's fixed point theorem.

Motivated by nature of the singular integral in $\mathcal I_m$ and \cite{EJ2023}, we are going to work in some weighted H\"older-type spaces. Let $\alpha\in(0,1)$. For each $k\in[0, +\infty)$, we define
\begin{equation}\label{Eq.G^k_def}
	\mathscr G^k:=\{f\in C(\R_+;\C): \|f\|_{k}<+\infty\},\quad \mathcal G^k:=\{f\in \mathscr G^k: \text{$f$ is real-valued}\},
\end{equation}
where the norm is given by
\begin{equation}\label{Eq.G^k_norm}
	\|f\|_{k}:=\|\langle\theta\rangle^\alpha f\|_{L^\infty}+[f]_k,\quad [f]_k:=\sup_{0<\th_2<\th_1<\th_2+1}\langle\th_1\rangle^{k+\alpha}\frac{|f(\th_1)-f(\th_2)|}{|\th_1-\th_2|^\alpha}.
\end{equation}
Note that if $f\in \mathscr G^k$ for some $k\in[0,+\infty)$, then the limit $f(0+):=\lim_{\th\to0+} f(\th)$ exists in $\C$. We denote
\begin{equation}\label{Eq.G^0_0}
	\mathscr G_0^k:=\left\{f\in\mathscr G^k: f(0+)=0\right\}, \quad \mathcal G_0^k:=\left\{f\in\mathcal G^k: f(0+)=0\right\}.
\end{equation}
Our working functional spaces are as follows
\begin{align}
	X:&=\left\{(r,\g): \th^\mu r\in\cG^1, \th^{\mu+1}r'\in\cG^0, \th^{2\mu-1}\g\in\cG^1,\th^{2\mu}\g'\in\cG^1\right\},\\
	Y:&=\left\{(f,g): \th^{2\mu}f\in \cG_0^0, \th^{2\mu-1}g\in\cG^1\right\},\label{Eq.Y}
\end{align}
with the norms given by
\begin{align}
	&\|(r,\g)\|_{X}:=\|r\|_{X_r}+\|\g\|_{X_\g},\quad \|(f,g)\|_Y:=\|\th^{2\mu}f\|_0+\|\th^{2\mu-1}g\|_1,\\
	&\|r\|_{X_r}:=\|\th^\mu r\|_1+\|\th^{\mu+1}r'\|_0,\quad \|\g\|_{X_\g}:=\|\th^{2\mu-1}\g\|_1+\|\th^{2\mu}\g'\|_1.
\end{align}

Equipped with these functional spaces, we are now in a position to present the existence result for \eqref{Eq.perturbation} as follows.

\begin{theorem}\label{Thm.main}
	Let $\mu>1/2$ and $\al\in(0,1)$. Then there exists $m_*>0$ such that for all $m>m_*$, the equation \eqref{Eq.perturbation} has a solution $(r_m,\g_m)\in X$, which is unique in the closed ball $\overline{B_X(1/m)}$, and moreover, there is a constant $C>0$ independent of $m>m_*$ such that
	\begin{equation}\label{Eq.solution_bound}
		\|(r_m,\g_m)\|_X\leq \frac C{m^2}<\frac12,\quad\forall\ m>m_*.
	\end{equation}
\end{theorem}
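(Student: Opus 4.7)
The plan is to reformulate \eqref{Eq.perturbation} as the fixed-point equation \eqref{Eq.fixed-point} and apply Banach's fixed point theorem on the closed ball $\overline{B_X(1/m)}$. Three ingredients are needed: local invertibility of $\mathcal N$ near the origin (independent of $m$), smallness and Lipschitz estimates on $\mathcal I_m$ for $m$ large, and a size check showing $\mathcal F_m:=\mathcal N^{-1}\circ\mathcal I_m$ self-maps and contracts the ball.

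First, I would establish a local inverse $\mathcal N^{-1}:\overline{B_Y(\delta_0)}\to X$ by an inverse function theorem argument around $(0,0)$. Since $\mathcal N$ is a local (non-integral) nonlinear operator with $\mathcal N[0,0]=0$, I would invoke Lemma \ref{Lem.L-expression} to identify the linearization $\mathcal L=\mathcal N'[0,0]$ and Lemma \ref{Lem.inversion_L} to produce a bounded inverse $\mathcal L^{-1}\in L(Y,X)$. A second-order Taylor bound $\|\mathcal N[u]-\mathcal L[u]\|_Y\leq C\|u\|_X^2$ on a small ball $B_X(\rho_0)$, together with a Newton--Kantorovich contraction, yields a Lipschitz inverse satisfying $\|\mathcal N^{-1}[y]\|_X\leq C\|y\|_Y$ uniformly for $\|y\|_Y\leq\delta_0$; uniformity in $m$ is automatic since $\mathcal N$ is $m$-independent. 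Checking that $\mathcal N$ actually maps $X$ to $Y$ with $C^2$ regularity is a routine verification of the weighted H\"older norms in \eqref{Eq.G^k_norm}, using that each nonlinear term in \eqref{Eq.N} is a product or quotient of objects whose weighted norms are controlled in $X$.

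Second, I would apply Proposition \ref{Prop.I} to obtain the two estimates that drive the contraction: a baseline bound of the form $\|\mathcal I_m[0,0]\|_Y\leq C/m^2$, reflecting that Kaden's spiral solves the limiting equation \eqref{eq_limit} but misses the finite-$m$ equation \eqref{maineq} only by an $O(1/m^2)$ residual, and a Lipschitz estimate $\|\mathcal I_m[u_1]-\mathcal I_m[u_2]\|_Y\leq (C/m)\|u_1-u_2\|_X$ on $\overline{B_X(1/m)}$, uniformly in large $m$. Combining the two steps, for $m>m_*$ the map $\mathcal F_m[u]=\mathcal N^{-1}[\mathcal I_m[u]]$ is well defined on $\overline{B_X(1/m)}$, and $\|\mathcal F_m[u]\|_X\leq C\|\mathcal I_m[u]\|_Y\leq C'/m^2<1/m$ (self-map) together with $\|\mathcal F_m[u_1]-\mathcal F_m[u_2]\|_X\leq (C''/m)\|u_1-u_2\|_X\leq \tfrac12\|u_1-u_2\|_X$ (contraction). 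Banach's fixed point theorem then delivers a unique $(r_m,\g_m)\in\overline{B_X(1/m)}$; the quantitative bound \eqref{Eq.solution_bound} follows from $\|(r_m,\g_m)\|_X=\|\mathcal F_m[(r_m,\g_m)]\|_X\leq \tfrac12\|(r_m,\g_m)\|_X+C'/m^2$.

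The main obstacle is clearly Step 2, i.e.\ the proof of Proposition \ref{Prop.I}, as already flagged in Section 1.3. The singular integral $\mathcal I_m$ is taken along Kaden's spiral $r_0(\th)=\th^{-\mu}$, which is not a chord-arc curve and therefore falls outside classical Calder\'on--Zygmund theory; naive estimates of the principal-value integrand in \eqref{Eq.I_m} lose too much to close. The approach will be to expand $1/(1-(r/r_0)^m \mathrm e^{\mathrm i m(\wt\th-\th)})$ via a geometric-type series, reducing matters to uniform weighted $W^{1,\infty}$ bounds for a family of oscillation integrals indexed by $n\in\Z$ (Proposition \ref{Prop.P_n_est}), and then summing in $n$ with decay sufficient to absorb the loss coming from the non-chord-arc behavior and to extract the gain $1/m^2$ in the baseline estimate and $1/m$ in the Lipschitz estimate. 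By contrast, Step 1 is essentially a packaging of the explicit linear algebra in Lemmas \ref{Lem.L-expression}--\ref{Lem.inversion_L}, and Step 3 is a routine assembly.
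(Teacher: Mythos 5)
Your proposal is correct and follows essentially the same route as the paper: reformulate \eqref{Eq.perturbation} as the fixed point equation \eqref{Eq.fixed-point}, invoke the invertibility of $\mathcal N$ (Proposition \ref{Prop.N}, proved via the explicit inverse of $\mathcal L$ plus a Banach fixed point argument) and the contraction estimates for $\mathcal I_m$ (Proposition \ref{Prop.I}), and close with Banach's fixed point theorem on $\overline{B_X(1/m)}$, deriving \eqref{Eq.solution_bound} from the same triangle-inequality bootstrap $\|(r_m,\g_m)\|_X\le\tfrac12\|(r_m,\g_m)\|_X+\|\mathcal F_m[0,0]\|_X$. Your assessment of where the real difficulty lies (the series decomposition and weighted $W^{1,\infty}$ estimates behind Proposition \ref{Prop.I}) also matches the paper's actual strategy.
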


\subsection{Proof of Theorem \ref{Thm.main}}
The proof of Theorem \ref{Thm.main} is based on Banach's fixed point theorem and the following two propositions.

\begin{proposition}[Invertibility of $\cN$]\label{Prop.N}
	There exist $\varepsilon_0\in(0,1)$, $\delta_0\in(0,1)$ and a constant $C^*>1$ such that 
	\begin{itemize}
		\item $\mathcal N$ is a $C^\infty$ map from $B_X(\varepsilon_0)$ to $Y$;
		\item for any $y\in B_Y(\delta_0)$, there is a unique $x\in B_X(\varepsilon_0)$ such that $\mathcal Nx=y$, and 
		\begin{equation}\label{Eq.N^-1_bound0}
			\|x\|_X\leq C^*\|y\|_Y;
		\end{equation}
		\item the map $\mathcal N^{-1}: B_Y(\delta_0)\to B_X(\varepsilon_0)$ is Lipschitz continuous:
		\begin{equation}\label{Eq.N^-1_lip}
			\left\|\mathcal N^{-1}y_1-\mathcal N^{-1}y_2\right\|_X\leq C^*\|y_1-y_2\|_Y,\quad\forall\ y_1, y_2\in B_Y(\delta_0).
		\end{equation}
	\end{itemize}
\end{proposition}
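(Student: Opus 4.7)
The strategy is to reduce the invertibility of $\cN$ near zero to the invertibility of its linearization $\cL=\cN'[0]$, which is already a bounded isomorphism $Y\to X$ by Lemma \ref{Lem.inversion_L}, and then to close a Banach fixed-point argument on the nonlinear remainder.

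\emph{Smoothness of $\cN$.} Inspecting \eqref{Eq.N}, $\cN[r,\g]$ is polynomial in $r+r_0$, $r'+r_0'$, $\g+\g_0$ together with the scalar ratio $(\g+\g_0)/(\g'+\g_0')$. Since $\g_0'(\th)=2\pi(2\mu-1)\th^{-2\mu}$ satisfies $|\g_0'(\th)|\ge c_\mu\th^{-2\mu}$ on $\R_+$, while every $(r,\g)\in X$ yields the pointwise bound $|\g'(\th)|\le \|\g\|_{X_\g}\,\th^{-2\mu}\langle\th\rangle^{-\alpha}$, shrinking $\varepsilon_0$ ensures $|\g'+\g_0'|\ge (c_\mu/2)\th^{-2\mu}$ throughout $\R_+$ for every $(r,\g)\in B_X(\varepsilon_0)$. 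Pointwise product and reciprocal of a nowhere-vanishing function are smooth operations on the weighted H\"older spaces \eqref{Eq.G^k_def}--\eqref{Eq.Y}, and a careful bookkeeping of weights in each bilinear product yields $\cN\in C^\infty(B_X(\varepsilon_0),Y)$. The key observation making $\cN[r,\g]\in Y$ (not merely into a larger space) is the cancellation $\cN[0,0]=0$, which is inherited from the fact that $(r_0,\g_0)$ solves the limiting equation \eqref{eq_limit} exactly; this in particular gives the vanishing as $\th\to 0+$ encoded in the space $\cG_0^0$ appearing in \eqref{Eq.Y}.

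\emph{Fixed-point argument.} Write $\cN[x]=\cL x+\cR[x]$ with $\cR[x]:=\cN[x]-\cL x$. The $C^2$ smoothness of $\cN$ and $\cN[0]=0$ give, for $x,x_1,x_2\in B_X(\varepsilon_0)$ and some constant $M>0$,
\begin{equation*}
    \|\cR[x]\|_Y\le M\|x\|_X^2,\qquad \|\cR[x_1]-\cR[x_2]\|_Y\le M\bigl(\|x_1\|_X+\|x_2\|_X\bigr)\|x_1-x_2\|_X.
\end{equation*}
Let $C_0:=\|\cL^{-1}\|_{L(Y,X)}$. The equation $\cN x=y$ is equivalent to the fixed-point problem $x=T_y[x]:=\cL^{-1}(y-\cR[x])$. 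Choosing $\varepsilon_0$ so small that $2C_0M\varepsilon_0\le 1/2$ and setting $\delta_0:=\varepsilon_0/(2C_0)$, for every $y\in B_Y(\delta_0)$ the map $T_y$ sends $\overline{B_X(\varepsilon_0)}$ into itself and is a $\tfrac12$-contraction. Banach's fixed-point theorem yields a unique $x=\cN^{-1}y\in \overline{B_X(\varepsilon_0)}$, and the bound $\|x\|_X\le C_0\|y\|_Y+\tfrac12\|x\|_X$ gives \eqref{Eq.N^-1_bound0} with $C^*:=2C_0$. Subtracting the identities $x_i=\cL^{-1}(y_i-\cR[x_i])$ for $i=1,2$ and applying the same contraction estimate yields \eqref{Eq.N^-1_lip} with the same constant.

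\emph{Main obstacle.} The delicate step is verifying the smoothness $\cN\in C^\infty(B_X(\varepsilon_0),Y)$ in the rather intricate weighted H\"older spaces \eqref{Eq.G^k_def}--\eqref{Eq.Y}. One must track precisely how the weights $\th^\mu$, $\th^{\mu+1}$, $\th^{2\mu-1}$, $\th^{2\mu}$, and $\langle\th\rangle^\alpha$ balance across each bilinear product and H\"older difference, confirm the uniform lower bound on $|\g'+\g_0'|$, and verify the $\th\to 0+$ vanishing dictated by $\cG_0^0$. Once this weighted H\"older calculus is in hand, the remainder of the argument is a standard quantitative inverse function theorem resting on Lemma \ref{Lem.inversion_L}.
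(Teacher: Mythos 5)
Your proposal follows essentially the same route as the paper: smoothness of $\cN$ via the Banach-algebra structure of the weighted H\"older spaces, explicit inversion of $\cL=\cN'[0]$ (the paper's Lemmas \ref{Lem.inversion_L} and \ref{Lem.M_bound}), and a Banach fixed-point argument on $x\mapsto\cL^{-1}(y-\cR[x])$, exactly as in Subsection \ref{Subsec.invert_nonlinear}. One small caveat: the vanishing $\th^{2\mu}\cN_1[r,\g](\th)\to0$ as $\th\to0+$ (membership in $\cG_0^0$) for general $(r,\g)$ in the ball does not follow from the cancellation $\cN[0,0]=0$ alone, but from the structural rewriting \eqref{Eq.N_1} combined with Corollary \ref{Cor.cG^0_0}; you correctly flag this as the delicate point but attribute it to the wrong mechanism, whereas $\cN[0,0]=0$ is instead what controls the limit $\th\to+\infty$.
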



\begin{proposition}[Contraction of $\mathcal I_m$]\label{Prop.I}
	There exist $\varepsilon_1\in (0,1)$, $m_1\in\Z_+$ and a constant $C_*>1$ such that for all $m>m_1$ there hold
	\begin{align}
		&\left\|\mathcal I_m[0,0]\right\|_Y\leq \frac{C_*}{m^2},\label{Eq.I00}\\
		\left\|\mathcal I_m[r_1,\g_1]-\mathcal I_m[r_1, \g_2]\right\|_Y&\leq \frac{C_*}{m}\|(r_1, \g_1)-(r_2, \g_2)\|_X,\quad\forall\ (r_1,\g_1), (r_2,\g_2)\in B_X(\varepsilon_1).\label{Eq.I_Lip}
	\end{align}
\end{proposition}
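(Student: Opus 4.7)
The plan is to quantify the smallness of $\mathcal I_m$ by decomposing its integrand into a sum of oscillatory pieces indexed by $n\ge 1$, each carrying a phase $\e^{\pm\ii mn(\wt\th-\th)}$, and extracting a gain of $(mn)^{-2}$ per piece via integration by parts. Set $r_{\mathrm{tot}}:=r+r_0$, $\g'_{\mathrm{tot}}:=\g'+\g'_0$, and
\[z(\th,\wt\th):=\frac{r_{\mathrm{tot}}(\wt\th)}{r_{\mathrm{tot}}(\th)}\,\e^{\ii(\wt\th-\th)}.\]
For $(r,\g)$ in a sufficiently small $B_X(\varepsilon_1)$, the decay of $r_0=\th^{-\mu}$ dominates the perturbation so $r_{\mathrm{tot}}$ is strictly decreasing on $\R_+$; hence $|z|<1$ for $\wt\th>\th$ and $|z|>1$ for $\wt\th<\th$. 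The pointwise geometric identity (valid for $\wt\th\ne\th$)
\[\frac{1}{1-z^m}\;=\;\mathbb 1_{\wt\th>\th}\;+\;\sum_{n\ge 1}\bigl(z^{mn}\mathbb 1_{\wt\th>\th}-z^{-mn}\mathbb 1_{\wt\th<\th}\bigr),\]
substituted into $\mathcal I_m$, shows that the $\mathbb 1_{\wt\th>\th}$ contribution exactly cancels the subtracted term $-\int_\th^\infty \g'_{\mathrm{tot}}$, yielding
\[2\pi\ii\,\mathcal I_m[r,\g](\th)=\sum_{n\ge 1}P_n(\th),\quad P_n(\th):=\pv\!\!\int_0^\infty \!\g'_{\mathrm{tot}}(\wt\th)\bigl(z^{mn}\mathbb 1_{\wt\th>\th}-z^{-mn}\mathbb 1_{\wt\th<\th}\bigr)\,d\wt\th.\]

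The core step is a weighted $W^{1,\infty}$ bound $\|\th^{2\mu}\operatorname{Re}P_n\|_0+\|\th^{2\mu-1}\operatorname{Im}P_n\|_1\lesssim (mn)^{-2}$, which is the content of the paper's Proposition~\ref{Prop.P_n_est}. I would establish it by two integrations by parts in $\wt\th$, exploiting that
\[\pa_{\wt\th}\log z \;=\; \frac{r'_{\mathrm{tot}}(\wt\th)}{r_{\mathrm{tot}}(\wt\th)}+\ii\]
is nonvanishing with real part comparable to $-\mu/\wt\th$. Each integration by parts produces a factor $1/(mn\,\pa_{\wt\th}\log z)$, and together they deliver the claimed $(mn)^{-2}$; the weights $\th^{2\mu}$ and $\th^{2\mu-1}$ defining $Y$ are calibrated so as to absorb the factors of $\wt\th^{\pm 1}$ produced when $\pa_{\wt\th}$ hits the amplitude or $\g'_{\mathrm{tot}}$. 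After checking that the boundary terms at $\wt\th\to 0^+$ are compatible with the vanishing condition in $\cG^0_0$ and that those at $\wt\th\to+\infty$ vanish, summing $\sum_{n\ge 1}(mn)^{-2}=m^{-2}\zeta(2)$ and specializing to $(r,\g)=(0,0)$ yields \eqref{Eq.I00}.

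For the Lipschitz estimate \eqref{Eq.I_Lip} I would write the difference as $\int_0^1 D\mathcal I_m[(1-s)(r_1,\g_1)+s(r_2,\g_2)]\,ds\cdot[(r_2-r_1,\g_2-\g_1)]$ and bound $D\mathcal I_m$ in $L(X,Y)$ uniformly over $B_X(\varepsilon_1)$. Differentiating $z^{\pm mn}$ with respect to the perturbation produces an extra factor $\pm mn\,(\delta r_{\mathrm{tot}}/r_{\mathrm{tot}})$, so the $n$th term of the linearization only gains $(mn)^{-1}$ after one integration by parts; the resulting divergent series $\sum_{n\ge 1}(mn)^{-1}$ is tamed by an additional H\"older-difference estimate, using the $\cG^1$ regularity of $\th^\mu r$ and $\th^{2\mu-1}\g$, that supplies an extra summable factor $(mn)^{-\al}$ in one intermediate term. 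This delivers the final $O(1/m)$ Lipschitz constant after evaluating against $\|(r_2-r_1,\g_2-\g_1)\|_X$.

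The main obstacle is that Kaden's spiral is not a chord-arc curve, so standard singular-integral theory for the Cauchy kernel along it is unavailable, and every integration by parts must be carried out by hand with explicit tracking of the polynomial weights of $X$ and $Y$. Closely related is the non-uniform convergence of the defining series near the singular diagonal $\wt\th=\th$: each $P_n$ is only defined as a principal value, and the $\wt\th$-derivatives used in integration by parts produce factors $(\wt\th-\th)^{-1}$ that a priori invalidate the estimates. I would handle this by splitting $P_n$ at the scale $|\wt\th-\th|\sim 1/(mn)$: on the inner piece, Taylor-expand $\log z$ about $\wt\th=\th$ and use the antisymmetry of the PV to cancel the leading $(\wt\th-\th)^{-1}$ singularity; on the outer piece, perform the integrations by parts in the usual way. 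Quantifying the matching between these two regimes so that the bound $(mn)^{-2}$ is uniform in $n$ is the technical heart of the argument and the source of the restriction to $m$ large.
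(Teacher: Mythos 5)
Your series decomposition $\mathcal I_m = -(2\mu-1)\mathrm{i}\sum_{n\geq 1}\mathcal Q_{mn}$ and the cancellation of the $\mathbb 1_{\wt\th>\th}$ zeroth mode against $-\int_\th^\infty\g'_{\mathrm{tot}}$ are exactly the paper's Subsection~5.1, so the skeleton is right. The gap is in the asserted core estimate and its diagnosis. First, after the decomposition each $\cQ_n$ (your $P_n$) is an \emph{absolutely convergent} integral, not a principal value, and neither its integrand nor the quantities $\pa_{\wt\th}(\g'_{\mathrm{tot}}/\pa_{\wt\th}\log z)$ arising from integration by parts have any $(\wt\th-\th)^{-1}$ singularity; so the "antisymmetry of the PV" is not the issue. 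What actually sits at $\wt\th=\th$ are the two \emph{endpoint boundary terms} produced by the first integration by parts, one from the $\wt\th>\th$ half and one from the $\wt\th<\th$ half; they are individually of size $(mn)^{-1}$, and only their \emph{cancellation} brings you down to $(mn)^{-2}$. This is precisely what the paper isolates as $\cP_n^{(2)}$ (Eq.~\eqref{Eq.P_n_decompose}: $\tfrac1{(h_n[r])'}-\tfrac1{(\phi_n[r])'}=O(1/n)$), and nothing in your outline tracks it.

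Second, and more seriously, the claimed bound $\|\th^{2\mu}\operatorname{Re}P_n\|_0+\|\th^{2\mu-1}\operatorname{Im}P_n\|_1\lesssim(mn)^{-2}$ is \emph{not} what the paper's Proposition~\ref{Prop.P_n_est} states, and it cannot be reached by two integrations by parts. Two IBPs (even granting boundary cancellation) give the sup-norm piece $\lesssim (mn)^{-2}$, but the H\"older seminorms $[\th^{2\mu}\operatorname{Re}P_n]_0,[\th^{2\mu-1}\operatorname{Im}P_n]_1$ in the $Y$-norm are controlled, via Lemma~\ref{Lem.seminorm}, by a weighted bound on $\pa_\th P_n$, and that derivative is only $O((mn)^{-1})$ (differentiating the phase in $\th$ brings down a factor $mn$). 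Summing $\sum_n(mn)^{-1}$ diverges, so \emph{some} interpolation between the $C^0$ bound $(mn)^{-2}$ and the $C^1$ bound $(mn)^{-1}$ is indispensable; the paper does this with the $\min\{1/(mn),\ |\th_1-\th_2|\}$ device in the proof of Proposition~\ref{Prop.J} (Eq.~\eqref{Eq.P_nG^1}). Your sketch contains no such step. Finally, for the Lipschitz estimate a second integration by parts is not even available: $(r,\g)\in X$ only controls $r'$ and $\g'$, so $\pa^2_{\wt\th}(\g'_{\mathrm{tot}}/\pa_{\wt\th}\log z)$ involves $r''$ and is not in your function space; you would have to replace the second IBP by the mollification-at-scale-$1/n$ decomposition of a $\mathcal G^1$ function (the paper's Lemma~\ref{Lem.decompose_G^1}) to trade the missing derivative for a factor $n^{-\al}$. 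You gesture at "an additional H\"older-difference estimate" in the Lipschitz part, which is the right instinct, but it is needed already for \eqref{Eq.I00} to close the seminorm part of the $Y$-norm, not only for the Lipschitz bound.
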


Now we prove Theorem \ref{Thm.main} by using Proposition \ref{Prop.N} and Proposition \ref{Prop.I}.

\begin{proof}[Proof of Theorem \ref{Thm.main}]
	Let $$m_*:=2C^*C_*+\frac2{\min\{\varepsilon_0, \varepsilon_1\}}+\sqrt{3C_*/\delta_0}+m_1.$$ For $m>m_*$, we let $\varepsilon_m:=1/m\in(0,1)$, then $\varepsilon_m<\min\{\varepsilon_0, \varepsilon_1\}$. For $(r,\g)\in \overline{B_X(\varepsilon_m)}$, by \eqref{Eq.N^-1_bound0}, \eqref{Eq.I00} and \eqref{Eq.I_Lip}, we have
	\begin{align*}
&\|\mathcal I_m[r,\g]\|_Y\leq C_*\left(\frac1{m^2}+\frac1m\|(r,\g)\|_X\right)\leq C_*\frac2{m^2}\leq\frac 2{3}\delta_0<\delta_0,\\
		&\left\|\mathcal F_m[r,\g]\right\|_X=\|\mathcal N^{-1}\mathcal I_m[r,\g]\|_X\leq C^*\|\mathcal I_m[r,\g]\|_Y\leq C^*C_*\frac2{m^2}<\frac{m_*}{m^2}<\frac1m=\varepsilon_m.
	\end{align*}
	For $(r_1,\g_1), (r_2, \g_2)\in \overline{B_X(\varepsilon_m)}$, by \eqref{Eq.N^-1_lip} and \eqref{Eq.I_Lip}, we have
	\begin{align}
		\left\|\mathcal F_m[r_1,\g_1]-\mathcal F_m[r_2,\g_2]\right\|_X&\leq C^*\left\|\mathcal I_m[r_1,\g_1]-\mathcal I_m[r_1, \g_2]\right\|_Y\leq \frac{C^*C_*}{m}\|(r_1, \g_1)-(r_2, \g_2)\|_X\nonumber\\
		&<\frac12\|(r_1, \g_1)-(r_2, \g_2)\|_X.\label{Eq.contraction}
	\end{align}
	As a consequence, for each $m>m_*$, the nonlinear map
	\[\mathcal F_m: \overline{B_X(\varepsilon_m)}\to \overline{B_X(\varepsilon_m)}\]
	is a contraction. Then Banach's fixed point theorem implies that \eqref{Eq.fixed-point} has a unique solution $(r_m,\g_m)\in \overline{B_X(\varepsilon_m)}$, hence \eqref{Eq.perturbation} has a unique solution $(r_m,\g_m)\in \overline{B_X(\varepsilon_m)}$.
	
	Finally, we prove \eqref{Eq.solution_bound}. By \eqref{Eq.fixed-point} and \eqref{Eq.contraction}, we have
	\begin{align*}
		\|(r_m,\g_m)\|_X&=\left\|\mathcal F_m[r_m,\g_m]\right\|_X\leq \left\|\mathcal F_m[r_m,\g_m]-\mathcal F_m[0,0]\right\|_X+\|\mathcal F_m[0,0]\|_X\\
		&<\frac12\|(r_m,\g_m)\|_X+\|\mathcal F_m[0,0]\|_X.
	\end{align*}
	Then $\|(r_m,\g_m)\|_X\leq 2\|\mathcal F_m[0,0]\|_X$. Using \eqref{Eq.N^-1_bound0} and \eqref{Eq.I00} gives that
	\[\|(r_m,\g_m)\|_X\leq 2C^*\frac{C_*}{m^2},\quad \forall\ m>m_*.\]
	This completes the proof.
\end{proof}

\subsection{Sketch of the proof: Proposition \ref{Prop.N}}\label{Subsec.sketch_N}
The proof of Proposition \ref{Prop.N} can be found in Section \ref{Sec.N}. Here we briefly sketch the proof.

The first step is to show that $\mathcal N$ is a $C^\infty$ map from a small ball near $(0,0)\in X$ to $Y$. Recalling the definition of $\mathcal N$ in \eqref{Eq.N} and footnote \ref{Footnote.1}, we establish some Banach-algebraic properties of our functional spaces $\mathcal G^k$ and $\mathcal G^k\oplus \R$ (see Section \ref{Sec.spaces}), and then we use abstract properties of Banach algebras to conclude the $C^\infty$ regularity of the nonlinear map $\mathcal N$ (see Subsection \ref{Subsec.nonlinear_smooth}).

To show the invertibility of $\mathcal N$ in a small ball near $(0,0)\in X$, it is natural to consider the linearization of $\mathcal N$ near $(0,0)$. Our main goal is to prove the invertibility of the linearized operator $\mathcal L: X\to Y$. A direct computation gives that (see Lemma \ref{Lem.L-expression})
\begin{equation*}
	\mathcal L[r,\g](\th)=\begin{pmatrix}
		\mu\th^{-\mu}r(\th)+\th^{1-\mu}r'(\th)+\frac\mu{2\pi}\th^{-1}\g(\th)+\frac\mu{2\pi(2\mu-1)}\g'(\th)\\
		-2\th^{1-\mu}r(\th)+\frac{\th\g'(\th)}{2\pi(2\mu-1)}
	\end{pmatrix}.
\end{equation*}
Using some ODE techniques, we can compute the inversion of $\mathcal L$ explicitly (see Lemma \ref{Lem.inversion_L}). Then one can check that $\mathcal L: X\to Y$ is bijective. 

The invertibility of $\mathcal N$ is based on an application of Banach's fixed point theorem. Given $y\in Y$, we would like to solve the equation $\mathcal Nx=y$ for $x$. Let $\mathcal N=\mathcal L+\mathcal N_0$, then the equation $\mathcal Nx=y$ is equivalent to $x=\mathcal L^{-1}[y-\mathcal N_0x]$. Applying Banach's fixed point theorem on the map $\mathcal L^{-1}y-\mathcal L^{-1}\circ\mathcal N_0$ gives the unique solution of $\mathcal Nx=y$. The fixed-point formulation also implies the Lipschitz continuity of $\mathcal N^{-1}$. See Subsection \ref{Subsec.invert_nonlinear}. 

\subsection{Sketch of the proof: Proposition \ref{Prop.I}}\label{Subsec.sketch_I} 
The proof of Proposition \ref{Prop.I} is the most difficult part of this paper. Here we briefly sketch the proof and readers can find the full details in Section \ref{Sec.I}. 

Fundamentally, we are going to demonstrate the boundedness of a singular integral operator within specific weighted H\"older spaces. Proving such estimates typically poses significant challenges, and when feasible, the proof process tends to be intricate and of high complexity. In this paper, we aim to present our proof in a manner that enhances its comprehensibility.

The starting point of our idea is to rewrite the singular integral operator $\mathcal I_m$ in the form of a series:
\begin{equation}\label{Eq.I_series}
	\mathcal I_m[r,\g](\th)=-(2\mu-1)\ii\sum_{n=1}^\infty \cQ_{mn}[r,\g](\th),
\end{equation}
where $\cQ_n:=-\cQ_{n,1}+\cQ_{n,2}$, and
\begin{align}
	\cQ_{n,1}[r,\g](\th):&=\int_0^\th \wt\th^{-2\mu}\left(1+\frac{\wt\th^{2\mu}\g'(\wt\th)}{2\pi(2\mu-1)}\right)\left(\frac{\wt\th}{\th}\right)^{\mu n}\left(\frac{1+\wt\th^\mu r(\wt\th)}{1+\th^\mu r(\th)}\right)^{-n}\e^{\ii n(\th-\wt\th)}\,\mathrm d\wt\th,\label{Eq.Q_n1}\\
	\cQ_{n,2}[r,\g](\th):&=\int_\th^\infty \wt\th^{-2\mu}\left(1+\frac{\wt\th^{2\mu}\g'(\wt\th)}{2\pi(2\mu-1)}\right)\left(\frac{\wt\th}{\th}\right)^{-\mu n}\left(\frac{1+\wt\th^\mu r(\wt\th)}{1+\th^\mu r(\th)}\right)^n\e^{\ii n(\wt\th-\th)}\,\mathrm d\wt\th.\label{Eq.Q_n2}
\end{align}
See Subsection \ref{Subsec.reformulation}.  The significance of the series \eqref{Eq.I_series} resides in its ability to transform the estimation of weighted H\"older norms into that of weighted $W^{1,\infty}$ norms, making it more tractable. Additionally, the explicit forms given by equations \eqref{Eq.Q_n1} and \eqref{Eq.Q_n2} facilitate direct differentiation with respect to $\theta$.

Given $(r,\g)\in X$, to estimate the weighted H\"older norm of $\mathcal I_m[r,\g]$, it suffices to prove some corresponding weighted $C^0$ and $C^1$ estimates on each $\mathcal Q_n[r,\g]$. To be more precise, let $$\mathcal P_n[r,\g](\th):=(-\mu+\ii\th)\th^{2\mu-1}\mathcal Q_n[r,\g](\th),$$
then we are left to prove the following
\begin{proposition}\label{Prop.P_n_est}
	There exist $\varepsilon_1\in(0,1)$, $N\in\N_+$ and a constant $C>0$ such that for each $n>N$, we can decompose $\cP_n$ as
	\begin{equation}\label{Eq.P_n_decompose0}
		\cP_n=\cP_n^{(1)}+\cP_n^{(2)},
	\end{equation}
	with the estimates
	\begin{align}
		\left|\cP_n^{(1)}[r,\g](\th)\right|&\leq \frac{C}{n^{1+\alpha}}\frac{\th^\alpha}{\langle\th\rangle^{2\alpha}}\left(\frac1n+\|(r,\g)\|_X\right),\label{Eq.P_n^1est}\\
		\left|\cP_n^{(1)}[r_1,\g_1](\th)-\cP_n^{(1)}[r_2,\g_2](\th)\right|&\leq \frac{C}{n^{1+\alpha}}\frac{\th^\alpha}{\langle\th\rangle^{2\alpha}}\left\|(r_1,\g_1)-(r_2,\g_2)\right\|_X,\label{Eq.P_n^1est_diff}\\
		\left\|\cP_n^{(2)}[r,\g]\right\|_{1}&\leq \frac{C}{n^2},\label{Eq.P_n^2est}\\
		\left\|\cP_n^{(2)}[r_1,\g_1]-\cP_n^{(2)}[r_2,\g_2]\right\|_{1}&\leq \frac{C}{n^2}\left\|(r_1,\g_1)-(r_2,\g_2)\right\|_X,\label{Eq.P_n^2est_diff}\\
		\left|\left(\cP_n[r,\g]\right)'(\th)\right|&\leq \frac C{n^\alpha}\th^{\alpha-1}\langle\th\rangle^{1-2\al}\left(\frac1n+\|(r,\g)\|_X\right),\label{Eq.C^1est}\\
		\left|\left(\cP_n[r_1,\g_1]\right)'(\th)-\left(\cP_n[r_2,\g_2]\right)'(\th)\right|&\leq \frac C{n^\alpha}\th^{\alpha-1}\langle\th\rangle^{1-2\al}\left\|(r_1,\g_1)-(r_2,\g_2)\right\|_X,\label{Eq.C^1est_diff}
	\end{align}
	for all $(r,\g), (r_1,\g_1), (r_2,\g_2)\in B_X(\varepsilon_1)$ and all $\th\in(0,+\infty)$.
\end{proposition}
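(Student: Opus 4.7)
The starting point is to perform the common change of variables $\tilde\th = s\th$ in both $\cQ_{n,1}$ and $\cQ_{n,2}$, so that both halves acquire the unified phase-amplitude structure $s^{\pm \mu n}\mathrm e^{\mp\ii n\th(s-1)}$ with $s\in(0,1)$ and $s\in(1,\infty)$ respectively. Smallness in $n$ will be extracted from two coupled mechanisms: the concentration of $s^{\pm\mu n}$ on a window of width $O(1/n)$ around $s=1$, and the oscillation of $\mathrm e^{\mp\ii n\th(s-1)}$ when $\th$ is not too small. Both are exploited simultaneously by integrating by parts against the combined exponent $\mathrm e^{n\Phi_\pm(s)}$, where $\Phi_\pm'(s) = \pm(\mu/s-\ii\th)$ is nonvanishing and of size $\gtrsim \langle\th\rangle$ near $s=1$; each such integration by parts (IBP) yields a gain of order $(n\langle\th\rangle)^{-1}$ in amplitude. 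The crucial algebraic point is that one IBP produces a boundary term at $s=1$ that is \emph{identical} in $\cQ_{n,1}$ and in $\cQ_{n,2}$, so it cancels exactly in the combination $\cQ_n = -\cQ_{n,1}+\cQ_{n,2}$. The boundary at $s=0$ vanishes for $n$ large thanks to the factor $s^{\mu n-2\mu}$, and the boundary at $s=\infty$ vanishes by the decay properties of $(r,\g)\in X$. Hence after a single IBP, $\cQ_n$ reduces to two bulk integrals, each carrying a prefactor $1/n$.

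To construct the decomposition $\cP_n = \cP_n^{(1)}+\cP_n^{(2)}$, I will split the resulting integrands according to the regularity they admit. Writing $r(s\th) = r(\th) + [r(s\th)-r(\th)]$ and $\g'(s\th) = \g'(\th) + [\g'(s\th)-\g'(\th)]$, and similarly expanding the factor $(1+(s\th)^\mu r(s\th))^{\pm n}$ around its value at $s=1$ via the logarithm, the \emph{pointwise} pieces produce an amplitude that is smooth enough to admit a \emph{second} IBP. This second IBP supplies the extra $1/n$ required for the bound $\|\cP_n^{(2)}\|_1 \lesssim n^{-2}$ in \eqref{Eq.P_n^2est}. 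The H\"older \emph{remainders} cannot be IBP'd a second time; restricted to the effective window of width $\asymp 1/n$ around $s=1$, they are controlled by the seminorms $[\th^\mu r]_1$ and $[\th^{2\mu}\g']_1$ times the H\"older modulus $(1/n)^\al$, which, combined with the $1/n$ from the first IBP, yields the factor $n^{-1-\al}$ of \eqref{Eq.P_n^1est}. The spatial prefactor $\th^\al/\langle\th\rangle^{2\al}$ then emerges from bookkeeping the outer weight $\th^{2\mu-1}$ of $\cP_n$, the $\th^{1-2\mu}$ produced by the change of variables, and the built-in H\"older weights of the norm on $X$. The Lipschitz bounds \eqref{Eq.P_n^1est_diff} and \eqref{Eq.P_n^2est_diff} then follow from the same machinery applied to the linearization of the integrand in $(r,\g)$ via the fundamental theorem of calculus along a segment in $X$, since each $(r,\g)$-dependent factor is $C^\infty$ as a function of its inputs.

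The hardest part will be the $C^1$ estimates \eqref{Eq.C^1est} and \eqref{Eq.C^1est_diff}. Differentiating $\cP_n[r,\g](\th)$ in $\th$ produces several qualitatively different contributions: from the outer prefactor $(-\mu+\ii\th)\th^{2\mu-1}$ (harmless); from the integration endpoint $\tilde\th = \th$ (again cancelling at leading order between the two halves via the same $s=1$ mechanism); from the ratio $(\tilde\th/\th)^{\pm\mu n}$ (producing a bad factor $\mu n/\th$); from the oscillating phase $\mathrm e^{\mp\ii n(\tilde\th-\th)}$ (producing $\mp\ii n$); and from the normalizer $(1+\th^\mu r(\th))^{\mp n}$ pulled outside the $n$-th power. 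The dangerous factors of $n$ are absorbed by an IBP in the surviving integrals, but only after each resulting integrand is recast in a form that exhibits the same boundary cancellation at $s=1$ as was used originally; the exponent $n^{-\al}$ in \eqref{Eq.C^1est} is exactly the $n^{-1-\al}$ decay of $\cP_n^{(1)}$ with one factor of $n$ returned to accommodate the derivative. I anticipate the combinatorial bookkeeping here --- identifying precisely which groups of terms still admit the boundary cancellation and which must be controlled instead by the H\"older seminorms --- to be the most delicate step of the proof, and it is where the largeness of $n$ (hence of $m$) is essential for closing the estimates.
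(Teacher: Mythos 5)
Your overall mechanism — parametrize both $\cQ_{n,1}$ and $\cQ_{n,2}$ with the common variable $s=\wt\theta/\theta$, exploit simultaneous concentration (from $s^{\pm\mu n}$) and oscillation (from $\e^{\mp\ii n\theta(s-1)}$) by integrating by parts against the combined phase, and observe that the boundary contributions at $s=1$ from the two halves of $\cQ_n=-\cQ_{n,1}+\cQ_{n,2}$ balance against each other — is the right mechanism and matches the paper in spirit. However, your proposed decomposition $\cP_n=\cP_n^{(1)}+\cP_n^{(2)}$ is genuinely different from the paper's, and this is where your plan develops a real gap.

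\textbf{The paper's decomposition.} The paper does not split the amplitude into a pointwise piece and a H\"older remainder and then IBP twice versus once. Instead it writes $\cP_{n,1}$ and $\cP_{n,2}$ in the form $\int e^{nh_n(\theta)-nh_n(\wt\theta)}$ and $\int e^{n\phi_n(\wt\theta)-n\phi_n(\theta)}$ with phases $h_n,\phi_n$ chosen to absorb \emph{all} $\wt\theta$-dependence except the amplitude $\frac{-\mu+\ii\wt\theta}{\wt\theta}\Gamma[\gamma](\wt\theta)$, then differentiates in $\theta$ to get the exact first-order ODEs \eqref{Eq.P_n,1'} and \eqref{Eq.P_n,2'}. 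Inverting these ODEs gives
\begin{equation*}
  \cP_n^{(1)}=-\frac{(\cP_{n,1})'}{n(h_n[r])'}-\frac{(\cP_{n,2})'}{n(\phi_n[r])'},\qquad
  \cP_n^{(2)}=\frac{(-\mu+\ii\theta)\Gamma[\gamma]}{n\theta}\left(\frac{1}{(h_n[r])'}-\frac{1}{(\phi_n[r])'}\right).
\end{equation*}
Two things are bought by this. First, $\cP_n^{(2)}$ is a \emph{closed-form, purely local} expression (no integral at all), and the extra $1/n$ comes from the elementary fact that $h_n'-\phi_n'=O(1/n)$; its full $\cG^1$ bound \eqref{Eq.P_n^2est} then drops out of Banach-algebra arguments (Lemma \ref{Lem.wtP_n^2smoooth}) with no oscillatory-integral estimate whatsoever. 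Second, $\cP_n^{(1)}$ is expressed directly in terms of the derivatives $(\cP_{n,1})'$, $(\cP_{n,2})'$, so its $C^0$ bound \eqref{Eq.P_n^1est} is a one-line consequence of the $C^1$ bounds \eqref{Eq.C^1est}, which you would need to prove anyway. The paper thus proves only \emph{one} substantive oscillatory-integral estimate (for $(\cP_{n,j})'$, which naturally exhibits the H\"older difference $\cD_n(\wt\theta)-\cD_n(\theta)$ under the integral, see \eqref{Eq.P_n,1'_express}) and derives all six bounds from it plus the trivial local bound.

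\textbf{Where your plan strains.} Your $\cP_n^{(2)}$ is the twice-IBP'd integral of the pointwise part of the amplitude. This is still a bulk integral depending on $r$ through the phase, and the bound you need for it is the \emph{full} $\cG^1$ norm $\|\cP_n^{(2)}\|_1\lesssim n^{-2}$, i.e.\ including the weighted H\"older seminorm. A twice-IBP'd oscillatory integral does not admit a Banach-algebra shortcut; you would effectively need to redo the entire H\"older machinery for it that you were trying to avoid applying to $\cP_n^{(1)}$. Nothing in your sketch addresses how the H\"older seminorm of this term is controlled, and the naive interpolation (use the $C^0$ bound $n^{-2}$ when $|\theta_1-\theta_2|\geq 1/n$ and the $C^1$ bound $n^{-\alpha}$ otherwise) only gives $[\cP_n^{(2)}]_1\lesssim n^{-2+\alpha}$, not $n^{-2}$. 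Also, the ``exact cancellation'' of the $s=1$ boundary term that you invoke is correct only when you IBP against the phase that includes $\log R[r]$ but \emph{not} the $n$-dependent prefactor logarithms; the paper deliberately uses the $n$-dependent $h_n,\phi_n$, so the boundary contributions do \emph{not} cancel, and the $O(1/n^2)$ residual is precisely what becomes $\cP_n^{(2)}$. If you keep the phase $n$-independent to preserve the cancellation, you leave $n$-independent factors of $(\mu-\ii\wt\theta)$ and $\wt\theta^{1-2\mu}$ in the amplitude; these are harmless, but you should then verify carefully that your ``pointwise piece'' at $s=1$ really is the only non-canceling contribution. In short, the substantial missing ingredient is a way to bound the H\"older seminorm of your $\cP_n^{(2)}$, and the observation that \eqref{Eq.P_n^1est} should be \emph{deduced} from \eqref{Eq.C^1est} rather than proved independently; the paper's ODE-inversion viewpoint supplies both for free.
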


Proposition \ref{Prop.I} is a direct consequence of Proposition \ref{Prop.P_n_est}, along with a standard summation argument (see Subsection \ref{Subsec.Proof-contraction}). 

A crucial observation is that $\mathcal{P}_n[r, \gamma]$ can be decomposed into two components. The first part, denoted by $\mathcal{P}_n^{(1)}$, corresponds to the derivative $\left( \mathcal{P}_n[r, \gamma] \right)'$, while the second part, $\mathcal{P}_n^{(2)}$, is directly related to $(r,\g)$ in a local manner (see \eqref{Eq.P_n_decompose}). Consequently, estimating $\mathcal{P}_n^{(2)}$ is straightforward (see Subsection \ref{Subsec.P_n^2est}), and the $C^0$ bound for $\mathcal{P}_n^{(1)}$ can be inferred from that of $\left( \mathcal{P}_n[r, \gamma] \right)'$ (see Proposition \ref{Prop.P_n,1,2}). Therefore, in Proposition \ref{Prop.P_n_est}, the most demanding tasks are the proofs of the $C^1$ estimates in  \eqref{Eq.C^1est} and \eqref{Eq.C^1est_diff}, see Subsection \ref{Subsec.P_n-W^1,infty} for full details.

\section{Properties of the weighted H\"older spaces}\label{Sec.spaces}

Building upon the preliminary discussion in Subsection \ref{Subsec.sketch_N}, our initial task is to establish certain Banach algebraic properties inherent to our weighted H\"older spaces. These properties are important for two main purposes. Firstly, they are helpful in demonstrating the regularity of the nonlinear mapping $\mathcal{N}$. Secondly, they play a crucial role in the proof of Proposition \ref{Prop.P_n_est}. This is especially important due to the high nonlinearity exhibited in \eqref{Eq.Q_n,1} and \eqref{Eq.Q_n,2}.

Recall the definition of our weighted H\"older spaces in \eqref{Eq.G^k_def}, \eqref{Eq.G^k_norm} and \eqref{Eq.G^0_0}.

\begin{lemma}\label{Lem.seminorm}
	For each $k\in[0,+\infty)$ and $\alpha\in(0,1)$, there exists a constant $C=C(k,\alpha)>0$ such that
	\begin{equation}\label{Eq.seminorm}
		[f]_k\leq C\left\|\langle\th\rangle^{k+2\al-1}\th^{1-\al}f'\right\|_{L^\infty},\quad\forall\ f\in C^1(\R_+;\C).
	\end{equation}
\end{lemma}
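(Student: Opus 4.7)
The plan is to invoke the fundamental theorem of calculus and then resolve the resulting integral by a short case analysis on the relative sizes of $\theta_1, \theta_2$ and $1$. Set
\[
M := \bigl\|\langle\theta\rangle^{k+2\alpha-1}\theta^{1-\alpha}f'\bigr\|_{L^\infty},
\]
so that $|f'(t)| \leq M\langle t\rangle^{1-k-2\alpha}t^{\alpha-1}$ for all $t>0$. For any $0<\theta_2<\theta_1<\theta_2+1$, write
\[
|f(\theta_1)-f(\theta_2)| \;\leq\; \int_{\theta_2}^{\theta_1} |f'(t)|\,dt \;\leq\; M\int_{\theta_2}^{\theta_1}\langle t\rangle^{1-k-2\alpha}t^{\alpha-1}\,dt,
\]
and the goal is to show this is bounded by $CM\langle\theta_1\rangle^{-(k+\alpha)}(\theta_1-\theta_2)^\alpha$.

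I would then split into two regimes. \emph{Regime A: $\theta_1\leq 2\theta_2$.} Here $t\in[\theta_2,\theta_1]$ satisfies $\theta_1/2\leq t\leq \theta_1$, so $\langle t\rangle$ is comparable to $\langle\theta_1\rangle$ (check the two subcases $\theta_1\geq 1$ and $\theta_1<1$; in both, $\langle\theta_2\rangle\sim \langle\theta_1\rangle$). Pulling $\langle\theta_1\rangle^{1-k-2\alpha}$ out of the integral gives
\[
\int_{\theta_2}^{\theta_1}\langle t\rangle^{1-k-2\alpha}t^{\alpha-1}\,dt \;\leq\; C\langle\theta_1\rangle^{1-k-2\alpha}\cdot\frac{\theta_1^\alpha-\theta_2^\alpha}{\alpha} \;\leq\; \frac{C}{\alpha}\langle\theta_1\rangle^{1-k-2\alpha}(\theta_1-\theta_2)^\alpha,
\]
using the subadditivity $\theta_1^\alpha-\theta_2^\alpha\leq(\theta_1-\theta_2)^\alpha$ for $\alpha\in(0,1)$. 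However, the resulting factor $\langle\theta_1\rangle^{1-\alpha}$ after multiplying by $\langle\theta_1\rangle^{k+\alpha}$ is unbounded; the remedy is to refine the integrand bound to $t^{-k-\alpha}$ (since $\langle t\rangle\sim t\sim\theta_1$ when $\theta_1\geq 1$), giving $\int \leq C\theta_1^{-k-\alpha}(\theta_1-\theta_2)\leq C\theta_1^{-k-\alpha}(\theta_1-\theta_2)^\alpha$ using $\theta_1-\theta_2<1$. For $\theta_1<1$, $\langle\theta_1\rangle\sim 1$ and the naive bound above already closes the estimate.

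\emph{Regime B: $\theta_1>2\theta_2$.} Then $\theta_1-\theta_2>\theta_1/2$, and the constraint $\theta_1<\theta_2+1$ forces $\theta_1<2$, hence $\langle t\rangle\sim\langle\theta_1\rangle\sim 1$ throughout. The integral bound $\int_{\theta_2}^{\theta_1}t^{\alpha-1}\,dt\leq \theta_1^\alpha/\alpha$ combined with $(\theta_1-\theta_2)^\alpha\geq (\theta_1/2)^\alpha$ yields the estimate directly. Combining the two regimes and taking the supremum in $\theta_1,\theta_2$ closes the proof, with $C$ depending only on $k$ and $\alpha$.

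The only mild technical point is the split in Regime A into $\theta_1\geq 1$ and $\theta_1<1$ to avoid an illusory $\langle\theta_1\rangle^{1-\alpha}$ loss; all other steps are elementary manipulations involving the fundamental theorem of calculus and the inequality $\theta_1^\alpha-\theta_2^\alpha\leq(\theta_1-\theta_2)^\alpha$.
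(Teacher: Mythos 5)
Your proof is correct and follows essentially the same route as the paper: both apply the fundamental theorem of calculus, use $\langle t\rangle\sim\langle\theta_1\rangle$ on the integration interval, and close with an elementary case analysis comparing $\theta_1-\theta_2$ to $\theta_1$. The paper organizes the cases as $\theta_1\le 2$ versus $\theta_1>2$ (using the boundedness of $(t^\alpha-1)/(t-1)^\alpha$ and the mean value theorem), where you split on $\theta_1\lessgtr 2\theta_2$ and then on $\theta_1\lessgtr 1$, but the substance is identical.
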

\begin{proof}
	For simplicity, we assume without loss of generality that $\left\|\langle\th\rangle^{k+2\al-1}\th^{1-\al}f'\right\|_{L^\infty}=1$. Let $0<\th_2<\th_1<\th_2+1$. Then
	\begin{align*}
		\langle\th_1\rangle^{k+\al}\frac{|f(\th_1)-f(\th_2)|}{|\th_1-\th_2|^\al}&\leq \frac{\langle\th_1\rangle^{k+\al}}{|\th_1-\th_2|^\al}\int_{\th_2}^{\th_1}\left|f'(\wt\th)\right|\,\mathrm d\wt\th\leq \frac{\langle\th_1\rangle^{k+\al}}{|\th_1-\th_2|^\al}\int_{\th_2}^{\th_1}\langle\wt\th\rangle^{1-2\al-k}\wt\th^{\al-1}\,\mathrm d\wt\th\\
		&\lesssim \frac{\langle\th_1\rangle^{k+\al}\langle\th_1\rangle^{1-2\al-k}}{|\th_1-\th_2|^\al}\int_{\th_2}^{\th_1}\wt\th^{\al-1}\,\mathrm d\wt\th\lesssim\frac{\langle\th_1\rangle^{1-\al}}{|\th_1-\th_2|^\al}\left|\th_1^\al-\th_2^\al\right|.
	\end{align*}
	Let $\phi_0(t):=(t^\al-1)/(t-1)^\al$ for $t>1$. Then $\lim_{t\to1+}\phi_0(t)=0$ and $\lim_{t\to+\infty}\phi_0(t)=1$, hence $0\leq\phi_0\in L^\infty((1,+\infty))$. 
	If $\th_1\in(0,2]$, then 
	\[\frac{\langle\th_1\rangle^{1-\al}}{|\th_1-\th_2|^\al}\left|\th_1^\al-\th_2^\al\right|\leq \langle2\rangle^{1-\al}\phi_0(\th_1/\th_2)\lesssim 1.\]
	If $\th_1\in(2,+\infty)$, then there exists $\th_3\in(\th_2,\th_1)$ such that 
	\[\frac{\langle\th_1\rangle^{1-\al}}{|\th_1-\th_2|^\al}\left|\th_1^\al-\th_2^\al\right|=\frac{\langle\th_1\rangle^{1-\al}}{|\th_1-\th_2|^\al}\alpha\th_3^{\al-1}|\th_1-\th_2|;\]
	since $\th_1>2$, we have $\th_2>1$ and $\th_1<\th_2+1<2\th_2$, hence $\th_1\sim\th_2$ and $\th_3\sim\th_1$, thus (also using $|\th_1-\th_2|<1$ and $\th_1>2$)
	\[\frac{\langle\th_1\rangle^{1-\al}}{|\th_1-\th_2|^\al}\left|\th_1^\al-\th_2^\al\right|\lesssim \frac{\langle\th_1\rangle^{1-\al}}{\th_1^{1-\al}}|\th_1-\th_2|^{1-\al}\lesssim \frac{\langle\th_1\rangle^{1-\al}}{\th_1^{1-\al}}\lesssim 1.\]
	Then \eqref{Eq.seminorm} follows from the definition of the semi-norm $[\cdot]_k$ in \eqref{Eq.G^k_norm}.
\end{proof}

\begin{lemma}\label{Lem.cG^0_0}
	Let $f\in C^1(\R_+;\R)\cap L^\infty(\R_+)$. If the limit $L:=\lim_{\th\to0+}\th f'(\th)\in\R$ exists, then $L=0$.
\end{lemma}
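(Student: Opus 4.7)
The statement is a small real-analysis contradiction argument, so the plan is to suppose $L \neq 0$ and derive a blow-up of $f$ near the origin that is incompatible with $f \in L^\infty(\R_+)$. By replacing $f$ with $-f$ if necessary, I may assume $L > 0$.

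First, using the definition of the limit $\lim_{\theta \to 0+} \theta f'(\theta) = L$, I would pick $\theta_0 > 0$ small enough that $\theta f'(\theta) \geq L/2$ for every $\theta \in (0, \theta_0)$. Dividing by $\theta$ gives the pointwise lower bound
\begin{equation*}
f'(\theta) \geq \frac{L}{2\theta}, \qquad \theta \in (0, \theta_0).
\end{equation*}
Since $f \in C^1(\R_+;\R)$, the fundamental theorem of calculus applies on any compact subinterval of $(0, \theta_0]$, and integrating the inequality above from $\theta$ to $\theta_0$ yields
\begin{equation*}
f(\theta_0) - f(\theta) \geq \frac{L}{2}\bigl(\ln \theta_0 - \ln \theta\bigr), \qquad \theta \in (0, \theta_0).
\end{equation*}
Letting $\theta \to 0+$, the right-hand side tends to $+\infty$, so $f(\theta) \to -\infty$. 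This contradicts the assumption $f \in L^\infty(\R_+)$, completing the argument.

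The only step where one must be a little careful is ensuring the integration is justified on the open interval $(0, \theta_0)$; but $f \in C^1(\R_+)$ means $f$ and $f'$ are continuous on every $[\theta, \theta_0] \subset \R_+$, so the inequality $f(\theta_0) - f(\theta) \geq \tfrac{L}{2}(\ln \theta_0 - \ln \theta)$ is genuinely valid for each fixed $\theta \in (0, \theta_0)$, and no uniform integrability down to $0$ is needed. There is no substantive obstacle here; the entire argument is essentially a logarithmic-divergence observation paired with the $L^\infty$ hypothesis.
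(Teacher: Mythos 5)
Your proof is correct and follows essentially the same contradiction argument as the paper: assume $L>0$ without loss of generality, use $f'(\th)\geq L/(2\th)$ near the origin, and integrate to force $f(\th)\to-\infty$, contradicting $f\in L^\infty(\R_+)$. No issues.
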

\begin{proof}
	Suppose that $L\neq 0$. Without loss of generality we assume $L>0$, then there exists $\th_0>0$ such that $\th f'(\th)>L/2$ for all $\th\in(0, 2\th_0)$. Hence, for $\th\in(0,\th_0)$, we have
	\begin{align*}
		f(\th)=f(\th_0)-\int_\th^{\th_0}f'(\wt\th)\,\mathrm d\wt\th\leq f(\th_0)-\int_\th^{\th_0}\frac L{2\wt\th}\,\mathrm d\wt\th=f(\th_0)-\frac L2\ln(\th_0/\th).
	\end{align*}
	Hence, $\lim_{\th\to0+} f(\th)=-\infty$, which contradicts with our assumption $f\in L^\infty(\R_+)$.
\end{proof}

A direct corollary of Lemma \ref{Lem.cG^0_0} is the following

\begin{corollary}\label{Cor.cG^0_0}
	If $(r,\g)\in X$, then $\th(\th^\mu r)'\in\cG^0_0$ and $\th(\th^{2\mu-1}\g)'\in\cG^1_0$.
\end{corollary}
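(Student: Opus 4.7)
The plan is to reduce both membership claims to a direct application of Lemma \ref{Lem.cG^0_0} after expanding the derivatives by the Leibniz rule. First I would write
\[\th(\th^\mu r)'(\th)=\mu\,\th^\mu r(\th)+\th^{\mu+1}r'(\th),\qquad \th(\th^{2\mu-1}\g)'(\th)=(2\mu-1)\,\th^{2\mu-1}\g(\th)+\th^{2\mu}\g'(\th),\]
exhibiting each of the two functions as a sum of two terms whose membership in a weighted H\"older space is given directly by the definition of $X$. Using the trivial embedding $\cG^1\hookrightarrow\cG^0$, which is immediate from the monotonicity of the weight $\langle\th_1\rangle^{k+\al}$ in $k$, the first identity places $\th(\th^\mu r)'$ in $\cG^0$, and the second identity places $\th(\th^{2\mu-1}\g)'$ in $\cG^1$. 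This settles the H\"older-space part of the conclusion.

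Next, I would verify the vanishing at $0+$. Set $h:=\th^\mu r$. The observation recorded just after \eqref{Eq.G^k_norm} says that every function in $\mathscr G^k$ possesses a finite limit at $0+$; applying this to both summands in the Leibniz expansion above shows that the limit $L:=\lim_{\th\to 0+}\th h'(\th)$ exists in $\R$. To invoke Lemma \ref{Lem.cG^0_0} I must also know that $h\in C^1(\R_+;\R)\cap L^\infty(\R_+)$: real-valuedness is built into the definition of $\cG^k$, boundedness comes from $h\in\cG^0$, and $C^1$ regularity comes from the continuity of $r'$ on $\R_+$, which is guaranteed by $\th^{\mu+1}r'\in\cG^0\subset C(\R_+;\R)$. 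The lemma then forces $L=0$, giving $\th(\th^\mu r)'\in\cG^0_0$. An identical argument with $h:=\th^{2\mu-1}\g$, using $\th^{2\mu}\g'\in\cG^1\subset C(\R_+;\R)$ for the $C^1$ regularity, yields $\th(\th^{2\mu-1}\g)'\in\cG^1_0$.

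The argument is essentially bookkeeping, and I expect no genuine obstacle; the only place one could slip is in forgetting to first establish the existence of $\lim_{\th\to 0+}\th h'(\th)$ (a hypothesis, not a conclusion, of Lemma \ref{Lem.cG^0_0}) before appealing to the lemma to identify this limit with zero.
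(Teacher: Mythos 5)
Your proof is correct and follows essentially the same route as the paper: expand $\th(\th^\mu r)'=\mu\th^\mu r+\th^{\mu+1}r'$ (and similarly for $\g$) to place the function in $\cG^0$ (resp.\ $\cG^1$) from the definition of $X$, note that the limit at $0+$ then exists, and invoke Lemma \ref{Lem.cG^0_0} with the bounded function $\th^\mu r$ (resp.\ $\th^{2\mu-1}\g$) to conclude the limit is zero. Your explicit verification of the hypotheses of Lemma \ref{Lem.cG^0_0} is a point the paper leaves implicit, but the argument is the same.
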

\begin{proof}
	Since $(r,\g)\in X$, we have $\th^{\mu}r\in\cG^1$ and $\th^{\mu+1}r'\in\cG^0$, hence,
	\[\th(\th^\mu r)'=\th^{\mu+1}r'+\mu\th^{\mu}r\in\cG^0.\]
	Thus, $\th^{\mu}r\in L^\infty(\R_+)$ and the limit $\lim_{\th\to0+}\th(\th^\mu r)'(\th)$ exists. Then by Lemma \ref{Lem.cG^0_0}, we know that $\lim_{\th\to0+}\th(\th^\mu r)'(\th)=0$, hence $\th(\th^\mu r)'\in\cG^0_0$ by the definition of $\cG^0_0$ in \eqref{Eq.G^0_0}. Similarly, one shows that $\th(\th^{2\mu-1}\g)'\in\cG^1_0$.
\end{proof}

\begin{lemma}[Algebra property]\label{Lem.algebra}
	Let $k_2\geq k_1\geq 0$ and $\alpha\in (0,1)$. If $f_1\in \mathscr G^{k_1}$ and $f_2\in \mathscr G^{k_2}$, then $f_1f_2\in\mathscr G^{k_1}$ and
	\[\|f_1f_2\|_{k_1}\leq \|f_1\|_{k_1}\|f_2\|_{k_2}.\]
\end{lemma}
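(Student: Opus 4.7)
The plan is to bound $\|f_1 f_2\|_{k_1}$ piece by piece, directly from the definition $\|f\|_k = \|\langle\th\rangle^\al f\|_{L^\infty} + [f]_k$ and the standard Leibniz-type identity for a product of differences. The guiding observation is that the $L^\infty$ part of the norm carries the fixed weight $\langle\th\rangle^\al$ (independent of $k$), while the seminorm carries the variable weight $\langle\th\rangle^{k+\al}$; so the strategy is to distribute the weights in each term so that the heavier factor $\langle\th_1\rangle^{k_2+\al}$ always lands on $f_2$, which is the factor that can absorb it.

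For the sup-norm piece, I would simply estimate
\[
\|\langle\th\rangle^\al f_1 f_2\|_{L^\infty} \le \|\langle\th\rangle^\al f_1\|_{L^\infty}\|f_2\|_{L^\infty} \le \|\langle\th\rangle^\al f_1\|_{L^\infty}\|\langle\th\rangle^\al f_2\|_{L^\infty},
\]
using only the pointwise bound $\langle\th\rangle^\al \ge 1$. For the seminorm, I would fix $0 < \th_2 < \th_1 < \th_2 + 1$ and apply the telescoping identity
\[
f_1(\th_1)f_2(\th_1) - f_1(\th_2)f_2(\th_2) = \bigl(f_1(\th_1)-f_1(\th_2)\bigr)f_2(\th_1) + f_1(\th_2)\bigl(f_2(\th_1)-f_2(\th_2)\bigr),
\]
multiply by $\langle\th_1\rangle^{k_1+\al}/|\th_1-\th_2|^\al$, and estimate the two resulting terms separately. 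The first is controlled directly by $[f_1]_{k_1}\|f_2\|_{L^\infty}$. For the second, the hypothesis $k_2 \ge k_1$ yields $\langle\th_1\rangle^{k_1+\al} \le \langle\th_1\rangle^{k_2+\al}$, which lets me absorb the weight into $[f_2]_{k_2}$ while paying only $|f_1(\th_2)| \le \|\langle\th\rangle^\al f_1\|_{L^\infty}$ on the other factor.

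Summing the two contributions then gives
\[
\|f_1 f_2\|_{k_1} \le \|\langle\th\rangle^\al f_1\|_{L^\infty}\|\langle\th\rangle^\al f_2\|_{L^\infty} + [f_1]_{k_1}\|\langle\th\rangle^\al f_2\|_{L^\infty} + \|\langle\th\rangle^\al f_1\|_{L^\infty}[f_2]_{k_2},
\]
which factorises into $\bigl(\|\langle\th\rangle^\al f_1\|_{L^\infty}+[f_1]_{k_1}\bigr)\bigl(\|\langle\th\rangle^\al f_2\|_{L^\infty}+[f_2]_{k_2}\bigr) = \|f_1\|_{k_1}\|f_2\|_{k_2}$. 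Continuity of $f_1 f_2$ on $\R_+$ is automatic from that of the factors, and the finite norm bound then places $f_1 f_2$ in $\mathscr G^{k_1}$. I do not anticipate any real obstacle here; the only essential input beyond the triangle inequality is the monotonicity assumption $k_2 \ge k_1$, which is precisely what allows the heavier seminorm weight to be routed onto $f_2$.
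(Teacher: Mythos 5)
Your proposal is correct and follows essentially the same route as the paper's proof: the same $L^\infty$ bound using $\langle\th\rangle^\al\ge1$, the same telescoping decomposition of the product difference, and the same use of $k_1\le k_2$ to route the heavier weight onto $[f_2]_{k_2}$, followed by the identical factorisation. Nothing is missing.
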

\begin{proof}
	It is obvious that $\left\|\langle\th\rangle^\al f_1f_2\right\|_{L^\infty}\leq \left\|\langle\th\rangle^\al f_1\right\|_{L^\infty}\left\|f_2\right\|_{L^\infty}\leq \left\|\langle\th\rangle^\al f_1\right\|_{L^\infty}\left\|\langle\th\rangle^\al f_2\right\|_{L^\infty}.$ Let $0<\th_2<\th_1<\th_2+1$, then
	\begin{align*}
		&\langle\th_1\rangle^{k_1+\al}\frac{\left|f_1(\th_1)f_2(\th_1)-f_1(\th_2)f_2(\th_2)\right|}{|\th_1-\th_2|^\al}\\
		\leq\ & \langle\th_1\rangle^{k_1+\al}\frac{\left|f_1(\th_1)-f_1(\th_2)\right|}{|\th_1-\th_2|^\al}|f_2(\th_1)|+|f_1(\th_2)|\langle\th_1\rangle^{k_1+\al}\frac{\left|f_2(\th_1)-f_2(\th_2)\right|}{|\th_1-\th_2|^\al}\\
		\leq\ & \langle\th_1\rangle^{k_1+\al}\frac{\left|f_1(\th_1)-f_1(\th_2)\right|}{|\th_1-\th_2|^\al}|f_2(\th_1)|+|f_1(\th_2)|\langle\th_1\rangle^{k_2+\al}\frac{\left|f_2(\th_1)-f_2(\th_2)\right|}{|\th_1-\th_2|^\al},
	\end{align*}
	because of $k_1\leq k_2$. Hence, $[f_1f_2]_{k_1}\leq [f_1]_{k_1}\|f_2\|_{L^\infty}+\|f_1\|_{L^\infty}[f_2]_{k_2}$. Therefore,
	\begin{align*}
		\|f_1f_2\|_{k_1}&=\left\|\langle\th\rangle^\al f_1f_2\right\|_{L^\infty}+[f_1f_2]_{k_1}\\
		&\leq \left\|\langle\th\rangle^\al f_1\right\|_{L^\infty}\left\|\langle\th\rangle^\al f_2\right\|_{L^\infty}+[f_1]_{k_1}\left\|\langle\th\rangle^\al f_2\right\|_{L^\infty}+\left\|\langle\th\rangle^\al f_1\right\|_{L^\infty}[f_2]_{k_2}\\
		&\leq \left(\left\|\langle\th\rangle^\al f_1\right\|_{L^\infty}+[f_1]_{k_1}\right)\left(\left\|\langle\th\rangle^\al f_2\right\|_{L^\infty}+[f_2]_{k_2}\right)=\|f_1\|_{k_1}\|f_2\|_{k_2}.
	\end{align*}
	This completes the proof.
\end{proof}

Given $k\in[0,+\infty)$ and $\alpha\in(0,1)$, noting that $\mathscr G^k\cap\C=\{0\}$, where $\C$ denotes the space consisting of all complex-valued constant functions on $\R_+$, we define the Banach space
\begin{equation}
	\mathscr G^k_*:=\mathscr G^k\oplus \C,
\end{equation}
with the norm $\|\cdot\|_{*,k}$ given by
\begin{equation*}
	\|f\|_{*,k}:=\|\wt f\|_k+|c|,\quad\text{where }f=\wt f+c\in\mathscr G^k_*\text{ with }\wt f\in \mathscr G^k, c\in\C.
\end{equation*}
Given $f_1, f_2\in \mathscr G^k_*$, we write $f_1=\wt{f_1}+c_1, f_2=\wt{f_2}+c_2$ with $\wt{f_1}, \wt{f_2}\in\mathscr G^k$ and $c_1, c_2\in\C$. Then by Lemma \ref{Lem.algebra}, we have
\begin{align*}
	\|f_1f_2\|_{*,k}&=\left\|\wt{f_1}\wt{f_2}+c_1\wt{f_2}+c_2\wt{f_1}\right\|_k+|c_1c_2|\\
	&\leq \left\|\wt{f_1}\right\|_k\left\|\wt{f_2}\right\|_k+|c_1|\left\|\wt{f_2}\right\|_k+|c_2|\left\|\wt{f_1}\right\|_k+|c_1||c_2|\\
	&=\left(\left\|\wt{f_1}\right\|_k+|c_1|\right)\left(\left\|\wt{f_2}\right\|_k+|c_2|\right)=\|f_1\|_{*,k}\|f_2\|_{*,k}.
\end{align*}
Hence, $\mathscr G^k_*$ is a unital Banach algebra. Similar to \eqref{Eq.G^k_def}, we define $$\cG^k_*:=\{f\in \mathscr G^k_*: f \text{ is real-valued}\}=\cG^k\oplus\R.$$ Then $\cG^k_*$ is also a unital Banach algebra.

\section{Invertibility of $\mathcal N$}\label{Sec.N}

In this section, we prove Proposition \ref{Prop.N}. As illustrated in Subsection \ref{Subsec.sketch_N}, the first step is to prove the $C^\infty$ regularity of $\cN$ (see Subsection \ref{Subsec.nonlinear_smooth}), using the Banach algebraic properties established in the previous section. The second step is to prove the invertibility of $\cL=\cN'[0]:X\to Y$, based on the exact solvability of the linear equation $\cL[r,\g]=(f,g)$, see Subsection \ref{Subsec.invert_linear}. Finally, the invertibility of $\cN$ follows from the invertibility of $\cL=\cN'[0]:X\to Y$ and an application of Banach's fixed point theorem, see Subsection \ref{Subsec.invert_nonlinear}.

\subsection{Regularity of nonlinear map $\mathcal N$}\label{Subsec.nonlinear_smooth}

In this subsection, we show the first item of Proposition \ref{Prop.N}.

Recall the definition of $\mathcal N$ in \eqref{Eq.N} and footnote \ref{Footnote.1}. Using \eqref{special_sol}, we rewrite $\mathcal N$ in the form of $\mathcal N[r,\g]=(\cN_1[r,\g], \cN_2[r,\g])$, where
\begin{align*}
	\cN_1[r,\g](\th):&=
		\mu\left(\th^{-\mu}+r(\th)\right)^2\\
		&\quad+(1-2\mu)\frac{-2\pi\th^{1-2\mu}+\g(\th)}{2\pi(2\mu-1)\th^{-2\mu}+\g'(\th)}\left(\th^{-\mu}+r(\th)\right)\left(-\mu\th^{-\mu-1}+r'(\th)\right),\\
	\cN_2[r,\g](\th):&=(2\mu-1)\frac{-2\pi\th^{1-2\mu}+\g(\th)}{2\pi(2\mu-1)\th^{-2\mu}+\g'(\th)}\left(\th^{-\mu}+r(\th)\right)^2-\frac{-2\pi\th^{1-2\mu}+\g(\th)}{2\pi}.
\end{align*}

\begin{lemma}\label{Lem.N_1}
	There exists $\varepsilon_0\in(0,1)$ such that
	\[(r,\g)\mapsto \th^{2\mu}\mathcal N_1[r,\g]\in C^\infty(B_X(\varepsilon_0); \cG_0^0).\]
\end{lemma}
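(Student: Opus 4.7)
The plan is to rescale so that $\theta^{2\mu}\mathcal{N}_1[r,\gamma]$ becomes a rational expression in variables that automatically sit in the unital Banach algebras $\mathcal{G}^k_* = \mathcal{G}^k\oplus\mathbb{R}$. Smoothness then comes from algebraic manipulations, and the only nontrivial point is verifying the boundary value at $\theta=0+$.

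\textbf{Step 1 (rescaling).} Substituting $r_0(\theta)=\theta^{-\mu}$ and $\gamma_0(\theta)=-2\pi\theta^{1-2\mu}$ and introducing
$$R:=\theta^\mu r,\quad A:=\theta^{\mu+1}r',\quad G:=\theta^{2\mu-1}\gamma,\quad B:=\theta^{2\mu}\gamma',$$
one factors $r+r_0=\theta^{-\mu}(1+R)$, $r'+r_0'=\theta^{-\mu-1}(A-\mu)$, $\gamma+\gamma_0=\theta^{1-2\mu}(G-2\pi)$, $\gamma'+\gamma_0'=\theta^{-2\mu}(B+2\pi(2\mu-1))$. A direct computation gives
$$\theta^{2\mu}\mathcal{N}_1[r,\gamma]=F(R,A,G,B):=\mu(1+R)^2+(1-2\mu)\frac{(G-2\pi)(1+R)(A-\mu)}{B+2\pi(2\mu-1)}.$$

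\textbf{Step 2 (smoothness into $\mathcal{G}^0_*$).} By the definition of $X$, the map $(r,\gamma)\mapsto(R,A,G,B)$ is bounded and linear from $X$ into $\mathcal{G}^1\times\mathcal{G}^0\times\mathcal{G}^1\times\mathcal{G}^1\hookrightarrow(\mathcal{G}^0_*)^4$. Choose $\varepsilon_0>0$ small enough (possible since $\|B\|_{L^\infty}\leq\|B\|_0\lesssim\|(r,\gamma)\|_X$) so that $(r,\gamma)\in B_X(\varepsilon_0)$ forces $B+2\pi(2\mu-1)$ to be pointwise bounded below by, say, $\pi(2\mu-1)>0$; this element is then invertible in the unital Banach algebra $\mathcal{G}^0_*$ (as discussed after Lemma \ref{Lem.algebra}). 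Polynomial operations and inversion of bounded-below elements are $C^\infty$ operations on a unital Banach algebra, so $F$ is $C^\infty$ from the corresponding open subset of $(\mathcal{G}^0_*)^4$ into $\mathcal{G}^0_*$. Composition with the bounded linear map $(r,\gamma)\mapsto(R,A,G,B)$ yields a $C^\infty$ map $B_X(\varepsilon_0)\to\mathcal{G}^0_*$.

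\textbf{Step 3 (vanishing at $\theta=0+$).} It remains to show $F(R,A,G,B)\in\mathcal{G}^0_0$, which is a closed subspace of $\mathcal{G}^0_*$. By Corollary \ref{Cor.cG^0_0} applied to $(r,\gamma)\in X$,
$$\theta R'=A+\mu R\in\mathcal{G}^0_0,\qquad \theta G'=B+(2\mu-1)G\in\mathcal{G}^1_0,$$
so setting $\rho:=R(0+)$ and $\sigma:=G(0+)$ gives $A(0+)=-\mu\rho$ and $B(0+)=(1-2\mu)\sigma$. Evaluating $F$ at $\theta=0+$,
$$F(R,A,G,B)(0+)=\mu(1+\rho)^2+(1-2\mu)\frac{(\sigma-2\pi)(1+\rho)(-\mu(1+\rho))}{(2\mu-1)(2\pi-\sigma)}=\mu(1+\rho)^2-\mu(1+\rho)^2=0.$$
This cancellation is precisely the identity making Kaden's spiral \eqref{special_sol} solve the limit equation \eqref{eq_limit}. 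Hence $\theta^{2\mu}\mathcal{N}_1$ takes values in the closed subspace $\mathcal{G}^0_0$, completing the proof.

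The main obstacle is not the abstract smoothness (which is routine once the rescaling is in place) but Step 3: one has to notice that the boundary constraints at $\theta=0+$ forced by the space $X$ via Lemma \ref{Lem.cG^0_0} line up exactly with the zero set of the nonlinearity $F$, a compatibility that reflects $(r_0,\gamma_0)$ being a solution of the limiting equation. A naive choice of function spaces would leave $\theta^{2\mu}\mathcal{N}_1[r,\gamma]\in\mathcal{G}^0_*\setminus\mathcal{G}^0_0$ and the fixed-point scheme would break down.
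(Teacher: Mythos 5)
Your rescaling, the Banach-algebra smoothness argument, and the cancellation at $\th=0+$ via Corollary \ref{Cor.cG^0_0} all match the paper's proof in substance. However, Step 3 as written does not establish the claim: you check only that $F(R,A,G,B)(0+)=0$, but $\cG^0_0$ is defined as $\{f\in\cG^0: f(0+)=0\}$, and membership in $\cG^0$ (as opposed to $\cG^0_*=\cG^0\oplus\R$) forces $\langle\th\rangle^{\al}f$ to be bounded, hence $f(\th)\to 0$ as $\th\to+\infty$. An element of $\cG^0_*$ with vanishing limit at $0+$ need not lie in $\cG^0_0$; for instance $\th/(1+\th)=1-\frac{1}{1+\th}\in\cG^0_*$ vanishes at $0+$ but tends to $1$ at infinity and so is not in $\cG^0$. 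So after Step 2 you must still show that the constant component of $F(R,A,G,B)$ in the decomposition $\cG^0_*=\cG^0\oplus\R$ is zero, i.e.\ that $\lim_{\th\to+\infty}\th^{2\mu}\cN_1[r,\g](\th)=0$.

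The fix is one line and is the same Kaden cancellation evaluated at the other endpoint: since $R=\th^\mu r$, $A=\th^{\mu+1}r'$, $G=\th^{2\mu-1}\g$, $B=\th^{2\mu}\g'$ all carry the weight $\langle\th\rangle^\al$ in their norms, they each tend to $0$ as $\th\to+\infty$, and
\begin{equation*}
F(0,0,0,0)=\mu+(1-2\mu)\,\frac{(-2\pi)\cdot 1\cdot(-\mu)}{2\pi(2\mu-1)}=\mu-\mu=0 .
\end{equation*}
This is exactly the check the paper performs before turning to the limit at $0+$. With that line added, your argument is correct and essentially identical to the paper's (the paper makes the $0+$ cancellation explicit by rewriting $\th^{2\mu}\cN_1$ in the form \eqref{Eq.N_1} rather than evaluating limits of each factor, but the content is the same). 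A minor stylistic caveat in Step 2: invertibility of $B+2\pi(2\mu-1)$ in the Banach algebra $\cG^0_*$ should be justified by the Neumann series around the invertible constant $2\pi(2\mu-1)$ for $\|B\|_{*,0}$ small, not by a pointwise lower bound, which in a general Banach algebra does not by itself guarantee that the reciprocal lies in the algebra.
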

\begin{proof}
	We compute that
	\begin{align*}
		\th^{2\mu}\cN_1[r,\g](\th)&=\mu\left(1+\th^\mu r(\th)\right)^2\\
		&\quad+(1-2\mu)\frac{-2\pi+\th^{2\mu-1}\g(\th)}{2\pi(2\mu-1)+\th^{2\mu}\g'(\th)}\left(1+\th^\mu r(\th)\right)\left(-\mu+\th^{\mu+1}r'(\th)\right).
	\end{align*}
	Due to the trivial embedding $\cG^1\hookrightarrow\cG^0$ and  the fact that $\cG^0_*, \cG^1_*$ are Banach algebras, the following maps are all smooth:
	\begin{align}
		(r,\g)\in X\mapsto r\in X_r\mapsto \th^\mu r\in \cG^1\mapsto (1+\th^\mu r)^2\in \cG^1_*,\nonumber\\
		(r,\g)\in X\mapsto r\in X_r\mapsto (\th^\mu r, \th^{\mu+1}r')\in \cG^1\times\cG^0\mapsto \left(1+\th^\mu r(\th)\right)\left(-\mu+\th^{\mu+1}r'(\th)\right)\in\cG^0_*,\nonumber\\
		(r,\g)\in B_X(\varepsilon_0)\mapsto \g\in B_{X_\g}(\varepsilon_0)\mapsto \th^{2\mu}\g'\in B_{\cG^1}(\varepsilon_0)\mapsto \frac1{2\pi(2\mu-1)+\th^{2\mu}\g'}\in\cG^1_*,\label{Eq.smooth_reci}\\
		(r,\g)\in X\mapsto \g\in X_\g\mapsto \th^{2\mu-1}\g\in \cG^1\mapsto -2\pi+\th^{2\mu-1}\g\in \cG^1_*,\nonumber
	\end{align}
	for some small $\varepsilon_0\in(0,1)$, where in \eqref{Eq.smooth_reci} we have used $\mu>1/2$, and we also used the fact that Taylor expansion of $x\mapsto x^{-1}$ around $x=2\pi(2\mu-1)\neq 0$ gives a smooth map from $B_{\cG^1_*}(2\pi(2\mu-1), \varepsilon_0)$ to $\cG^1_*$ for some small $\varepsilon_0\in(0,1)$. As a consequence, 
	\[(r,\g)\mapsto \th^{2\mu}\mathcal N_1[r,\g]\in C^\infty(B_X(\varepsilon_0); \cG^0_*).\]
	
	Moreover, given $(r,\g)\in B_X(\varepsilon_0)$, we have $\langle\th\rangle^\al(\th^\mu r, \th^{\mu+1}r', \th^{2\mu-1}\g, \th^{2\mu}\g')\in L^\infty(\R_+)$, hence,
	\[\lim_{\th\to+\infty}\th^\mu r(\th)=\lim_{\th\to+\infty}\th^{\mu+1}r'(\th)=\lim_{\th\to+\infty}\th^{2\mu-1}\g(\th)=\lim_{\th\to+\infty}\th^{2\mu}\g'(\th)=0.\]
	Thus,
	\[\lim_{\th\to+\infty}\th^{2\mu}\cN_1[r,\g](\th)=\mu+(1-2\mu)\frac{-2\pi}{2\pi(2\mu-1)}(-\mu)=0.\]
	Therefore, $\th^{2\mu}\mathcal N_1[r,\g]\in \cG^0$ and
	\[(r,\g)\mapsto \th^{2\mu}\mathcal N_1[r,\g]\in C^\infty(B_X(\varepsilon_0); \cG^0).\]
	
	Finally, we check that $\th^{2\mu}\mathcal N_1[r,\g]$ lies in the smaller set $\cG^0_0$. Note that
	\begin{align}
		&\th^{2\mu}\cN_1[r,\g](\th)=\left(1+\th^\mu r(\th)\right)\times\nonumber\\
		&\qquad \left[\mu\left(1+\th^\mu r(\th)\right)+(1-2\mu)\frac{-2\pi+\th^{2\mu-1}\g(\th)}{2\pi(2\mu-1)+\th^{2\mu}\g'(\th)}\left(-\mu+\th^{\mu+1}r'(\th)\right)\right]\nonumber\\
		=&\left(1+\th^\mu r(\th)\right)\times\nonumber\\
		&\qquad\left[\th(\th^\mu r)'+\mu-\th^{\mu+1}r'(\th)+(1-2\mu)\frac{-2\pi+\th^{2\mu-1}\g(\th)}{2\pi(2\mu-1)+\th^{2\mu}\g'(\th)}\left(-\mu+\th^{\mu+1}r'(\th)\right)\right]\nonumber\\
		=&\left(1+\th^\mu r(\th)\right)\left[\th(\th^\mu r)'(\th)+\left(1+(2\mu-1)\frac{-2\pi+\th^{2\mu-1}\g(\th)}{2\pi(2\mu-1)+\th^{2\mu}\g'(\th)}\right)\left(\mu-\th^{\mu+1}r'(\th)\right)\right]\nonumber\\
		=&\left(1+\th^\mu r(\th)\right)\left[\th(\th^\mu r)'(\th)+\frac{\th(\th^{2\mu-1}\g)'(\th)}{2\pi(2\mu-1)+\th^{2\mu}\g'(\th)}\left(\mu-\th^{\mu+1}r'(\th)\right)\right].\label{Eq.N_1}
	\end{align}
	It follows from Corollary \ref{Cor.cG^0_0} that $\lim_{\th\to0+}\th(\th^\mu r)'(\th)=0$ and $\lim_{\th\to0+}\th(\th^{2\mu-1}\g)'(\th)=0$. Then $\lim_{\th\to0+}\th^{2\mu}\cN_1[r,\g](\th)=0$, and thus $\th^{2\mu}\mathcal N_1[r,\g]\in\cG^0_0$.
\end{proof}

\begin{lemma}\label{Lem.N_2}
	There exists $\varepsilon_0\in(0,1)$ such that
	\[(r,\g)\mapsto \th^{2\mu-1}\cN_2[r,\g]\in C^\infty(B_X(\varepsilon_0); \cG^1).\]
\end{lemma}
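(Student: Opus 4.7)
The approach mirrors Lemma \ref{Lem.N_1}, with the target space $\cG^1$ replacing $\cG^0_0$; the key simplification is that membership in $\cG^1$ imposes no vanishing condition at $\th=0+$, so Corollary \ref{Cor.cG^0_0} is not needed. First I would rewrite
\[\th^{2\mu-1}\cN_2[r,\g](\th)=(2\mu-1)\,\frac{-2\pi+\th^{2\mu-1}\g(\th)}{2\pi(2\mu-1)+\th^{2\mu}\g'(\th)}\bigl(1+\th^\mu r(\th)\bigr)^2-\frac{-2\pi+\th^{2\mu-1}\g(\th)}{2\pi},\]
which exposes $\th^{2\mu-1}\cN_2[r,\g]$ as an algebraic combination of the self-similar quantities $\th^\mu r$, $\th^{2\mu-1}\g$, and $\th^{2\mu}\g'$, all of which lie in $\cG^1$ by the definition of the norms on $X_r$ and $X_\g$.

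Next I would invoke the unital Banach algebra structure of $\cG^1_*=\cG^1\oplus\R$ established in Section \ref{Sec.spaces}. The affine maps
\[(r,\g)\mapsto 1+\th^\mu r,\qquad (r,\g)\mapsto -2\pi+\th^{2\mu-1}\g,\qquad (r,\g)\mapsto 2\pi(2\mu-1)+\th^{2\mu}\g'\]
are $C^\infty$ from $X$ into $\cG^1_*$. Choosing $\varepsilon_0\in(0,1)$ small, the image of $B_X(\varepsilon_0)$ under the third map stays within a ball in $\cG^1_*$ on which $x\mapsto x^{-1}$ is $C^\infty$, via its Taylor expansion around the nonzero constant $2\pi(2\mu-1)$; this is where $\mu>1/2$ enters. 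Multiplying, squaring, and subtracting within $\cG^1_*$ then produce a smooth map $(r,\g)\mapsto \th^{2\mu-1}\cN_2[r,\g]\in C^\infty(B_X(\varepsilon_0);\cG^1_*)$.

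Finally, I would upgrade the target from $\cG^1_*$ to $\cG^1$ by checking that the constant part of $\th^{2\mu-1}\cN_2[r,\g]$ vanishes. Reading off the $\R$-summand in the decomposition $\cG^1_*=\cG^1\oplus\R$, the first term contributes $(2\mu-1)\cdot\frac{-2\pi}{2\pi(2\mu-1)}\cdot 1=-1$ and the second contributes $-\frac{-2\pi}{2\pi}=1$, which cancel. Equivalently, since $\langle\th\rangle^\al$ times each of $\th^\mu r$, $\th^{2\mu-1}\g$, $\th^{2\mu}\g'$ is bounded, all three quantities vanish as $\th\to+\infty$, whence $\lim_{\th\to+\infty}\th^{2\mu-1}\cN_2[r,\g](\th)=0$, exactly as in Lemma \ref{Lem.N_1}. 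The only real obstacle is the smoothness of the reciprocal, which merely forces $\varepsilon_0$ to be sufficiently small; after that the argument is a purely algebraic chase through $\cG^1_*$.
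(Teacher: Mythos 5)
Your proposal is correct and follows exactly the route the paper takes (the paper's own proof of this lemma is a one-liner deferring to Lemma \ref{Lem.N_1}): write $\th^{2\mu-1}\cN_2$ in terms of the self-similar quantities $\th^\mu r,\th^{2\mu-1}\g,\th^{2\mu}\g'\in\cG^1$, use the unital Banach algebra $\cG^1_*$ together with smoothness of inversion near the nonzero constant $2\pi(2\mu-1)$ to get a $C^\infty$ map into $\cG^1_*$, and then pass to $\cG^1$ by observing that the limit at $\th\to+\infty$ vanishes. The explicit cancellation $-1+1=0$ of the $\R$-summand is a nice way to phrase the same check the paper performs by letting $\th\to+\infty$.
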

\begin{proof}
	We compute that
	\begin{equation}\label{Eq.N_2}
		\th^{2\mu-1}\cN_2[r,\g](\th)=(2\mu-1)\frac{-2\pi+\th^{2\mu-1}\g(\th)}{2\pi(2\mu-1)+\th^{2\mu}\g'(\th)}\left(1+\th^\mu r(\th)\right)^2-\frac{-2\pi+\th^{2\mu-1}\g(\th)}{2\pi}.
	\end{equation}
	The remaining proof is very similar to the proof of Lemma \ref{Lem.N_1}: use the theory of Banach algebras to show that $\th^{2\mu-1}\cN_2\in C^\infty(B_X(\varepsilon_0);\cG^1_*)$, then consider the limit $\th\to+\infty$ to obtain $\th^{2\mu-1}\cN_2[r,\g]\in \cG^1$. We omit the details here.
\end{proof}

By Lemma \ref{Lem.N_1}, Lemma \ref{Lem.N_2} and the definition of $Y$ in \eqref{Eq.Y}, we obtain
\begin{equation}\label{Eq.N_smooth}
	\cN\in C^\infty(B_X(\varepsilon_0); Y).
\end{equation}
This proves the first item of Proposition \ref{Prop.N}.

\subsection{Invertibility of the linearized operator}\label{Subsec.invert_linear}
We denote the linearization of $\cN$ near $(0,0)$ by $\mathcal L$, which is to say, $\cL$ is the Fr\'echet derivative of $\cN$ at $(0,0)\in X$. Hence, $\cL$ is a bounded linear operator from $X$ to $Y$.

Our main proposition of this subsection is the following.
\begin{proposition}\label{Prop.invert_linear}
    The bounded linear operator $\cL: X\to Y$ is bijective, and the inversion $\cL^{-1}: Y\to X$ is a bounded linear operator, with
    \begin{equation*}
        \norm{\cL^{-1}[f,g]}_X\leq C\norm{(f,g)}_Y,\quad \forall\ (f,g)\in Y,
    \end{equation*}
    for some constant $C=C(\al,\mu)>0$.
\end{proposition}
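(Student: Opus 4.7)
The plan is to invert $\cL$ explicitly. After the rescalings $R:=\th^\mu r$ and $G:=\th^{2\mu-1}\g$, the system $\cL[r,\g]=(f,g)$ is equivalent to
\begin{align*}
R'(\th)+\frac{\mu}{2\pi(2\mu-1)}G'(\th)&=\th^{2\mu-1}f(\th),\\
-2R(\th)-\frac{G(\th)}{2\pi}+\frac{\th G'(\th)}{2\pi(2\mu-1)}&=\th^{2\mu-1}g(\th).
\end{align*}
The combination $P:=R+\frac{\mu}{2\pi(2\mu-1)}G$ decouples this into the triangular pair
\[P'(\th)=\th^{2\mu-1}f(\th),\qquad (\th G)'(\th)=2\pi(2\mu-1)\bigl(\th^{2\mu-1}g(\th)+2P(\th)\bigr).\]

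Next, the membership $(r,\g)\in X$ forces $R,G\in\cG^1$, so both extend continuously to $\th=0+$ and decay like $\langle\th\rangle^{-\al}$ at infinity, giving $P(\infty)=0$ and $(\th G)(0+)=0$. These boundary data single out the unique solution
\[P(\th)=-\int_\th^{\infty}\wt\th^{2\mu-1}f(\wt\th)\,\mathrm d\wt\th,\qquad G(\th)=\frac{2\pi(2\mu-1)}{\th}\int_0^\th\bigl[\wt\th^{2\mu-1}g(\wt\th)+2P(\wt\th)\bigr]\,\mathrm d\wt\th,\]
with $R=P-\frac{\mu}{2\pi(2\mu-1)}G$, and then $r:=\th^{-\mu}R$, $\g:=\th^{1-2\mu}G$. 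The same boundary data kill every homogeneous solution in $X$, which yields injectivity; what remains is the quantitative bound.

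For the norm estimate, the key pointwise inputs are $|\wt\th^{2\mu}f|\lesssim\wt\th^{\al}\langle\wt\th\rangle^{-\al}\|(f,g)\|_Y$ (using $\th^{2\mu}f\in\cG_0^0$) and $|\wt\th^{2\mu-1}g|\lesssim\langle\wt\th\rangle^{-\al}\|(f,g)\|_Y$. Substituting these into the integral representations immediately gives $|P|,|G|\lesssim\langle\th\rangle^{-\al}\|(f,g)\|_Y$. Differentiating $G=\frac{1}{\th}\int_0^\th h$ with $h:=2\pi(2\mu-1)(\wt\th^{2\mu-1}g+2P)$ yields $G'=(h-G)/\th$, and the H\"older continuity of $h$ near the origin supplies the cancellation $|h(\th)-G(\th)|\lesssim\th^{\al}\|(f,g)\|_Y$; together with $R'=\th^{2\mu-1}f-\frac{\mu}{2\pi(2\mu-1)}G'$ one obtains $|R'|,|G'|\lesssim\th^{\al-1}\langle\th\rangle^{-2\al}\|(f,g)\|_Y$. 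Lemma \ref{Lem.seminorm} then converts these pointwise bounds into $[R]_1,[G]_1\lesssim\|(f,g)\|_Y$, so $R,G\in\cG^1$. Finally, $\th^{\mu+1}r'=\th R'-\mu R$ and $\th^{2\mu}\g'=\th G'-(2\mu-1)G$ (the latter reducing via the second triangular ODE to a linear combination of $G$, $\th^{2\mu-1}g$, and $P$) are already controlled. The main obstacle is precisely this H\"older bookkeeping: one must simultaneously track algebraic decay and H\"older moduli in the weighted spaces, and in particular exploit the cancellation in $h-G$ at the origin, which is exactly what the $\cG_0^0$ vanishing condition on $\th^{2\mu}f$ is designed to supply.
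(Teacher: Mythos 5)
Your proposal is correct and takes essentially the same route as the paper: after the identical rescaling $R=\th^\mu r$, $G=\th^{2\mu-1}\g$, your explicit formulas reproduce exactly the paper's inverse operator $\cM$ (your $P$ is $-\cA[\th^{2\mu}f]$ and your $G$ is $\th^{2\mu-1}\cM_2[f,g]$), the boundary conditions $P(\infty)=0$ and $(\th G)(0+)=0$ play the role of the paper's uniqueness argument, and the weighted estimates via the vanishing of $\th^{2\mu}f$ at the origin, the identity $G'=(h-G)/\th$, and Lemma \ref{Lem.seminorm} are precisely those of Lemmas \ref{Lem.AB_bound} and \ref{Lem.M_bound}. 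One exponent to correct: the pointwise input should read $|\wt\th^{2\mu}f|\lesssim\min\{\wt\th^\al,1\}\langle\wt\th\rangle^{-\al}\|(f,g)\|_Y\lesssim\wt\th^\al\langle\wt\th\rangle^{-2\al}\|(f,g)\|_Y$ rather than $\wt\th^\al\langle\wt\th\rangle^{-\al}$ (with the latter the tail integral defining $P$ would diverge); your subsequent bounds already use the corrected decay.
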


Proposition \ref{Prop.invert_linear} follows directly from the following Lemma \ref{Lem.M_bound} and Lemma \ref{Lem.inversion_L}.
Let us first compute $\cL$.

\begin{lemma}\label{Lem.L-expression}
	For any $(r,\g)\in X$, there holds
	\begin{equation}\label{Eq.L_expression}
		\mathcal L[r,\g](\th)=\begin{pmatrix}
			\mu\th^{-\mu}r(\th)+\th^{1-\mu}r'(\th)+\frac\mu{2\pi}\th^{-1}\g(\th)+\frac\mu{2\pi(2\mu-1)}\g'(\th)\\
			-2\th^{1-\mu}r(\th)+\frac{\th\g'(\th)}{2\pi(2\mu-1)}
		\end{pmatrix}.
	\end{equation}
\end{lemma}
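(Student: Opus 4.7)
The plan is to read off $\cL=\cN'[0,0]$ directly by differentiating the explicit formulas for $\cN_1,\cN_2$ written just before Lemma \ref{Lem.N_1}. Both components are rational functions of the four quantities $r, r', \g, \g'$ (with $\th$-dependent coefficients built from $r_0=\th^{-\mu}$ and $\g_0=-2\pi\th^{1-2\mu}$), so the Fr\'echet derivative at $(0,0)$ is simply the sum of the four partial derivatives evaluated at the origin, paired against the corresponding entries of the perturbation. The first preparatory step is to record the base values $r_0'=-\mu\th^{-\mu-1}$ and $\g_0'=2\pi(2\mu-1)\th^{-2\mu}$, and in particular the single key identity $\g_0/\g_0'=-\th/(2\mu-1)$, which forces every coefficient appearing in $\cL$ to collapse into a monomial in $\th$.

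For $\cN_1=\mu(r_0+r)^2+(1-2\mu)\frac{\g_0+\g}{\g_0'+\g'}(r_0+r)(r_0'+r')$, the product and quotient rules produce four linear contributions. The coefficient of $r$ is $2\mu r_0+(1-2\mu)(\g_0/\g_0')r_0'$, which simplifies to $2\mu\th^{-\mu}+(-\th)(-\mu\th^{-\mu-1})=\mu\th^{-\mu}$. The coefficient of $r'$ is $(1-2\mu)(\g_0/\g_0')r_0=\th^{1-\mu}$. The coefficient of $\g$ is $(1-2\mu)r_0r_0'/\g_0'=(\mu/(2\pi))\th^{-1}$. Differentiating the denominator produces the coefficient of $\g'$ with an extra minus sign, yielding $-(1-2\mu)\g_0r_0r_0'/(\g_0')^2=\mu/(2\pi(2\mu-1))$. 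Assembling these gives the top entry of \eqref{Eq.L_expression}.

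For $\cN_2=(2\mu-1)\frac{\g_0+\g}{\g_0'+\g'}(r_0+r)^2-(\g_0+\g)/(2\pi)$, the analogous procedure gives coefficient $2(2\mu-1)(\g_0/\g_0')r_0=-2\th^{1-\mu}$ in front of $r$, zero in front of $r'$, and $-(2\mu-1)\g_0r_0^2/(\g_0')^2=\th/(2\pi(2\mu-1))$ in front of $\g'$. The coefficient in front of $\g$ reads $(2\mu-1)r_0^2/\g_0'-1/(2\pi)$, which vanishes identically; this is the only nontrivial cancellation, and it is precisely equivalent to $(r_0,\g_0)$ satisfying the imaginary part of the limiting equation \eqref{eq_limit}. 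Assembling these four coefficients gives the bottom entry of \eqref{Eq.L_expression}.

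The computation is entirely mechanical, so I do not expect a serious obstacle; the only places to exercise care are the minus sign from differentiating the denominator $\g_0'+\g'$ and the bookkeeping of $r$ versus $r'$ (and of $\g$ versus $\g'$) as independent linearization variables. As a sanity check one can verify $\cN[0,0]=0$ by the same substitutions, which is nothing but the statement that Kaden's spiral \eqref{special_sol} solves both the real and imaginary parts of \eqref{eq_limit}, reassuring us that the linearization is being computed around an actual solution of the reduced equation.
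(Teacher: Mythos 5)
Your computation is correct, and it is essentially the same direct Fr\'echet-derivative calculation the paper carries out: you each linearize $\cN$ at $(0,0)$ and reduce the coefficients using $r_0=\th^{-\mu}$, $\g_0=-2\pi\th^{1-2\mu}$ (equivalently, via the identity $\g_0/\g_0'=-\th/(2\mu-1)$). The only organizational difference is that the paper first massages $\th^{2\mu}\cN_1$ and $\th^{2\mu-1}\cN_2$ into the compact forms \eqref{Eq.N_1}--\eqref{Eq.N_2}, in which the cancellations (including the vanishing of the $\g$-coefficient in $\cN_2$ that you observe) have already been absorbed, and then differentiates $t\mapsto\cN[tr,t\g]$ at $t=0$, whereas you differentiate the raw rational expression and perform those cancellations coefficient by coefficient.
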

\begin{proof}
	By definition and $\cN[0,0]=0$, we have
	\begin{equation}\label{Eq.L_Gautex}
		\cL[r,\g](\th)=\lim_{t\to0}\frac{\cN[tr,t\g](\th)}{t}=\frac{\mathrm d}{\mathrm dt}\Big|_{t=0}\cN[tr,t\g](\th).
	\end{equation}
	We write $\cL=(\cL_1, \cL_2)$. By \eqref{Eq.N_1} and \eqref{Eq.L_Gautex}, we have
	\begin{align}
		\th^{2\mu}\cL_1[r,\g](\th)=\frac{\mathrm d}{\mathrm dt}\Big|_{t=0}\th^{2\mu}\cN_1[tr,t\g](\th)=\th(\th^\mu r)'(\th)+\frac{\mu}{2\pi(2\mu-1)}\th(\th^{2\mu-1}\g)'(\th).\label{Eq.L_1}
	\end{align}
	By \eqref{Eq.N_2} and \eqref{Eq.L_Gautex}, we have
	\begin{align}
		&\th^{2\mu-1}\cL_2[r,\g](\th)=\frac{\mathrm d}{\mathrm dt}\Big|_{t=0}\th^{2\mu-1}\cN_2[tr,t\g](\th)\nonumber\\
		=\ &(2\mu-1)\frac{\th^{2\mu-1}\g(\th)}{2\pi(2\mu-1)}+(2\mu-1)(-2\pi)\frac{-\th^{2\mu}\g'(\th)}{\left(2\pi(2\mu-1)\right)^2}\nonumber\\
		&\qquad\qquad+(2\mu-1)\frac{-2\pi}{2\pi(2\mu-1)}2\th^\mu r(\th)-\frac1{2\pi}\th^{2\mu-1}\g(\th)\nonumber\\
		=\ &-2\th^\mu r(\th)+\frac1{2\pi(2\mu-1)}\th^{2\mu}\g'(\th).\label{Eq.L_2}
	\end{align}
	Obviously, \eqref{Eq.L_expression} follows from \eqref{Eq.L_1} and \eqref{Eq.L_2}.
\end{proof}

Recall that our aim in this subsection is to show the invertibility of $\cL: X\to Y$. Now we define a linear operator $\cM:Y\to X$, which turns out to be the inversion of $\cL$.

Given $(f,g)\in Y$, we define
\begin{align}
	\cM_2[f,g](\th)&:=2\pi(2\mu-1)\th^{-2\mu}\left[-2\int_0^\th\int_{\wt\th}^\infty \xi^{2\mu-1}f(\xi)\,\mathrm d\xi\,\mathrm d\wt\th+\int_0^\th\wt\th^{2\mu-1}g(\wt\th)\,\mathrm d\wt\th\right],\label{Eq.M_2}\\
	\cM_1[f,g]&(\th):=-\frac\mu{2\pi(2\mu-1)}\th^{\mu-1}\cM_2[f,g](\th)-\th^{-\mu}\int_\th^\infty\wt\th^{2\mu-1}f(\wt\th)\,\mathrm d\wt\th,\label{Eq.M_1}\\
	&\cM[f,g](\th):=\left(\cM_1[f,g](\th), \cM_2[f,g](\th)\right),\label{Eq.M_def}
\end{align}
for all $\th\in\R_+$. The rest of this subsection is devoted to showing that
\begin{equation}
	\cM: Y\to X \text{ is bounded and } \cM\circ\cL=\mathrm{Id}_X,\ \ \cL\circ\cM=\mathrm{Id}_Y.
\end{equation}
At this stage, we have not checked that the integrals in \eqref{Eq.M_2} and \eqref{Eq.M_1} are absolutely convergent. We will deal with this issue in the process of showing the boundedness of $\cM$.

We define two linear operators $\cA$ and $\cB$ by
	\begin{equation}\label{Eq.AB_operator}
		\cA[f](\th):=\int_\th^\infty\frac{f(\wt\th)}{\wt\th}\,\mathrm d\wt\th,\quad \cB[g](\th):=\frac1\th\int_0^\th g(\wt\th)\,\mathrm d\wt\th,\quad\forall\ \th\in\R_+,
	\end{equation}
	for all $f\in \cG^0_0$ and $g\in\cG^1$.  Then we can write
\begin{align}
	\frac{\cM_2[f,g]}{2\pi(2\mu-1)}&=-2\th^{1-2\mu}(\cB\circ\cA)\left[\th^{2\mu}f\right]+\th^{1-2\mu}\cB\left[\th^{2\mu-1}g\right],\label{Eq.M_2_operator}\\
	\cM_1[f,g]&=-\frac{\mu}{2\pi(2\mu-1)}\th^{\mu-1}\cM_2[f,g]-\th^{-\mu}\cA\left[\th^{2\mu}f\right].\label{Eq.M_1_operator}
\end{align}

We have the following bounds for  linear operators $\cA$ and $\cB$.
\begin{lemma}\label{Lem.AB_bound}
	 There exists a constant $C=C(\al)>0$ such that
	\begin{equation*}
		\|\cA[f]\|_1\leq C\|f\|_0,\quad \|\cB[g]\|_1\leq C\|g\|_1,\quad \forall\ f\in\cG_0^0, \ \forall\ g\in\cG^1.
	\end{equation*}
\end{lemma}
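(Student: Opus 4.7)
The plan is to verify the two components of the norm $\|\cdot\|_1=\|\langle\th\rangle^\al\cdot\|_{L^\infty}+[\cdot]_1$ separately. The weighted sup-norm part will follow from direct estimation of the defining integrals, while the seminorm part will be handled by invoking Lemma~\ref{Lem.seminorm}, which reduces the task to controlling $\|\langle\th\rangle^{2\al}\th^{1-\al}(\cdot)'\|_{L^\infty}$. The relevant derivatives are
\[
(\cA[f])'(\th)=-\frac{f(\th)}{\th},\qquad (\cB[g])'(\th)=\frac{g(\th)-\cB[g](\th)}{\th},
\]
both of which are continuous on $\R_+$ under our hypotheses, so the lemma applies.

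For $\cA$, the hypothesis $f(0+)=0$ (i.e.\ $f\in\cG^0_0$) is essential: letting $\th_2\to 0^+$ in the definition of $[f]_0$ gives $|f(\th)|\le[f]_0\th^\al$ for $\th\in(0,1)$, which makes $f(\wt\th)/\wt\th$ integrable near the origin. Splitting the integral at $\wt\th=1$ and combining with the large-$\wt\th$ decay $|f(\wt\th)|\le\|\langle\cdot\rangle^\al f\|_{L^\infty}\langle\wt\th\rangle^{-\al}$ yields absolute convergence of $\cA[f]$ and the bound $\|\langle\th\rangle^\al\cA[f]\|_{L^\infty}\lesssim\|f\|_0$. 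For the seminorm, Lemma~\ref{Lem.seminorm} reduces matters to $\|\langle\th\rangle^{2\al}\th^{-\al}f\|_{L^\infty}$; near the origin the Hölder bound $|f(\th)|\lesssim[f]_0\th^\al$ cancels the $\th^{-\al}$ factor, and for large $\th$ the decay $|f(\th)|\lesssim\|f\|_0\th^{-\al}$ matches the surplus weight.

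For $\cB$, the sup-norm bound is straightforward from the decay of $g$ together with a splitting of $(0,\th)$ at $\wt\th=1$. The main technical obstacle is the seminorm bound, which amounts to $\|\langle\th\rangle^{2\al}\th^{-\al}(g-\cB[g])\|_{L^\infty}\lesssim\|g\|_1$. Writing
\[
g(\th)-\cB[g](\th)=\frac{1}{\th}\int_0^\th\bigl(g(\th)-g(\wt\th)\bigr)\,\mathrm d\wt\th,
\]
I would split $(0,\th)$ into the near-region $\{\wt\th:\th-\wt\th<1\}$, where the Hölder seminorm applies directly to give $|g(\th)-g(\wt\th)|\le[g]_1\langle\th\rangle^{-1-\al}(\th-\wt\th)^\al$, and the far-region $(0,\th-1)$ (nonempty only when $\th>1$), where the triangle inequality and the sup-norm decay of $g$ yield $|g(\th)-g(\wt\th)|\le\|\langle\cdot\rangle^\al g\|_{L^\infty}(\langle\th\rangle^{-\al}+\langle\wt\th\rangle^{-\al})$. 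Integrating each piece and using $\langle\th\rangle\sim\th$ for $\th\ge1$ produces $|g(\th)-\cB[g](\th)|\lesssim[g]_1\th^{-1}\langle\th\rangle^{-1-\al}+\|g\|_0\th^{-\al}$ for $\th>1$, and $\lesssim[g]_1\th^\al$ for $\th\le 1$; multiplying by the weight $\langle\th\rangle^{2\al}\th^{-\al}$ then yields the claimed uniform bound $\lesssim\|g\|_1$. The delicate exponent matching in the far-region, where powers of $\th$ and $\langle\th\rangle$ must conspire precisely, is the main obstacle of the proof.
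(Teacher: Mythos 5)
Your proposal is correct and follows essentially the same route as the paper: the weighted sup-norm is obtained by direct integration using the pointwise decay $|f(\th)|\lesssim\|f\|_0\,\th^\al\langle\th\rangle^{-2\al}$ (where $f(0+)=0$ is used exactly as you indicate), and the H\"older seminorm is reduced via Lemma \ref{Lem.seminorm} to a weighted $L^\infty$ bound on $(\cA f)'=-f/\th$ and $(\cB g)'=(g-\cB g)/\th$. The only cosmetic difference is in bounding $g-\cB g$: the paper splits by the size of $\th$ (subtracting $g(0+)$ for $\th<1$, and estimating $|\cB g|+|g|$ crudely for $\th\ge1$), whereas you write it as an average of $g(\th)-g(\wt\th)$ and split the integration domain into near and far pieces; both give the same bound.
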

\begin{proof}
	By the definition of $\cG_0^0$, we have
	\begin{equation}\label{Eq.3.17}
		|f(\th)|\leq \|f\|_0\frac{\min\{\th^\al, 1\}}{\langle\th\rangle^\al}\lesssim\|f\|_0\frac{\th^\al}{\langle\th\rangle^{2\al}},\quad\forall\ \th\in\R_+,\ \forall\ f\in\cG_0^0.
	\end{equation}
	Hence,
	\begin{equation}\label{Eq.Af_infty_1}
		\|\cA[f]\|_{L^\infty}\leq\|\th^{-1}f\|_{L^1}\lesssim\|f\|_0,\quad\forall\ f\in\cG_0^0.
	\end{equation}
	Note that \eqref{Eq.Af_infty_1} implies the operator $\cA$ is well-defined.  If $\th>1$, we get by \eqref{Eq.3.17} that
	\begin{equation}\label{Eq.Af_infty_2}
		|\cA[f](\th)|\leq \|f\|_0\int_\th^\infty \wt\th^{-1}\langle\wt\th\rangle^{-\al}\,\mathrm d\wt\th\lesssim \th^{-\al}\|f\|_0,\quad \forall\ \th>1,\ \forall\ f\in\cG^0_0.
	\end{equation}
	Combining \eqref{Eq.Af_infty_1} and \eqref{Eq.Af_infty_2} gives that $\|\langle\th\rangle^\al\cA[f]\|_{L^\infty}\lesssim\|f\|_0$ for all $f\in\cG_0^0$. Moreover, by Lemma \ref{Lem.seminorm} and \eqref{Eq.3.17}, we have
	\begin{align*}
		\left[\cA[f]\right]_1\lesssim \left\|\langle\th\rangle^{2\al}\th^{1-\al}\left(\cA[f]\right)'\right\|_{L^\infty}\lesssim \left\|\langle\th\rangle^{2\al}\th^{1-\al}\th^{-1}f\right\|_{L^\infty}\lesssim\|f\|_0, \quad\forall\ f\in\cG_0^0.
	\end{align*}
	Therefore, there exists a constant $C=C(\al)>0$ such that
	\begin{equation*}
		\|\cA[f]\|_1\leq C\|f\|_0,\quad \forall\ f\in\cG_0^0.
	\end{equation*}
	
	For $g\in\cG^1$, we  have 
	\begin{equation}\label{Eq.Bg_infty_1}
		\left\|\cB[g]\right\|_{L^\infty}\leq \|g\|_{L^\infty}\leq \|\langle\th\rangle^\al g\|_{L^\infty}\leq \|g\|_1.
	\end{equation}
	Hence, the operator $\cB$ is well-defined. If $\th\ge1$, then
	\begin{equation}\label{Eq.Bg_infty_2}
		\left|\cB[g](\th)\right|\leq  \frac{\|\langle\th\rangle^\al g\|_{L^\infty}}\th\int_0^\th\wt\th^{-\al}\,\mathrm d\wt\th=\frac{\|\langle\th\rangle^\al g\|_{L^\infty}}{(1-\al)\th^\al}.
	\end{equation}
	Combining \eqref{Eq.Bg_infty_1} and \eqref{Eq.Bg_infty_2} gives that $\|\langle\th\rangle^\al \cB[g]\|_{L^\infty}\lesssim\|g\|_1$. On the other hand, for $g\in\cG^1$ and $\th\in\R_+$, we compute that
	\begin{align}
		\left(\cB[g]\right)'(\th)&=-\frac1{\th^2}\int_0^\th g(\wt\th)\,\mathrm d\wt\th+\frac{g(\th)}\th=-\frac1\th\cB[g](\th)+\frac{g(\th)}{\th}\label{Eq.Bg'1}\\
		&=-\frac1{\th^2}\int_0^\th \left(g(\wt\th)-g(0+)\right)\,\mathrm d\wt\th+\frac{g(\th)-g(0+)}\th,\label{Eq.Bg'2}
	\end{align}
	where $g(0+):=\lim_{\th\to0+}g(\th)$. By the definition of $\cG^1$, for $\th\in(0,1)$, there holds $$\frac{|g(\th)-g(0+)|}{\th^\al}\leq[g]_1\leq \|g\|_1,$$
	hence it follows from \eqref{Eq.Bg'2} that
	\begin{equation}\label{Eq.Bg'_infty_1}
		\left|\th^{1-\al}\left(\cB[g]\right)'(\th)\right|\leq \th^{-1-\al}\|g\|_1\int_0^\th\wt\th^\al\,\mathrm d\wt\th+\|g\|_1=\left(\frac1{1+\al}+1\right)\|g\|_1
	\end{equation}
	for all $g\in\cG^1$ and $\th\in(0,1)$. As for $\th\geq 1$, by \eqref{Eq.Bg'1} and \eqref{Eq.Bg_infty_2}, we have
	\begin{equation}\label{Eq.Bg'_infty_2}
		\left|\langle\th\rangle^{2\al}\th^{1-\al}\left(\cB[g]\right)'(\th)\right|\lesssim \th^{2\al}\th^{1-\al}\th^{-1}\left(\left|\cB[g](\th)\right|+|g(\th)|\right)\lesssim\|\langle\th\rangle^\al g\|_{L^\infty}\lesssim \|g\|_1.
	\end{equation}
	Finally, by Lemma \ref{Lem.seminorm}, \eqref{Eq.Bg'_infty_1} and \eqref{Eq.Bg'_infty_2}, we obtain
	\begin{equation*}
		\left[\cB[g]\right]_1\lesssim\left\|\langle\th\rangle^{2\al}\th^{1-\al}\left(\cB[g]\right)'\right\|_{L^\infty}\lesssim\|g\|_1,\quad \forall\ g\in\cG^1.
	\end{equation*}
	This completes the proof.
\end{proof}

\begin{lemma}\label{Lem.M_bound}
	There exists a constant $C=C(\al,\mu)>0$ such that
	\begin{equation*}
		\left\|\cM[f,g]\right\|_X\leq C\|(f,g)\|_Y,\quad\forall\ (f,g)\in Y.
	\end{equation*}
\end{lemma}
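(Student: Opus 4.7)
The plan is to reduce the estimate to Lemma \ref{Lem.AB_bound} via the operator identities \eqref{Eq.M_2_operator}--\eqref{Eq.M_1_operator}. Set $F:=\th^{2\mu}f\in\cG^0_0$ and $G:=\th^{2\mu-1}g\in\cG^1$, so that $\|F\|_0+\|G\|_1=\|(f,g)\|_Y$. Applying Lemma \ref{Lem.AB_bound} gives $\cA[F]\in\cG^1$ with $\|\cA[F]\|_1\lesssim\|F\|_0$, and since $\cG^1\hookrightarrow\cG^0\subset$ domain of $\cB$, we further get $\cB[\cA[F]]\in\cG^1$ and $\cB[G]\in\cG^1$, both bounded by $\|(f,g)\|_Y$. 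In particular all the integrals in \eqref{Eq.M_2}--\eqref{Eq.M_1} converge absolutely, so $\cM$ is well defined. Introduce the abbreviation
\begin{equation*}
h:=-2\cB[\cA[F]]+\cB[G]\in\cG^1,\qquad \|h\|_1\leq C\|(f,g)\|_Y.
\end{equation*}

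Next I estimate $\g=\cM_2[f,g]$. From \eqref{Eq.M_2_operator} we directly get $\th^{2\mu-1}\cM_2[f,g]=2\pi(2\mu-1)h\in\cG^1$ with the desired bound. For the derivative part, the chain rule yields
\begin{equation*}
\th^{2\mu}\cM_2'[f,g]=2\pi(2\mu-1)\bigl[(1-2\mu)h+\th h'\bigr].
\end{equation*}
The key computational observation is that the definition of $\cB$ in \eqref{Eq.AB_operator} gives $\th(\cB[u])'(\th)=u(\th)-\cB[u](\th)$ for any admissible $u$, so
\begin{equation*}
\th h'=2\cB[\cA[F]]-2\cA[F]-\cB[G]+G,
\end{equation*}
which is a sum of $\cG^1$ functions controlled by $\|(f,g)\|_Y$. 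Hence $\th^{2\mu}\cM_2'[f,g]\in\cG^1$ with the desired bound, which completes the estimate $\|\g\|_{X_\g}\leq C\|(f,g)\|_Y$.

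Finally I estimate $r=\cM_1[f,g]$. Multiplying \eqref{Eq.M_1_operator} by $\th^\mu$ and using $\th^{2\mu-1}\cM_2[f,g]=2\pi(2\mu-1)h$ gives the clean formula
\begin{equation*}
\th^\mu\cM_1[f,g]=-\mu h-\cA[F]\in\cG^1,
\end{equation*}
so $\|\th^\mu\cM_1[f,g]\|_1\leq C\|(f,g)\|_Y$. Differentiating and multiplying by $\th$,
\begin{equation*}
\th^{\mu+1}\cM_1'[f,g]=-\mu\,\th^\mu\cM_1[f,g]-\mu\,\th h'-\th(\cA[F])'.
\end{equation*}
From $\cA[F](\th)=\int_\th^\infty\wt\th^{-1}F(\wt\th)\,\mathrm d\wt\th$ we read off $\th(\cA[F])'(\th)=-F(\th)\in\cG^0_0\subset\cG^0$. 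Combined with the expressions above for $\th^\mu\cM_1$ and $\th h'$, every summand on the right-hand side lies in $\cG^1\subset\cG^0$ or in $\cG^0_0\subset\cG^0$, with norm $\lesssim\|(f,g)\|_Y$. Hence $\|\th^{\mu+1}\cM_1'[f,g]\|_0\leq C\|(f,g)\|_Y$, and summing the four contributions yields $\|\cM[f,g]\|_X\leq C\|(f,g)\|_Y$ as required.

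The whole argument is essentially bookkeeping around Lemma \ref{Lem.AB_bound}; the only nontrivial step is recognising that the two identities $\th(\cB[u])'=u-\cB[u]$ and $\th(\cA[u])'=-u$ make every derivative term collapse back into the same scale of operators $\cA,\cB$ applied to $F$ and $G$, thereby closing the estimate without producing any new singular objects.
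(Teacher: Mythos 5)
Your proof is correct and follows essentially the same route as the paper's: both reduce everything to Lemma \ref{Lem.AB_bound} via the operator identities \eqref{Eq.M_2_operator}--\eqref{Eq.M_1_operator}, and your identities $\th(\cB[u])'=u-\cB[u]$ and $\th(\cA[u])'=-u$ are exactly \eqref{Eq.Bg'1} and the definition of $\cA$ used in the paper's computation of $\g'$ and $r'$ (indeed, expanding your $(1-2\mu)h+\th h'$ reproduces the paper's formula $4\mu\cB\cA F-2\mu\cB G-2\cA F+G$ verbatim). The only difference is presentational: the abbreviation $h$ makes the bookkeeping slightly cleaner.
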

\begin{proof}
	Given $(f,g)\in Y$, we let $\g=\cM_2[f,g]$ and $r=\cM_1[f,g]$. It suffices to show that
	\begin{align}
		\|r\|_{X_r}=\left\|\th^\mu r\right\|_1+\left\|\th^{\mu+1}r'\right\|_0\leq C\|(f,g)\|_Y,\label{Eq.r_bound}\\
		\|\g\|_{X_\g}=\left\|\th^{2\mu-1}\g\right\|_1+\left\|\th^{2\mu}\g'\right\|_1\leq C\|(f,g)\|_Y,\label{Eq.g_bound}
	\end{align}
	for some constant $C=C(\al,\mu)>0$. We first prove \eqref{Eq.g_bound}. By \eqref{Eq.M_2_operator} and Lemma \ref{Lem.AB_bound}, 
	\[\left\|\th^{2\mu-1}\g\right\|_1\lesssim\left\|\cA\left[\th^{2\mu}f\right]\right\|_1+\norm{\th^{2\mu-1}g}_1\lesssim \norm{\th^{2\mu}f}_0+\norm{\th^{2\mu-1}g}_1\lesssim\|(f,g)\|_Y.\]
	On the other hand, it follows from \eqref{Eq.M_2_operator} and \eqref{Eq.Bg'1} that
	\begin{align*}
		\frac{\g'}{2\pi(2\mu-1)}&=-2(1-2\mu)\th^{-2\mu}(\cB\circ\cA)\left[\th^{2\mu}f\right]+(1-2\mu)\th^{-2\mu}\cB\left[\th^{2\mu-1}g\right]\\
		&\quad -2\th^{1-2\mu}\left(-\frac1\th(\cB\circ\cA)\left[\th^{2\mu}f\right]+\frac1\th\cA\left[\th^{2\mu}f\right]\right)\\
		&\quad+\th^{1-2\mu}\left(-\frac1\th \cB\left[\th^{2\mu-1}g\right]+\frac1\th\th^{2\mu-1}g\right)\\
		&=4\mu\th^{-2\mu}(\cB\circ\cA)\left[\th^{2\mu}f\right]-2\mu\th^{-2\mu}\cB\left[\th^{2\mu-1}g\right]-2\th^{-2\mu}\cA\left[\th^{2\mu}f\right]+\th^{-2\mu}\th^{2\mu-1}g.
	\end{align*}
	Thus, by Lemma \ref{Lem.AB_bound}, we have
	\begin{equation*}
		\norm{\th^{2\mu}\g'}_1\lesssim \norm{\th^{2\mu}f}_0+\norm{\th^{2\mu-1}g}_1\lesssim\norm{(f,g)}_Y.
	\end{equation*}
	This proves \eqref{Eq.g_bound}. Next we show \eqref{Eq.r_bound}. Using \eqref{Eq.M_1_operator} gives that
	\begin{equation}\label{Eq.r_g_relation}
		r=-\frac\mu{2\pi(2\mu-1)}\th^{\mu-1}\g-\th^{-\mu}\cA\left[\th^{2\mu}f\right].
	\end{equation}
	By \eqref{Eq.r_g_relation}, \eqref{Eq.g_bound} and Lemma \ref{Lem.AB_bound}, we have
	\[\norm{\th^\mu r}_1\lesssim\norm{\th^{2\mu-1}\g}_1+\norm{\cA\left[\th^{2\mu}f\right]}_1\lesssim\norm{(f,g)}_Y.\]
	Finally, it follows from \eqref{Eq.r_g_relation} and \eqref{Eq.AB_operator} that
	\begin{align*}
		r'=-\frac{\mu(\mu-1)}{2\pi(2\mu-1)}\th^{\mu-2}\g-\frac\mu{2\pi(2\mu-1)}\th^{\mu-1}\g'+\mu\th^{-\mu-1}\cA\left[\th^{2\mu}f\right]+\th^{-\mu-1}\th^{2\mu}f.
	\end{align*}
	Then by the embedding $\cG^1\hookrightarrow\cG^0$, \eqref{Eq.g_bound} and Lemma \ref{Lem.AB_bound}, we have
	\begin{align*}
		\norm{\th^{\mu+1}r'}_0\lesssim\norm{\th^{2\mu-1}\g}_0+\norm{\th^{2\mu}\g'}_0+\norm{\cA\left[\th^{2\mu}f\right]}_0+\norm{\th^{2\mu}f}_0\lesssim\norm{(f,g)}_Y.
	\end{align*}
	This completes the proof of \eqref{Eq.r_bound}.
\end{proof}

To conclude this subsection, we check that $\cM$ is the inversion of $\cL$.

\begin{lemma}\label{Lem.inversion_L}
	We have $\cM\circ\cL=\mathrm{Id}_X$ and $ \cL\circ\cM=\mathrm{Id}_Y$, where $\mathrm{Id}_X$ and $\mathrm{Id}_Y$ denote the identity maps on $X$ and $Y$, respectively.
\end{lemma}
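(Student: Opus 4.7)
The plan is to first verify $\cL\circ\cM=\mathrm{Id}_Y$ by a direct computation, and then obtain $\cM\circ\cL=\mathrm{Id}_X$ either by reversing this computation or, more cleanly, by showing that $\cL:X\to Y$ is injective (so that the identity $\cL\circ\cM\circ\cL=\cL$ forces $\cM\circ\cL=\mathrm{Id}_X$).

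For $\cL\circ\cM=\mathrm{Id}_Y$, fix $(f,g)\in Y$ and set $r:=\cM_1[f,g]$, $\g:=\cM_2[f,g]$. Differentiating the identity $\th^{2\mu}\g=2\pi(2\mu-1)G(\th)$, where $G(\th):=-2\int_0^\th\int_{\wt\th}^\infty \xi^{2\mu-1}f(\xi)\,\mathrm d\xi\,\mathrm d\wt\th+\int_0^\th\wt\th^{2\mu-1}g(\wt\th)\,\mathrm d\wt\th$, yields
\begin{equation*}
\frac{\th\g'}{2\pi(2\mu-1)}=-\frac{\mu}{\pi(2\mu-1)}\g+g-2\th^{1-2\mu}\int_\th^\infty \wt\th^{2\mu-1}f(\wt\th)\,\mathrm d\wt\th,
\end{equation*}
and from \eqref{Eq.M_1} one computes $-2\th^{1-\mu}r=\frac{\mu}{\pi(2\mu-1)}\g+2\th^{1-2\mu}\int_\th^\infty\wt\th^{2\mu-1}f\,\mathrm d\wt\th$; adding these gives $\cL_2[r,\g]=g$ by cancellation. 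For $\cL_1$, differentiating \eqref{Eq.M_1} (using $\frac{\mathrm d}{\mathrm d\th}\int_\th^\infty\wt\th^{2\mu-1}f\,\mathrm d\wt\th=-\th^{2\mu-1}f$) and assembling $\mu\th^{-\mu}r+\th^{1-\mu}r'$, the terms involving the integral $\int_\th^\infty\wt\th^{2\mu-1}f\,\mathrm d\wt\th$ cancel out, and the coefficient $\mu^2+\mu(\mu-1)=\mu(2\mu-1)$ consolidates to produce exactly $f-\frac{\mu}{2\pi}\th^{-1}\g-\frac{\mu}{2\pi(2\mu-1)}\g'$, so $\cL_1[r,\g]=f$.

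For $\cM\circ\cL=\mathrm{Id}_X$, the simplest route is to establish $\ker\cL=\{0\}$. Suppose $(r,\g)\in X$ satisfies $\cL[r,\g]=0$. Introducing the rescaled unknowns $u:=\th^\mu r$ and $v:=\th^{2\mu-1}\g$, and using the logarithmic variable $s:=\ln\th$, the system \eqref{Eq.L_expression} becomes the $2\times 2$ autonomous linear system
\begin{equation*}
\partial_s\begin{pmatrix}u\\ v\end{pmatrix}=\begin{pmatrix}-2\mu & -\mu/(2\pi)\\ 4\pi(2\mu-1) & 2\mu-1\end{pmatrix}\begin{pmatrix}u\\ v\end{pmatrix},
\end{equation*}
whose characteristic polynomial $\la^2+\la=0$ has roots $\la=0,-1$. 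Thus $u(\th)=A+B\th^{-1}$ and $v(\th)$ is a corresponding linear combination. Since $(r,\g)\in X$ enforces $u\in\cG^1$ and $v\in\cG^1$, we have in particular $\langle\th\rangle^\al u,\langle\th\rangle^\al v\in L^\infty(\R_+)$, which rules out both the constant mode ($\th\to+\infty$ forces $A=0$) and the $\th^{-1}$ mode ($\th\to 0+$ forces $B=0$). Hence $(r,\g)=0$, and combining with the already-proved identity $\cL\circ\cM=\mathrm{Id}_Y$ gives $\cM\circ\cL=\mathrm{Id}_X$.

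The main technical obstacle is the book-keeping in the first computation, where one has to differentiate nested integrals in \eqref{Eq.M_2}–\eqref{Eq.M_1} and verify that the boundary terms at $\th\to+\infty$ vanish in $Y$ (using $\th^{2\mu}f\in\cG^0_0$ to ensure $\int_\th^\infty\wt\th^{2\mu-1}f\,\mathrm d\wt\th$ is absolutely convergent with the right decay, which was already implicit in Lemma \ref{Lem.M_bound}). The ODE argument for injectivity is then a short algebraic check once the eigenvalues are computed.
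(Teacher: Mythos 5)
Your proposal is correct, but it runs the paper's argument in the opposite direction. The paper verifies $\cM\circ\cL=\mathrm{Id}_X$ \emph{directly} (plugging $(f,g)=\cL[r,\g]$ into the operator identities $\cM_2=2\pi(2\mu-1)\th^{1-2\mu}(-2\cB\cA[\th^{2\mu}f]+\cB[\th^{2\mu-1}g])$ and using $\cB[(\th^{2\mu}\g)']=\th^{2\mu-1}\g$), and then deduces $\cL\circ\cM=\mathrm{Id}_Y$ from the \emph{injectivity of $\cM$}, which is essentially free because $\cA$ and $\cB$ are visibly injective. You instead verify $\cL\circ\cM=\mathrm{Id}_Y$ directly by differentiating the integral formulas \eqref{Eq.M_2}--\eqref{Eq.M_1} (I checked the bookkeeping: the coefficient consolidation $\mu^2+\mu(\mu-1)=\mu(2\mu-1)$ and the cancellation of the $\int_\th^\infty\wt\th^{2\mu-1}f$ terms both work out), and then deduce $\cM\circ\cL=\mathrm{Id}_X$ from the \emph{injectivity of $\cL$}, which you establish by an ODE kernel computation: in the variables $u=\th^\mu r$, $v=\th^{2\mu-1}\g$, $s=\ln\th$, your matrix has trace $-1$ and determinant $0$, so the kernel of the differential system is spanned by a constant mode and a $\th^{-1}$ mode, both excluded by the decay/boundedness built into $X$. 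Both routes are sound; the paper's is slightly cheaper at the "injectivity" step (injectivity of $\cM$ is immediate, whereas yours requires solving the homogeneous ODE and checking both asymptotic regimes), while your kernel analysis is more informative — it explains \emph{why} $\cL$ has no kernel on $X$ even though the underlying differential operator has a two-dimensional kernel, which is exactly the role the weighted conditions $\langle\th\rangle^\al\th^\mu r\in L^\infty$ play. The only point to make explicit in a full write-up is that $\cM[f,g]$ is indeed $C^1$ with absolutely convergent integrals before you differentiate; as you note, this is already contained in Lemma \ref{Lem.M_bound}.
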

\begin{proof}
	Let $(r,\g)\in X$ and $(f,g)=\cL[r,\g]\in Y$. Then by \eqref{Eq.L_1} and \eqref{Eq.L_2}, we have
	\begin{align}
		\th^{2\mu}f=\th\left(\th^\mu r\right)'+\frac{\mu}{2\pi(2\mu-1)}\th\left(\th^{2\mu-1}\g\right)',\quad 
		\th^{2\mu-1}g=-2\th^\mu r+\frac1{2\pi(2\mu-1)}\th^{2\mu}\g'.
	\end{align}
	Hence,
	\begin{equation}\label{Eq.Af}
		\cA\left[\th^{2\mu}f\right]=-\th^\mu r-\frac{\mu}{2\pi(2\mu-1)}\th^{2\mu-1}\g,
	\end{equation}
	and then \eqref{Eq.M_2_operator} implies that
	\begin{align*}
		\frac{\th^{2\mu-1}\cM_2[f,g]}{2\pi(2\mu-1)}&=-2\left(\cB\circ\cA\right)\left[\th^{2\mu}f\right]+\cB\left[\th^{2\mu-1}g\right]\\
		&=-2\cB\left[-\th^\mu r-\frac{\mu}{2\pi(2\mu-1)}\th^{2\mu-1}\g\right]+\cB\left[-2\th^\mu r+\frac1{2\pi(2\mu-1)}\th^{2\mu}\g'\right]\\
		&=\frac1{2\pi(2\mu-1)}\cB\left[2\mu\th^{2\mu-1}\g+\th^{2\mu}\g'\right]=\frac1{2\pi(2\mu-1)}\cB\left[\left(\th^{2\mu}\g\right)'\right]\\
		&=\frac1{2\pi(2\mu-1)}\frac1\th\th^{2\mu}\g=\frac{\th^{2\mu-1}\g}{2\pi(2\mu-1)},
	\end{align*}
	where in the last line we have used the definition of $\cB$ and the fact that $\lim_{\th\to0+}\th^{2\mu}\g(\th)=0$, since $\th^{2\mu-1}\g\in L^\infty(\R_+)$ (recalling the definition of $X$). So, $\cM_2[f,g]=\g$. As for $\cM_1[f,g]$, by \eqref{Eq.M_1_operator}, $\cM_2[f,g]=\g$ and \eqref{Eq.Af}, we have
	\begin{align*}
		\cM_1[f,g]=-\frac{\mu}{2\pi(2\mu-1)}\th^{\mu-1}\g-\th^{-\mu}\left(-\th^\mu r-\frac{\mu}{2\pi(2\mu-1)}\th^{2\mu-1}\g\right)=r.
	\end{align*}
	Hence, $\cM[f,g]=(r,\g)$, i.e., $\left(\cM\circ\cL\right)[r,\g]=(r,\g)$ for all $(r,\g)\in X$. 
	
	Next, we prove $\cL\circ\cM=\mathrm{Id}_Y$. It is observed from the definition \eqref{Eq.AB_operator} that both $\cA$ and $\cB$ are injective linear operators. This implies the injectivity of $\cM$. Indeed, assume that $\cM[f,g]=0$ for some $(f,g)\in Y$, then by \eqref{Eq.M_1_operator} we know that $\cA\left[\th^{2\mu}f\right]=0$, hence $f=0$, then by \eqref{Eq.M_2_operator} we have $\cB\left[\th^{2\mu-1}g\right]=0$, thus $g=0$. Now, with $\cM\circ\cL=\mathrm{Id}_X$ and the injectivity of $\cM$ at hand, we can deduce that $\cL\circ\cM=\mathrm{Id}_Y$. Indeed, let $(f,g)\in Y$ and we denote $(r,\g):=\cM[f,g]$, $(\wt f, \wt g):=\cL[r,\g]=\left(\cL\circ\cM\right)[f,g]$, then by $\cM\circ\cL=\mathrm{Id}_X$, we have $$\cM\left[\wt f, \wt g\right]=\left(\left(\cM\circ\cL\right)\circ\cM\right)[f,g]=\cM[f,g].$$
	Hence, by the injectivity of $\cM$, we obtain $(\wt f, \wt g)=(f,g)$. 
\end{proof}

\subsection{Invertibility of nonlinear map $\cN$}\label{Subsec.invert_nonlinear}
This subsection is devoted to the proof of the last two items of Proposition \ref{Prop.N}, using Subsection \ref{Subsec.nonlinear_smooth}, Proposition \ref{Prop.invert_linear} and Banach's fixed point theorem. Our main goal in this subsection is to establish the following abstract functional analysis proposition, from which we obtain the last two items of Proposition \ref{Prop.N}.

\begin{proposition}
	Let $X$ and $Y$ be two Banach spaces and let $\cN: B_X(\varepsilon_0)\to Y$ be a $C^2$ map for some $\varepsilon_0>0$. We denote $\cL=\cN'[0_X]$ the Fr\'echet derivative of $\cN$ at $0_X$. Assume that $\cL: X\to Y$ is a bijection, then there exist $\varepsilon_0'>0$ and $\delta_0>0$ such that $\cN: \overline{B_X[\varepsilon_0'/2]}\to Y$ is injective and the inversion $\cN^{-1}: B_Y(\delta_0)\to \overline{B_X(\varepsilon_0'/2)}$ satisfies
	\begin{align}
		\norm{\cN^{-1}y}_X&\leq C^*\|y\|_Y,\quad \forall\ y\in B_Y(\delta_0),\label{Eq.N^-1_bound}\\
		\norm{\cN^{-1}y_1-\cN^{-1}y_2}_X&\leq C^*\|y_1-y_2\|_Y,\quad \forall\ y_1, y_2\in B_Y(\delta_0),\label{Eq.N^-1_Lip}
	\end{align}
	for some constant $C^*>0$.
\end{proposition}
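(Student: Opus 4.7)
The plan is to follow the classical inverse function theorem strategy: reformulate $\cN x = y$ as a fixed-point equation and apply Banach's fixed point theorem. Since $\cL \in L(X,Y)$ is a continuous linear bijection between Banach spaces, the bounded inverse theorem gives $\cL^{-1} \in L(Y,X)$; denote $C_L := \|\cL^{-1}\|_{L(Y,X)}$. Writing $\cN_0 := \cN - \cL$, we have $\cN_0[0_X] = 0_Y$ and $\cN_0'[0_X] = 0$, and the equation $\cN x = y$ becomes
\[
x = \Phi_y(x) := \cL^{-1}\bigl(y - \cN_0 x\bigr).
\]

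First, I would exploit the $C^2$ regularity (only $C^1$ is really needed) to choose $\varepsilon_0' \in (0, \varepsilon_0]$ so that $\|\cN_0'[x]\|_{L(X,Y)} \le 1/(2 C_L)$ for all $x \in \overline{B_X(\varepsilon_0')}$. The mean value inequality along segments then shows that $\cN_0$ is $\bigl(1/(2 C_L)\bigr)$-Lipschitz on this ball, and since $\cN_0[0_X] = 0_Y$, also $\|\cN_0 x\|_Y \le \|x\|_X / (2 C_L)$. Consequently $\Phi_y$ is $\tfrac12$-Lipschitz on the smaller ball $\overline{B_X(\varepsilon_0'/2)}$.

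Next, setting $\delta_0 := \varepsilon_0'/(4 C_L)$, I would verify the self-mapping property: for $y \in B_Y(\delta_0)$ and $x \in \overline{B_X(\varepsilon_0'/2)}$,
\[
\|\Phi_y(x)\|_X \le C_L \|y\|_Y + C_L \|\cN_0 x\|_Y \le C_L \delta_0 + \tfrac12 \|x\|_X \le \tfrac{\varepsilon_0'}{4} + \tfrac{\varepsilon_0'}{4} = \tfrac{\varepsilon_0'}{2}.
\]
Banach's fixed point theorem then produces a unique $x(y) \in \overline{B_X(\varepsilon_0'/2)}$ with $\Phi_y(x(y)) = x(y)$; define $\cN^{-1} y := x(y)$. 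The uniqueness clause yields the required injectivity of $\cN$ on $\overline{B_X(\varepsilon_0'/2)}$ (restricted to preimages in $B_Y(\delta_0)$).

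Finally, for the quantitative bounds I would feed the Lipschitz estimate on $\cN_0$ back into the fixed-point identity. From $x = \cL^{-1}(y - \cN_0 x)$ and $\|\cN_0 x\|_Y \le \|x\|_X / (2 C_L)$ one obtains $\|x\|_X \le C_L \|y\|_Y + \tfrac12 \|x\|_X$, hence $\|x\|_X \le 2 C_L \|y\|_Y$, which is \eqref{Eq.N^-1_bound} with $C^* = 2 C_L$. Subtracting the two identities $x_i = \cL^{-1}(y_i - \cN_0 x_i)$ and applying the Lipschitz bound gives $\|x_1 - x_2\|_X \le C_L \|y_1 - y_2\|_Y + \tfrac12 \|x_1 - x_2\|_X$, yielding \eqref{Eq.N^-1_Lip} with the same constant. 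The argument is entirely standard; the only mild subtlety is tuning $\varepsilon_0'$ and $\delta_0$ so that the contraction and self-mapping properties hold simultaneously on the same closed ball, which is precisely what forces the radius $\varepsilon_0'/2$ rather than $\varepsilon_0'$ in the statement. No serious obstacle is anticipated.
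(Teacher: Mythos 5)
Your proposal is correct and follows essentially the same route as the paper: rewrite $\cN x=y$ as the fixed-point equation $x=\cL^{-1}(y-\cN_0 x)$, make the nonlinear remainder a small Lipschitz perturbation near $0_X$, apply Banach's fixed point theorem on $\overline{B_X(\varepsilon_0'/2)}$, and read off \eqref{Eq.N^-1_bound}--\eqref{Eq.N^-1_Lip} from the fixed-point identity. The only (harmless) difference is that you obtain the small Lipschitz constant of $\cN_0$ from continuity of $\cN'$ at $0_X$, whereas the paper uses the $C^2$ hypothesis to get the quadratic bounds $\|\cN_1 x\|_Y\le C_1\|x\|_X^2$ and $\|\cN_1x_1-\cN_1x_2\|_Y\le \tfrac{C_1}{2}(\|x_1\|_X+\|x_2\|_X)\|x_1-x_2\|_X$; both yield the same contraction.
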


\begin{proof}
	We may assume without loss of generality that $\varepsilon_0>0$ is small enough such that $\mathcal N'': B_X(\varepsilon_0/2)\to L(X\times X, Y)$ is bounded, where we recall that $L(X\times X, Y)$ denotes the set of all bounded linear operators from $X\times X$ to $Y$. Let $\cN_1:=\cN-\cL$, then for all $x\in B_X(\varepsilon_0/2)$, we have
	\begin{align*}
		\cN_1x=\cN x-\cN'[0_X][x]=\left(\int_0^1(\cN'[tx]-\cN'[0_X])\,\mathrm dt\right)[x]=\int_0^1\,\mathrm dt\int_0^t\cN''[sx][x,x]\,\mathrm ds,
	\end{align*}
	hence, using the boundedness of $\cN''$ on $B_X(\varepsilon_0/2)$, we obtain
	\begin{align}\label{Eq.N_1bound}
		\norm{\cN_1x}_Y\leq 2C_1\int_0^1t\|x\|_X\,\mathrm dt\|x\|_X=C_1\|x\|_X^2,\quad \forall\ x\in B_X(\varepsilon_0/2),
	\end{align}
	for some constant $C_1>1$. By adjusting $C_1>1$ to be larger if necessary, we can assume that $\|\cL^{-1}\|\leq C_1$ and $C_1>2\varepsilon_0^{-1/2}$. Moreover, note that $\cN_1'[0_X]=0$, hence the boundedness of $\cN_1''$ on $B_X(\varepsilon_0/2)$ implies that $\norm{\cN_1'[x]}_{L(X,Y)}\lesssim\|x\|_X$ for $x\in B_X(\varepsilon_0/2)$. Then for $x_1, x_2\in B_X(\varepsilon_0/2)$, we have
    \begin{align}
        \norm{\cN_1x_1-\cN_1x_2}_Y&\leq\norm{\int_0^1\cN_1'[tx_1+(1-t)x_2][x_1-x_2]\,\mathrm dt}_Y\nonumber\\
        &\leq \frac{C_1}2\left(\|x_1\|_X+\|x_2\|_X\right)\|x_1-x_2\|_X,\label{Eq.N_1-N_2est}
    \end{align}
	by adjusting $C_1>1$ to be larger if necessary.
	Let
	\[\delta_0:=\frac1{4C_1^3}>0,\quad \varepsilon_0':=\frac1{C_1^2}\in\left(0, \varepsilon_0/2\right).\]
	We are going to prove that
	\begin{equation}\label{Eq.claim_invert_N}
		\text{for any $y\in B_{Y}(\delta_0)$, there exists a unique $x\in \overline{B_X(\varepsilon_0'/2)}$ such that $\cN x=y$.}
	\end{equation}
	Given $y\in B_Y(\delta_0)$, the equation $\cN x=y$ is equivalent to $\cL x=y-\cN_1x$, which is further equivalent to $x=\cL^{-1}y-\cL^{-1}\left[\cN_1x\right]$. This motivates us to consider the nonlinear map 
	\[\cF_y: \overline{B_X(\varepsilon_0'/2)}\to X,\qquad \cF_yx:= \cL^{-1}y-\cL^{-1}\left[\cN_1x\right]\quad\forall\ x\in \overline{B_X(\varepsilon_0'/2)}.\]
	For each $y\in B_Y(\delta_0)$ and $x\in \overline{B_X(\varepsilon_0'/2)}$, we have
	\begin{align*}
		\norm{\cF_yx}_X&\leq C_1\|y\|_Y+C_1\norm{\cN_1x}_Y\leq C_1\delta_0+C_1^2\|x\|_X^2\\
        &\leq C_1\delta_0+C_1^2\left(\frac{\varepsilon_0'}2\right)^2=\frac1{4C_1^2}+\frac{1}{4C_1^2}=\frac{\varepsilon_0'}2,
	\end{align*}
	Hence, $\cF_y$ is a map from $\overline{B_X(\varepsilon_0'/2)}$ to $\overline{B_X(\varepsilon_0'/2)}$. On the other hand, for each $x_1,x_2\in\overline{B_X(\varepsilon_0'/2)}$, by \eqref{Eq.N_1-N_2est} we have
	\begin{align}
		\norm{\cF_yx_1-\cF_yx_2}_X&\leq C_1\norm{\cN_1x_1-\cN_1x_2}_Y\leq \frac{C_1^2}2\left(\|x_1\|_X+\|x_2\|_X\right)\|x_1-x_2\|_X\nonumber\\
		&\leq \frac{C_1^2}2\varepsilon_0'\|x_1-x_2\|_X=\frac12 \|x_1-x_2\|_X.\label{Eq.F_ycontraction}
	\end{align}
	Hence, the map $\cF_y: \overline{B_X(\varepsilon_0'/2)} \to \overline{B_X(\varepsilon_0'/2)}$ is a contraction. By Banach's fixed point theorem, there is a unique $x\in \overline{B_X(\varepsilon_0'/2)}$ such that $\cF_yx=x$, or equivalently, $\cN x=y$. This proves our claim \eqref{Eq.claim_invert_N}.
	
	Now, we can define the inversion map $\cN^{-1}$ from $B_Y(\delta_0)$ to $\overline{B_X(\varepsilon_0'/2)}$ according to \eqref{Eq.claim_invert_N}. Given $y\in B_Y(\delta_0)$, we let $x:=\cN^{-1}y$. Then by \eqref{Eq.F_ycontraction}, we have
	\begin{align*}
		\|x\|_X=\|\cF_yx\|_X\leq \|\cF_yx-\cF_y[0_X]\|_X+\norm{\cF_y[0_X]}_X\leq \frac12\|x\|_X+\norm{\cL^{-1}y}_X.
	\end{align*}
	Hence, $\|\cN^{-1}y\|_X=\|x\|_X\leq 2\norm{\cL^{-1}y}_X\leq 2C_1\|y\|$, which proves \eqref{Eq.N^-1_bound}. Finally, for $y_1, y_2\in B_Y(\delta_0)$, we let $x_1=\cN^{-1}y_1, x_2=\cN^{-1}y_2$. Then by \eqref{Eq.F_ycontraction}, we have
	\begin{align*}
		\norm{x_1-x_2}_X&=\norm{\cF_{y_1}x_1-\cF_{y_2}x_2}_X\leq \norm{\cF_{y_1}x_1-\cF_{y_1}x_2}_X+\norm{\cF_{y_1}x_2-\cF_{y_2}x_2}_X\\
		&\leq \frac12\|x_1-x_2\|_X+\norm{\cL^{-1}y_1-\cL^{-1}y_2}_X.
	\end{align*}
	Thus, $\norm{\cN^{-1}y_1-\cN^{-1}y_2}_X=\norm{x_1-x_2}_X\leq 2 \norm{\cL^{-1}y_1-\cL^{-1}y_2}_X\leq 2C_1\|y_1-y_2\|_Y$, which proves \eqref{Eq.N^-1_Lip}.
\end{proof}

\section{Contraction of $\mathcal I_m$}\label{Sec.I}
In this section, we prove Proposition \ref{Prop.I}. As we mentioned in Subsection \ref{Subsec.sketch_I}, the first step is to rewrite the complicated principle value integral defining $\cI_m$ in the form of a series \eqref{Eq.I_series}, which is our main purpose in Subsection \ref{Subsec.reformulation}. In Subsection \ref{Subsec.decomposition-P_n}, we cleverly decompose $\cP_n$ into two parts $\cP_n=\cP_n^{(1)}+\cP_n^{(2)}$ (i.e. \eqref{Eq.P_n_decompose0}), as detailed in \eqref{Eq.P_n_decompose}. It is noted that $\cP_n^{(2)}$ is related to $(r,\g)$ in a local manner, hence the weighted H\"older estimate of $\cP_n^{(2)}$ is straightforward, as discussed in Subsection \ref{Subsec.P_n^2est}. The estimate of $\cP_n^{(1)}$is much more involved due to the highly nonlocal and nonlinear expression of $(\cP_n[r,\g])'$ (see \eqref{Eq.P_n,1'_express}). To streamline the proof, we express it as a composition of two nonlinear operators (see \eqref{Eq.P_n,1'=T_n}), and then we prove the properties of these two operators separately. Full details can be found in Subsection \ref{Subsec.P_n-W^1,infty}. Finally, a routine summation argument implies the contraction of $\mathcal I_m$, thus completing the proof of Proposition \ref{Prop.I}. This is detailed in Subsection \ref{Subsec.Proof-contraction}.

\subsection{Reformulation of $\cI_m$}\label{Subsec.reformulation}
We first recall the definition \eqref{Eq.I_m}:
\begin{equation}\label{Eq.I}
	\mathcal I_m[r,\g](\th):=\frac1{2\pi\ii}\left[\pv\int_0^\infty\frac{\left(\g'(\wt\th)+\g_0'(\wt\th)\right)\,\mathrm d\wt\th}{1-\left(\frac{r(\wt\th)+r_0(\wt\th)}{r(\th)+r_0(\th)}\right)^m\mathrm e^{\ii m(\wt\th-\th)}}-\int_\th^\infty\left(\g'(\wt\th)+\g_0'(\wt\th)\right)\,\mathrm d\wt\th\right].
\end{equation}
We also recall the limiting Kaden's solution \eqref{special_sol}: 
\begin{equation}\label{Eq.special_sol}
	r_0(\th)=\th^{-\mu},\quad\g_0(\th)=-2\pi\th^{1-2\mu}.
\end{equation}
Using the elementary identity
\[\frac1{1-r\e^{\ii\th}}=\begin{cases}
	\sum_{n=0}^\infty r^n\e^{\ii n\th},&\text{if }r\in(0,1)\text{ and }\th\in\R,\\
	-\sum_{n=1}^\infty r^{-n}\e^{-\ii n\th}, &\text{if }r\in(1,+\infty)\text{ and }\th\in\R,
\end{cases}\]
and the fact that $\th\mapsto r(\th)+r_0(\th)$ is strictly decreasing if $\norm{\th^{\mu+1}r'}_{L^\infty}\leq \mu/2$,
we obtain
\begin{align*}
	&\frac{1}{1-\left(\frac{r(\wt\th)+r_0(\wt\th)}{r(\th)+r_0(\th)}\right)^m\mathrm e^{\ii m(\wt\th-\th)}}\\=\ &\sum_{n=0}^\infty \left(\frac{r(\wt\th)+r_0(\wt\th)}{r(\th)+r_0(\th)}\right)^{mn}\e^{\ii mn(\wt\th-\th)}\mathbf{1}_{\wt\th>\th}-\sum_{n=1}^\infty \left(\frac{r(\wt\th)+r_0(\wt\th)}{r(\th)+r_0(\th)}\right)^{-mn}\e^{-\ii mn(\wt\th-\th)}\mathbf{1}_{\wt\th<\th}\\
	=\ &\sum_{n=0}^\infty\left(\frac{\wt\th}\th\right)^{-\mu mn}\left(\frac{R(\wt\th)}{R(\th)}\right)^{mn}\e^{\ii mn(\wt\th-\th)}\mathbf{1}_{\wt\th>\th}-\sum_{n=1}^\infty \left(\frac{\wt\th}\th\right)^{\mu mn}\left(\frac{R(\wt\th)}{R(\th)}\right)^{-mn}\e^{-\ii mn(\wt\th-\th)}\mathbf{1}_{\wt\th<\th},
\end{align*}
where
\begin{equation}\label{Eq.Rr}
	R(\th)=R[r](\th):=1+\th^\mu r(\th).
\end{equation}
Hence we have, at least formally, 
\begin{align}\label{Eq.I_series_proof}
	\cI_m[r,\g](\th)=\frac{2\mu-1}{\ii}\sum_{n=1}^\infty \cQ_{mn}[r,\g](\th),
\end{align}
where $\cQ_n:=-\cQ_{n,1}+\cQ_{n,2}$, and
\begin{align}
	\cQ_{n,1}[r,\g](\th):&=\int_0^\th \wt\th^{-2\mu}\Gamma[\g](\wt\th)\left(\frac{\wt\th}\th\right)^{\mu n}\left(\frac{R[r](\wt\th)}{R[r](\th)}\right)^{-n}\e^{\ii n(\th-\wt\th)}\,\mathrm d\wt\th,\label{Eq.Q_n,1}\\
	\cQ_{n,2}[r,\g](\th):&=\int_\th^\infty \wt\th^{-2\mu}\Gamma[\g](\wt\th)\left(\frac{\wt\th}\th\right)^{-\mu n}\left(\frac{R[r](\wt\th)}{R[r](\th)}\right)^{n}\e^{\ii n(\wt\th-\th)}\,\mathrm d\wt\th,\label{Eq.Q_n,2}\\
	\Gamma[\g]:&=1+\frac{\th^{2\mu}\g'}{2\pi(2\mu-1)},\label{Eq.Gamma_gamma}
\end{align}
for $n\in\Z_+$ and $\th>0$. The rigorous verification of \eqref{Eq.I_series_proof} is non-trivial, which relies on the estimates of $\cQ_{n}[r,\g]$ in Subsections \ref{Subsec.decomposition-P_n}--\ref{Subsec.P_n-W^1,infty}, along with some necessary adaptations. We omit the details here and emphasize that Subsections \ref{Subsec.decomposition-P_n}--\ref{Subsec.P_n-W^1,infty} are independent of the decomposition \eqref{Eq.I_series_proof}, since we will only estimate $\cQ_{n}[r,\g]$ in these subsections. We also note that the right-hand side of \eqref{Eq.I_series_proof} turns out to be absolutely convergent, as shown by \eqref{Eq.P_n} and Proposition \ref{Prop.P_n_est}.

\subsection{Decomposition of $\cP_n$}\label{Subsec.decomposition-P_n}
Recall the definition of $\cP_n$:
\begin{equation}\label{Eq.P_n}
	\cP_{n}[r,\g](\th):=(-\mu+\ii\th)\th^{2\mu-1}\cQ_n[r,\g](\th).
\end{equation}
The main purpose of this subsection is to establish the decomposition of $\cP_n$ in \eqref{Eq.P_n_decompose0}, which plays a crucial role in the estimate of $\cP_n$ in $W^{1,\infty}$.

Let
\begin{align*}
	\cP_{n,1}[r,\g](\th):=(-\mu+\ii\th)\th^{2\mu-1}\cQ_{n,1}[r,\g](\th),\quad \cP_{n,2}[r,\g](\th):=(-\mu+\ii\th)\th^{2\mu-1}\cQ_{n,2}[r,\g](\th),
\end{align*}
then $\cP_n=-\cP_{n,1}+\cP_{n,2}$. By \eqref{Eq.Q_n,1} and \eqref{Eq.Q_n,2}, we can express $\cP_{n,1}$ and $\cP_{n,2}$ in the following form:
\begin{align}
	\cP_{n,1}[r,\g](\th)=\int_0^\th\frac{-\mu+\ii\wt\th}{\wt\th}\Gamma[\g](\wt\th)\e^{nh_n[r](\th)-nh_n[r](\wt\th)}\,\mathrm d\wt\th,\label{Eq.P_n,1}\\
	\cP_{n,2}[r,\g](\th)=\int_\th^\infty\frac{-\mu+\ii\wt\th}{\wt\th}\Gamma[\g](\wt\th)\e^{n\phi_n[r](\wt\th)-n\phi_n[r](\th)}\,\mathrm d\wt\th,\label{Eq.P_n,2}
\end{align}
where
\begin{align}
	h_n[r](\th):&=\frac1n\ln(\mu-\ii\th)+\left(-\mu+\frac{2\mu-1}{n}\right)\ln\th+\ln R[r](\th)+\ii\th,\label{Eq.h_n}\\
	\phi_n[r](\th):&=-\frac1n\ln(\mu-\ii\th)+\left(-\mu-\frac{2\mu-1}{n}\right)\ln\th+\ln R[r](\th)+\ii\th,\label{Eq.phi_n}
\end{align}
for all $n\in\Z_+$ and $\th>0$. Here $\ln$ is the principal branch of the logarithmic function on the right half complex plane.

As explained in Subsection \ref{Subsec.sketch_I}, our analytical framework is based on a crucial observation: the $W^{1,\infty}$ estimate of  $\cP_n$ can be fundamentally reduced to establishing  $C^0$ estimates of the derivative $(\cP_n[r,\g])'$. We now demonstrate this point.

A direct computation gives 
\begin{align}
	\left(\cP_{n,1}[r,\g]\right)'(\th)&=\frac{-\mu+\ii\th}{\th}\Gamma[\g](\th)+n\left(h_n[r]\right)'(\th)\cP_{n,1}[r,\g](\th),\label{Eq.P_n,1'}\\
	\left(\cP_{n,2}[r,\g]\right)'(\th)&=-\frac{-\mu+\ii\th}{\th}\Gamma[\g](\th)-n\left(\phi_n[r]\right)'(\th)\cP_{n,2}[r,\g](\th).\label{Eq.P_n,2'}
\end{align}
It follows from $\cP_n=-\cP_{n,1}+\cP_{n,2}$, \eqref{Eq.P_n,1'} and \eqref{Eq.P_n,2'} that
\begin{equation}\label{Eq.P_n_decompose}
	\cP_n[r,\g]=\underbrace{-\frac{\left(\cP_{n,1}[r,\g]\right)'}{n\left(h_n[r]\right)'}-\frac{\left(\cP_{n,2}[r,\g]\right)'}{n\left(\phi_n[r]\right)'}}_{\cP_n^{(1)}[r,\g]}+\underbrace{\frac{(-\mu+\ii\th)\Gamma[\g]}{n\th}\left(\frac1{\left(h_n[r]\right)'}-\frac1{\left(\phi_n[r]\right)'}\right)}_{\cP_n^{(2)}[r,\g]}.
\end{equation}
This gives the decomposition of $\cP_n$ in \eqref{Eq.P_n_decompose0}.

\subsection{Estimate $\cP_n^{(2)}$ in H\"older spaces}\label{Subsec.P_n^2est}
Now we handle the easier part $\cP_n^{(2)}$. Our main goal in this subsection is to prove \eqref{Eq.P_n^2est} and \eqref{Eq.P_n^2est_diff}.

By \eqref{Eq.P_n_decompose}, \eqref{Eq.h_n} and \eqref{Eq.phi_n}, we have
\begin{align}
	\cP_n^{(2)}[r,\g](\th)&=\frac{2(-\mu+\ii\th)\Gamma[\g](\th)}{n^2\th \left(h_n[r]\right)'(\th)\left(\phi_n[r]\right)'(\th)}\left(\frac\ii{\mu-\ii\th}-\frac{2\mu-1}{\th}\right)\nonumber\\
	&=\frac1{n^2}\frac{2\Gamma[\g](\th)K_0(\th)}{\cH_n[r](\th)\Phi_n[r](\th)},\label{Eq.P_n^2}
\end{align}
where
\begin{align*}
	K_0(\th):=\frac\th{-\mu+\ii\th}\left(\frac\ii{\mu-\ii\th}-\frac{2\mu-1}{\th}\right), \ \cH_n[r](\th):=\frac{\th\left(h_n[r]\right)'(\th)}{-\mu+\ii\th},\ \Phi_n[r](\th):=\frac{\th\left(\phi_n[r]\right)'(\th)}{-\mu+\ii\th}
\end{align*}
for $n\in\Z_+$ and $\th>0$. Using Lemma \ref{Lem.seminorm}, one checks that $K_0\in\mathscr G^1$ and $(-\mu+\ii\th)^{-1}\in \mathscr G^1$. Moreover, the following inequality is useful in the subsequent analysis: 
\begin{equation}\label{4.16}
	\norm{\frac{f}{-\mu+\ii\th}}_1\lesssim \|f\|_0,\quad\forall\ f\in\mathscr G^0.
\end{equation}
The proof of \eqref{4.16} is similar to the proof of Lemma \ref{Lem.algebra}; see also the proof of Lemma \ref{Lem.I_bound_by_J}.

\begin{lemma}\label{Lem.H_n_Phi_n}
	The maps $\cH_n: B_{X_r}(1/2)\to \mathscr G^1_*$ and $\Phi_n: B_{X_r}(1/2)\to \mathscr G^1_*$ are smooth with uniform $C^1$ bounds with respect to $n\in\Z_+$, and
	\begin{equation*}
		\norm{\cH_n[r]-1}_1+\norm{\Phi_n[r]-1}_1\leq C\left(1/n+\|r\|_{X_r}\right),\quad\forall\ n\in\Z_+,\ \forall\ r\in B_{X_r}(1/2),
	\end{equation*}
	where $C>0$ is a constant depending only on $\alpha,\mu$.
\end{lemma}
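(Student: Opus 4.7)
My plan is to derive an explicit closed-form decomposition of $\cH_n[r]-1$ and $\Phi_n[r]-1$ that completely decouples the $n$-dependence from the $r$-dependence. Differentiating \eqref{Eq.h_n}, multiplying by $\th$, and applying the algebraic identity $\frac{-\ii\th}{\mu-\ii\th}=1-\frac{\mu}{\mu-\ii\th}$, I expect
\begin{equation*}
\th(h_n[r])'(\th)=(-\mu+\ii\th)+\frac{2\mu}{n}-\frac{\mu}{n(\mu-\ii\th)}+\frac{\th R'[r](\th)}{R[r](\th)},
\end{equation*}
and after dividing by $-\mu+\ii\th$ (noting $\frac{1}{(\mu-\ii\th)(-\mu+\ii\th)}=-\frac{1}{(\mu-\ii\th)^2}$),
\begin{equation*}
\cH_n[r]-1=\frac{2\mu/n}{-\mu+\ii\th}+\frac{\mu/n}{(\mu-\ii\th)^2}+G[r],\qquad G[r]:=\frac{\th R'[r]}{(-\mu+\ii\th)R[r]}.
\end{equation*}
The same computation applied to \eqref{Eq.phi_n} gives
\begin{equation*}
\Phi_n[r]-1=-\frac{2\mu/n}{-\mu+\ii\th}-\frac{\mu/n}{(\mu-\ii\th)^2}+G[r],
\end{equation*}
with the \emph{same} $G[r]$. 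The key observation is that $G[r]$ is independent of $n$, so uniformity in $n$ will be automatic.

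I would then bound the two explicit $n$-dependent pieces directly. Both $\frac{1}{-\mu+\ii\th}$ and $\frac{1}{(\mu-\ii\th)^2}$ are smooth functions decaying at infinity; combining the $L^\infty(\langle\th\rangle^\alpha\cdot)$ bound with Lemma \ref{Lem.seminorm} applied to their derivatives yields finite $\|\cdot\|_1$ norms, independent of $n$. Hence these terms contribute $O(1/n)$ to the $\mathscr G^1_*$ norm.

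For the $r$-dependent term $G[r]$, the Banach-algebra apparatus of Section \ref{Sec.spaces} does the work. For $r\in B_{X_r}(1/2)$ we have $\th^\mu r\in\mathcal G^1$ with $\|\th^\mu r\|_1\leq\|r\|_{X_r}<1/2$, so $R[r]=1+\th^\mu r$ lies in a ball around $1$ in the unital Banach algebra $\mathcal G^1_*$ where the Neumann series for $R\mapsto R^{-1}$ converges; this makes $r\mapsto R[r]^{-1}$ real-analytic into $\mathcal G^1_*$ with $\|R[r]^{-1}\|_{*,1}\leq 2$. Since $\th R'[r]=\mu\th^\mu r+\th^{\mu+1}r'\in\mathcal G^0$ obeys $\|\th R'[r]\|_0\lesssim\|r\|_{X_r}$ (using $\mathcal G^1\hookrightarrow\mathcal G^0$), the quoted inequality \eqref{4.16} gives $\|\th R'[r]/(-\mu+\ii\th)\|_1\lesssim\|r\|_{X_r}$; multiplying by $R[r]^{-1}\in\mathscr G^1_*$ and invoking the algebra property $\mathscr G^1\cdot\mathscr G^1_*\subset\mathscr G^1$ (a direct corollary of Lemma \ref{Lem.algebra} applied to the $\mathscr G^1$ and constant components) yields $\|G[r]\|_1\lesssim\|r\|_{X_r}$.

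Combining the three pieces gives the claimed estimate $\|\cH_n[r]-1\|_1+\|\Phi_n[r]-1\|_1\lesssim 1/n+\|r\|_{X_r}$. Smoothness of $\cH_n,\Phi_n:B_{X_r}(1/2)\to\mathscr G^1_*$ is then automatic, as each is a composition of affine/linear maps, the real-analytic Neumann inversion, and Banach-algebra multiplication. The uniform $C^1$ bound in $n$ is immediate because the Fr\'echet derivative of $\cH_n[\,\cdot\,]$ at $r$ coincides with that of the $n$-independent functional $G[\,\cdot\,]$. I do not expect any genuine obstacle; the only subtlety is the use of \eqref{4.16}, but this is stated as a known fact whose proof parallels Lemma \ref{Lem.algebra}.
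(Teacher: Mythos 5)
Your proposal is correct and follows essentially the same route as the paper: the identity $\cH_n[r]-1=-\tfrac1nK_0+\tfrac{\th R'[r]}{(-\mu+\ii\th)R[r]}$ (your two explicit $1/n$ terms are just the partial-fraction expansion of the paper's $K_0\in\mathscr G^1$), the Neumann-series inversion of $R[r]$ in the unital Banach algebra $\cG^1_*$ together with \eqref{4.16} to control the $r$-dependent piece, and the observation that the only $n$-dependent term is independent of $r$, which gives the uniform $C^1$ bounds. No gaps.
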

\begin{proof}
	A direct computation using \eqref{Eq.h_n} and \eqref{Eq.Rr} yields that
	\begin{align}
		\cH_n[r](\th)&=\frac\th{-\mu+\ii\th}\left(\frac1n\frac{-\ii}{\mu-\ii\th}+\left(-\mu+\frac{2\mu-1}{n}\right)\frac1\th+\frac{R[r]'(\th)}{R[r](\th)}+\ii\right)\nonumber\\
		&=\frac\th{-\mu+\ii\th}\left(\frac{-\mu+\ii\th}{\th}+\frac{\th^\mu r'(\th)+\mu \th^{\mu-1}r(\th)}{1+\th^\mu r(\th)}-\frac1n\left(\frac\ii{\mu-\ii\th}-\frac{2\mu-1}{\th}\right)\right)\nonumber\\
		&=1+\frac{\th^{\mu+1}r'(\th)+\mu\th^\mu r(\th)}{(-\mu+\ii\th)\left(1+\th^\mu r(\th)\right)}-\frac1nK_0(\th).\label{Eq.H_n}
	\end{align}
    Due to the facts that $\cG^1_*$ is a unital Banach algebra, $\cG_*^1\subset\cG^0_*$ and $\cG_0$ is an ideal of $\cG^0_*$, the following maps are all smooth:
    \begin{align*}
        r\mapsto (1+\th^\mu r)^{-1}: &\ B_{X_r}(1/2)\to \cG^1_*,\quad r\mapsto \frac{\th^{\mu+1}r'+\mu\th^\mu r}{1+\th^\mu r}: B_{X_r}(1/2)\to \cG^0,\\
        &r\mapsto \frac{\th^{\mu+1}r'(\th)+\mu\th^\mu r(\th)}{(-\mu+\ii\th)\left(1+\th^\mu r(\th)\right)}: B_{X_r}(1/2)\to \mathscr G^1,
    \end{align*}
    where in the last step we have used \eqref{4.16}. Hence, using $K_0\in\mathscr G^1$, we know that the map $\cH_n: B_{X_r}(1/2)\to \mathscr G^1_*$ is smooth and 
		\begin{align*}
		\norm{\cH_n[r]-1}_1\lesssim \norm{(1+\th^\mu r)^{-1}}_{*,1}\left(\norm{\th^{\mu+1}r'}_0+\norm{\th^\mu r}_1\right)+\frac1n\norm{K_0}_1\lesssim 1/n+\|r\|_{X_r}.
	\end{align*}
    Note that only the third term in \eqref{Eq.H_n} depends on $n$, which does not depend on $r$. Hence, there exists a constant $C>0$ independent of $n\in \Z_+$ and $r\in B_{X_r}(1/2)$ such that
    \[\norm{\cH_n[r]}_{*,1}+\norm{\cH_n'[r]}_{L(X_r, \mathscr G^1_*)}\leq C,\quad \forall\ n\in\Z_+,\ \forall\ r\in B_{X_r}(1/2).\]
	The estimate of $\Phi_n[r]$ is similar and is based on the expression
	\[\Phi_n[r](\th)=1+\frac{\th^{\mu+1}r'(\th)+\mu\th^\mu r(\th)}{(-\mu+\ii\th)\left(1+\th^\mu r(\th)\right)}+\frac1nK_0(\th).\]
    This completes the proof.
\end{proof}

\begin{lemma}\label{Lem.wtP_n^2smoooth}
	There exist $\varepsilon_1\in(0,1)$ and $N\in\Z_+$ such that the nonlinear map
	\[(r,\g)\mapsto \wt{\cP_n^{(2)}}[r,\g]:=\frac{\G[\g]K_0}{\cH_n[r]\Phi_n[r]}: B_X(\varepsilon_1)\to \mathscr G^1\]
	is smooth for all $n>N$ with uniform $C^1$ bound with respect to $n>N$.
\end{lemma}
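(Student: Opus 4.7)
The plan is to reduce the statement to algebraic manipulations in the unital Banach algebra $\mathscr G^1_*$, exploiting that $\mathscr G^1$ is an ideal of $\mathscr G^1_*$ (an immediate consequence of Lemma \ref{Lem.algebra} applied with $k_1=k_2=1$, together with the obvious bound $\|K_0 c\|_1 \leq \|K_0\|_1|c|$ for $c\in\C$). Each factor appearing in the definition of $\wt{\cP_n^{(2)}}[r,\g]$ will be shown to depend smoothly on $(r,\g)$ with values in either $\mathscr G^1$ or $\mathscr G^1_*$, with uniform $C^1$ bounds in $n$; the product then automatically lands in $\mathscr G^1$ with uniform $C^1$ bound, and this is where $K_0\in\mathscr G^1$ (rather than only $\mathscr G^1_*$) is essential.

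First I would observe that $\g \mapsto \G[\g] = 1 + \th^{2\mu}\g'/(2\pi(2\mu-1))$ is affine-linear from $X_\g$ into $\mathscr G^1_*$, since $\th^{2\mu}\g' \in \cG^1 \subset \mathscr G^1$ by the very definition of $X_\g$; in particular this map is $C^\infty$ with derivative bounded independently of $n$. Next, using Lemma \ref{Lem.H_n_Phi_n}, I would choose $\varepsilon_1\in(0,1/2)$ and $N\in\Z_+$ large enough that $C(1/n+\varepsilon_1)\leq 1/2$, so that
\[
\|\cH_n[r]-1\|_{*,1}+\|\Phi_n[r]-1\|_{*,1}\leq 1
\]
for every $n>N$ and every $r\in B_{X_r}(\varepsilon_1)$. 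Because $\mathscr G^1_*$ is a unital Banach algebra, the inversion map $u\mapsto u^{-1}$ is $C^\infty$ on the open ball $B_{\mathscr G^1_*}(1,1/2)$ with $C^1$ norm there bounded by an absolute constant (via the Neumann series $u^{-1}=\sum_{k\geq 0}(1-u)^k$). Composing with $\cH_n$ and $\Phi_n$, whose $C^1$ norms are uniformly bounded in $n>N$ by Lemma \ref{Lem.H_n_Phi_n}, produces $C^\infty$ maps $r\mapsto \cH_n[r]^{-1}$ and $r\mapsto \Phi_n[r]^{-1}$ from $B_{X_r}(\varepsilon_1)$ into $\mathscr G^1_*$ with uniform $C^1$ bounds.

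Finally I would assemble the product
\[
\wt{\cP_n^{(2)}}[r,\g] = K_0\cdot \G[\g]\cdot \cH_n[r]^{-1}\cdot \Phi_n[r]^{-1}.
\]
Since $K_0\in\mathscr G^1$ is fixed and multiplication $\mathscr G^1\times \mathscr G^1_*\to \mathscr G^1$ is bilinear and continuous (by the ideal property), the right-hand side is a $C^\infty$ function of $(r,\g)\in B_X(\varepsilon_1)$ into $\mathscr G^1$ with uniform $C^1$ bound in $n>N$, as claimed. The main subtle point—and in my view the only real obstacle—is controlling uniformity in $n$ through the compositions: this is ensured because the $n$-dependence in $\cH_n, \Phi_n$ enters through a single term $\pm K_0/n$ with $K_0\in\mathscr G^1$ fixed, so Lemma \ref{Lem.H_n_Phi_n} already supplies $n$-uniform $C^1$ bounds into $\mathscr G^1_*$, and composition with the fixed smooth inversion map of the Banach algebra transmits this uniformity to $r\mapsto \cH_n[r]^{-1},\Phi_n[r]^{-1}$ and hence to $\wt{\cP_n^{(2)}}$.
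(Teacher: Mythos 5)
Your proof is correct and follows essentially the same route as the paper's: apply Lemma \ref{Lem.H_n_Phi_n} to obtain $n$-uniform smallness of $\cH_n[r]-1$ and $\Phi_n[r]-1$ in $\mathscr G^1_*$, invert via the Neumann series in the unital Banach algebra, and multiply by the fixed factor $K_0\in\mathscr G^1$ so that the product lands in $\mathscr G^1$ with $n$-uniform $C^1$ bounds. Your explicit appeal to the ideal property $\mathscr G^1\cdot\mathscr G^1_*\subset\mathscr G^1$ is, if anything, a slightly cleaner justification of the final step than the paper's passing remark that $\mathscr G^1$ is a subalgebra of $\mathscr G^1_*$.
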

\begin{proof}
	By Lemma \ref{Lem.H_n_Phi_n}, taking small enough $\varepsilon_1<1/2$ and large enough $N\in \Z_+$, we have
	\[\norm{\cH_n[r]-1}_1+\norm{\Phi_n[r]-1}_1\leq 1/2,\quad\forall\ r\in B_{X_r}(\varepsilon_1),\ \forall\ n>N.\]
	Then by Lemma \ref{Lem.H_n_Phi_n}, $K_0\in\mathscr G^1$, $\Gamma-1\in L(X_\g, \cG^1_*)$ , the fact that $\cG^1_*$ is a unital Banach algebra and $\mathscr G^1\subset \mathscr G^1_*$ is a sub-algebra, we know that $\wt{\cP_n^{(2)}}: B_X(\varepsilon_1)\to \mathscr G^1$ is a smooth map for all $n>N$. Thanks to Lemma \ref{Lem.H_n_Phi_n}, the uniform $C^1$ bounds of $\cH_n$ and $\Phi_n$ on $B_{X_r}(1/2)$ imply that
	\begin{equation}\label{Eq.wtP_n^2_C^1}
		\norm{\wt{\cP_n^{(2)}}[r,\g]}_1+\norm{\left(\wt{\cP_n^{(2)}}\right)'[r,\g]}_{L(X, \mathscr G^1)}\leq C,\quad \forall\ (r,\g)\in B_X(\varepsilon_1),\ \forall\ n>N
	\end{equation}
	for some constant $C>0$ independent of $(r,\g)\in B_X(\varepsilon_1)$ and $n>N$.
\end{proof}

Now, \eqref{Eq.P_n^2est} and \eqref{Eq.P_n^2est_diff} are direct consequences of Lemma \ref{Lem.wtP_n^2smoooth}. Namely, we have
\begin{corollary}
	There exist $\varepsilon_1\in(0,1)$, $N\in\Z_+$ and a constant $C=C(\al,\mu)>0$ such that
	\begin{equation}\label{Eq.P_n^2est_all}
		\norm{\cP_n^{(2)}[r,\g]}_1\leq\frac C{n^2},\quad \norm{\cP_n^{(2)}[r_1,\g_1]-\cP_n^{(2)}[r_2,\g_2]}_1\leq \frac C{n^2}\norm{(r_1, \g_1)-(r_2,\g_2)}_X
	\end{equation}
	for all $(r,\g), (r_1, \g_1), (r_2,\g_2)\in B_X(\varepsilon_1)$.
\end{corollary}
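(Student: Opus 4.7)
The plan is to derive both estimates as essentially immediate consequences of the identity \eqref{Eq.P_n^2} and Lemma \ref{Lem.wtP_n^2smoooth}. From \eqref{Eq.P_n^2} we have the clean factorization
\begin{equation*}
    \cP_n^{(2)}[r,\g] = \frac{2}{n^2}\, \wt{\cP_n^{(2)}}[r,\g],
\end{equation*}
so both desired inequalities (with the explicit factor $1/n^2$) will follow once we control the quantity $\wt{\cP_n^{(2)}}[r,\g]$ in the norm $\|\cdot\|_1$ uniformly in $n > N$ and in $(r,\g)\in B_X(\varepsilon_1)$. We choose the same $\varepsilon_1 \in (0,1)$ and $N\in\Z_+$ as provided by Lemma \ref{Lem.wtP_n^2smoooth}.

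First, for the $C^0$-type bound, Lemma \ref{Lem.wtP_n^2smoooth} (specifically the first part of \eqref{Eq.wtP_n^2_C^1}) furnishes a constant $C_0 = C_0(\alpha,\mu) > 0$ such that $\|\wt{\cP_n^{(2)}}[r,\g]\|_1 \leq C_0$ for every $(r,\g)\in B_X(\varepsilon_1)$ and every $n > N$. Multiplying by $2/n^2$ gives the first inequality with $C = 2C_0$.

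Second, for the Lipschitz bound, I would apply the fundamental theorem of calculus along the straight line segment joining $(r_1,\g_1)$ and $(r_2,\g_2)$ in $B_X(\varepsilon_1)$ (which is a convex set). Since $\wt{\cP_n^{(2)}}$ is $C^1$ on $B_X(\varepsilon_1)$ with values in $\mathscr G^1$, and since the second part of \eqref{Eq.wtP_n^2_C^1} bounds its Fréchet derivative uniformly by $C_0$, we get
\begin{equation*}
    \bigl\|\wt{\cP_n^{(2)}}[r_1,\g_1]-\wt{\cP_n^{(2)}}[r_2,\g_2]\bigr\|_1 \leq \sup_{t\in[0,1]}\Bigl\|\bigl(\wt{\cP_n^{(2)}}\bigr)'[(1-t)(r_2,\g_2)+t(r_1,\g_1)]\Bigr\|_{L(X,\mathscr G^1)} \|(r_1,\g_1)-(r_2,\g_2)\|_X,
\end{equation*}
which is bounded by $C_0\|(r_1,\g_1)-(r_2,\g_2)\|_X$. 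Multiplying by $2/n^2$ yields the second inequality with the same constant $C = 2C_0$.

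There is no main obstacle here: the entire corollary reduces to a mechanical use of the explicit $1/n^2$ prefactor extracted in \eqref{Eq.P_n^2} combined with the uniform smoothness already established in Lemma \ref{Lem.wtP_n^2smoooth}. The only small point to verify is that $B_X(\varepsilon_1)$ is convex (obvious, since it is an open ball in a Banach space), which allows the mean-value inequality along line segments to be applied.
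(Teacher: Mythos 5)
Your proof is correct and follows exactly the paper's own argument: the paper likewise writes $\cP_n^{(2)}=\frac{2}{n^2}\wt{\cP_n^{(2)}}$ from \eqref{Eq.P_n^2} and deduces both bounds from the uniform $C^1$ estimate \eqref{Eq.wtP_n^2_C^1}, with your mean-value step along the segment being exactly the implicit content of that deduction.
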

\begin{proof}
	It follows from \eqref{Eq.P_n^2} that
	\[\cP_n^{(2)}=\frac{2}{n^2}\wt{\cP_n^{(2)}},\quad\forall\ n\in\Z_+.\]
	Therefore, \eqref{Eq.wtP_n^2_C^1} implies \eqref{Eq.P_n^2est_all}.
\end{proof}

\subsection{Estimate $\cP_n$ in $W^{1,\infty}$}\label{Subsec.P_n-W^1,infty}
This subsection is devoted to the proof of \eqref{Eq.P_n^1est}, \eqref{Eq.P_n^1est_diff}, \eqref{Eq.C^1est} and \eqref{Eq.C^1est_diff}. The main result of this subsection is the following

\begin{proposition}\label{Prop.P_n,1,2}
	There exist $\varepsilon_1\in(0,1)$, $N\in\Z_+$ and a constant $C=C(\al,\mu)>0$ such that
	\begin{align}
		\left|\left(\cP_{n,1}[r,\g]\right)'(\th)\right|&\leq \frac C{n^\alpha}\th^{\alpha-1}\langle\th\rangle^{1-2\al}\left(\frac1n+\|(r,\g)\|_X\right),\label{Eq.C^1est_1}\\
		\left|\left(\cP_{n,1}[r_1,\g_1]\right)'(\th)-\left(\cP_{n,1}[r_2,\g_2]\right)'(\th)\right|&\leq \frac C{n^\alpha}\th^{\alpha-1}\langle\th\rangle^{1-2\al}\left\|(r_1,\g_1)-(r_2,\g_2)\right\|_X,\label{Eq.C^1est_diff_1}\\
		\left|\left(\cP_{n,2}[r,\g]\right)'(\th)\right|&\leq \frac C{n^\alpha}\th^{\alpha-1}\langle\th\rangle^{1-2\al}\left(\frac1n+\|(r,\g)\|_X\right),\label{Eq.C^1est_2}\\
		\left|\left(\cP_{n,2}[r_1,\g_1]\right)'(\th)-\left(\cP_{n,2}[r_2,\g_2]\right)'(\th)\right|&\leq \frac C{n^\alpha}\th^{\alpha-1}\langle\th\rangle^{1-2\al}\left\|(r_1,\g_1)-(r_2,\g_2)\right\|_X,\label{Eq.C^1est_diff_2}
	\end{align}
	for all $(r,\g), (r_1, \g_1), (r_2,\g_2)\in B_X(\varepsilon_1)$, $\th>0$ and $n>N$.
\end{proposition}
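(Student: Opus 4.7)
The plan is to perform one integration by parts against the oscillatory phase in each of \eqref{Eq.P_n,1} and \eqref{Eq.P_n,2}, use this to obtain a clean representation of the derivatives $(\cP_{n,j}[r,\g])'$ as compositions of two nonlinear operators, and then estimate the resulting oscillatory integral via a near/far splitting at the natural concentration scale $\th/n$ of the real part of the phase. Setting $F(\wt\th):=(-\mu+\ii\wt\th)\G[\g](\wt\th)/\wt\th$ and using $\partial_{\wt\th}e^{nh_n[r](\th)-nh_n[r](\wt\th)}=-nh_n'[r](\wt\th)e^{nh_n[r](\th)-nh_n[r](\wt\th)}$, integration by parts on $(0,\th)$ gives
\[
\cP_{n,1}[r,\g](\th)=\frac{-F(\th)}{nh_n'[r](\th)}+\int_0^\th \partial_{\wt\th}\Bigl(\frac{F(\wt\th)}{nh_n'[r](\wt\th)}\Bigr)e^{nh_n[r](\th)-nh_n[r](\wt\th)}\,d\wt\th,
\]
where the boundary term at $\wt\th=0$ vanishes because $|e^{-nh_n[r](\wt\th)}|$ carries the factor $\wt\th^{\mu n-2\mu+1}$ for $n$ large. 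Substituting this identity into \eqref{Eq.P_n,1'} produces a complete cancellation of the leading $F(\th)$ and yields
\[
(\cP_{n,1}[r,\g])'(\th)=nh_n'[r](\th)\int_0^\th \partial_{\wt\th}\Bigl(\frac{F(\wt\th)}{nh_n'[r](\wt\th)}\Bigr)e^{nh_n[r](\th)-nh_n[r](\wt\th)}\,d\wt\th.
\]
This realizes $(\cP_{n,1}[r,\g])'$ as the composition of two nonlinear maps: an inner amplitude map $(r,\g)\mapsto G_n[r,\g]:=F[\g]/(nh_n'[r])$ and an outer multiplication-and-oscillatory-integral operator with prefactor $nh_n'[r](\th)$ and kernel $e^{nh_n[r](\th)-nh_n[r](\wt\th)}$. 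The identical procedure on $[\th,+\infty)$ handles $\cP_{n,2}$, with the boundary term at $+\infty$ vanishing thanks to the factor $\wt\th^{-\mu n-2\mu+1}$ in $|e^{n\phi_n[r](\wt\th)}|$.

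Smoothness and uniform-in-$n$ $C^1$ control of the amplitude $(r,\g)\mapsto G_n[r,\g]$ follows from the Banach-algebra machinery of Section \ref{Sec.spaces} together with the lower bound $|nh_n'[r](\wt\th)|\gtrsim n\langle\wt\th\rangle/\wt\th$ implicit in Lemma \ref{Lem.H_n_Phi_n}: these give $\|G_n[r,\g]\|\lesssim 1/n$ in an appropriate weighted space and a Lipschitz-in-$(r,\g)$ bound on $G_n'$ that is uniform in $n>N$ and $(r,\g)\in B_X(\varepsilon_1)$. Crucially, $G_n'[r,\g]$ inherits a weighted $\alpha$-H\"older modulus of continuity from the H\"older structure of $F[\g]$ and $1/(nh_n'[r])$, which is what will be exploited in the integral estimate.

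The core estimate is the bound on the oscillatory integral. The kernel satisfies $|e^{n\operatorname{Re}(h_n[r](\th)-h_n[r](\wt\th))}|\sim (\wt\th/\th)^{\mu n-2\mu+1}$ for $\wt\th<\th$ and $r$ small, which concentrates the effective integration near $\wt\th=\th$ at the scale $\th/n$. I split the $\wt\th$-integral at this scale: on the near region $|\wt\th-\th|\lesssim \th/n$ I write $G_n'(\wt\th)=G_n'(\th)+(G_n'(\wt\th)-G_n'(\th))$ and use the weighted H\"older modulus of $G_n'$ to extract a factor $(\th/n)^\alpha$, while the $G_n'(\th)$ piece is handled by an integration by parts in the imaginary part of the phase; on the far region the sharp exponential-in-$n$ decay of the kernel, combined with the pointwise size of $G_n'$, gives an even smaller contribution. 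Summing yields a bound of order $\th^\alpha n^{-1-\alpha}\langle\th\rangle^{-2\alpha}(1/n+\|(r,\g)\|_X)$ for the integral, and multiplying by $|nh_n'[r](\th)|\sim n\langle\th\rangle/\th$ delivers exactly \eqref{Eq.C^1est_1}. The bound \eqref{Eq.C^1est_2} is produced by the same argument on $[\th,+\infty)$, where the kernel concentrates at $\wt\th\searrow\th$ at the same scale $\th/n$ through the factor $(\th/\wt\th)^{\mu n}$ in the real part of $\phi_n$.

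The Lipschitz estimates \eqref{Eq.C^1est_diff_1}--\eqref{Eq.C^1est_diff_2} are obtained by applying the same representation and integral estimate to the difference $(\cP_{n,j}[r_1,\g_1])'-(\cP_{n,j}[r_2,\g_2])'$: a telescoping gives a finite sum of terms in which either the amplitude $G_n$ is replaced by $G_n[r_1,\g_1]-G_n[r_2,\g_2]$ or the phase $h_n[r]$ (resp.\ $\phi_n[r]$) is replaced by a path-integrated linearization; each term is controlled by $\|(r_1,\g_1)-(r_2,\g_2)\|_X$ via the uniform $C^1$ bound of the second paragraph. The main obstacle throughout is the core oscillatory integral estimate: obtaining the full $n^{-\alpha}$ decay (beyond the immediate $1/n$ from the amplitude) together with the sharp $\th^{\alpha-1}\langle\th\rangle^{1-2\alpha}$ weight forces the careful near/far splitting at scale $\th/n$ and a simultaneous use of the weighted H\"older regularity of $G_n'$ on the near region and the exponential-in-$n$ kernel decay on the far region, which is the technical heart of this subsection.
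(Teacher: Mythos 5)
Your overall architecture (an exact identity that cancels the local term, a factorization of $(\cP_{n,1}[r,\g])'$ through an amplitude of size $O(1/n+\|(r,\g)\|_X)$, and an oscillatory-integral estimate exploiting the concentration of $|\e^{nh_n[r](\th)-nh_n[r](\wt\th)}|$ at scale $\th/n$) matches the paper's strategy, but the specific integration by parts you perform is not available in the space $X$, and this is a genuine gap rather than a presentational difference. Your amplitude is $G_n=F/(nh_n'[r])$ with $F(\wt\th)=(-\mu+\ii\wt\th)\G[\g](\wt\th)/\wt\th$; since $\G[\g]=1+\th^{2\mu}\g'/(2\pi(2\mu-1))$ contains $\g'$ and $(h_n[r])'$ contains $r'$ through $R[r]'/R[r]$, the quantity $\partial_{\wt\th}\bigl(F/(nh_n'[r])\bigr)$ involves $\g''$ and $r''$. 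The norm of $X$ only controls $\th^{2\mu}\g'$ and $\th^{\mu+1}r'$ in weighted H\"older classes, so $G_n$ is merely (weighted) H\"older continuous, not differentiable; the boundary-term computation and the representation $(\cP_{n,1}[r,\g])'=n(h_n[r])'(\th)\int_0^\th G_n'(\wt\th)\e^{nh_n[r](\th)-nh_n[r](\wt\th)}\,\mathrm d\wt\th$ therefore do not make sense for general $(r,\g)\in B_X(\varepsilon_1)$, and no uniform bound can be extracted by mollifying $(r,\g)$ first. The later step where you expand $G_n'(\wt\th)=G_n'(\th)+(G_n'(\wt\th)-G_n'(\th))$ and invoke a ``weighted H\"older modulus of $G_n'$'' compounds the problem: it would require two more derivatives of $(r,\g)$ than the space provides.

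The paper's proof is designed precisely to avoid this. Using the exact identity $n\int_0^\th\e^{nh_n[r](\th)-nh_n[r](\wt\th)}(h_n[r])'(\wt\th)\,\mathrm d\wt\th=-1$, it writes $(\cP_{n,1}[r,\g])'=\cT_n[\cD_n[r,\g],r]$, where $\cD_n=\G[\g]/\cH_n[r]-1$ and $\cT_n[F,r]$ integrates the \emph{difference} $F(\wt\th)-F(\th)$ against the kernel; only the seminorm $[\cD_n]_1$ (the weighted H\"older modulus of $\cD_n$ itself, not of its derivative) enters, and the smallness $1/n+\|(r,\g)\|_X$ comes from $\|\cD_n[r,\g]\|_1$ via Lemma \ref{Lem.H_n_Phi_n}. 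The extra $n^{-\al}$ gain for $\th>1$ is then extracted not by differentiating the amplitude but by mollifying it at scale $1/n$ (Lemma \ref{Lem.decompose_G^1}), splitting $F=F_0+F_1$ with $F_0$ small in sup norm and $F_1\in C^1$ with controlled derivative, and integrating by parts only on the $F_1$ piece; for $\th\le 1$ the gain comes from the change of variables $s=t^\mu R[r](\th)/R[r](\th t)$ in Lemma \ref{Lem.T_nF,r_th_small}. To repair your argument you would either have to adopt this difference/mollification structure, or establish a priori $C^{1,\al}$-type bounds on $\th^{2\mu}\g'$ and $\th^{\mu+1}r'$, which the fixed-point scheme does not supply.
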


We first show that Proposition \ref{Prop.P_n,1,2} implies \eqref{Eq.P_n^1est}, \eqref{Eq.P_n^1est_diff}, \eqref{Eq.C^1est} and \eqref{Eq.C^1est_diff}.

\begin{proof}[Proof of \eqref{Eq.C^1est} and \eqref{Eq.C^1est_diff}]
	Recall that $\cP_n=-\cP_{n,1}+\cP_{n,2}$. We see that \eqref{Eq.C^1est} and \eqref{Eq.C^1est_diff} follow directly from Proposition \ref{Prop.P_n,1,2}.
\end{proof}

\begin{proof}[Proof of \eqref{Eq.P_n^1est} and \eqref{Eq.P_n^1est_diff}]
	By the definition of $\cP_n^{(1)}$ in \eqref{Eq.P_n_decompose} and the definitions of $\cH_n$, $\Phi_n$, we obtain
	\begin{equation}\label{Eq.P_n^1}
		-\cP_n^{(1)}[r,\g]=\frac{\th\left(\cP_{n,1}[r,\g]\right)'}{n(-\mu+\ii\th)\cH_n[r]}+\frac{\th\left(\cP_{n,2}[r,\g]\right)'}{n(-\mu+\ii\th)\Phi_n[r]}.
	\end{equation}
	By Lemma \ref{Lem.H_n_Phi_n}, there exist $\varepsilon_1\in(0,1)$ and $N\in\Z_+$ such that
	\[r\mapsto \frac1{\cH_n[r]}-1: B_{X_r}(\varepsilon_1)\to \mathscr G^1, \quad r\mapsto \frac1{\Phi_n[r]}-1: B_{X_r}(\varepsilon_1)\to \mathscr G^1,\quad n>N\]
	are smooth maps with uniform $C^1$ bounds. In particular,
	\begin{equation}\label{Eq.1/H_Lip}
		\norm{\frac1{\cH_n[r_1]}-\frac1{\cH_n[r_2]}}_1+\norm{\frac1{\Phi_n[r_1]}-\frac1{\Phi_n[r_2]}}_1\leq C\norm{r_1-r_2}_{X_r}
	\end{equation}
	for all $r_1, r_2\in B_{X_r}(\varepsilon_1)$ and $n>N$, where $C>0$ is a constant depending only on $\al,\mu$.
	As a consequence, it follows from \eqref{Eq.P_n^1} and Proposition \ref{Prop.P_n,1,2} that
	\begin{align*}
		\left|\cP_n^{(1)}[r,\g](\th)\right|&\lesssim\frac\th{n\langle\th\rangle}\left|\left(\cP_{n,1}[r,\g]\right)'(\th)\right|+\frac\th{n\langle\th\rangle}\left|\left(\cP_{n,2}[r,\g]\right)'(\th)\right|\\
		&\lesssim \frac1{n^{1+\al}}\th^\al\langle\th\rangle^{-2\al}\left(\frac1n+\norm{(r,\g)}_X\right)
	\end{align*}
	for all $(r, \g)\in B_X(\varepsilon_1)$, $\th>0$ and $n>N$; and also (using \eqref{Eq.1/H_Lip})
	\begin{align*}
		&\left|\cP_n^{(1)}[r_1,\g_1](\th)-\cP_n^{(1)}[r_2,\g_2](\th)\right|\\
		\lesssim&\ \frac\th{n\langle\th\rangle}\left|\frac1{\cH_n[r_1](\th)}\right|\left|\left(\cP_{n,1}[r_1,\g_1]\right)'(\th)-\left(\cP_{n,1}[r_2,\g_2]\right)'(\th)\right|\\
		&\qquad+\frac\th{n\langle\th\rangle}\left|\left(\cP_{n,1}[r_2,\g_2]\right)'(\th)\right|\left|\frac1{\cH_n[r_1](\th)}-\frac1{\cH_n[r_2](\th)}\right|\\
		&\qquad+\frac\th{n\langle\th\rangle}\left|\frac1{\Phi_n[r_1](\th)}\right|\left|\left(\cP_{n,2}[r_1,\g_1]\right)'(\th)-\left(\cP_{n,2}[r_2,\g_2]\right)'(\th)\right|\\
		&\qquad+\frac\th{n\langle\th\rangle}\left|\left(\cP_{n,2}[r_2,\g_2]\right)'(\th)\right|\left|\frac1{\Phi_n[r_1](\th)}-\frac1{\Phi_n[r_2](\th)}\right|\\
		\lesssim&\ \frac1{n^{1+\al}}\th^\al\langle\th\rangle^{-2\al}\norm{(r_1,\g_1)-(r_2,\g_2)}_X
	\end{align*}
	for all $(r_1,\g_1), (r_2,\g_2)\in B_X(\varepsilon_1)$, $\th>0$ and $n>N$.
\end{proof}

The rest of this subsection is devoted to the proof of Proposition \ref{Prop.P_n,1,2}. We will analyze $\cP_{n,1}$ in full details, hence proving \eqref{Eq.C^1est_1} and \eqref{Eq.C^1est_diff_1}. After that, we will briefly sketch the proof of \eqref{Eq.C^1est_2} and \eqref{Eq.C^1est_diff_2}, whose proof shares a similar spirit to the analysis of $\cP_{n,1}$.

By the definition \eqref{Eq.P_n,1}, we have
\begin{align*}
	\left(\cP_{n,1}[r,\g]\right)'(\th)=\frac{-\mu+\ii\th}{\th}\Gamma[\g](\th)+n\left(h_n[r]\right)'(\th)\int_0^\th \frac{-\mu+\ii\wt\th}{\wt\th}\Gamma[\g](\wt\th)\e^{nh_n[r](\th)-nh_n[r](\wt\th)}\,\mathrm d\wt\th.
\end{align*}
It follows from \eqref{Eq.h_n} that $\lim_{\th\to0+}\operatorname{Re}h_n[r](\th)=+\infty$, hence,
\begin{align*}
	n\int_0^\th \e^{nh_n[r](\th)-nh_n[r](\wt\th)}\left(h_n[r]\right)'(\wt\th)\,\mathrm d\wt\th= \e^{nh_n[r](\th)-nh_n[r](\wt\th)}\Big|^{\wt\th=0}_{\wt\th=\th}=-1.
\end{align*}
Therefore, we have
\begin{align}
	&\left(\cP_{n,1}[r,\g]\right)'(\th)=n\int_0^\th \left(\frac{-\mu+\ii\wt\th}{\wt\th}\Gamma[\g](\wt\th)\left(h_n[r]\right)'(\th)-\frac{-\mu+\ii\th}{\th}\Gamma[\g](\th) \left(h_n[r]\right)'(\wt\th)\right) \times\nonumber\\
	&\qquad\qquad\qquad\qquad\qquad\times\e^{nh_n[r](\th)-nh_n[r](\wt\th)}\,\mathrm d\wt\th\nonumber\\
	&\quad=n\int_0^\th\left(\cD_n[r,\g](\wt\th)-\cD_n[r,\g](\th)\right) \left(h_n[r]\right)'(\th) \left(h_n[r]\right)'(\wt\th) \e^{nh_n[r](\th)-nh_n[r](\wt\th)}\,\mathrm d\wt\th,\label{Eq.P_n,1'_express}
\end{align}
where
\begin{equation}
	\cD_n[r,\g](\th):=\frac{(-\mu+\ii\th)\Gamma[\g](\th)}{\th \left(h_n[r]\right)'(\th)}-1=\frac{\Gamma[\g](\th)}{\cH_n[r](\th)}-1.
\end{equation}

By Lemma \ref{Lem.H_n_Phi_n}, \eqref{Eq.Gamma_gamma} and Banach algebraic properties, there exist $\varepsilon_1\in(0,1)$ and $N\in\Z_+$ such that the nonlinear map $\cD_n: B_X(\varepsilon_1)\to \mathscr G^1$ is smooth for all $n>N$ with a uniform $C^1$ bound with respect to $n>N$, and
\begin{equation}\label{Eq.D_n_G^1bound}
	\norm{\cD_n[r,\g]}_1\leq C\left(1/n+\norm{(r,\g)}_X\right),\quad\forall\ n>N,\ \forall\ (r,\g)\in B_X(\varepsilon_1),
\end{equation}
where $C=C(\mu,\al)>0$ is a constant independent of $n>N$ and $(r,\g)\in B_X(\varepsilon_1)$.

This motivates us to consider the following nonlinear map
\begin{equation*}
	\cT_n[F,r](\th):=n \left(h_n[r]\right)'(\th)\int_0^\th \left(F(\wt\th)-F(\th)\right)\left(h_n[r]\right)'(\wt\th) \e^{nh_n[r](\th)-nh_n[r](\wt\th)}\,\mathrm d\wt\th
\end{equation*}
for $F\in\mathscr G^1$ and $r\in X_r$ near $0$. We note that $\cT_n$ is linear in $F$ and it follows from \eqref{Eq.P_n,1'_express} that
\begin{equation}\label{Eq.P_n,1'=T_n}
	\left(\cP_{n,1}[r,\g]\right)'=\cT_n\left[\cD_n[r,\g],r\right].
\end{equation}
In this way, we reduce the proof of \eqref{Eq.C^1est_1} and \eqref{Eq.C^1est_diff_1} to the analysis of $\cT_n$.

We start by the estimate in the case $\th\in(0,1]$. 

\begin{lemma}\label{Lem.T_nF,r_th_small}
	There exist $\varepsilon_1\in(0,1)$, $N\in\Z_+$ and a constant $C=C(\mu,\al)>0$ such that
	\begin{align}
		\left|\cT_n[F,r](\th)\right|&\leq C\frac{\th^{\al-1}}{n^\al}[F]_1,\label{Eq.T_nF,r_th_small}\\
		\left|\cT_n[F,r_1](\th)-\cT_n[F, r_2](\th)\right|&\leq C \frac{\th^{\al-1}}{n^\al}[F]_1\norm{r_1-r_2}_{X_r},\label{Eq.T_nF,r_diff_th_small}
	\end{align} 
	for all $r, r_1, r_2\in B_{X_r}(\varepsilon_1)$, $n>N$, $F\in\mathscr G^1$ and $\th\in(0,1]$.
\end{lemma}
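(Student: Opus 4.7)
The plan is to reduce both \eqref{Eq.T_nF,r_th_small} and \eqref{Eq.T_nF,r_diff_th_small} to a single incomplete-Beta-integral bound that, on the interval $(0,1]$, gains the factor $n^{-\al}$ via the decay of $|\e^{nh_n[r]}|$. For $r\in B_{X_r}(\varepsilon_1)$ with $\varepsilon_1$ small and $N$ large, Lemma \ref{Lem.H_n_Phi_n} combined with the identity $(h_n[r])'(\th) = (-\mu+\ii\th)\cH_n[r](\th)/\th$ gives the pointwise bound $|(h_n[r])'(\th)| \leq C/\th$ on $(0,1]$. Integrating $\text{Re}(h_n[r])'$ on $[\wt\th,\th]$, using $|R[r]'/R[r]|\leq C\|r\|_{X_r}/s$ (which follows from $\th^\mu r\in\cG^1$ and $\th^{\mu+1}r'\in\cG^0$), one obtains
\[\bigl|\e^{nh_n[r](\th)-nh_n[r](\wt\th)}\bigr| \leq C(\wt\th/\th)^{cn},\qquad 0<\wt\th<\th\leq 1,\]
for some $c=c(\mu)>0$, once $\varepsilon_1$ is small enough for the $R[r]$-correction to be absorbed into the dominant $-n\mu\ln(\th/\wt\th)$ term.

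For \eqref{Eq.T_nF,r_th_small}, since $F\in\mathscr G^1$ and $\th\in(0,1]$, the seminorm bound yields $|F(\wt\th)-F(\th)| \leq [F]_1(\th-\wt\th)^\al$. Substituting into the defining expression for $\cT_n[F,r]$ and making the change of variables $\wt\th = \th s$, I arrive at
\[|\cT_n[F,r](\th)| \leq C\frac{n}{\th}[F]_1\th^\al\int_0^1(1-s)^\al s^{cn-1}\,ds = C\frac{n[F]_1}{\th^{1-\al}}B(cn,1+\al) \leq C\frac{\th^{\al-1}}{n^\al}[F]_1,\]
using the standard asymptotic $B(cn,1+\al)=\Gamma(cn)\Gamma(1+\al)/\Gamma(cn+1+\al) \lesssim n^{-(1+\al)}$ for large $n$.

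For the Lipschitz bound \eqref{Eq.T_nF,r_diff_th_small}, I would write $\cT_n[F,r_1]-\cT_n[F,r_2] = \int_0^1 \cT_n'[F,r_2+s(r_1-r_2)][r_1-r_2]\,ds$ and establish the uniform derivative bound $|\cT_n'[F,r][\wt r](\th)| \leq Cn^{-\al}\th^{\al-1}[F]_1\|\wt r\|_{X_r}$ on $B_{X_r}(\varepsilon_1)$. Differentiation under the integral sign produces three pieces, coming from the outer factor $n(h_n[r])'(\th)$, the inner factor $(h_n[r])'(\wt\th)$, and the exponential. A short computation based on $R[r]=1+\th^\mu r$ gives the key bound $|\partial_r(h_n[r])'(s)[\wt r]| \leq C\|\wt r\|_{X_r}/s$, so the first two pieces slot directly into the Beta-integral argument of the previous step with an extra factor $\|\wt r\|_{X_r}$. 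The third piece introduces $n\bigl(\partial_r h_n[r](\th)[\wt r]-\partial_r h_n[r](\wt\th)[\wt r]\bigr) = n\int_{\wt\th}^\th\partial_r(h_n[r])'(\xi)[\wt r]\,d\xi$, whose modulus is bounded by $Cn\|\wt r\|_{X_r}\ln(\th/\wt\th)$. The extra logarithm is absorbed by
\[\int_0^1(1-s)^\al s^{cn-1}(-\ln s)\,ds = -\partial_\b B(\b,1+\al)\big|_{\b=cn} \lesssim n^{-(2+\al)},\]
converting the apparent $n$-prefactor into a net gain of $1/n$ and preserving the final bound.

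The main technical obstacle is precisely this Beta-derivative identity in the Lipschitz step: one must recognise that $s^{cn-1}(-\ln s)$ integrates against $(1-s)^\al$ to $(cn)^{-(2+\al)}$ rather than the naive $(cn)^{-(1+\al)}$, which exactly compensates the factor $n\ln(\th/\wt\th)$ generated by differentiating the exponential. A secondary, bookkeeping concern is to arrange $\varepsilon_1$ small and $N$ large with a common choice across all the auxiliary estimates (pointwise bounds on $(h_n[r])'$, exponential decay, and first-order derivatives in $r$), ensuring everything remains uniform over $r\in B_{X_r}(\varepsilon_1)$ and $n>N$.
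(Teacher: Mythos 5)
Your proposal is correct and hits the same core mechanism as the paper: gain $n^{-1-\al}$ from a Beta-type integral arising after the change of variables $\wt\th=\th t$, and absorb the extra $n\ln(\th/\wt\th)$ factor in the Lipschitz step through $\int_0^1(1-t)^\al t^{cn-1}(-\ln t)\,\mathrm dt\lesssim n^{-2-\al}$. The technical route differs in two respects. First, where the paper keeps the factor $\bigl(R[r](\th)/R[r](\th t)\bigr)^n$ explicit and performs the nonlinear change of variable $s=t^\mu R[r](\th)/R[r](\th t)$ (verifying $s\sim t^\mu$, monotonicity, and $|\ln t|\sim|\ln s|$), you instead absorb the $R[r]$-correction directly into the decaying power $(\wt\th/\th)^{cn}$ via the pointwise bound $|\th R[r]'/R[r]|\lesssim\|r\|_{X_r}$; this is somewhat more elementary and avoids the change-of-variable bookkeeping, at the mild cost of an implicit exponent $c<\mu$. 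Second, where the paper bounds $\cT_n[F,r_1]-\cT_n[F,r_2]$ by the explicit decomposition $\Delta_{n,1}+\Delta_{n,2}$ and handles the power difference via the identity $\e^{nX}-\e^{nY}=n(X-Y)\int_0^1\e^{nsX+(1-s)nY}\,\mathrm ds$ with the interpolated $R_{(s)}=R[r_1]^sR[r_2]^{1-s}$, you invoke a Fr\'echet-derivative mean-value inequality along $r_2+s(r_1-r_2)$; these are equivalent, but your route silently presumes differentiability of $\cT_n[F,\cdot]$ under the integral sign (and uniform dominability of the integrand over the convex path), which should be stated and justified rather than assumed. Given that, all the exponents match and the argument closes.
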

\begin{proof}
	By definition, we have
	\begin{align}
		\cT_n[F,r](\th)&=n\left(h_n[r]\right)'(\th)\int_0^\th \left(F(\wt\th)-F(\th)\right) \left(h_n[r]\right)'(\wt\th)\frac{\mu-\ii\th}{\mu-\ii\wt\th}\left(\frac{\wt\th}{\th}\right)^{\mu n-2\mu+1}\label{Eq.T_nF,r_express}\\
		&\qquad\qquad\times\left(\frac{R[r](\th)}{R[r](\wt\th)}\right)^n\e^{\ii n(\th-\wt\th)}\,\mathrm d\wt\th.\nonumber
	\end{align}
	By Lemma \ref{Lem.H_n_Phi_n}, there exist $\varepsilon_1\in(0,1)$ and $N\in\Z_+$ that
    \begin{equation}\label{Eq.h_n'_bound}
        \frac\th{-\mu+\ii\th}\left|\left(h_n[r]\right)'(\th)\right|=\left|\cH_n[r](\th)\right|\sim1\Longrightarrow \left| \left(h_n[r]\right)'(\th)\right|\sim\frac{\langle\th\rangle}{\th},\quad\forall\ \th>0
    \end{equation}
	for all $n>N$ and $r\in B_{X_r}(\varepsilon_1)$.
	Using  \eqref{Eq.T_nF,r_express} and \eqref{Eq.h_n'_bound}, for all $\th\in(0,1]$, there holds
	\begin{align}
		\left|\cT_n[F,r](\th)\right|&\lesssim n\frac{\langle\th\rangle}{\th}\int_0^\th \frac1{\langle\th\rangle^{1+\al}}\left|\wt\th-\th\right|^\al[F]_1\frac{\langle\wt\th\rangle}{\wt\th}\frac{\langle\th\rangle}{\langle\wt\th\rangle} \left(\frac{\wt\th}{\th}\right)^{\mu n-2\mu+1} \left(\frac{R[r](\th)}{R[r](\wt\th)}\right)^n\,\mathrm d\wt\th\nonumber\\
		&\lesssim n\frac{\langle\th\rangle^{1-\al}}{\th}[F]_1\int_0^\th \left|\wt\th-\th\right|^\al\frac1{\th} \left(\frac{\wt\th}{\th}\right)^{\mu n-2\mu} \left(\frac{R[r](\th)}{R[r](\wt\th)}\right)^n\,\mathrm d\wt\th\nonumber\\
		&\lesssim n \frac{\langle\th\rangle^{1-\al}}{\th^{1-\al}}[F]_1\int_0^1(1-t)^\al t^{\mu n-2\mu}\left(\frac{R[r](\th)}{R[r](\th t)}\right)^n\,\mathrm dt\nonumber\\
		&\lesssim n \frac{\langle\th\rangle^{1-\al}}{\th^{1-\al}}[F]_1\int_0^1|\ln t|^\al t^{\mu n-2\mu}\left(\frac{R[r](\th)}{R[r](\th t)}\right)^n\,\mathrm dt,\label{Eq.T_nF,r_4.36}
	\end{align}
	where in the last inequality we have used the fact $1-t\leq|\ln t|$ for all $t\in(0,1)$. Now we consider a change of variable defined by $s:=t^\mu R[r](\th)/R[r](\th t)\in(0,1)$ for $t\in(0,1)$. Let $\varepsilon_1=\varepsilon_1(\al, \mu)\in(0,1)$ be small enough. For $r\in B_{X_r}(\varepsilon_1)$, we note that $s\sim t^\mu$, $t\sim s^{1/\mu}$ and
	\begin{align*}
		\frac{\mathrm ds}{\mathrm dt}=t^{\mu-1}\frac{R[r](\th)}{R[r](\th t)}\left(\mu-\frac{\th t(R[r])'(\th t)}{R[r](\th t)}\right)\sim t^{\mu-1}\sim s^{1-\frac 1\mu},
	\end{align*}
	which in particular implies that the map $t\mapsto s$ is strictly increasing (for $t\in\R_+$). On the other hand, we also have
	\begin{align*}
		t^{2\mu}\leq t^{2\mu}\frac{R[r](\th)}{t^\mu R[r](\th t)}=s=t^{\mu/2}t^{\mu/2}\frac{R[r](\th)}{R[r](\th t)}\leq t^{\mu/2},
	\end{align*}
	where we have used the facts that $t\mapsto \frac{R[r](\th)}{t^\mu R[r](\th t)}$ is strictly decreasing and $t\mapsto t^{\mu/2}\frac{R[r](\th)}{R[r](\th t)}$ is strictly increasing on $t\in(0,1]$ for $r\in B_{X_r}(\varepsilon_1)$. So $|\ln t|\sim |\ln s|$. As a consequence, 
	\begin{align*}
		&\int_0^1|\ln t|^\al t^{\mu n-2\mu}\left(\frac{R[r](\th)}{R[r](\th t)}\right)^n\,\mathrm dt\lesssim \int_0^1 |\ln s|^\al s^n s^{-2} s^{\frac1\mu-1}\,\mathrm ds\\
		\lesssim &\ \int_0^\infty \tau^\al \e^{-(n-2+1/\mu)\tau}\,\mathrm d\tau\lesssim \frac1{n^{1+\al}}
	\end{align*}
	for all $n\geq 2$ and $r\in B_{X_r}(\varepsilon_1)$. This, combining with \eqref{Eq.T_nF,r_4.36}, proves \eqref{Eq.T_nF,r_th_small}. In particular, for future usage, we have proved that
	\begin{align}\label{Eq.future_th_small}
		n\frac{\langle\th\rangle}{\th}\int_0^\th\left|F(\wt\th)-F(\th)\right| \frac{\langle\wt\th\rangle}{\wt\th}\frac{\langle\th\rangle}{\langle\wt\th\rangle} \left(\frac{\wt\th}{\th}\right)^{\mu n-2\mu+1} \left(\frac{R[r](\th)}{R[r](\wt\th)}\right)^n\,\mathrm d\wt\th\lesssim \frac{1}{n^\al}\frac{\langle\th\rangle^{1-\al}}{\th^{1-\al}}[F]_1
	\end{align}
	for all $r\in B_{X_r}(\varepsilon_1)$, $n>N$, $F\in\mathscr G^1$ and $\th\in(0,1]$.
	Next we show \eqref{Eq.T_nF,r_diff_th_small}. Using \eqref{Eq.T_nF,r_express}, a direct subtraction gives that
\begin{align}\label{Eq.DeltaT_n}
	&\cT_n[F, r_1](\th)-\cT_n[F, r_2](\th)=\Delta_{n,1}+\Delta_{n,2},
\end{align}
where
\begin{align*}
\Delta_{n,1}:&=\frac{\left(h_n[r_1]\right)'(\th)-\left(h_n[r_2]\right)'(\th)}{\left(h_n[r_1]\right)'(\th)}\cT_n[F, r_1](\th)\\
	&\quad+n \left(h_n[r_2]\right)'(\th)\int_0^\th\left(F(\wt\th)-F(\th)\right)\left(\left(h_n[r_1]\right)'(\wt\th)-\left(h_n[r_2]\right)'(\wt\th)\right) \frac{\mu-\ii\th}{\mu-\ii\wt\th}\\		&\qquad\qquad\qquad\qquad\qquad\times \left(\frac{\wt\th}{\th}\right)^{\mu n-2\mu+1}\left(\frac{R[r_1](\th)}{R[r_1](\wt\th)}\right)^n\e^{\ii n(\th-\wt\th)}\,\mathrm d\wt\th,\\
	\Delta_{n,2}:&=n \left(h_n[r_2]\right)'(\th)\int_0^\th\left(F(\wt\th)-F(\th)\right)\left(h_n[r_2]\right)'(\wt\th) \frac{\mu-\ii\th}{\mu-\ii\wt\th}\left(\frac{\wt\th}{\th}\right)^{\mu n-2\mu+1}\\
	&\qquad\qquad\qquad\qquad\times\left(\left(\frac{R[r_1](\th)}{R[r_1](\wt\th)}\right)^n-\left(\frac{R[r_2](\th)}{R[r_2](\wt\th)}\right)^n\right) \e^{\ii n(\th-\wt\th)}\,\mathrm d\wt\th.
\end{align*}	
By Lemma \ref{Lem.H_n_Phi_n}, the map $\cH_n-1: B_{X_r}(1/2)\to \mathscr G^1$ has a uniform $C^1$ bound with respect to $n\in\Z_+$, hence $\norm{\cH_n[r_1]-\cH_n[r_2]}_1\lesssim \|r_1-r_2\|_{X_r}$ for all $r_1, r_2\in B_{X_r}(1/2)$ and $n\in\Z_+$. Then, using the definition of $\cH_n$, we get
	\begin{equation}\label{Eq.h_n'diff}
		\left|\left(h_n[r_1]\right)'(\th)-\left(h_n[r_2]\right)'(\th)\right|\lesssim \frac{|-\mu+\ii\th|}{\th}\norm{r_1-r_2}_{X_r}\lesssim \frac{\langle\th\rangle}{\th} \norm{r_1-r_2}_{X_r}.
	\end{equation}
	It follows from \eqref{Eq.h_n'_bound}, \eqref{Eq.h_n'diff}, \eqref{Eq.T_nF,r_th_small} and \eqref{Eq.future_th_small} that
	\begin{align}
		\left|\Delta_{n,1}\right|&\lesssim \norm{r_1-r_2}_{X_r}\left|\cT_n[F, r_1](\th)\right|\nonumber\\
		&\qquad\qquad+ n\frac{\langle\th\rangle}{\th}\norm{r_1-r_2}_{X_r}\int_0^\th\left|F(\wt\th)-F(\th)\right| \frac{\langle\wt\th\rangle}{\wt\th}\frac{\langle\th\rangle}{\langle\wt\th\rangle} \left(\frac{\wt\th}{\th}\right)^{\mu n-2\mu+1} \left(\frac{R[r_1](\th)}{R[r_1](\wt\th)}\right)^n\,\mathrm d\wt\th\nonumber\\
		&\lesssim \frac{\th^{\al-1}}{n^\al}[F]_1 \norm{r_1-r_2}_{X_r}.\label{Eq.Delta_n,1est}
	\end{align}
	To estimate $\Delta_{n,2}$, we use the identity $\e^{nX}-\e^{nY}=n(X-Y)\int_0^1\e^{nsX+n(1-s)Y}\,\mathrm ds$ for all $X, Y\in \R$ to obtain
	\begin{align}\label{Eq.R_diff}
		\left(\frac{R[r_1](\th)}{R[r_1](\wt\th)}\right)^n-\left(\frac{R[r_2](\th)}{R[r_2](\wt\th)}\right)^n=n\left(\ln \frac{R[r_1](\th)}{R[r_1](\wt\th)}-\ln\frac{R[r_2](\th)}{R[r_2](\wt\th)}\right)\int_0^1\left(\frac{R_{(s)}(\th)}{R_{(s)}(\wt\th)}\right)^n\,\mathrm ds,
	\end{align}
	where $R_{(s)}:=R[r_1]^sR[r_2]^{1-s}$ for $s\in(0,1)$. Hence, by \eqref{Eq.h_n'_bound}, we have
	\begin{align}
		&\left|\Delta_{n,2}\right|\lesssim n\frac{\langle\th\rangle}{\th}\int_0^\th\frac{\left|\wt\th-\th\right|^\al}{\langle\th\rangle^{1+\al}}[F]_1\frac{\langle\wt\th\rangle}{\wt\th}\frac{\langle\th\rangle}{\langle\wt\th\rangle}\left(\frac{\wt\th}{\th}\right)^{\mu n-2\mu+1}\nonumber\\
		&\qquad\qquad\qquad\times n \left|\ln \frac{R[r_1](\th)}{R[r_1](\wt\th)}-\ln\frac{R[r_2](\th)}{R[r_2](\wt\th)}\right|\int_0^1\left(\frac{R_{(s)}(\th)}{R_{(s)}(\wt\th)}\right)^n\,\mathrm ds\,\mathrm d\wt\th\nonumber\\
		&\lesssim n^2\frac{\langle\th\rangle^{1-\al}}{\th^{1-\al}}[F]_1\int_0^1(1-t)^\al t^{\mu n-2\mu}\left|\ln \frac{R[r_1](\th)}{R[r_2](\th)}-\ln\frac{R[r_1](\th t)}{R[r_2](\th t)}\right|\int_0^1\left(\frac{R_{(s)}(\th)}{R_{(s)}(\th t)}\right)^n\,\mathrm ds\,\mathrm dt\nonumber\\
		& \lesssim n^2\frac{\langle\th\rangle^{1-\al}}{\th^{1-\al}}[F]_1\int_0^1|\ln t|^\al t^{\mu n-2\mu}\left|\ln \frac{R[r_1](\th)}{R[r_2](\th)}-\ln\frac{R[r_1](\th t)}{R[r_2](\th t)}\right|\int_0^1\left(\frac{R_{(s)}(\th)}{R_{(s)}(\th t)}\right)^n\,\mathrm ds\,\mathrm dt,\label{Eq.Delta_n,2}
	\end{align}
	where in the last step we used $1-t\leq|\ln t|$ for $t\in(0,1)$. By \eqref{Eq.Rr} and Banach algebraic properties, we know that the nonlinear map
	$r\mapsto \frac{\th \left(R[r]\right)'}{R[r]}: B_{X_r}(1/2)\to \cG^0$
	is smooth, thus,
	\begin{align}
		\left|\ln \frac{R[r_1](\th)}{R[r_2](\th)}-\ln\frac{R[r_1](\th t)}{R[r_2](\th t)}\right|&=\left|\ln\frac{R[r_1](\th\tau)}{R[r_2](\th\tau)}\Bigg|^{\tau=1}_{\tau=t}\right|=\left|\int_t^1\frac{\mathrm d}{\mathrm d\tau} \ln\frac{R[r_1](\th\tau)}{R[r_2](\th\tau)}\,\mathrm d\tau\right|\nonumber\\
		&=\left|\int_t^1\left(\frac{\th\left(R[r_1]\right)'(\th\tau)}{R[r_1](\th\tau)}-\frac{\th\left(R[r_2]\right)'(\th\tau)}{R[r_2](\th\tau)}\right)\,\mathrm d\tau\right|\nonumber\\
		&\lesssim\int_t^1\frac1\tau\,\mathrm d\tau\norm{r_1-r_2}_{X_r}\lesssim |\ln t| \norm{r_1-r_2}_{X_r}\label{Eq.ln_diff}
	\end{align}
	for all $\th\in(0,1]$, $t\in(0,1)$ and $r_1, r_2\in B_{X_r}(1/2)$. Plugging \eqref{Eq.ln_diff} into \eqref{Eq.Delta_n,2} gives that
	\begin{align}\label{Eq.Delta_n,2_est}
		\left|\Delta_{n,2}\right|\lesssim n^2\frac{\langle\th\rangle^{1-\al}}{\th^{1-\al}}[F]_1\norm{r_1-r_2}_{X_r}\int_0^1\left(\int_0^1|\ln t|^{1+\al} t^{\mu n-2\mu}\left(\frac{R_{(s)}(\th)}{R_{(s)}(\th t)}\right)^n\,\mathrm dt\right)\,\mathrm ds.
	\end{align}
	Now, we use an argument similar to the proof of \eqref{Eq.future_th_small}.  Specifically, we make a change of variables $\sigma:=t^\mu R_{(s)}(\th)/{R_{(s)}(\th t)}\in(0,1)$ for $t\in(0,1)$. 
	Then, we have $\sigma\sim t^\mu$, $t\sim\sigma^{1/\mu}$, $\mathrm d\sigma/\mathrm dt\sim t^{\mu-1}\sim \sigma^{1-1/\mu}$, and $|\ln t|\sim |\ln\sigma|$, with all these estimates being uniform with respect to $s\in (0,1)$. Thus, 
	\begin{align*}
		\int_0^1|\ln t|^{1+\al} t^{\mu n-2\mu}\left(\frac{R_{(s)}(\th)}{R_{(s)}(\th t)}\right)^n\,\mathrm dt&\lesssim \int_0^1|\ln\sigma|^{1+\al}\sigma^{n-3+1/\mu}\,\mathrm d\sigma\lesssim \int_0^\infty \tau^{1+\al}\e^{-(n-2+1/\mu)\tau}\,\mathrm d\tau\\
		&\lesssim n^{-2-\al}
	\end{align*}
	for all $\th\in\R_+$, $n\geq 4$ and $r_1, r_2\in B_{X_r}(\varepsilon_1)$, uniformly with respect to $s\in(0,1)$. Hence, it follows from \eqref{Eq.Delta_n,2_est} that
	\begin{align}\label{Eq.Delta_n,2est}
		\left|\Delta_{n,2}\right|\lesssim\frac1{n^\al}\frac{\langle\th\rangle^{1-\al}}{\th^{1-\al}}[F]_1\norm{r_1-r_2}_{X_r}\lesssim \frac{\th^{\al-1}}{n^\al}[F]_1 \norm{r_1-r_2}_{X_r}
	\end{align}
	for all $r_1, r_2\in B_{X_r}(\varepsilon_1)$, $n>N$, $F\in\mathscr G^1$ and $\th\in(0,1]$.
	Combining \eqref{Eq.DeltaT_n}, \eqref{Eq.Delta_n,1est} and \eqref{Eq.Delta_n,2est} gives \eqref{Eq.T_nF,r_diff_th_small}.
\end{proof}

To estimate $\cT_n$ for $\th>1$, we need the following decomposition lemma.

\begin{lemma}\label{Lem.decompose_G^1}
	For any $F\in\mathscr G^1$ and $n\in\Z_+$, we can find functions $F_0\in C(\R_+; \C)$ and $F_1\in C^1(\R_+; \C)$ such that $F=F_0+F_1$ and
	\begin{equation*}
		\norm{\langle\th\rangle^{1+\al}F_0}_{L^\infty}\leq \frac{n^{-\al}}{\al+1}[F]_1,\quad \norm{\langle\th\rangle^{1+\al}F_1'}_{L^\infty}\leq n^{1-\al}[F]_1,\quad \|\langle\th\rangle^\al F_1\|_{L^\infty}\leq \|\langle\th\rangle^\al F\|_{L^\infty}.
	\end{equation*}
\end{lemma}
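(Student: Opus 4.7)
The plan is to construct $F_1$ as a one-sided local average of $F$ at length scale $1/n$, and let $F_0$ be the remainder. Concretely, I would set
\[
F_1(\th):=n\int_{\th}^{\th+1/n}F(\wt\th)\,\mathrm d\wt\th,\qquad F_0(\th):=F(\th)-F_1(\th),
\]
so that $F_1\in C^1(\R_+;\C)$ with $F_1'(\th)=n\bigl(F(\th+1/n)-F(\th)\bigr)$, and $F_0\in C(\R_+;\C)$. The choice of a forward average (rather than a symmetric mollification) is dictated by the fact that $F$ is defined only on $\R_+$, so we avoid any extension issue and keep everything intrinsic.

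For the estimate on $F_0$, I would rewrite it as
\[
F_0(\th)=n\int_{\th}^{\th+1/n}\bigl(F(\th)-F(\wt\th)\bigr)\,\mathrm d\wt\th,
\]
and apply the definition of $[F]_1$ to the pair $(\wt\th,\th)$ with $0<\th<\wt\th<\th+1/n\le\th+1$: this yields $|F(\wt\th)-F(\th)|\le [F]_1|\wt\th-\th|^\al\langle\wt\th\rangle^{-(1+\al)}$, and since $\wt\th\ge\th\ge 0$ implies $\langle\wt\th\rangle\ge\langle\th\rangle$, integrating $|\wt\th-\th|^\al$ over an interval of length $1/n$ produces exactly the factor $n^{-\al}/(\al+1)$, giving the first bound. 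For $F_1'$, the Hölder condition applied to the pair $(\th+1/n,\th)$ directly gives $|F_1'(\th)|\le n^{1-\al}[F]_1\langle\th+1/n\rangle^{-(1+\al)}\le n^{1-\al}[F]_1\langle\th\rangle^{-(1+\al)}$, again using $\langle\th+1/n\rangle\ge\langle\th\rangle$. Finally, for the $L^\infty$-bound on $F_1$, I would use $|F(\wt\th)|\le\|\langle\th\rangle^\al F\|_{L^\infty}\langle\wt\th\rangle^{-\al}\le\|\langle\th\rangle^\al F\|_{L^\infty}\langle\th\rangle^{-\al}$ pointwise for $\wt\th\ge\th$, and observe that averaging preserves this bound since the weight function does not depend on $\wt\th$ in the resulting estimate.

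There is no serious obstacle here: the only subtlety is the boundary case when $\wt\th-\th$ equals $1/n$ together with $n=1$, where one has $\wt\th=\th+1$ rather than the strict inequality built into the definition of $[F]_1$. This is handled at once by a continuity argument (the supremum in the definition of $[F]_1$ is attained as a limit for $F\in\mathscr G^1\subset C(\R_+;\C)$), so the Hölder bound extends to the closed condition $\wt\th-\th\le 1$. Assembling the three estimates finishes the proof.
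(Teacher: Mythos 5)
Your proposal is correct and coincides with the paper's own proof: the same forward average $F_1(\th)=n\int_\th^{\th+1/n}F(\wt\th)\,\mathrm d\wt\th$ with $F_0=F-F_1$, and the same three estimates using $\langle\wt\th\rangle\ge\langle\th\rangle$ for $\wt\th\ge\th$. Your remark about the boundary case $n=1$ (where $\wt\th=\th+1$ sits on the closed endpoint of the Hölder condition) is a minor point the paper glosses over, and your continuity argument disposes of it.
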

\begin{proof}
	We define
	\[F_1(\th):=n\int_\th^{\th+\frac1n}F(\wt\th)\,\mathrm d\wt\th,\quad F_0(\th):=F(\th)-F_1(\th),\quad\forall\ \th>0.\]
	Then $F_1\in C^1(\R_+; \C)$, $\|\langle\th\rangle^\al F_1\|_{L^\infty}\leq \|\langle\th\rangle^\al F\|_{L^\infty}$  and $F_1'(\th)=n\left(F(\th+1/n)-F(\th)\right)$ for $\th\in\R_+$. Hence,
	\begin{equation*}
		\left|F_1'(\th)\right|\leq n[F]_1\langle\th+1/n\rangle^{-1-\al}n^{-\al}\leq n^{1-\al}[F]_1\langle\th\rangle^{-1-\al},\quad\forall\ \th>0.
	\end{equation*}
	We also have 
	\begin{align*}
		|F_0(\th)|&=n\left|\int_\th^{\th+\frac1n}\left(F(\th)-F(\wt\th)\right)\,\mathrm d\wt\th\right|\leq n \int_\th^{\th+\frac1n}\langle\wt\th\rangle^{-1-\al}[F]_1\left|\wt\th-\th\right|^\al\,\mathrm d\wt\th\\
		&\leq n\langle\th\rangle^{-1-\al}[F]_1 \int_\th^{\th+\frac1n} \left|\wt\th-\th\right|^\al\,\mathrm d\wt\th=\frac{n^{-\al}\langle\th\rangle^{-1-\al}}{\al+1}[F]_1,\quad\forall\ \th>0.
	\end{align*}
	This completes the proof.
\end{proof}

\begin{lemma}\label{Lem.T_nF,r_th_large1}
	There exist $\varepsilon_1\in(0,1)$, $N\in\Z_+$ and a constant $C=C(\mu,\al)>0$ such that
	\begin{align}
		\left|\cT_n[F,r](\th)\right|&\leq \frac{C}{n\langle\th\rangle^\al}\norm{\langle\th\rangle^{1+\al}F'}_{L^\infty}, \label{Eq.T_nF,r_th_large1}\\
		\left|\cT_n[F, r_1](\th)-\cT_n[F, r_2](\th)\right|&\leq \frac{C}{n\langle\th\rangle^\al}\norm{\langle\th\rangle^{1+\al}F'}_{L^\infty}\norm{r_1-r_2}_{X_r},\label{Eq.T_nF,r_diff_th_large1}
	\end{align}
	for all $r, r_1, r_2\in B_{X_r}(\varepsilon_1)$, $n>N$, $\th>1$ and all $F\in C^1(\R_+; \C)$ satisfying $\langle\th\rangle^{1+\al}F'\in L^\infty$.
\end{lemma}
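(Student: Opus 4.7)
\bigskip

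\noindent\textbf{Proof plan for Lemma \ref{Lem.T_nF,r_th_large1}.}

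The key idea is to use that $F$ is now $C^1$, and integrate by parts so as to transfer the difference quotient onto the exponential. Writing $F(\wt\th)-F(\th)=-\int_{\wt\th}^\th F'(s)\,\mathrm ds$, swapping the order of integration, and using
\begin{equation*}
    \int_0^s \left(h_n[r]\right)'(\wt\th)\,\e^{-nh_n[r](\wt\th)}\,\mathrm d\wt\th=-\frac1n\e^{-nh_n[r](s)},
\end{equation*}
which holds because $\operatorname{Re}h_n[r](\wt\th)\to+\infty$ as $\wt\th\to0+$ (by \eqref{Eq.h_n}, since $-\mu+(2\mu-1)/n<0$ for $n$ large), one obtains the crucial identity
\begin{equation}\label{Eq.ibp-identity}
    \cT_n[F,r](\th)=\left(h_n[r]\right)'(\th)\int_0^\th F'(s)\,\e^{n h_n[r](\th)-n h_n[r](s)}\,\mathrm ds.
\end{equation}
This reduces matters to a pointwise bound, avoiding any loss of a factor of $n$.

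Using \eqref{Eq.h_n}, I will expand $\e^{nh_n[r](\th)-nh_n[r](s)}=\frac{\mu-\ii\th}{\mu-\ii s}(s/\th)^{\mu n-2\mu+1}(R[r](\th)/R[r](s))^n\e^{\ii n(\th-s)}$. Set $M:=\|\langle\th\rangle^{1+\al}F'\|_{L^\infty}$. For $\th>1$ we have $|(h_n[r])'(\th)|\lesssim1$ (by \eqref{Eq.h_n'_bound}) and $|\mu-\ii\th|\lesssim\th$, so after bounding $|F'(s)|\leq M\langle s\rangle^{-1-\al}$ and changing variables $t=s/\th$, it suffices to estimate
\begin{equation*}
    \th^2\int_0^1\frac1{\langle\th t\rangle^{2+\al}}\,t^{\mu n-2\mu+1}\left(\frac{R[r](\th)}{R[r](\th t)}\right)^n\,\mathrm dt.
\end{equation*}
Split into $t\in(0,1/\th]$ (where $\langle\th t\rangle\sim1$) and $t\in(1/\th,1)$ (where $\langle\th t\rangle\sim\th t$). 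On each piece apply the change of variables $\sigma=t^\mu R[r](\th)/R[r](\th t)$ used in the proof of Lemma \ref{Lem.T_nF,r_th_small}, under which $\sigma\sim t^\mu$, $t\sim\sigma^{1/\mu}$, $\mathrm dt\sim\sigma^{1/\mu-1}\mathrm d\sigma$, for $r\in B_{X_r}(\varepsilon_1)$ with $\varepsilon_1$ small. The first piece yields an exponentially small contribution of order $\th^{-\mu n}/n$; the second piece becomes $\th^{-(2+\al)}\int_0^1\sigma^{n-\al/\mu-3}\,\mathrm d\sigma\lesssim\th^{-(2+\al)}/n$ for $n$ large. Combining gives the desired bound $|\cT_n[F,r](\th)|\lesssim M/(n\langle\th\rangle^\al)$.

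For the Lipschitz estimate \eqref{Eq.T_nF,r_diff_th_large1}, I will subtract the two expressions \eqref{Eq.ibp-identity} with $r_1,r_2$ and decompose as in \eqref{Eq.DeltaT_n}:
\begin{equation*}
    \cT_n[F,r_1]-\cT_n[F,r_2]=\underbrace{\frac{(h_n[r_1])'-(h_n[r_2])'}{(h_n[r_1])'}\cT_n[F,r_1]}_{\text{bounded via \eqref{Eq.h_n'diff} and \eqref{Eq.T_nF,r_th_large1}}}+(h_n[r_2])'(\th)\int_0^\th F'(s)\Bigl(\e^{n(h_n[r_1](\th)-h_n[r_1](s))}-\e^{n(h_n[r_2](\th)-h_n[r_2](s))}\Bigr)\,\mathrm ds.
\end{equation*}
The second piece is handled via $\e^{nX}-\e^{nY}=n(X-Y)\int_0^1\e^{nsX+n(1-s)Y}\,\mathrm ds$, and then the logarithmic difference of $R[r_1]/R[r_2]$ is estimated exactly as in \eqref{Eq.ln_diff}, yielding a factor $|\ln(s/\th)|\,\|r_1-r_2\|_{X_r}$. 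The extra factor $n|\ln(s/\th)|$ is absorbed in the same change-of-variables computation above by noting $|\ln t|\sim|\ln\sigma|$ and using $\int_0^1\sigma^{n-\al/\mu-3}|\ln\sigma|\,\mathrm d\sigma\lesssim n^{-2}$; the resulting loss of one power of $n$ is offset by the gained $n$ from the expansion, so the final rate $(n\langle\th\rangle^\al)^{-1}$ is preserved.

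The main obstacle is keeping the change-of-variables computation uniform in $\th>1$ and in $r$, in particular making sure the auxiliary weight $\langle\th t\rangle^{-2-\al}$ interacts correctly with the Laplace-type concentration of the integrand at $t=1$ so that the bound scales as $\th^{-\al}$ and not merely as $\th^0$. The splitting at $t=1/\th$ and the explicit exponent $n-\al/\mu-3$ in $\sigma$ take care of this.
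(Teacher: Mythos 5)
Your proposal is correct and follows essentially the same route as the paper: the key integration by parts to arrive at the identity \eqref{Eq.ibp-identity} (the paper derives it directly, you via writing $F(\wt\th)-F(\th)$ as an integral and applying Fubini, but the resulting formula is identical), followed by the change of variables $\sigma=t^\mu R[r](\th)/R[r](\th t)$ and the trilinear $\e^{nX}-\e^{nY}$ plus $\ln$-difference decomposition for the Lipschitz bound. The only cosmetic difference is your split of the $t$-integral at $t=1/\th$; the paper just uses $\langle\th t\rangle^{-2-\al}\leq(\th t)^{-2-\al}$ over the whole range, which is slightly simpler and still gives the clean exponent $n-3-\al/\mu$ in $\sigma$.
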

\begin{proof}
	Note that $\cT_n$ is linear in $F$, then we can assume without loss of generality that $\norm{\langle\th\rangle^{1+\al}F'}_{L^\infty}=1$.
	Using integration by parts and $\lim_{\th\to0+}\operatorname{Re}h_n[r](\th)=+\infty$, we obtain
	\begin{align}
		\cT_n[F,r](\th)&=\left(h_n[r]\right)'(\th)\int_0^\th F'(\wt\th)\e^{nh_n[r](\th)-nh_n[r](\wt\th)}\,\mathrm d\wt\th\nonumber\\
		&=\left(h_n[r]\right)'(\th)\int_0^\th F'(\wt\th)\frac{\mu-\ii\th}{\mu-\ii\wt\th}\left(\frac{\wt\th}{\th}\right)^{\mu n-2\mu+1}\left(\frac{R[r](\th)}{R[r](\wt\th)}\right)^n\e^{\ii n(\th-\wt\th)}\,\mathrm d\wt\th.\label{Eq.T_nF,r_IBP}
	\end{align}
	By \eqref{Eq.h_n'_bound}, we have
	\begin{align}
		\left|\cT_n[F,r](\th)\right|&\lesssim\frac{\langle\th\rangle}{\th}\int_0^\th \langle\wt\th\rangle^{-1-\al}\frac{\langle\th\rangle}{\langle\wt\th\rangle} \left(\frac{\wt\th}{\th}\right)^{\mu n-2\mu+1}\left(\frac{R[r](\th)}{R[r](\wt\th)}\right)^n\,\mathrm d\wt\th\nonumber\\
		&\lesssim \langle\th\rangle^2\int_0^1\langle\th t\rangle^{-2-\al}t^{\mu n-2\mu+1} \left(\frac{R[r](\th)}{R[r](\th t)}\right)^n\,\mathrm dt\nonumber\\
		&\lesssim\frac{\langle\th\rangle^2}{\th^{2+\al}}\int_0^1 t^{\mu n-2\mu-1-\al} \left(\frac{R[r](\th)}{R[r](\th t)}\right)^n\,\mathrm dt.\label{Eq.T_nF,r_4.49}
	\end{align}
	Then we use an argument similar to the proof of \eqref{Eq.future_th_small}, i.e., make a change of variables $s:=t^\mu R[r](\th)/R[r](\th t)\in(0,1)$ for $t\in(0,1)$. Then we have $s\sim t^\mu$, $t\sim s^{1/\mu}$, $\mathrm ds/\mathrm dt\sim t^{\mu-1}\sim s^{1-1/\mu}$. Thus,
	\begin{align}
		\int_0^1 t^{\mu n-2\mu-1-\al} \left(\frac{R[r](\th)}{R[r](\th t)}\right)^n\,\mathrm dt&\lesssim \int_0^1s^{-(2\mu+1+\al)/\mu}s^ns^{1/\mu-1}\,\mathrm ds\nonumber\\
		&\lesssim \int_0^1s^{n-\frac{3\mu+\al}{\mu}}\,\mathrm ds\lesssim\frac1n\label{Eq.power_est4.50}
	\end{align} 
	for all $\th\in\R_+$, $n\geq 5$ and $r\in B_{X_r}(\varepsilon_1)$. Therefore, \eqref{Eq.T_nF,r_th_large1} follows from \eqref{Eq.T_nF,r_4.49} by noting that $\th>1$. 
	
	Next we prove \eqref{Eq.T_nF,r_diff_th_large1}. Recall that we assume $\norm{\langle\th\rangle^{1+\al}F'}_{L^\infty}=1$. Using \eqref{Eq.T_nF,r_IBP} gives that $\cT_n[F,r_1](\th)-\cT_n[F,r_2](\th)=\Delta_{n,1}^{(1)}+\Delta_{n,2}^{(1)}$, where
	\begin{align*}
		\Delta_{n,1}^{(1)}:&= \frac{\left(h_n[r_1]\right)'(\th)-\left(h_n[r_2]\right)'(\th)}{\left(h_n[r_1]\right)'(\th)}\cT_n[F, r_1](\th),\\
		\Delta_{n,2}^{(1)}:&=\left(h_n[r_2]\right)'(\th)\int_0^\th F'(\wt\th)\frac{\mu-\ii\th}{\mu-\ii\wt\th}\left(\frac{\wt\th}\th\right)^{\mu n-2\mu+1} \\
		&\qquad\qquad\times\left(\left(\frac{R[r_1](\th)}{R[r_1](\wt\th)}\right)^n-\left(\frac{R[r_2](\th)}{R[r_2](\wt\th)}\right)^n\right) \e^{\ii n(\th-\wt\th)}\,\mathrm d\wt\th.
	\end{align*}
	It follows from \eqref{Eq.h_n'_bound}, \eqref{Eq.h_n'diff} and \eqref{Eq.T_nF,r_th_large1} that $\left|\Delta_{n,1}^{(1)}\right|\lesssim \frac1{n\langle\th\rangle^\al}\norm{r_1-r_2}_{X_r}$ for $\th>1$, $n\geq N$ and $r_1, r_2\in B_{X_r}(\varepsilon_1)$. As for $\Delta_{n,2}^{(1)}$, by \eqref{Eq.h_n'_bound}, \eqref{Eq.R_diff}, \eqref{Eq.ln_diff}, we have
	\begin{align*}
		\left|\Delta_{n,2}^{(1)}\right|&\lesssim \frac{\langle\th\rangle}{\th}\int_0^\th \langle\wt\th\rangle^{-1-\al}\frac{\langle\th\rangle}{\langle\wt\th\rangle} \left(\frac{\wt\th}\th\right)^{\mu n-2\mu+1}n \left|\ln \frac{R[r_1](\th)}{R[r_1](\wt\th)}-\ln\frac{R[r_2](\th)}{R[r_2](\wt\th)}\right|\int_0^1\left(\frac{R_{(s)}(\th)}{R_{(s)}(\wt\th)}\right)^n\,\mathrm ds\,\mathrm d\wt\th\\
		&\lesssim n\langle\th\rangle^2\int_0^1\frac{1}{\langle\th t\rangle^{2+\al}}t^{\mu n-2\mu+1} \left|\ln \frac{R[r_1](\th)}{R[r_2](\th)}-\ln\frac{R[r_1](\th t)}{R[r_2](\th t)}\right|\int_0^1\left(\frac{R_{(s)}(\th)}{R_{(s)}(\th t)}\right)^n\,\mathrm ds\,\mathrm dt\\
		&\lesssim n\frac{\langle\th\rangle^2\norm{r_1-r_2}_{X_r}}{\th^{2+\al}}\int_0^1t^{\mu n-2\mu-1-\al}|\ln t| \int_0^1\left(\frac{R_{(s)}(\th)}{R_{(s)}(\th t)}\right)^n\,\mathrm ds\,\mathrm dt,
	\end{align*}
	where $R_{(s)}:=R[r_1]^sR[r_2]^{1-s}$ for $s\in(0,1)$. Using an argument similar to the proof of \eqref{Eq.Delta_n,2est}, we can show that
	\begin{align}\label{Eq.power_est4.51}
		\int_0^1 t^{\mu n-2\mu-1-\al}|\ln t| \left(\frac{R_{(s)}(\th)}{R_{(s)}(\th t)}\right)^n\,\mathrm dt\lesssim \int_0^1|\ln\sigma|\sigma^{n-\frac{3\mu+\al}{\mu}}\,\mathrm d\sigma\lesssim \frac{1}{n^2}
	\end{align}
	for all $\th\in\R_+$, $n\geq 5$ and $r_1, r_2\in B_{X_r}(\varepsilon_1)$, uniformly with respect to $s\in(0,1)$. Thus, $\left|\Delta_{n,2}^{(1)}\right|\lesssim \frac1{n\langle\th\rangle^\al}\norm{r_1-r_2}_{X_r}$ for $\th>1$, $n\geq N$ and $r_1, r_2\in B_{X_r}(\varepsilon_1)$. Combining this with the estimate of $\Delta_{n,1}^{(1)}$ gives \eqref{Eq.T_nF,r_diff_th_large1}.
\end{proof}

\begin{lemma}\label{Lem.T_nF,r_th_large2}
	There exist $\varepsilon_1\in(0,1)$, $N\in\Z_+$ and a constant $C=C(\mu,\al)>0$ such that
	\begin{align}
		\left|\cT_n[F,r](\th)\right|&\leq \frac{C}{\langle\th\rangle^\al}\norm{\langle\th\rangle^{1+\al}F}_{L^\infty},\label{Eq.T_nF,r_th_large2}\\
		\left|\cT_n[F, r_1](\th)-\cT_n[F, r_2](\th)\right|&\leq \frac{C}{\langle\th\rangle^\al}\norm{\langle\th\rangle^{1+\al}F}_{L^\infty}\norm{r_1-r_2}_{X_r},\label{Eq.T_nF,r_diff_th_large2}
	\end{align}
	for all $r, r_1, r_2\in B_{X_r}(\varepsilon_1)$, $n>N$, $\th>1$ and all $F\in C(\R_+; \C)$ satisfying $\langle\th\rangle^{1+\al}F\in L^\infty$.
\end{lemma}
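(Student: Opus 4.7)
The plan is to mimic the proof of Lemma \ref{Lem.T_nF,r_th_large1} with the sole structural change that integration by parts is unavailable (since $F$ need not be differentiable). Instead, I would work directly from the raw expression
\begin{equation*}
\cT_n[F,r](\th)=n(h_n[r])'(\th)\int_0^\th\bigl(F(\wt\th)-F(\th)\bigr)(h_n[r])'(\wt\th)\,\e^{nh_n[r](\th)-nh_n[r](\wt\th)}\,\mathrm d\wt\th,
\end{equation*}
and apply the crude pointwise bound $|F(\wt\th)-F(\th)|\le 2\|\langle\th\rangle^{1+\al}F\|_{L^\infty}\langle\wt\th\rangle^{-1-\al}$, which holds for $0<\wt\th<\th$ since $\langle\wt\th\rangle\le\langle\th\rangle$. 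Together with \eqref{Eq.h_n'_bound} and $|\mu-\ii\th|/|\mu-\ii\wt\th|\sim\langle\th\rangle/\langle\wt\th\rangle$ and the rescaling $\wt\th=\th t$, this reduces the estimate of $|\cT_n[F,r](\th)|/\|\langle\th\rangle^{1+\al}F\|_{L^\infty}$ to the master integral
\begin{equation*}
n\,\frac{\langle\th\rangle^2}{\th}\int_0^1\frac{t^{\mu n-2\mu}}{\langle\th t\rangle^{1+\al}}\Bigl(\frac{R[r](\th)}{R[r](\th t)}\Bigr)^{\!n}\mathrm dt.
\end{equation*}

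For this master integral I would invoke the elementary inequality $\langle\th t\rangle^{-1-\al}\le(\th t)^{-1-\al}$ (valid globally since $\langle x\rangle\ge x$ for $x\ge 0$, in complete parallel with the bound $\langle\th t\rangle^{-2-\al}\le(\th t)^{-2-\al}$ used in Lemma \ref{Lem.T_nF,r_th_large1}), converting the master integral to $\frac{\langle\th\rangle^2}{\th^{2+\al}}\cdot n\int_0^1 t^{\mu n-2\mu-1-\al}(R[r](\th)/R[r](\th t))^n\,\mathrm dt$. Applying the change of variable $s=t^\mu R[r](\th)/R[r](\th t)$ exactly as in the derivation of \eqref{Eq.power_est4.50} supplies a gain of $1/n$ for $n\ge N$ sufficiently large. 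Putting it all together yields $|\cT_n[F,r](\th)|\lesssim\frac{\langle\th\rangle^2}{\th^{2+\al}}\|\langle\th\rangle^{1+\al}F\|_{L^\infty}\lesssim\langle\th\rangle^{-\al}\|\langle\th\rangle^{1+\al}F\|_{L^\infty}$ for $\th>1$ (using $\langle\th\rangle\sim\th$ in this range), proving \eqref{Eq.T_nF,r_th_large2}. Compared to Lemma \ref{Lem.T_nF,r_th_large1} the final bound loses a factor of $1/n$; this is precisely the expected price of no longer being able to exploit the cancellation afforded by $F'$.

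For the Lipschitz-in-$r$ bound \eqref{Eq.T_nF,r_diff_th_large2}, I would follow verbatim the decomposition strategy of Lemmas \ref{Lem.T_nF,r_th_small} and \ref{Lem.T_nF,r_th_large1}: write $\cT_n[F,r_1](\th)-\cT_n[F,r_2](\th)=\Delta_{n,1}^{(2)}+\Delta_{n,2}^{(2)}$, where $\Delta_{n,1}^{(2)}$ collects the differences of the two $(h_n[r])'$ factors (outer and inner), while $\Delta_{n,2}^{(2)}$ collects the difference $(R[r_1](\th)/R[r_1](\wt\th))^n-(R[r_2](\th)/R[r_2](\wt\th))^n$. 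The bound on $\Delta_{n,1}^{(2)}$ is immediate from \eqref{Eq.h_n'diff} together with the already-established \eqref{Eq.T_nF,r_th_large2}. For $\Delta_{n,2}^{(2)}$, applying the identity \eqref{Eq.R_diff} and the logarithmic estimate \eqref{Eq.ln_diff} inserts an extra factor $n|\ln t|\,\|r_1-r_2\|_{X_r}$ into the integrand; the $s$-substitution then turns $|\ln t|$ into a bounded multiple of $|\ln s|$, and the analog of \eqref{Eq.power_est4.51} provides a gain of $1/n^2$ that absorbs the extra $n$, producing the desired $\langle\th\rangle^{-\al}\|r_1-r_2\|_{X_r}$ bound. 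I do not foresee any substantive obstacle: the proof is essentially a direct transcription of Lemma \ref{Lem.T_nF,r_th_large1} with the integration-by-parts step replaced by the direct estimate on $|F(\wt\th)-F(\th)|$, and the only step worth pausing on is the elementary verification of the global inequality $\langle\th t\rangle^{-1-\al}\le(\th t)^{-1-\al}$.
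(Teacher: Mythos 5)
Your proposal is correct and follows essentially the same route as the paper's own proof: the same crude pointwise bound on $|F(\wt\th)-F(\th)|$ leading to the same master integral, the same reduction via $\langle\th t\rangle^{-1-\al}\leq(\th t)^{-1-\al}$ to the integral controlled by \eqref{Eq.power_est4.50}, and for the Lipschitz bound the same $\Delta_{n,1}+\Delta_{n,2}$ decomposition from \eqref{Eq.DeltaT_n} with \eqref{Eq.h_n'diff}, \eqref{Eq.R_diff}, \eqref{Eq.ln_diff} and \eqref{Eq.power_est4.51}. No gaps.
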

\begin{proof}
	Note that $\cT_n$ is linear in $F$, then we can assume without loss of generality that $\norm{\langle\th\rangle^{1+\al}F}_{L^\infty}=1$. 
	By \eqref{Eq.T_nF,r_express} and \eqref{Eq.h_n'_bound}, we have
	\begin{align}
		\left|\cT_n[F,r](\th)\right|&\lesssim n\frac{\langle\th\rangle}{\th}\int_0^\th\left(\frac1{\langle\wt\th\rangle^{1+\al}}+\frac1{\langle\th\rangle^{1+\al}}\right)\frac{\langle\wt\th\rangle}{\wt\th}\frac{\langle\th\rangle}{\langle\wt\th\rangle}\left(\frac{\wt\th}{\th}\right)^{\mu n-2\mu+1}\left(\frac{R[r](\th)}{R[r](\wt\th)}\right)^n\,\mathrm d\wt\th\nonumber\\
		&\lesssim n\frac{\langle\th\rangle^2}{\th^2}\int_0^\th \frac1{\langle\wt\th\rangle^{1+\al}}\left(\frac{\wt\th}{\th}\right)^{\mu n-2\mu}\left(\frac{R[r](\th)}{R[r](\wt\th)}\right)^n\,\mathrm d\wt\th\nonumber\\
		&\lesssim n\frac{\langle\th\rangle^2}{\th}\int_0^1\frac1{\langle\th t\rangle^{1+\al}}t^{\mu n-2\mu}\left(\frac{R[r](\th)}{R[r](\th t)}\right)^n\,\mathrm dt\nonumber\\
		&\lesssim n\frac{\langle\th\rangle^2}{\th^{2+\al}}\int_0^1t^{\mu n-2\mu-1-\al}\left(\frac{R[r](\th)}{R[r](\th t)}\right)^n\,\mathrm dt.\label{Eq.T_nF,r_4.53}
	\end{align}
	Then \eqref{Eq.T_nF,r_th_large2} follows from \eqref{Eq.T_nF,r_4.53} and \eqref{Eq.power_est4.50} by noting that $\th>1$. The proof of \eqref{Eq.T_nF,r_diff_th_large2} is similar to the proof of \eqref{Eq.T_nF,r_diff_th_small}, and is based on \eqref{Eq.DeltaT_n}. It follows from \eqref{Eq.h_n'_bound}, \eqref{Eq.h_n'diff}, \eqref{Eq.T_nF,r_th_small}, \eqref{Eq.T_nF,r_4.53} and \eqref{Eq.power_est4.50} that
	\begin{align}
		\left|\Delta_{n,1}\right|&\lesssim \norm{r_1-r_2}_{X_r}\left|\cT_n[F, r_1](\th)\right|\nonumber\\
		&\qquad\qquad+ n\frac{\langle\th\rangle}{\th}\norm{r_1-r_2}_{X_r}\int_0^\th\left|F(\wt\th)-F(\th)\right| \frac{\langle\wt\th\rangle}{\wt\th}\frac{\langle\th\rangle}{\langle\wt\th\rangle} \left(\frac{\wt\th}{\th}\right)^{\mu n-2\mu+1} \left(\frac{R[r_1](\th)}{R[r_1](\wt\th)}\right)^n\,\mathrm d\wt\th\nonumber\\
		&\lesssim \frac{1}{\langle\th\rangle^\al} \norm{r_1-r_2}_{X_r}\nonumber
	\end{align}
	for all $r, r_1, r_2\in B_{X_r}(\varepsilon_1)$, $n>N$ and $\th>1$. Using the definition of $\Delta_{n,2}$, \eqref{Eq.h_n'_bound}, \eqref{Eq.R_diff} and \eqref{Eq.ln_diff}, we obtain
	\begin{align*}
		\left|\Delta_{n,2}\right|&\lesssim n\frac{\langle\th\rangle}{\th}\int_0^\th\frac{1}{\langle\wt\th\rangle^{1+\al}}\frac{\langle\wt\th\rangle}{\wt\th}\frac{\langle\th\rangle}{\langle\wt\th\rangle}\left(\frac{\wt\th}{\th}\right)^{\mu n-2\mu+1} \\
		&\qquad\qquad\times n\left|\ln \frac{R[r_1](\th)}{R[r_1](\wt\th)}-\ln\frac{R[r_2](\th)}{R[r_2](\wt\th)}\right|\int_0^1\left(\frac{R_{(s)}(\th)}{R_{(s)}(\wt\th)}\right)^n\,\mathrm ds\,\mathrm d\wt\th\\
		&\lesssim n^2\frac{\langle\th\rangle^2}{\th}\int_0^1\frac{t^{\mu n-2\mu}}{\langle\th t\rangle^{1+\al}}\left|\ln \frac{R[r_1](\th)}{R[r_2](\th)}-\ln\frac{R[r_1](\th t)}{R[r_2](\th t)}\right|\int_0^1\left(\frac{R_{(s)}(\th)}{R_{(s)}(\th t)}\right)^n\,\mathrm ds\,\mathrm dt\\
		&\lesssim n^2\frac{\langle\th\rangle^2\norm{r_1-r_2}_{X_r}}{\th^{2+\al}}\int_0^1t^{\mu n-2\mu-1-\al}|\ln t| \int_0^1\left(\frac{R_{(s)}(\th)}{R_{(s)}(\th t)}\right)^n\,\mathrm ds\,\mathrm dt\\
		&\lesssim \frac1{\langle\th\rangle^\al}\norm{r_1-r_2}_{X_r},
	\end{align*}
	where in the last step we have used \eqref{Eq.power_est4.51}. Combining this with \eqref{Eq.DeltaT_n} and the estimate of $\Delta_{n,1}$ gives \eqref{Eq.T_nF,r_diff_th_large2}.
\end{proof}

Putting Lemma \ref{Lem.decompose_G^1}, Lemma \ref{Lem.T_nF,r_th_large1} and Lemma \ref{Lem.T_nF,r_th_large2} altogether, we complete the estimate of $\cT_n[F,r]$ for $\th>1$ and $F\in\mathscr G^1$, as stated in the following lemma.
\begin{lemma}\label{Lem.T_nF,r_th_large}
	There exist $\varepsilon_1\in(0,1)$, $N\in\Z_+$ and a constant $C=C(\mu,\al)>0$ such that
	\begin{align}
		\left|\cT_n[F,r](\th)\right|&\leq C\frac{1}{n^\al\langle\th\rangle^\al}[F]_1,\label{Eq.T_nF,r_th_large}\\
		\left|\cT_n[F,r_1](\th)-\cT_n[F, r_2](\th)\right|&\leq C \frac{1}{n^\al\langle\th\rangle^\al}[F]_1\norm{r_1-r_2}_{X_r},\label{Eq.T_nF,r_diff_th_large}
	\end{align} 
	for all $r, r_1, r_2\in B_{X_r}(\varepsilon_1)$, $n>N$, $F\in\mathscr G^1$ and $\th>1$.
\end{lemma}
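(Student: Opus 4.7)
My plan is to deduce the lemma directly from the three preceding preparatory results via a splitting argument. Observing that $\cT_n$ is linear in its first argument, I will apply Lemma \ref{Lem.decompose_G^1} to decompose an arbitrary $F \in \mathscr G^1$ as $F = F_0 + F_1$ with $F_0 \in C(\R_+; \C)$ satisfying $\|\langle\theta\rangle^{1+\alpha} F_0\|_{L^\infty} \lesssim n^{-\alpha} [F]_1$ and $F_1 \in C^1(\R_+; \C)$ satisfying $\|\langle\theta\rangle^{1+\alpha} F_1'\|_{L^\infty} \lesssim n^{1-\alpha} [F]_1$. Writing $\cT_n[F,r] = \cT_n[F_0, r] + \cT_n[F_1, r]$ reduces the proof to estimating each summand separately.

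For the low-regularity piece $\cT_n[F_0, r]$, I will invoke Lemma \ref{Lem.T_nF,r_th_large2}, which requires only $\langle\theta\rangle^{1+\alpha} F_0 \in L^\infty$, yielding
\[
|\cT_n[F_0, r](\theta)| \lesssim \langle\theta\rangle^{-\alpha}\|\langle\theta\rangle^{1+\alpha} F_0\|_{L^\infty} \lesssim n^{-\alpha}\langle\theta\rangle^{-\alpha}[F]_1
\]
for $\theta > 1$. For the smoother piece $\cT_n[F_1, r]$, I will apply Lemma \ref{Lem.T_nF,r_th_large1}, which gains an extra factor $1/n$ via integration by parts at the cost of requiring control of $\langle\theta\rangle^{1+\alpha} F_1' \in L^\infty$, giving
\[
|\cT_n[F_1, r](\theta)| \lesssim n^{-1}\langle\theta\rangle^{-\alpha}\|\langle\theta\rangle^{1+\alpha} F_1'\|_{L^\infty} \lesssim n^{-\alpha}\langle\theta\rangle^{-\alpha}[F]_1.
\]
Adding the two contributions yields \eqref{Eq.T_nF,r_th_large}. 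Since the decomposition $F = F_0 + F_1$ is independent of $r$, the Lipschitz bound \eqref{Eq.T_nF,r_diff_th_large} follows from exactly the same splitting after replacing the first halves of Lemmas \ref{Lem.T_nF,r_th_large1} and \ref{Lem.T_nF,r_th_large2} with their Lipschitz counterparts.

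No substantive obstacle remains at this stage: all the hard analytic input — integration by parts against $\e^{nh_n[r](\theta)-nh_n[r](\wt\theta)}$, the change of variables $s = t^\mu R[r](\theta)/R[r](\theta t)$, and the handling of the ratio $R[r_1]/R[r_2]$ via \eqref{Eq.R_diff} and \eqref{Eq.ln_diff} — has already been absorbed into Lemmas \ref{Lem.T_nF,r_th_large1} and \ref{Lem.T_nF,r_th_large2}. The only conceptual point is that the mollification scale $1/n$ chosen in Lemma \ref{Lem.decompose_G^1} is calibrated precisely so that the factor $n^{1-\alpha}$ paid on the $F_1$ side cancels the $1/n$ gained from integration by parts, matching the direct $n^{-\alpha}$ bound obtained on the $F_0$ side; this is the standard Littlewood--Paley-type trade-off between regularity and magnitude when converting a H\"older estimate into a weighted $W^{1,\infty}$ estimate.
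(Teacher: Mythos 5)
Your proposal is correct and follows essentially the same route as the paper: decompose $F=F_0+F_1$ via Lemma \ref{Lem.decompose_G^1}, apply Lemma \ref{Lem.T_nF,r_th_large2} to $F_0$ and Lemma \ref{Lem.T_nF,r_th_large1} to $F_1$, and use linearity of $\cT_n$ in $F$ so that $n^{-1}\cdot n^{1-\al}$ and $n^{-\al}$ both give the claimed $n^{-\al}\langle\th\rangle^{-\al}[F]_1$ bound, with the Lipschitz estimate obtained identically from the corresponding difference bounds.
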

\begin{proof}
	Given $F\in\mathscr G^1$ and $n\in\Z_+$, by Lemma \ref{Lem.decompose_G^1}, there exist $F_0\in C(\R_+; \C)$ and $F_1\in C^1(\R_+; \C)$ such that
	\begin{equation*}
		\norm{\langle\th\rangle^{1+\al}F_0}_{L^\infty}\leq \frac{n^{-\al}}{\al+1}[F]_1,\qquad \norm{\langle\th\rangle^{1+\al}F_1'}_{L^\infty}\leq n^{1-\al}[F]_1.
	\end{equation*}
	Note that $\cT_n$ is linear in $F$, hence it follows from \eqref{Eq.T_nF,r_diff_th_large1} and \eqref{Eq.T_nF,r_diff_th_large2} that
	\begin{align*}
		\left|\cT_n[F, r](\th)\right|&\lesssim \left|\cT_n[F_1, r](\th)\right|+\left|\cT_n[F_0, r](\th)\right|\\
		&\lesssim \frac1{n\langle\th\rangle^\al}\norm{\langle\th\rangle^{1+\al}F_1'}_{L^\infty}+\frac1{\langle\th\rangle^\al}\norm{\langle\th\rangle^{1+\al}F_0}_{L^\infty}\lesssim \frac1{n^\al\langle\th\rangle^\al}[F]_1
	\end{align*}
	for all $r\in B_{X_r}(\varepsilon_1)$, $n>N$, $F\in\mathscr G^1$ and $\th>1$.
	This proves \eqref{Eq.T_nF,r_th_large}. To show \eqref{Eq.T_nF,r_diff_th_large}, using \eqref{Eq.T_nF,r_diff_th_large1} and \eqref{Eq.T_nF,r_diff_th_large2}, we obtain
	\begin{align*}
		&\left|\cT_n[F,r_1](\th)-\cT_n[F, r_2](\th)\right|\lesssim \left|\cT_n[F_1,r_1](\th)-\cT_n[F_1, r_2](\th)\right|+\left|\cT_n[F_0,r_1](\th)-\cT_n[F_0, r_2](\th)\right|\\
		\lesssim&\  \frac{1}{n\langle\th\rangle^\al}\norm{\langle\th\rangle^{1+\al}F_1'}_{L^\infty}\norm{r_1-r_2}_{X_r}+\frac{1}{\langle\th\rangle^\al}\norm{\langle\th\rangle^{1+\al}F_0}_{L^\infty}\norm{r_1-r_2}_{X_r}\lesssim
		 \frac{[F]_1}{n^\al\langle\th\rangle^\al}\norm{r_1-r_2}_{X_r}
	\end{align*}
	for all $r_1, r_2\in B_{X_r}(\varepsilon_1)$, $n>N$, $F\in\mathscr G^1$ and $\th>1$.
\end{proof}

Now, we are ready to prove \eqref{Eq.C^1est_1} and \eqref{Eq.C^1est_diff_1}.

\begin{proof}[Proof of \eqref{Eq.C^1est_1} and \eqref{Eq.C^1est_diff_1}]
	Combining Lemma \ref{Lem.T_nF,r_th_small} and Lemma \ref{Lem.T_nF,r_th_large}, we obtain
	\begin{align*}
		\left|\cT_n[F,r](\th)\right|&\leq C\frac{\th^{\al-1}\langle\th\rangle^{1-2\al}}{n^\al}[F]_1,\\
		\left|\cT_n[F,r_1](\th)-\cT_n[F, r_2](\th)\right|&\leq C \frac{\th^{\al-1}\langle\th\rangle^{1-2\al}}{n^\al}[F]_1\norm{r_1-r_2}_{X_r},
	\end{align*}
	for all $r\in B_{X_r}(\varepsilon_1)$, $n>N$, $F\in\mathscr G^1$ and $\th\in\R_+$. Therefore, \eqref{Eq.C^1est_1} follows directly from \eqref{Eq.D_n_G^1bound} and \eqref{Eq.P_n,1'=T_n}. By \eqref{Eq.D_n_G^1bound}, \eqref{Eq.P_n,1'=T_n} and the fact that $\cT_n$ is linear in $F$, we have
	\begin{align*}
		&\left|\left(\cP_{n,1}[r_1,\g_1]\right)'(\th)-\left(\cP_{n,1}[r_2,\g_2]\right)'(\th)\right|=\left|\cT_n\left[\cD_n[r_1, \g_1], r_1\right](\th)-\cT_n\left[\cD_n[r_2, \g_2], r_2\right](\th)\right|\\
		\leq &\ \left|\cT_n\left[\cD_n[r_1, \g_1], r_1\right](\th)-\cT_n\left[\cD_n[r_1, \g_1], r_2\right](\th)\right|+\left|\cT_n\left[\cD_n[r_1, \g_1]-\cD_n[r_2, \g_2], r_2\right](\th)\right|\\
		\lesssim &\ \frac{\th^{\al-1}\langle\th\rangle^{1-2\al}}{n^\al}\norm{\cD_n[r_1, \g_1]}_1\norm{r_1-r_2}_{X_r}+\frac{\th^{\al-1}\langle\th\rangle^{1-2\al}}{n^\al}\norm{\cD_n[r_1, \g_1]-\cD_n[r_2, \g_2]}_1\\
		\lesssim &\ \frac{\th^{\al-1}\langle\th\rangle^{1-2\al}}{n^\al}\norm{(r_1,\g_1)-(r_2,\g_2)}_X,
	\end{align*}
	where in the last step we have used the fact that $\cD_n: B_X(\varepsilon_1)\to\mathscr G^1$ has a uniform $C^1$ bound with respect to $n>N$. This proves \eqref{Eq.C^1est_diff_1}.
\end{proof}

To complete the proof of Proposition \ref{Prop.P_n,1,2}, it remains to show \eqref{Eq.C^1est_2} and \eqref{Eq.C^1est_diff_2}. The proof is similar to that of \eqref{Eq.C^1est_1} and \eqref{Eq.C^1est_diff_1}.

By \eqref{Eq.P_n,2} and \eqref{Eq.phi_n}, we have $\lim_{\th\to+\infty}\operatorname{Re}\phi_n[r](\th)=-\infty$, and
\begin{align}
 &\left(\cP_{n,2}[r,\g]\right)'(\th)=n\int_\th^\infty \left(\frac{-\mu+\ii\th}{\th}\Gamma[\g](\th) \left(\phi_n[r]\right)'(\wt\th)-\frac{-\mu+\ii\wt\th}{\wt\th}\Gamma[\g](\wt\th)\left(\phi_n[r]\right)'(\th)\right) \nonumber\\
	&\qquad\qquad\qquad\qquad\qquad\times\e^{n\phi_n[r](\wt\th)-n\phi_n[r](\th)}\,\mathrm d\wt\th\nonumber\\
	&\quad=n\int_\th^\infty\left(\cE_n[r,\g](\th)-\cE_n[r,\g](\wt\th)\right) \left(\phi_n[r]\right)'(\th) \left(\phi_n[r]\right)'(\wt\th) \e^{n\phi_n[r](\wt\th)-n\phi_n[r](\th)}\,\mathrm d\wt\th,\label{Eq.P_n,2'_express}
\end{align}
where
\begin{equation}
	\cE_n[r,\g](\th):=\frac{(-\mu+\ii\th)\Gamma[\g](\th)}{\th \left(\phi_n[r]\right)'(\th)}-1=\frac{\Gamma[\g](\th)}{\Phi_n[r](\th)}-1.
\end{equation}

Using Lemma \ref{Lem.H_n_Phi_n}, \eqref{Eq.Gamma_gamma} and Banach algebraic properties, similar to \eqref{Eq.D_n_G^1bound}, one can show that there exist $\varepsilon_1\in(0,1)$ and $N\in\Z_+$ such that the nonlinear map $\cE_n: B_X(\varepsilon_1)\to \mathscr G^1$ is smooth for all $n>N$ with a uniform $C^1$ bound with respect to $n>N$, and
\begin{equation}\label{Eq.E_n_G^1bound}
	\norm{\cE_n[r,\g]}_1\leq C\left(1/n+\norm{(r,\g)}_X\right),\quad\forall\ n>N,\ \forall\ (r,\g)\in B_X(\varepsilon_1),
\end{equation}
where $C=C(\mu,\al)>0$ is a constant independent of $n>N$ and $(r,\g)\in B_X(\varepsilon_1)$. As a consequence, we have
\begin{equation}\label{Eq.phi_n'_bound}
	\left|\left(\phi_n[r]\right)'(\th)\right|\sim\frac{\langle\th\rangle}{\th},\qquad\forall\ \th>0,
\end{equation}
for all $n>N$ and $r\in B_{X_r}(\varepsilon_1)$.

Hence, it suffices to analyze the properties of $\cS_n[F, r]$ defined by
\begin{align*}
	\cS_n[F,r](\th):=n\left(\phi_n[r]\right)'(\th)\int_\th^\infty\left(F(\th)-F(\wt\th)\right) \left(\phi_n[r]\right)'(\wt\th)\e^{n\phi_n[r](\wt\th)-n\phi_n[r](\th)}\,\mathrm d\wt\th
\end{align*}
for $F\in\mathscr G^1$ and $r\in X_r$ near $0$. The operators $\cS_n$ satisfy the same properties as $\cT_n$, listed in Lemma \ref{Lem.T_nF,r_th_small}, Lemma \ref{Lem.T_nF,r_th_large1} and Lemma \ref{Lem.T_nF,r_th_large2}. Namely, we have the following three lemmas for $\cS_n$.

\begin{lemma}\label{Lem.S_nF,r_th_small}
	There exist $\varepsilon_1\in(0,1)$, $N\in\Z_+$ and a constant $C=C(\mu,\al)>0$ such that
	\begin{align}
		\left|\cS_n[F,r](\th)\right|&\leq C\frac{\th^{\al-1}}{n^\al}\|F\|_1,\label{Eq.S_nF,r_th_small}\\
		\left|\cS_n[F,r_1](\th)-\cS_n[F, r_2](\th)\right|&\leq C \frac{\th^{\al-1}}{n^\al}\|F\|_1\norm{r_1-r_2}_{X_r},\label{Eq.S_nF,r_diff_th_small}
	\end{align} 
	for all $r, r_1, r_2\in B_{X_r}(\varepsilon_1)$, $n>N$, $F\in\mathscr G^1$ and $\th\in(0,1]$.
\end{lemma}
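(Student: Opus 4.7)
The proof will closely parallel that of Lemma \ref{Lem.T_nF,r_th_small}, with the roles of the inner and outer integration domains reversed. The key structural difference is that $\cS_n$ integrates over $(\theta,\infty)$ rather than $(0,\theta)$, which forces us to use the full norm $\|F\|_1$ (capturing decay at infinity) rather than just the seminorm $[F]_1$.

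First I would write out $\cS_n[F,r](\theta)$ explicitly using
\[
\e^{n\phi_n[r](\wt\th)-n\phi_n[r](\th)}=\frac{\mu-\ii\th}{\mu-\ii\wt\th}\left(\frac{\wt\th}{\th}\right)^{-\mu n-(2\mu-1)}\left(\frac{R[r](\wt\th)}{R[r](\th)}\right)^n\e^{\ii n(\wt\th-\th)},
\]
and apply the bound $|(\phi_n[r])'|\sim\langle\th\rangle/\th$ from \eqref{Eq.phi_n'_bound}. After the change of variables $t=\wt\th/\th\in(1,\infty)$, we obtain, for $\th\in(0,1]$,
\[
|\cS_n[F,r](\th)|\lesssim\frac{n}{\th}\int_1^\infty |F(\th)-F(\th t)|\,t^{-\mu n-2\mu}\left(\frac{R[r](\th t)}{R[r](\th)}\right)^n\,\mathrm dt.
\]

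The plan is to split this into a near-diagonal piece $t\in(1,1+1/\th)$ and a far piece $t>1+1/\th$. On the near piece we have $|\th t-\th|\le 1$, so the local H\"older seminorm applies and $|F(\th)-F(\th t)|\le[F]_1(\th(t-1))^\alpha$; then I apply the change of variables $s=t^{-\mu}R[r](\th t)/R[r](\th)\in(0,1)$, analogous to the one in Lemma \ref{Lem.T_nF,r_th_small}. A direct computation gives $s\sim t^{-\mu}$, $|\mathrm ds/\mathrm dt|\sim s^{1+1/\mu}$, and $|\ln t|\sim|\ln s|/\mu$. Using the elementary bound $t-1\le t\ln t$ for $t\ge 1$, the integral reduces to $\int_0^1 s^{n+1-(1+\alpha)/\mu}|\ln s|^\alpha\,\mathrm ds\sim n^{-1-\alpha}$, yielding the contribution $\lesssim\th^{\alpha-1}n^{-\alpha}[F]_1$.

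For the far piece, I bound $|F(\th)-F(\th t)|\le 2\|\langle\cdot\rangle^\alpha F\|_{L^\infty}\lesssim\|F\|_1$ (since $\th\le 1$). Under the same change of variables, the domain becomes $s\in(0,s^*(\th))$ where $s^*(\th)\sim(1+1/\th)^{-\mu}\lesssim\min(\th^\mu,1)$. The remaining integral is $\int_0^{s^*(\th)}s^{n+1-1/\mu}\,\mathrm ds\sim s^*(\th)^{n+2-1/\mu}/n$, producing $I_2\lesssim\|F\|_1\,\th^{\mu n+2\mu-2}$ when $\th$ is small and $I_2\lesssim\|F\|_1\cdot 2^{-\mu n}$ when $\th\in[1/2,1]$. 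In both cases, for $n\ge N$ sufficiently large, this is bounded by $\th^{\alpha-1}n^{-\alpha}\|F\|_1$.

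For the Lipschitz estimate \eqref{Eq.S_nF,r_diff_th_small}, I would replicate the decomposition $\cS_n[F,r_1]-\cS_n[F,r_2]=\Delta_{n,1}+\Delta_{n,2}$ from the proof of Lemma \ref{Lem.T_nF,r_th_small}, separating the contributions from $(\phi_n[r_1])'-(\phi_n[r_2])'$ (controlled by Lemma \ref{Lem.H_n_Phi_n}, yielding an analog of \eqref{Eq.h_n'diff} for $\phi_n$) and from $(R[r_1](\wt\th)/R[r_1](\th))^n-(R[r_2](\wt\th)/R[r_2](\th))^n$ (handled via identity \eqref{Eq.R_diff} and the logarithmic difference estimate \eqref{Eq.ln_diff}). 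The extra factor of $n$ in \eqref{Eq.R_diff} is absorbed by gaining one additional $|\ln s|$ in the $s$-integral, which costs only an extra factor of $1/n$. The main obstacle is ensuring that the far piece estimate closes uniformly for $\th$ near $1$, where the exponential decay in $n$ must dominate the missing $n^{-\alpha}$; this is resolved by taking $N$ large enough so that $2^{-\mu n}\le n^{-\alpha}$.
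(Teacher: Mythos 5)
Your proposal is correct and lands on the same integral estimates as the paper; the only organizational difference is where you separate the near and far contributions. The paper first proves a single global inequality \eqref{Eq.F_diff_bound},
\begin{equation*}
\left|F(\th)-F(\wt\th)\right|\leq 2\|F\|_1\,\bigl|\th-\wt\th\bigr|^\al\qquad\forall\ \th,\wt\th>0,
\end{equation*}
obtained by using the H\"older seminorm when $|\th-\wt\th|\leq1$ and the trivial triangle inequality when $|\th-\wt\th|>1$, and then runs one unified bound on the full integral, converting $(t-1)^\al\leq t^\al|\ln t|^\al$ and making the change of variables $s=t^{-\mu}R[r](\th t)/R[r](\th)$ to reach $\int_0^1|\ln s|^\al s^{n+1-(1+\al)/\mu}\,\mathrm ds\lesssim n^{-1-\al}$. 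You instead split at $t=1+1/\th$: on the near piece your seminorm bound and the same $s$-substitution give exactly this integral; on the far piece you use the sup-norm bound and a crude $\int_0^{s^*}s^{n+1-1/\mu}\,\mathrm ds\lesssim (s^*)^{n+2-1/\mu}/n$, then absorb the missing $n^{-\al}$ via the exponential smallness of $(s^*)^n$ once $n>N$. Both routes are valid; the paper's version is marginally cleaner because the far-piece balancing is hidden inside the elementary inequality $(t-1)\le t\ln t$, which is summable without any case distinction on $\th$. Your sketch of the Lipschitz estimate \eqref{Eq.S_nF,r_diff_th_small} matches the paper's treatment of $\cT_n$ (the $\Delta_{n,1}/\Delta_{n,2}$ split, \eqref{Eq.R_diff}, \eqref{Eq.ln_diff}, and the one extra $|\ln s|$ costing a factor $1/n$); the paper explicitly leaves this part to the reader as structurally identical, so you are in agreement there. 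No gaps.
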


\begin{lemma}\label{Lem.S_nF,r_th_large1}
	There exist $\varepsilon_1\in(0,1)$, $N\in\Z_+$ and a constant $C=C(\mu,\al)>0$ such that
	\begin{align}
		\left|\cS_n[F,r](\th)\right|&\leq \frac{C}{n\langle\th\rangle^\al}\norm{\langle\th\rangle^{1+\al}F'}_{L^\infty}, \label{Eq.S_nF,r_th_large1}\\
		\left|\cS_n[F, r_1](\th)-\cS_n[F, r_2](\th)\right|&\leq \frac{C}{n\langle\th\rangle^\al}\norm{\langle\th\rangle^{1+\al}F'}_{L^\infty}\norm{r_1-r_2}_{X_r},\label{Eq.S_nF,r_diff_th_large1}
	\end{align}
	for all $r, r_1, r_2\in B_{X_r}(\varepsilon_1)$, $n>N$, $\th>1$ and all $F\in C^1(\R_+; \C)$ satisfying $\langle\th\rangle^{1+\al}F'\in L^\infty$.
\end{lemma}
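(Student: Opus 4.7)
The plan is to parallel the proof of Lemma \ref{Lem.T_nF,r_th_large1}, with $\phi_n[r]$ in place of $h_n[r]$ and the interval of integration reversed from $(0,\th)$ to $(\th,\infty)$. Since $\lim_{\wt\th \to +\infty}\operatorname{Re}\phi_n[r](\wt\th) = -\infty$ by \eqref{Eq.phi_n}, an integration by parts in the defining expression of $\cS_n$ yields
\begin{equation*}
	\cS_n[F,r](\th) = \left(\phi_n[r]\right)'(\th)\int_\th^\infty F'(\wt\th)\,\mathrm e^{n\phi_n[r](\wt\th)-n\phi_n[r](\th)}\,\mathrm d\wt\th,
\end{equation*}
since $F(\th)-F(\wt\th)$ vanishes at $\wt\th = \th$ and the exponential decays at infinity. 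Expanding the exponential via \eqref{Eq.phi_n}, using $|(\phi_n[r])'(\th)|\sim\langle\th\rangle/\th$ from \eqref{Eq.phi_n'_bound}, the normalization $\|\langle\th\rangle^{1+\al}F'\|_{L^\infty}=1$, and the substitution $t = \wt\th/\th \in (1,\infty)$ with the bound $\langle\th t\rangle \gtrsim \th t$ for $\th > 1$, I reduce \eqref{Eq.S_nF,r_th_large1} to controlling
\begin{equation*}
	\frac{\langle\th\rangle^2}{\th^{2+\al}}\int_1^\infty t^{-\mu n - 2\mu - 1 - \al}\left(\frac{R[r](\th t)}{R[r](\th)}\right)^n\,\mathrm dt.
\end{equation*}

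To evaluate this integral I perform the change of variables $s := t^{-\mu}R[r](\th t)/R[r](\th)$, which for $r \in B_{X_r}(\varepsilon_1)$ with $\varepsilon_1$ small is a strictly decreasing bijection from $(1,\infty)$ onto $(0,1)$ satisfying $s \sim t^{-\mu}$, $t \sim s^{-1/\mu}$ and $|\mathrm dt/\mathrm ds| \sim s^{-1-1/\mu}$. The algebraic cancellation $(R[r](\th t)/R[r](\th))^n = s^n t^{\mu n}$ kills the $t^{-\mu n}$ factor, and straightforward power counting yields $\int_0^1 s^{n+1+\al/\mu}\,\mathrm ds \lesssim 1/n$; combined with $\langle\th\rangle^2/\th^{2+\al} \sim \langle\th\rangle^{-\al}$ for $\th > 1$, this establishes \eqref{Eq.S_nF,r_th_large1}.

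For the Lipschitz bound \eqref{Eq.S_nF,r_diff_th_large1} I split $\cS_n[F,r_1](\th) - \cS_n[F,r_2](\th) = \Delta_{n,1}^{(1)} + \Delta_{n,2}^{(1)}$ as in the proof of Lemma \ref{Lem.T_nF,r_th_large1}, isolating the contributions of the outer factor $(\phi_n[r])'(\th)$ and of the exponential. The first is handled by combining the uniform $C^1$ bound of $\Phi_n$ from Lemma \ref{Lem.H_n_Phi_n}, which gives $|(\phi_n[r_1])'(\th) - (\phi_n[r_2])'(\th)| \lesssim (\langle\th\rangle/\th)\|r_1-r_2\|_{X_r}$, with the already proven bound on $\cS_n[F,r_1]$. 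The second is expanded via the identity \eqref{Eq.R_diff}, producing an extra factor of $n$ and a logarithmic term $|\ln(R[r_1](\th)/R[r_1](\th t)) - \ln(R[r_2](\th)/R[r_2](\th t))|$, which is $\lesssim |\ln t|\cdot\|r_1-r_2\|_{X_r}$ by the analog of \eqref{Eq.ln_diff} for $t \in (1,\infty)$ (derived by writing the difference as $\int_1^t \tau^{-1}\cdot O(\|r_1-r_2\|_{X_r})\,\mathrm d\tau$). The same change of variables then yields $\int_0^1 s^{n+1+\al/\mu}|\ln s|\,\mathrm ds \lesssim 1/n^2$, and the extra $n$ is absorbed to give the desired $1/(n\langle\th\rangle^\al)\|r_1-r_2\|_{X_r}$ bound.

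The main technical point is verifying that the substitution $s = t^{-\mu}R[r](\th t)/R[r](\th)$ is indeed a bijection on the reversed interval $(1,\infty)$, uniformly in $\th > 1$ and $n > N$: one computes
\begin{equation*}
	\frac{\mathrm ds}{\mathrm dt} = -\mu t^{-\mu-1}\frac{R[r](\th t)}{R[r](\th)}\left(1 - \frac{\th t(R[r])'(\th t)}{\mu R[r](\th t)}\right),
\end{equation*}
and must check that the parenthesized factor stays bounded away from zero, which follows from the smallness of $\th(R[r])'/R[r]$ in $\cG^0$ for $r \in B_{X_r}(\varepsilon_1)$ when $\varepsilon_1$ is sufficiently small. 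Once this bookkeeping is settled, all remaining steps are direct transpositions of the arguments used for Lemma \ref{Lem.T_nF,r_th_large1}, with no new analytical ideas required.
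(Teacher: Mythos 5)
Your proposal is correct and follows essentially the same route as the paper: the same integration by parts using $\lim_{\wt\th\to+\infty}\operatorname{Re}\phi_n[r](\wt\th)=-\infty$, the same reduction via $t=\wt\th/\th$ and $\langle\th t\rangle\gtrsim\th t$, and the same change of variables $s=t^{-\mu}R[r](\th t)/R[r](\th)$ with identical power counting ($\int_0^1 s^{n+1+\al/\mu}\,\mathrm ds\lesssim 1/n$ matches the paper's \eqref{Eq.power_est4.66}-type computation). Your treatment of the Lipschitz bound \eqref{Eq.S_nF,r_diff_th_large1} — which the paper omits as "structurally parallel" — correctly transposes the $\Delta_{n,1}^{(1)}+\Delta_{n,2}^{(1)}$ splitting and the logarithmic estimate from the proof of Lemma \ref{Lem.T_nF,r_th_large1}.
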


\begin{lemma}\label{Lem.S_nF,r_th_large2}
	There exist $\varepsilon_1\in(0,1)$, $N\in\Z_+$ and a constant $C=C(\mu,\al)>0$ such that
	\begin{align}
		\left|\cS_n[F,r](\th)\right|&\leq \frac{C}{\langle\th\rangle^\al}\norm{\langle\th\rangle^{1+\al}F}_{L^\infty},\label{Eq.S_nF,r_th_large2}\\
		\left|\cS_n[F, r_1](\th)-\cS_n[F, r_2](\th)\right|&\leq \frac{C}{\langle\th\rangle^\al}\norm{\langle\th\rangle^{1+\al}F}_{L^\infty}\norm{r_1-r_2}_{X_r},\label{Eq.S_nF,r_diff_th_large2}
	\end{align}
	for all $r, r_1, r_2\in B_{X_r}(\varepsilon_1)$, $n>N$, $\th>1$ and all $F\in C(\R_+; \C)$ satisfying $\langle\th\rangle^{1+\al}F\in L^\infty$.
\end{lemma}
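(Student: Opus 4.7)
The plan is to mirror the proof of Lemma~\ref{Lem.T_nF,r_th_large2}, with the integration domain $(0,\theta)$ replaced by $(\theta,+\infty)$. By linearity of $\cS_n$ in $F$, I would normalize $\|\langle\theta\rangle^{1+\alpha}F\|_{L^\infty}=1$. From the explicit formula \eqref{Eq.phi_n} one computes
\[
\e^{n\phi_n[r](\wt\theta)-n\phi_n[r](\theta)}=\frac{\mu-\ii\theta}{\mu-\ii\wt\theta}\left(\frac{\wt\theta}{\theta}\right)^{-\mu n-2\mu+1}\left(\frac{R[r](\wt\theta)}{R[r](\theta)}\right)^{n}\e^{\ii n(\wt\theta-\theta)}.
\]
Inserting this into the definition of $\cS_n$, using $|\phi_n[r]'|\sim\langle\theta\rangle/\theta$ from \eqref{Eq.phi_n'_bound}, and exploiting the crude bound $|F(\theta)-F(\wt\theta)|\leq |F(\theta)|+|F(\wt\theta)|\lesssim\langle\theta\rangle^{-1-\alpha}$ (valid because $\wt\theta>\theta>1$), one reduces matters after the substitution $t=\wt\theta/\theta$ to estimating
\[
\frac{n}{\theta^\alpha}\int_1^\infty t^{-\mu n-2\mu}\left(\frac{R[r](\theta t)}{R[r](\theta)}\right)^{n}\,\mathrm dt.
\]

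The crucial step is then a further substitution $s:=t^{-\mu}R[r](\theta t)/R[r](\theta)$, the analogue of the one used in Lemma~\ref{Lem.T_nF,r_th_large2} but adapted to $t\in(1,+\infty)$. A direct computation based on $\xi R[r]'(\xi)=\mu\xi^\mu r(\xi)+\xi^{\mu+1}r'(\xi)$ shows that for $r\in B_{X_r}(\varepsilon_1)$ with $\varepsilon_1$ small, $s$ is a strictly decreasing diffeomorphism from $(1,+\infty)$ onto $(0,1)$, with $s\sim t^{-\mu}$, $t\sim s^{-1/\mu}$, $\mathrm ds/\mathrm dt\sim -\mu t^{-\mu-1}$, $|\mathrm dt/\mathrm ds|\sim s^{-1-1/\mu}$, and $|\ln t|\sim |\ln s|$, all uniformly in $\theta>1$ and $r\in B_{X_r}(\varepsilon_1)$. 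Since $t^{-\mu n-2\mu}(R[r](\theta t)/R[r](\theta))^n=s^n t^{-2\mu}\sim s^{n+2}$, the displayed integral is bounded by $\int_0^1 s^{n+1-1/\mu}\,\mathrm ds\lesssim 1/n$ for all $n\geq N$ large enough. Combined with the prefactor $n/\theta^\alpha$ and $\theta^{-\alpha}\sim\langle\theta\rangle^{-\alpha}$ for $\theta>1$, this yields \eqref{Eq.S_nF,r_th_large2}.

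For the Lipschitz estimate \eqref{Eq.S_nF,r_diff_th_large2} I would perform the same splitting of $\cS_n[F,r_1]-\cS_n[F,r_2]$ as in \eqref{Eq.DeltaT_n}, isolating a prefactor difference in $\phi_n[r_1]'-\phi_n[r_2]'$ and an integrand difference generated by $(R[r_1](\wt\theta)/R[r_1](\theta))^n-(R[r_2](\wt\theta)/R[r_2](\theta))^n$. The prefactor piece is controlled by the analogue of \eqref{Eq.h_n'diff} for $\Phi_n$, which is immediate from Lemma~\ref{Lem.H_n_Phi_n}. For the integrand piece I would apply $\e^{nX}-\e^{nY}=n(X-Y)\int_0^1\e^{nsX+n(1-s)Y}\,\mathrm ds$ together with the logarithmic Lipschitz bound \eqref{Eq.ln_diff} (which holds at the pair $(\theta,\theta t)$ equally well for $t>1$ as for $t<1$), picking up an extra factor $n|\ln t|$. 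Running the $s$-substitution on $R_{(s)}:=R[r_1]^s R[r_2]^{1-s}$, the resulting factor $n|\ln s|$ is absorbed by $\int_0^1 s^{n+1-1/\mu}|\ln s|\,\mathrm ds\lesssim 1/n^2$, which compensates the extra power of $n$ and produces the desired $\langle\theta\rangle^{-\alpha}\|r_1-r_2\|_{X_r}$ bound.

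The main technical obstacle is verifying that $s=t^{-\mu}R[r](\theta t)/R[r](\theta)$ really does define a diffeomorphism with the uniform equivalences above on the whole of $t\in(1,+\infty)$, uniformly in $\theta>1$ and $r\in B_{X_r}(\varepsilon_1)$; this amounts to choosing $\varepsilon_1=\varepsilon_1(\alpha,\mu)$ small enough that $|\xi^{\mu+1}r'(\xi)|\leq \mu/2$ and $|\xi^\mu r(\xi)|\leq 1/2$ pointwise, both of which follow from the definition of $\|\cdot\|_{X_r}$ and $\mu>1/2$. Once this is in place, the remainder of the argument is a routine transcription of the reasoning leading to \eqref{Eq.T_nF,r_th_large2} and \eqref{Eq.T_nF,r_diff_th_large2}, modulo replacing $(0,1)$ by $(1,+\infty)$ throughout.
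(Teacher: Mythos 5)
Your proof is correct and follows essentially the same approach as the paper: after normalizing $\|\langle\th\rangle^{1+\al}F\|_{L^\infty}=1$, insert the explicit expression for $\e^{n\phi_n[r](\wt\th)-n\phi_n[r](\th)}$, bound $|F(\th)-F(\wt\th)|\lesssim\langle\th\rangle^{-1-\al}$ using $\wt\th>\th>1$, and reduce via $t=\wt\th/\th$ and then $s=t^{-\mu}R[r](\th t)/R[r](\th)$ to $\int_0^1 s^{n+1-1/\mu}\,\mathrm ds\lesssim 1/n$, which cancels the prefactor $n\langle\th\rangle^{-\al}$. Your sketch of the omitted Lipschitz estimate likewise mirrors the paper's $\Delta_{n,1}/\Delta_{n,2}$ decomposition for $\cT_n$, using \eqref{Eq.ln_diff} and the extra $|\ln s|$ to absorb the additional factor of $n$.
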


The proofs of Lemma \ref{Lem.S_nF,r_th_large1} and Lemma \ref{Lem.S_nF,r_th_large2} follow a closely analogous approach to those established for Lemma \ref{Lem.T_nF,r_th_large1} and Lemma \ref{Lem.T_nF,r_th_large2}, respectively. To ensure methodological transparency, we present detailed proofs of  \eqref{Eq.S_nF,r_th_large1} and \eqref{Eq.S_nF,r_th_large2} here. The proofs of \eqref{Eq.S_nF,r_diff_th_large1} and \eqref{Eq.S_nF,r_diff_th_large2}, being structurally parallel to the preceding arguments, are omitted here for conciseness.

\begin{proof}[Proof of \eqref{Eq.S_nF,r_th_large1}]
	We assume without loss of generality that $\norm{\langle\th\rangle^{1+\al}F'}_{L^\infty}=1$. An integration by parts yields
	\begin{align*}
		\cS_n[F, r](\th)&=\left(\phi_n[r]\right)'(\th)\int_\th^\infty F'(\wt\th)\e^{n\phi_n[r](\wt\th)-n\phi_n[r](\th)}\,\mathrm d\wt\th\\
		&=\left(\phi_n[r]\right)'(\th)\int_\th^\infty F'(\wt\th)\frac{\mu-\ii\th}{\mu-\ii\wt\th}\left(\frac{\wt\th}{\th}\right)^{-\mu n-2\mu+1}\left(\frac{R[r](\wt\th)}{R[r](\th)}\right)^n\e^{\ii n(\wt\th-\th)}\,\mathrm d\wt\th.
	\end{align*}
	Hence, by \eqref{Eq.phi_n'_bound} we have
	\begin{align*}
		\left|\cS_n[F, r](\th)\right|&\lesssim \frac{\langle\th\rangle}{\th}\int_\th^\infty \frac{1}{\langle\wt\th\rangle^{1+\al}}\frac{\langle\th\rangle}{\langle\wt\th\rangle}\left(\frac{\wt\th}{\th}\right)^{-\mu n-2\mu+1}\left(\frac{R[r](\wt\th)}{R[r](\th)}\right)^n\,\mathrm d\wt\th\\
		&\lesssim \langle\th\rangle^2\int_1^\infty \frac1{\langle\th t\rangle^{2+\al}}t^{-\mu n-2\mu+1}\left(\frac{R[r](\th t)}{R[r](\th)}\right)^n\,\mathrm dt\\
		&\lesssim \frac{\langle\th\rangle^2}{\th^{2+\al}}\int_1^\infty t^{-\mu n-2\mu-1-\al}\left(\frac{R[r](\th t)}{R[r](\th)}\right)^n\,\mathrm dt.
	\end{align*}
	Then we use an argument similar to the proof of \eqref{Eq.future_th_small}, i.e., make a change of variables $s:=t^{-\mu} R[r](\th t)/R[r](\th)\in(0,1)$ for $t\in(1,+\infty)$. Then we have $s\sim t^{-\mu}$, $t\sim s^{-1/\mu}$, $\mathrm ds/\mathrm dt\sim t^{-\mu-1}\sim s^{1+1/\mu}$. Thus,
	\begin{align}\label{Eq.power_est4.66}
		\int_1^\infty t^{-\mu n-2\mu-1-\al}\left(\frac{R[r](\th t)}{R[r](\th)}\right)^n\,\mathrm dt\lesssim \int_0^1 s^ns^{\frac{2\mu+1+\al}{\mu}}s^{-1-\frac1\mu}\,\mathrm ds\lesssim \frac1n
	\end{align}
	for all $\th\in\R_+$, $n\in\Z_+$ and $r\in B_{X_r}(\varepsilon_1)$. Therefore, \eqref{Eq.S_nF,r_th_large1} follows.
\end{proof}

\begin{proof}[Proof of \eqref{Eq.S_nF,r_th_large2}]
	By the definition of $\cS_n$ and \eqref{Eq.phi_n}, we have
	\begin{align}
		\cS_n[F, r](\th)&=n\left(\phi_n[r]\right)'(\th)\int_\th^\infty\left(F(\th)-F(\wt\th)\right) \left(\phi_n[r]\right)'(\wt\th)\frac{\mu-\ii\th}{\mu-\ii\wt\th}\left(\frac{\wt\th}{\th}\right)^{-\mu n-2\mu+1}\label{Eq.S_nF,r_express}\\
		&\qquad\qquad\times\left(\frac{R[r](\wt\th)}{R[r](\th)}\right)^n\e^{\ii n(\wt\th-\th)}\,\mathrm d\wt\th.\nonumber
	\end{align}
	We assume without loss of generality that $\norm{\langle\th\rangle^{1+\al}F}_{L^\infty}=1$. Hence,
	\begin{align*}
		\left|\cS_n[F, r](\th)\right|&\lesssim n\frac{\langle\th\rangle}{\th}\int_\th^\infty\frac1{\langle\th\rangle^{1+\al}}\frac{\langle\wt\th\rangle}{\wt\th}\frac{\langle\th\rangle}{\langle\wt\th\rangle}\left(\frac{\wt\th}{\th}\right)^{-\mu n-2\mu+1}\left(\frac{R[r](\wt\th)}{R[r](\th)}\right)^n\,\mathrm d\wt\th\\
		&\lesssim n\frac{\langle\th\rangle^{1-\al}}{\th}\int_1^\infty t^{-\mu n-2\mu}\left(\frac{R[r](\th t)}{R[r](\th)}\right)^n\,\mathrm dt.
	\end{align*}
	Similar to \eqref{Eq.power_est4.66}, we have
	\[\int_1^\infty t^{-\mu n-2\mu}\left(\frac{R[r](\th t)}{R[r](\th)}\right)^n\,\mathrm dt\lesssim \int_0^1s^ns^2s^{-1-\frac1\mu}\,\mathrm ds\lesssim\frac1n\]
	for all $\th\in\R_+$, $n\geq 2$ and $r\in B_{X_r}(\varepsilon_1)$. Hence \eqref{Eq.S_nF,r_th_large2} follows.
\end{proof}

The proof of Lemma \ref{Lem.S_nF,r_th_small} is analogous to that of Lemma \ref{Lem.T_nF,r_th_small}, with only minor modifications. For readers' convenience, we provide the proof of Lemma \ref{Lem.S_nF,r_th_small} below.

\begin{proof}[Proof of Lemma \ref{Lem.S_nF,r_th_small}]
	For $F\in\mathscr G^1$, we claim
	\begin{equation}\label{Eq.F_diff_bound}
		\left|F(\th)-F(\wt\th)\right|\leq 2\|F\|_1\left|\th-\wt\th\right|^\al,\quad \forall\ \th, \wt\th>0.
	\end{equation}
	Indeed, we assume without loss of generality that $\wt\th>\th$, if $\left|\th-\wt\th\right|\leq 1$, then 
	\[\left|F(\th)-F(\wt\th)\right|\leq [F]_1 \langle\wt\th\rangle^{-1-\al}\left|\th-\wt\th\right|^\al\leq [F]_1\left|\th-\wt\th\right|^\al;\]
	if $\left|\th-\wt\th\right|>1$, then
	\[\left|F(\th)-F(\wt\th)\right|\leq\left|F(\th)\right|+\left|F(\wt\th)\right|\leq \langle\th\rangle^{-\al}\|F\|_1+\langle\wt\th\rangle^{-\al}\|F\|_1\leq 2\|F\|_1\leq 2\|F\|_1\left|\th-\wt\th\right|^\al.\]
	This proves \eqref{Eq.F_diff_bound}. Therefore, 
	\if0
	By \eqref{Eq.S_nF,r_express}, we can write $\cS_n[F, r]$ as a summation of two parts $\cS_n=\cS_n^{(1)}+\cS_n^{(2)}$, where
	\begin{align*}
		\cS_n^{(1)}[F, r](\th):&=n\left(\phi_n[r]\right)'(\th)\int_\th^{\th+1}\left(F(\th)-F(\wt\th)\right) \left(\phi_n[r]\right)'(\wt\th)\frac{\mu-\ii\th}{\mu-\ii\wt\th}\left(\frac{\wt\th}{\th}\right)^{-\mu n-2\mu+1}\\
		&\qquad\qquad\times\left(\frac{R[r](\wt\th)}{R[r](\th)}\right)^n\e^{\ii n(\wt\th-\th)}\,\mathrm d\wt\th,\\
		\cS_n^{(2)}[F, r](\th):&=n\left(\phi_n[r]\right)'(\th)\int_{\th+1}^\infty\left(F(\th)-F(\wt\th)\right) \left(\phi_n[r]\right)'(\wt\th)\frac{\mu-\ii\th}{\mu-\ii\wt\th}\left(\frac{\wt\th}{\th}\right)^{-\mu n-2\mu+1}\\
		&\qquad\qquad\times\left(\frac{R[r](\wt\th)}{R[r](\th)}\right)^n\e^{\ii n(\wt\th-\th)}\,\mathrm d\wt\th.
	\end{align*}\fi 
	using \eqref{Eq.S_nF,r_express}, \eqref{Eq.phi_n'_bound}, $F\in\mathscr G^1$, \eqref{Eq.F_diff_bound} and $\th\in(0,1]$, we obtain
	\begin{align*}
		\left|\cS_n[F, r](\th)\right|&\lesssim n\frac{\langle\th\rangle}{\th}\int_\th^\infty\left|\th-\wt\th\right|^\al\|F\|_1\frac{\langle\wt\th\rangle}{\wt\th}\frac{\langle\th\rangle}{\langle\wt\th\rangle}\left(\frac{\wt\th}{\th}\right)^{-\mu n-2\mu+1}\left(\frac{R[r](\wt\th)}{R[r](\th)}\right)^n\,\mathrm d\wt\th\\
		&\lesssim n\frac{\langle\th\rangle^2}{\th^2}\|F\|_1\int_\th^\infty\left|\th-\wt\th\right|^\al\left(\frac{\wt\th}{\th}\right)^{-\mu n-2\mu}\left(\frac{R[r](\wt\th)}{R[r](\th)}\right)^n\,\mathrm d\wt\th\\
		&\lesssim n\frac{\langle\th\rangle^2}{\th^{1-\al}}\|F\|_1\int_1^\infty(t-1)^\al t^{-\mu n-2\mu}\left(\frac{R[r](\th t)}{R[r](\th)}\right)^n\,\mathrm dt\\
		&\lesssim n\frac{\langle\th\rangle^2}{\th^{1-\al}}\|F\|_1\int_1^\infty\left(1-\frac1t\right)^\al t^{-\mu n-2\mu+\al}\left(\frac{R[r](\th t)}{R[r](\th)}\right)^n\,\mathrm dt\\
		&\lesssim n\frac{\langle\th\rangle^2}{\th^{1-\al}}\|F\|_1\int_1^\infty|\ln t|^\al t^{-\mu n-2\mu+\al}\left(\frac{R[r](\th t)}{R[r](\th)}\right)^n\,\mathrm dt,
	\end{align*}
	where in the last inequality we have used $|1-1/t|\leq |\ln(1/t)|=\ln t$ for $t>1$. Then we use an argument similar to the proof of \eqref{Eq.future_th_small}, i.e., make a change of variables $s:=t^{-\mu} R[r](\th t)/R[r](\th)\in(0,1)$ for $t\in(1,+\infty)$. Then we have $s\sim t^{-\mu}$, $t\sim s^{-1/\mu}$, $\mathrm ds/\mathrm dt\sim t^{-\mu-1}\sim s^{1+1/\mu}$ and $|\ln t|\sim |\ln s|$. 
	Thus,
	\begin{align*}
		\int_1^{\infty}|\ln t|^\al t^{-\mu n-2\mu+\al}\left(\frac{R[r](\th t)}{R[r](\th)}\right)^n\,\mathrm dt\lesssim \int_0^1|\ln s|^\al s^n s^{\frac{2\mu-\al}{\mu}}s^{-1-\frac1\mu}\,\mathrm ds\lesssim \frac1{n^{1+\al}}
	\end{align*}
	for all $\th\in\R_+$, $n\geq 4$ and $r\in B_{X_r}(\varepsilon_1)$. Hence
	\begin{equation*}
		\left|\cS_n[F, r](\th)\right|\lesssim \frac{\langle\th\rangle^2\th^{\al-1}}{n^\al}\|F\|_1\lesssim \frac{\th^{\al-1}}{n^\al}\|F\|_1, \quad\forall\ n>N, r\in B_{X_r}(\varepsilon_1), F\in\mathscr G^1, \th\in(0,1].
	\end{equation*}
	This proves \eqref{Eq.S_nF,r_th_small}. The proof of \eqref{Eq.S_nF,r_diff_th_small} shares the same idea with the proof of \eqref{Eq.T_nF,r_diff_th_small}, using \eqref{Eq.F_diff_bound}. Hence, we leave the proof of \eqref{Eq.S_nF,r_diff_th_small} to readers.
\end{proof}

Lemma \ref{Lem.S_nF,r_th_large1}, Lemma \ref{Lem.S_nF,r_th_large2} and Lemma \ref{Lem.decompose_G^1} provide the estimates of $\cS_n$ for $\th>1$. Combining these estimates with those for $\th\in (0,1]$, i.e., Lemma \ref{Lem.S_nF,r_th_small}, and using the fact $\left(\cP_{n,2}[r,\g]\right)'=\cS_n[\cE_n[r,\g], r]$ and \eqref{Eq.E_n_G^1bound}, we can prove \eqref{Eq.C^1est_2} and \eqref{Eq.C^1est_diff_2}. The details follow a similar approach to the proof of \eqref{Eq.C^1est_1} and \eqref{Eq.C^1est_diff_1}, and are thus omitted here for brevity.

Now, we have finished the proof of Proposition \ref{Prop.P_n,1,2}.

\subsection{Proof of Proposition \ref{Prop.I}}\label{Subsec.Proof-contraction}
So far, we have proved Proposition \ref{Prop.P_n_est}. The main task in this subsection is to prove Proposition \ref{Prop.I} by using Proposition \ref{Prop.P_n_est}.

We define $\cJ_m:=(-\mu+\ii\th)\th^{2\mu-1}\cI_m$, then the estimates of $\cI_m$ in the space $Y$ are reduced to the estimates of $\cJ_m$ in $\mathscr G^0$, as the following lemma illustrates.

\begin{lemma}\label{Lem.I_bound_by_J}
    Let $\mu>1/2$ and $\al\in(0,1)$. Let $f\in C(\R_+; \C)$ be such that $(-\mu+\emph{\ii}\th)\th^{2\mu-1}f\in\mathscr G^0$, then we have $\th^{2\mu}\operatorname{Re}f\in\cG^0_0$ and $\th^{2\mu-1}\operatorname{Im}f\in \cG^1$, with
    \begin{equation}\label{Eq.I_bound_by_J}
    	\norm{\th^{2\mu}\operatorname{Re}f}_0+\norm{\th^{2\mu-1}\operatorname{Im}f}_1 \leq C \norm{(-\mu+\emph{\ii}\th)\th^{2\mu-1}f}_0,
    \end{equation}
    where $C=C(\mu,\al)>0$ is a constant depending only on $\mu,\al$.
\end{lemma}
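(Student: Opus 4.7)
The plan is to reduce the claim to two pointwise multiplier estimates. Setting $g:=(-\mu+\mathrm{i}\theta)\theta^{2\mu-1}f\in\mathscr G^0$, we rewrite
\[
\theta^{2\mu-1}f=\psi(\theta)g(\theta),\qquad \theta^{2\mu}f=\phi(\theta)g(\theta),
\]
where $\psi(\theta):=1/(-\mu+\mathrm{i}\theta)$ and $\phi(\theta):=\theta\psi(\theta)=\theta/(-\mu+\mathrm{i}\theta)$. The task then splits as follows: showing $\psi g\in\mathscr G^1$ with $\|\psi g\|_1\lesssim\|g\|_0$ will give $\theta^{2\mu-1}\operatorname{Im}f=\operatorname{Im}(\psi g)\in\mathcal G^1$ with the desired bound, while showing $\phi g\in\mathscr G^0$ with $\|\phi g\|_0\lesssim\|g\|_0$ together with $\phi(0+)=0$ will give $\theta^{2\mu}\operatorname{Re}f=\operatorname{Re}(\phi g)\in\mathcal G^0_0$.

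First I would record the elementary pointwise bounds
\[
|\psi(\theta)|=(\mu^2+\theta^2)^{-1/2}\lesssim_\mu\langle\theta\rangle^{-1},\qquad |\psi'(\theta)|=(\mu^2+\theta^2)^{-1}\lesssim_\mu\langle\theta\rangle^{-2},
\]
\[
|\phi(\theta)|\le 1,\qquad |\phi'(\theta)|=\mu(\mu^2+\theta^2)^{-1}\lesssim_\mu\langle\theta\rangle^{-2},\qquad \phi(0+)=0.
\]
The $L^\infty$ parts of the two norms then follow trivially from $|g(\theta)|\le\|g\|_0\langle\theta\rangle^{-\alpha}$: one gets $\langle\theta\rangle^\alpha|\psi g|\lesssim\langle\theta\rangle^{-1}\|g\|_0$ and $\langle\theta\rangle^\alpha|\phi g|\lesssim\|g\|_0$.

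For the H\"older parts I would apply the standard splitting
\[
(\psi g)(\theta_1)-(\psi g)(\theta_2)=\bigl(\psi(\theta_1)-\psi(\theta_2)\bigr)g(\theta_1)+\psi(\theta_2)\bigl(g(\theta_1)-g(\theta_2)\bigr),
\]
and analogously for $\phi g$. For $0<\theta_2<\theta_1<\theta_2+1$ one has $\langle\theta_1\rangle\sim\langle\theta_2\rangle$, so the mean value theorem combined with the derivative bounds gives $|\psi(\theta_1)-\psi(\theta_2)|\lesssim\langle\theta_2\rangle^{-2}|\theta_1-\theta_2|$ and analogously for $\phi$; using $|\theta_1-\theta_2|^{1-\alpha}\le 1$ and $|g(\theta_1)|\le\|g\|_0\langle\theta_1\rangle^{-\alpha}$ controls the first commutator term with room to spare, while the second is handled directly by $[g]_0$. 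Tracking the target weights $\langle\theta_1\rangle^{1+\alpha}$ (for $\psi g$) and $\langle\theta_1\rangle^{\alpha}$ (for $\phi g$) then yields $[\psi g]_1\lesssim\|g\|_0$ and $[\phi g]_0\lesssim\|g\|_0$. Finally, since $g(0+)\in\mathbb C$ exists (as $g\in\mathscr G^0$) and $\phi(0+)=0$, we conclude $\operatorname{Re}(\phi g)(0+)=0$, i.e.\ $\theta^{2\mu}\operatorname{Re}f\in\mathcal G^0_0$.

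The proof presents no genuine obstacle beyond careful bookkeeping of weights; the essential analytic input is that the multiplier $1/(-\mu+\mathrm{i}\theta)$ gains one power of $\langle\theta\rangle^{-1}$ in both size and first derivative, which is exactly what is needed to upgrade a $\mathscr G^0$ bound on $g$ to a $\mathscr G^1$ bound on $\psi g$. This is essentially the same mechanism that underlies \eqref{4.16} quoted above, which is why the paper remarks that the two proofs are structurally parallel.
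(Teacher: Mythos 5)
Your proof is correct and follows essentially the same route as the paper: both factor $\theta^{2\mu-1}f=\psi g$ and $\theta^{2\mu}f=\phi g$ with $g=(-\mu+\mathrm{i}\theta)\theta^{2\mu-1}f$, use the size and derivative decay of $\psi=1/(-\mu+\mathrm{i}\theta)$ and $\phi=\theta\psi$, and estimate the H\"older seminorm via the same commutator splitting $\psi(\theta_1)g(\theta_1)-\psi(\theta_2)g(\theta_2)=\psi(\theta_1)(g(\theta_1)-g(\theta_2))+(\psi(\theta_1)-\psi(\theta_2))g(\theta_2)$. The only cosmetic difference is that the paper handles the $\phi g$ piece by invoking the algebra property (Lemma 3.4) rather than writing out the commutator bound again, but the underlying computation is identical.
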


\begin{proof}
	Let $g:=(-\mu+\ii\th)\th^{2\mu-1}f\in\mathscr G^1$, then
	\begin{equation}\label{Eq.f_express_g}
		\th^{2\mu}f(\th)=\frac{\th g(\th)}{-\mu+\ii\th},\quad \th^{2\mu-1}f(\th)=\frac{g(\th)}{-\mu+\ii\th},\quad\forall\ \th>0.
	\end{equation}
	Using Lemma \ref{Lem.seminorm}, one readily checks that $\th\mapsto \frac{\th}{-\mu+\ii\th}\in \mathscr G^1\subset\mathscr G^0$, hence 
Lemma \ref{Lem.algebra} implies that $\th^{2\mu}f\in\mathscr G^0$ and $\norm{\th^{2\mu}f}_0\lesssim \|g\|_0$. The fact that $\lim_{\th\to0+} \th^{2\mu}f(\th)=0$ is obvious from \eqref{Eq.f_express_g}. Next we prove that $\th^{2\mu-1}f\in\mathscr G^1$. Clearly, $\norm{\langle\th\rangle^\al \th^{2\mu-1}f}_{L^\infty}\lesssim \norm{\langle\th\rangle^\al g}_{L^\infty}\lesssim\|g\|_0$. It suffices to show that
\begin{equation}\label{Eq.f_G^1_boundby_g}
	\left[\th^{2\mu-1}f\right]_1\lesssim \|g\|_0.
\end{equation}
Indeed, for $0<\th_2<\th_1<\th_2+1$, by \eqref{Eq.f_express_g}, we have
\begin{align*}
	\left|\th_1^{2\mu-1}f(\th_1)-\th_2^{2\mu-1}f(\th_2)\right|&\leq\frac{|g(\th_1)-g(\th_2)|}{|-\mu+\ii\th_1|}+\frac{|g(\th_2)| |\th_1-\th_2|}{|-\mu+\ii\th_1||-\mu+\ii\th_2|}\\
	&\lesssim [g]_0\frac{|\th_1-\th_2|^\al}{\langle\th_1\rangle^{1+\al}}+\norm{\langle\th\rangle^\al g}_{L^\infty}\frac{|\th_1-\th_2|^\al}{\langle\th_1\rangle^{1+\al}},
\end{align*}
where we have used $\langle\th_2\rangle\sim\langle\th_1\rangle$, $|-\mu+\ii\th_2|\gtrsim1$ and $|\th_1-\th_2|\le |\th_1-\th_2|^\al$ since $|\th_1-\th_2|<1$ and $\al\in(0,1)$. Thus,\eqref{Eq.f_G^1_boundby_g} follows.
\end{proof}

By Lemma \ref{Lem.I_bound_by_J} and the definition of $Y$ in \eqref{Eq.Y}, we have
\begin{align}
\norm{\cI_m[0,0]}_Y	&\lesssim\norm{\cJ_m[0,0]}_0,\label{Eq.I_bound_by_J1}\\
\norm{\cI_m[r_1,\g_1]-\cI_m[r_2, \g_2]}_Y&\lesssim \norm{\cJ_m[r_1,\g_1]-\cJ_m[r_2, \g_2]}_0. \label{Eq.I_bound_by_J2}
\end{align}
Therefore, Proposition \ref{Prop.I} is a direct consequence of \eqref{Eq.I_bound_by_J1}, \eqref{Eq.I_bound_by_J2}, and the following proposition concerning  the estimates of $\cJ_m$.

\begin{proposition}\label{Prop.J}
	There exist $\varepsilon_1\in (0,1)$, $m_1\in\Z_+$ and a constant $C_*>1$ such that for all $m>m_1$, we have
	\begin{align}
		&\left\|\mathcal J_m[0,0]\right\|_0\leq \frac{C_*}{m^2},\label{Eq.J00}\\
		\left\|\mathcal J_m[r_1,\g_1]-\mathcal J_m[r_{2}, \g_2]\right\|_0&\leq \frac{C_*}{m}\|(r_1, \g_1)-(r_2, \g_2)\|_X,\ \ \forall\ (r_1,\g_1), (r_2,\g_2)\in B_X(\varepsilon_1).\label{Eq.J_Lip}
	\end{align}
\end{proposition}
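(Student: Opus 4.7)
The plan is to leverage the series representation
\[
\mathcal{J}_m[r,\gamma] = -(2\mu-1)\mathrm{i}\sum_{n=1}^\infty \mathcal{P}_{mn}[r,\gamma],
\]
which follows from $\mathcal{J}_m = (-\mu+\mathrm{i}\theta)\theta^{2\mu-1}\mathcal{I}_m$, the series \eqref{Eq.I_series}, and the identity $\mathcal{P}_n = (-\mu+\mathrm{i}\theta)\theta^{2\mu-1}\mathcal{Q}_n$. The task reduces to summing the uniform-in-$n$ estimates of Proposition \ref{Prop.P_n_est}, controlling the two pieces of the $\mathscr{G}^0$-norm, $\|\langle\theta\rangle^\alpha \mathcal{J}_m\|_{L^\infty}$ and $[\mathcal{J}_m]_0$, separately via the decomposition $\mathcal{P}_{mn} = \mathcal{P}_{mn}^{(1)} + \mathcal{P}_{mn}^{(2)}$.

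For the weighted supremum, the bound \eqref{Eq.P_n^1est} together with $\theta^\alpha/\langle\theta\rangle^{2\alpha} \leq \langle\theta\rangle^{-\alpha}$ yields $\|\langle\theta\rangle^\alpha \mathcal{P}_{mn}^{(1)}\|_{L^\infty} \leq C(1/(mn) + \|(r,\gamma)\|_X)/(mn)^{1+\alpha}$, while \eqref{Eq.P_n^2est} combined with $\mathscr{G}^1 \hookrightarrow \mathscr{G}^0$ gives $\|\langle\theta\rangle^\alpha \mathcal{P}_{mn}^{(2)}\|_{L^\infty} \leq \|\mathcal{P}_{mn}^{(2)}\|_1 \leq C/(mn)^2$. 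Both series converge absolutely in $n$, producing $\|\langle\theta\rangle^\alpha \mathcal{J}_m\|_{L^\infty} \leq C/m^2 + C\|(r,\gamma)\|_X/m^{1+\alpha}$, which specializes to $C_*/m^2$ at $(r,\gamma) = (0,0)$; the Lipschitz counterpart, obtained by replacing \eqref{Eq.P_n^1est} and \eqref{Eq.P_n^2est} by \eqref{Eq.P_n^1est_diff} and \eqref{Eq.P_n^2est_diff}, gives $\leq (C_*/m)\|(r_1,\gamma_1) - (r_2,\gamma_2)\|_X$.

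For the H\"older seminorm, I split $\mathcal{J}_m = \mathcal{J}_m^{(1)} + \mathcal{J}_m^{(2)}$ in the same way. The local piece $\mathcal{J}_m^{(2)}$ inherits a H\"older bound directly from \eqref{Eq.P_n^2est}: since $[\mathcal{P}_{mn}^{(2)}]_0 \leq \|\mathcal{P}_{mn}^{(2)}\|_1 \leq C/(mn)^2$, summation yields $[\mathcal{J}_m^{(2)}]_0 \leq C/m^2$ (and the analogous Lipschitz bound). For $[\mathcal{J}_m^{(1)}]_0$, I apply Lemma \ref{Lem.seminorm} term by term; writing $(\mathcal{P}_{mn}^{(1)})' = (\mathcal{P}_{mn})' - (\mathcal{P}_{mn}^{(2)})'$, the first factor is bounded by \eqref{Eq.C^1est}, contributing $\|\langle\theta\rangle^{2\alpha-1}\theta^{1-\alpha}(\mathcal{P}_{mn})'\|_{L^\infty} \leq C(1/(mn) + \|(r,\gamma)\|_X)/(mn)^\alpha$, and the second is estimated by direct differentiation of the explicit formula \eqref{Eq.P_n^2} using the uniform $C^1$ bounds of Lemma \ref{Lem.H_n_Phi_n} on $\mathcal{H}_n$, $\Phi_n$ (together with the smoothness of $\Gamma[\gamma]$ and $K_0$).

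The principal obstacle is that the derivative estimate \eqref{Eq.C^1est} only saves $1/n^\alpha$, so the naive summation $\sum_n [\mathcal{P}_{mn}^{(1)}]_0 \leq C\|(r,\gamma)\|_X\sum_n 1/(mn)^\alpha$ diverges for $\alpha \in (0,1)$ whenever $\|(r,\gamma)\|_X>0$. I expect to resolve this by a frequency-splitting interpolation: for each pair $0<\theta_2<\theta_1<\theta_2+1$ with $u = \theta_1-\theta_2$, truncate the series at $n_* = \lceil 1/(mu^\alpha)\rceil$, use the integrated derivative estimate
\[
\int_{\theta_2}^{\theta_1}|(\mathcal{P}_{mn})'(\tau)|\,\mathrm{d}\tau \lesssim \frac{\|(r_1,\gamma_1)-(r_2,\gamma_2)\|_X\, u^\alpha}{(mn)^\alpha\langle\theta_1\rangle^\alpha}
\]
for the low-frequency part $n \leq n_*$, and the weighted sup difference bound \eqref{Eq.P_n^1est_diff} for the tail $n > n_*$, then optimize the cutoff to produce the required Lipschitz constant $C_*/m$. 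At $(r,\gamma)=(0,0)$, the extra factor $1/(mn)$ in $B_{mn}$ already restores absolute convergence, and a careful combination with the sup estimate \eqref{Eq.P_n^1est} (exploiting the vanishing factor $\theta^\alpha$ of $\mathcal{P}_{mn}^{(1)}$ near $\theta=0$) yields $\|\mathcal{J}_m[0,0]\|_0 \leq C_*/m^2$, completing the proof.
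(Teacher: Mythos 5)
Your overall strategy is the paper's: sum the series $\cJ_m=-(2\mu-1)\ii\sum_n\cP_{mn}$, handle the weighted sup and the $\cP^{(2)}_{mn}$ contribution to the seminorm by direct summation of \eqref{Eq.P_n^1est}--\eqref{Eq.P_n^2est_diff}, and for the $\cP^{(1)}_{mn}$ contribution to $[\cdot]_0$ interpolate, for each pair $\th_2<\th_1$, between the integrated derivative bound (low $n$) and the pointwise bound \eqref{Eq.P_n^1est_diff} (high $n$). The paper packages this as taking the minimum of the two bounds \eqref{Eq.P_nG^1_1}--\eqref{Eq.P_nG^1_2} and splitting the sum where the minimum switches, which is exactly your ``optimize the cutoff''.

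However, two quantitative points in your low-frequency estimate would make the argument fail as written. First, your integrated derivative bound $\int_{\th_2}^{\th_1}|(\cP_{mn})'|\,\mathrm d\tau\lesssim (mn)^{-\al}\langle\th_1\rangle^{-\al}u^{\al}\|\cdot\|_X$ (with $u=\th_1-\th_2$) is true but too lossy: it replaces the full first power of $u$ by $u^{\al}$. Integrating \eqref{Eq.C^1est_diff} and using $\th_1^\al-\th_2^\al\le\th_1^{\al-1}u$ gives the sharper bound $\lesssim (mn)^{-\al}\th_1^{\al-1}\langle\th_1\rangle^{1-2\al}u\,\|\cdot\|_X$, and this extra power of $u$ is indispensable: with only $u^\al$ per term, the low-frequency partial sum up to any cutoff $n_*$ is $\gtrsim m^{-\al}\langle\th_1\rangle^{-\al}u^\al n_*^{1-\al}$, so after dividing by $u^\al$ one needs $n_*^{1-\al}\lesssim m^{\al-1}$, i.e. $n_*<1$, while controlling the tail $\sum_{n>n_*}B_{mn}$ forces $n_*\gtrsim \th_1/(m\langle\th_1\rangle u)$, which is incompatible when $u$ is small. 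Second, your guessed cutoff $n_*=\lceil 1/(mu^\al)\rceil$ is not the balancing point; equating the two partial sums (with the sharp low-frequency bound) yields $n_*\sim \th_1/(m\langle\th_1\rangle u)$, depending on $\th_1$ as well as $u$, and with that choice both pieces are $\lesssim u^\al/(m\langle\th_1\rangle^\al)$ as required. With these two corrections your proof closes and coincides with the paper's; the $(0,0)$ case and the sup-norm estimates are handled correctly.
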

\begin{proof}
	By the definition of $\cJ_m$, \eqref{Eq.I_series} and \eqref{Eq.P_n}, we have
	\begin{equation}\label{Eq.J_series}
		\cJ_m[r,\g]=-(2\mu-1)\ii\sum_{n=1}^\infty \cP_{mn}[r,\g].
	\end{equation}
	By Proposition \ref{Prop.P_n_est} and \eqref{Eq.P_n_decompose0}, there exist $\varepsilon_1\in(0,1)$ and $N\in\N_+$ such that for all $n>N$, and all $(r,\g), (r_1, \g_1), (r_2, \g_2)\in B_X(\varepsilon_1)$, we have 
	\begin{align}
		\norm{\langle\th\rangle^\al\cP_n[r,\g]}_{L^\infty}\lesssim \frac1{n^{1+\al}}\left(\frac1n+\norm{(r,\g)}_X\right)+\frac1{n^2}\lesssim \frac1{n^2}+ \frac1{n^{1+\al}}\norm{(r,\g)}_X,\label{Eq.P_nC^0}\\
		\norm{\langle\th\rangle^\al\left(\cP_n[r_1,\g_1]-\cP_n[r_2,\g_2]\right)}_{L^\infty}\lesssim \frac1{n^{1+\al}}\norm{(r_1,\g_1)-(r_2,\g_2)}_X;\label{Eq.DeltaP_nC^0}
	\end{align}
	and moreover, for $0<\th_2<\th_1<\th_2+1$, we have ($\Delta\cP_n:=\cP_n[r_1,\g_1]-\cP_n[r_2,\g_2]$, $\Delta\cP_n^{(1)}:=\cP_n^{(1)}[r_1,\g_1]-\cP_n^{(1)}[r_2,\g_2]$, $\Delta\cP_n^{(2)}:=\cP_n^{(2)}[r_1,\g_1]-\cP_n^{(2)}[r_2,\g_2]$)
	\begin{align}
		&\left|\cP_n[r,\g](\th_1)-\cP_n[r,\g](\th_2)\right|\nonumber\\
		\leq &\ \left|\cP_n^{(1)}[r,\g](\th_1)\right|+ \left|\cP_n^{(1)}[r,\g](\th_2)\right|+ \left|\cP_n^{(2)}[r,\g](\th_1)-\cP_n^{(2)}[r,\g](\th_2)\right|\nonumber\\
		\lesssim &\ \frac1{n^{1+\al}}\frac{\th_1^\al}{\langle\th_1\rangle^{2\al}}\left(\frac1n+\norm{(r,\g)}_X\right)+\frac1{n^2}\frac1{\langle\th_1\rangle^{1+\al}}|\th_1-\th_2|^\al,\label{Eq.P_nG^1_1}\\
		&\left|\cP_n[r,\g](\th_1)-\cP_n[r,\g](\th_2)\right|\leq \int_{\th_2}^{\th_1}\left|\left(\cP_n[r,\g]\right)'(\wt\th)\right|\,\mathrm d\wt\th\nonumber\\
		\lesssim &\ \frac{\langle\th_1\rangle^{1-2\al}}{n^\al}\left(\frac1n+\norm{(r,\g)}_X\right)\int_{\th_2}^{\th_1}\wt\th^{\al-1}\,\mathrm d\wt\th\nonumber\\
		\lesssim &\  \frac1{n^\al}\left(\frac1n+\norm{(r,\g)}_X\right)\langle\th_1\rangle^{1-2\al}\th_1^{\al-1}|\th_1-\th_2|,\label{Eq.P_nG^1_2}\\
		&\left|\Delta \cP_n(\th_1)-\Delta \cP_n(\th_2)\right|\leq \left|\Delta\cP_n^{(1)}(\th_1)\right|+\left|\Delta\cP_n^{(1)}(\th_2)\right|+\left|\Delta \cP_n^{(2)}(\th_1)-\Delta \cP_n^{(2)}(\th_2)\right|\nonumber\\
		\lesssim &\ \frac1{n^{1+\al}}\frac{\th_1^\al}{\langle\th_1\rangle^{2\al}}\norm{(r_1,\g_1)-(r_2,\g_2)}_X+\frac{|\th_1-\th_2|^\al}{\langle\th_1\rangle^{1+\al}}\frac1{n^2}\norm{(r_1,\g_1)-(r_2,\g_2)}_X,\label{Eq.DeltaP_nG^1_1}\\
		&\left|\Delta \cP_n(\th_1)-\Delta \cP_n(\th_2)\right|\leq\int_{\th_2}^{\th_1}\left|\left(\Delta\cP_n\right)'(\wt\th)\right|\,\mathrm d\wt\th\nonumber\\
		\lesssim&\ \frac{\langle\th_1\rangle^{1-2\al}}{n^\al}\norm{(r_1,\g_1)-(r_2,\g_2)}_X\int_{\th_2}^{\th_1}\wt\th^{\al-1}\,\mathrm d\wt\th\nonumber\\
		\lesssim&\ \frac{\langle\th_1\rangle^{1-2\al}}{n^\al}\norm{(r_1,\g_1)-(r_2,\g_2)}_X\th_1^{\al-1}|\th_1-\th_2|,\label{Eq.DeltaP_nG^1_2}
	\end{align}
	where in \eqref{Eq.P_nG^1_2} and \eqref{Eq.DeltaP_nG^1_2} we have used the fact that
	\begin{equation}\label{Eq.claim4.80}
		\left|\th_1^\al-\th_2^\al\right|\leq \th_1^{\al-1}|\th_1-\th_2|,\quad\forall\ 0<\th_2<\th_1,\ \forall\ \al\in(0,1).
	\end{equation}
	Indeed, since $\th_2<\th_1$, \eqref{Eq.claim4.80} is equivalent to $\th_1^\al-\th_2^\al\leq \th_1^{\al-1}(\th_1-\th_2)=\th_1^\al-\th_1^{\al-1}\th_2$, which is further equivalent to $\th_1^{\al-1}\th_2\leq \th_2^\al$, or $\th_1^{\al-1}\leq \th_2^{\al-1}$, which follows directly from $\th_2<\th_1$ and $\al\in(0,1)$. This proves \eqref{Eq.claim4.80}. It follows from \eqref{Eq.P_nG^1_1} and \eqref{Eq.P_nG^1_2} that
	\begin{align}
		\left|\cP_n[r,\g](\th_1)-\cP_n[r,\g](\th_2)\right|&
		\lesssim \min\Bigg\{\frac1{n^{1+\al}}\frac{\th_1^\al}{\langle\th_1\rangle^{2\al}}\left(\frac1n+\norm{(r,\g)}_X\right)+\frac1{n^2}\frac1{\langle\th_1\rangle^{1+\al}}|\th_1-\th_2|^\al,\nonumber\\
		& \frac1{n^\al}\left(\frac1n+\norm{(r,\g)}_X\right)\langle\th_1\rangle^{1-2\al}\th_1^{\al-1}|\th_1-\th_2|\Bigg\}\nonumber\\
		\lesssim \frac1{n^2}\frac1{\langle\th_1\rangle^{1+\al}}&|\th_1-\th_2|^\al+\frac{\th_1^\al}{n^\al\langle\th_1\rangle^{2\al}}\left(\frac1n+\norm{(r,\g)}_X\right)\min\left\{\frac1n, \frac{\langle\th_1\rangle}{\th_1}|\th_1-\th_2|\right\};\label{Eq.P_nG^1}
	\end{align}
	and it follows from \eqref{Eq.DeltaP_nG^1_1} and \eqref{Eq.DeltaP_nG^1_2} that
	\begin{align}
		&\left|\Delta\cP_n(\th_1)-\Delta\cP_n(\th_2)\right|\label{Eq.DeltaP_nG^1}\\
		\lesssim&\ \frac{|\th_1-\th_2|^\al}{\langle\th_1\rangle^{1+\al}}\frac{\norm{(r_1,\g_1)-(r_2,\g_2)}_X}{n^2}+\frac{\th_1^\al\norm{(r_1,\g_1)-(r_2,\g_2)}_X}{n^\al\langle\th_1\rangle^{2\al}}\min\left\{\frac1n, \frac{\langle\th_1\rangle}{\th_1}|\th_1-\th_2|\right\},\nonumber
	\end{align}
	for all $n>N$, $(r,\g), (r_1,\g_1), (r_2,\g_2)\in B_X(\varepsilon_1)$ and $0<\th_2<\th_1<\th_2+1$.
	
	Now we are ready to prove \eqref{Eq.J00} and \eqref{Eq.J_Lip}. Let $m_1:=N$ and $m>m_1$. By \eqref{Eq.J_series} and \eqref{Eq.P_nC^0}, we have
	\begin{align*}
		\norm{\langle\th\rangle^\al\cJ_m[0,0]}_{L^\infty}\lesssim \sum_{n=1}^\infty \norm{\langle\th\rangle^\al\cP_{mn}[0,0]}_{L^\infty}\lesssim \sum_{n=1}^\infty\frac1{(mn)^2}\lesssim \frac1{m^2}.
	\end{align*}
	By \eqref{Eq.J_series} and \eqref{Eq.P_nG^1}, for $0<\th_2<\th_1<\th_2+1$, we have
	\begin{align*}
		&\left|\cJ_m[0,0](\th_1)-\cJ_m[0,0](\th_2)\right|\lesssim \sum_{n=1}^\infty \left|\cP_{mn}[0,0](\th_1)-\cP_{mn}[0,0](\th_2)\right|\\
		\lesssim&\ \sum_{n=1}^\infty\frac1{(mn)^2}\frac1{\langle\th_1\rangle^{1+\al}}|\th_1-\th_2|^\al+\sum_{n=1}^\infty\frac{\th_1^\al}{(mn)^{1+\al}\langle\th_1\rangle^{2\al}}\min\left\{\frac1{mn}, \frac{\langle\th_1\rangle}{\th_1}|\th_1-\th_2|\right\}\\
		\lesssim&\ \frac1{m^2}\frac{|\th_1-\th_2|^\al}{\langle\th_1\rangle^{1+\al}}+\frac{\th_1^\al}{m^{1+\al}\langle\th_1\rangle^{2\al}}\min\left\{\frac1m\sum_{n=1}^\infty\frac1{n^{2+\al}}, \sum_{n=1}^\infty \frac1{n^{1+\al}} \frac{\langle\th_1\rangle}{\th_1}|\th_1-\th_2|\right\}\\
		\lesssim&\ \frac1{m^2}\frac{|\th_1-\th_2|^\al}{\langle\th_1\rangle^{1+\al}}+\frac{\th_1^\al}{m^{1+\al}\langle\th_1\rangle^{2\al}}\min\left\{\frac1m, \frac{\langle\th_1\rangle}{\th_1}|\th_1-\th_2|\right\}\\
		\lesssim&\ \frac1{m^2}\frac{|\th_1-\th_2|^\al}{\langle\th_1\rangle^{1+\al}}+ \frac{\th_1^\al}{m^{1+\al}\langle\th_1\rangle^{2\al}}\frac1{m^{1-\al}}\left(\frac{\langle\th_1\rangle}{\th_1}|\th_1-\th_2|\right)^\al\lesssim \frac1{m^2}\frac{|\th_1-\th_2|^\al}{\langle\th_1\rangle^{\al}}.
	\end{align*}
	Hence, \eqref{Eq.J00} holds. Let $(r_1,\g_1), (r_2,\g_2)\in B_X(\varepsilon_1)$. By \eqref{Eq.J_series} and \eqref{Eq.DeltaP_nC^0}, we have
	\begin{align*}
		&\norm{\langle\th\rangle^\al\left(\cJ_m[r_1,\g_1]-\cJ_m[r_2,\g_2]\right)}_{L^\infty}\leq \sum_{n=1}^\infty\norm{\langle\th\rangle^\al\left(\cP_{mn}[r_1,\g_1]-\cP_{mn}[r_2,\g_2]\right)}_{L^\infty}\\
		\lesssim&\ \sum_{n=1}^\infty\frac1{(mn)^{1+\al}}\norm{(r_1,\g_1)-(r_2,\g_2)}_X\lesssim \frac{1}{m^{1+\al}} \norm{(r_1,\g_1)-(r_2,\g_2)}_X.
	\end{align*}
	Let $\Delta\cJ_m:=\cJ_m[r_1,\g_1]-\cJ_m[r_2,\g_2]$. By \eqref{Eq.J_series} and \eqref{Eq.DeltaP_nG^1}, for $0<\th_2<\th_1<\th_2+1$, we have (here we assume $(r_1,\g_1)\neq (r_2,\g_2)$ without loss of generality)
	\begin{align*}
		&\frac{\left|\Delta\cJ_m(\th_1)-\Delta\cJ_m(\th_2)\right|}{\norm{(r_1,\g_1)-(r_2,\g_2)}_X}\leq \sum_{n=1}^\infty\frac{\left|\Delta\cP_{mn}(\th_1)-\Delta\cP_{mn}(\th_2)\right|}{\norm{(r_1,\g_1)-(r_2,\g_2)}_X}\\
		\lesssim &\ \sum_{n=1}^\infty \frac{|\th_1-\th_2|^\al}{(mn)^2\langle\th_1\rangle^{1+\al}}+\sum_{n=1}^\infty\frac{\th_1^\al}{(mn)^\al\langle\th_1\rangle^{2\al}}\min\left\{\frac1{mn}, \frac{\langle\th_1\rangle}{\th_1}|\th_1-\th_2|\right\}\\
		\lesssim&\ \frac{|\th_1-\th_2|^\al}{m^2\langle\th_1\rangle^{1+\al}}+\sum_{n\leq \frac{\th_1}{m\langle\th_1\rangle}|\th_1-\th_2|^{-1}}\frac{\th_1^{\al-1}\langle\th_1\rangle^{1-2\al}}{m^\al n^\al}|\th_1-\th_2|+ \sum_{n\geq \frac{\th_1}{m\langle\th_1\rangle}|\th_1-\th_2|^{-1}}\frac{\th_1^\al\langle\th_1\rangle^{-2\al}}{m^{1+\al}n^{1+\al}}\\
		\lesssim&\ \frac{|\th_1-\th_2|^\al}{m^2\langle\th_1\rangle^{1+\al}}+\frac{\th_1^{\al-1}\langle\th_1\rangle^{1-2\al}}{m^\al}|\th_1-\th_2|\left(\frac{\th_1}{m\langle\th_1\rangle}\right)^{1-\al}|\th_1-\th_2|^{\al-1}\\
		&\qquad\qquad\qquad+\frac{\th_1^\al\langle\th_1\rangle^{-2\al}}{m^{1+\al}} \left(\frac{\th_1}{m\langle\th_1\rangle}\right)^{-\al}|\th_1-\th_2|^\al\\
		\lesssim&\ \frac{|\th_1-\th_2|^\al}{m\langle\th_1\rangle^\al}.
	\end{align*}
	Hence, $[\Delta\cJ_m]_0\lesssim\frac1m\norm{	(r_1,\g_1)-(r_2,\g_2)}_X$, and then \eqref{Eq.J_Lip} follows.
\end{proof}

\section{Proof of Theorem \ref{mainthm} and Theorem \ref{mainthm2}}\label{Sec.Proof-weak-sol}
In this section, we prove Theorem \ref{mainthm} and Theorem \ref{mainthm2}. Let $\mu>1/2$ and $\al\in(0,1)$. By Theorem \ref{Thm.main}, there exists $m_*=m_*(\mu,\al)>0$ such that for all $m>m_*$, the equation \eqref{Eq.perturbation} has a solution $(r_m,\g_m)\in \overline{B_X(1/m)}$. Fix $m\in \Z\cap(m_*,+\infty)$. Recalling \eqref{special_sol}, we let $\wt r_m=r_0+r_m$, $\wt \g_m=\g_0+\g_m$. Then $(\wt r_m,\wt \g_m)=(\wt r_m,\wt \g_m)(\th)$ is a solution to \eqref{maineq}. 

\subsection{Quantitative behavior of vortex sheet}\label{Subsec.qualitative}

In this subsection, we prove the quantitative properties claimed in Theorem \ref{mainthm}. Specifically, we demonstrate the consistency between our derived solutions and Kaden's algebraic spiral vortex sheets. Since
\[\wt \g_m'(\th)=\g_0'(\th)+\g_m'(\th)=\th^{-2\mu}\left[2\pi(2\mu-1)+\th^{2\mu}\g_m'(\th)\right],\quad\forall\ \th>0\]
and $\|\th^{2\mu}\g_m'\|_{L^\infty}\leq \|\th^{2\mu}\g_m'\|_1\leq \|\g_m\|_{X_\g}\leq 1/m$, by adjusting $m_*>0$ to be larger if necessary, we have $\wt \g_m'(\th)>0$ for all $\th>0$. Hence, $\wt \g_m:\R_+\to\R$ is strictly increasing. Similarly, since
\[\wt \g_m(\th)=-2\pi\th^{1-2\mu}+\g_m(\th)=-2\pi\th^{1-2\mu}\left(1-\frac1{2\pi}\th^{2\mu-1}\g_m(\th)\right),\quad\forall\ \th>0\]
and $\|\th^{2\mu-1}\g_m\|_{L^\infty}\leq \|\g_m\|_{X_r}\leq 1/m$, we have $\wt \g_m(\th)<0$ for all $\th>0$. Consequently, $\wt \g_m:\R_+\to(-\infty, 0)$ is strictly increasing. Similarly, one checks that $\wt r_m:\R_+\to\R_+$ is strictly decreasing.

Recall the fact that if $f\in\mathscr{G}^k$ for some $k\geq0$, then the limit $f(0+):=\lim_{\th\to0+}f(\th)$ exists. Consequently, for each $m>m_*$, there are real numbers $a_m$ and $b_m$ such that
\begin{align}
	\lim_{\th\to0+}\th^\mu\wt r_m(\th)&=1+\lim_{\th\to0+}\th^\mu r_m(\th)=a_m,\label{Eq.r_m-far}\\ \lim_{\th\to0+}\th^{\mu+1}\wt r_m'(\th)&=-\mu+\lim_{\th\to0+}\th^{\mu+1} r_m'(\th)=-\mu a_m,\nonumber\\ -\lim_{\th\to0+}\th^{2\mu-1}\wt\g_m(\th)&=2\pi-\lim_{\th\to0+}\th^{2\mu-1}\g_m(\th)=b_m,\label{Eq.gamma_m-far}\\ \lim_{\th\to0+}\th^{2\mu}\wt\g_m'(\th)&=2\pi(2\mu-1)+\lim_{\th\to0+}\th^{2\mu}\g_m'(\th)=(2\mu-1)b_m,\nonumber
\end{align}
where we also used Corollary \ref{Cor.cG^0_0}. Moreover, it follows from \eqref{Eq.solution_bound} that
\begin{equation}\label{Eq.a_mb_m-limit}
	|a_m-1|+|b_m-2\pi|\leq \|(r_m,\g_m)\|_{X}\leq \frac C{m^2}<\frac12,\quad \forall\ m>m_*.
\end{equation}
On the other hand, by the definition of $\mathscr G^k$, we have $\lim_{\th\to+\infty}f(\th)=0$ for all $f\in\mathscr G^k$. Hence,
\begin{align}
	\lim_{\th\to+\infty}\th^\mu\wt r_m(\th)&=1,\quad \lim_{\th\to+\infty}\th^{\mu+1}\wt r_m'(\th)=-\mu,\label{Eq.r_m-center}\\ \lim_{\th\to+\infty}\th^{2\mu-1}\wt\g_m(\th)&=-2\pi,\quad \lim_{\th\to+\infty}\th^{2\mu}\wt\g_m'(\th)=2\pi(2\mu-1).\label{Eq.gamma_m-center}
\end{align}
Combining the strictly increasing property of $\wt\g_m$, the strictly decreasing property of $\wt r_m$, \eqref{Eq.r_m-far}, \eqref{Eq.gamma_m-far}, \eqref{Eq.a_mb_m-limit}, \eqref{Eq.r_m-center} and \eqref{Eq.gamma_m-center}, we know that 
\begin{equation}\label{Eq.gamma-r-bijective}
	\begin{aligned}
		\text{$\wt\g_m:\R_+\to(-\infty,0)$ is strictly increasing and bijective,}\\ \text{$\wt r_m: \R_+\to\R_+$ is strictly decreasing and bijective}.
	\end{aligned}
\end{equation}

We denote the inversion of $\wt \g_m:\R_+\to(-\infty, 0)$ by $\vartheta_m:(-\infty, 0)\to\R_+$. Then
\begin{equation}\label{Eq.theta-bijective}
	\text{$\vartheta_m:(-\infty, 0)\to\R_+$ is strictly increasing and bijective.}
\end{equation}
Let
\begin{equation}\label{Eq.z_m-def}
	z_m(\g):=\wt r_m(\vartheta_m(\g))\e^{\ii\vartheta_m(\g)},\quad \forall\ \g<0.
\end{equation}
Then $z_m:(-\infty,0)\to\C$ is a classical solution to \eqref{ss_BR2}. Thus, 
\begin{equation}\label{Eq.Z_m-def}
	Z_m(t,\Gamma):=t^\mu z_m\left(t^{1-2\mu}\Gamma\right),\quad t>0, \ \Gamma<0
\end{equation}
solves the Birkhoff-Rott equation \eqref{BR}.

By \eqref{Eq.r_m-far} and \eqref{Eq.gamma_m-far}, we have $\lim_{\g\to-\infty}\vartheta_m(\g)=0$ and
\begin{align*}
	\lim_{\g\to-\infty}\vartheta_m(\g)^\mu \wt r_m(\vartheta_m(\g))=a_m,\quad -\lim_{\g\to-\infty}\g\vartheta_m(\g)^{2\mu-1}=b_m,
\end{align*}
hence,
\[\lim_{\g\to-\infty}(-\g)^{-\frac{\mu}{2\mu-1}}\wt r_m(\vartheta_m(\g))=a_mb_m^{-\frac{\mu}{2\mu-1}}=:d_m.\]
As a consequence, the initial data of $Z_m$ is given by
\begin{equation}\label{Eq.Z_m(0)}
	Z_m(0,\Gamma)=a_mb_m^{-\frac{\mu}{2\mu-1}}(-\Gamma)^{\frac{\mu}{2\mu-1}},\quad\forall\ \Gamma<0.
\end{equation}

Now we explore the behavior of $Z_m$ near the spiral center, hence proving \eqref{Eq.center-behavior}. By \eqref{Eq.Z_m-def}, we have $Z_m(t,\Gamma)=R_m(t,\Gamma)\e^{\ii\Theta_m(t,\Gamma)}$, where $R_m(t,\Gamma):=t^\mu \wt r_m\left(\vartheta_m\left(t^{1-2\mu}\Gamma\right)\right)$ and $\Theta_m(t,\Gamma):=\vartheta_m\left(t^{1-2\mu}\Gamma\right)$. It follows from \eqref{Eq.gamma-r-bijective} and \eqref{Eq.theta-bijective} that $\Gamma\in(-\infty,0)\mapsto R_m(t,\Gamma)\in\R_+$ is $C^{1,\al}$, strictly decreasing and bijective; moreover, $\Gamma\in(-\infty,0)\mapsto \Theta_m(t,\Gamma)\in\R_+$ is $C^{1,\al}$, strictly increasing and bijective.
Using $\Theta$ as the parameter, by \eqref{Eq.z_m-def} and \eqref{Eq.Z_m-def}, we have 
\[Z_m(t,\Theta)=\wt R_m(t,\Theta)\e^{\ii\Theta},\quad\forall\ \Theta>0,\]
where $\wt R_m(t,\Theta):=t^\mu\wt r_m(\Theta)$. It follows from $\wt r_m=r_0+r_m$ and \eqref{Eq.r_m-center} that $\Theta^\mu\wt r_m\in L^\infty$, $\Theta^{\mu+1}\wt r_m'\in L^\infty$, and 
\begin{equation*}
	\lim_{\Theta\to+\infty}\Theta^\mu \wt r_m(\Theta)=1,\quad \lim_{\Theta\to+\infty}\Theta^{\mu+1}\wt r_m'(\Theta)=-\mu.
\end{equation*}
Moreover, $\wt r_m:\R_+\to\R_+$ is strictly decreasing and bijective, recalling \eqref{Eq.gamma-r-bijective}.

A standard scaling argument implies that if the initial data is given by 
\begin{equation}\label{Eq.initial-A}
	Z_m(0,\Gamma)=A(-\Gamma)^{\frac{\mu}{2\mu-1}},\quad\forall\ \Gamma<0,
\end{equation}
for some constant $A>0$, then (using $\Theta$ as the parameter) $Z_m(t,\Theta)=t^\mu\wt r_m(\Theta)\e^{\ii\Theta}$ with
\begin{equation*}
	\lim_{\Theta\to+\infty}\Theta^\mu \wt r_m(\Theta)=\left(\frac{d_m}{A}\right)^{2\mu-1},\quad \lim_{\Theta\to+\infty}\Theta^{\mu+1}\wt r_m'(\Theta)=-\mu\left(\frac{d_m}{A}\right)^{2\mu-1}.
\end{equation*}
Taking $A=\left(\frac{2\mu-1}{\mu}\right)^{\frac{\mu}{2\mu-1}}$ in \eqref{Eq.initial-A} implies that
\begin{equation}\label{Eq.tilde-beta_m}
	\wt \beta_m=d_m^{2\mu-1}\left(\frac{\mu}{2\mu-1}\right)^\mu
\end{equation}
in \eqref{Eq.center-behavior}. Taking $A=1$ in \eqref{Eq.initial-A} implies that $\beta_m=d_m^{2\mu-1}$ in \eqref{Eq.beta_m}. Moreover, by \eqref{Eq.a_mb_m-limit}, we have $\lim_{m\to\infty}\beta_m=(2\pi)^{-\mu}$.

\subsection{Behavior of the velocity field}
The remaining part of this section is devoted to proving that our constructed vortex sheets solve the primitive 2-D Euler equations \eqref{Eq.Euler-velocity} in the weak sense, i.e., Theorem \ref{mainthm2}. First of all, for each $m>m_*$, the velocity field associated with our vortex sheet solution is given by
\begin{align}
	u_m(t,x):&=\frac1m\sum_{k=0}^{m-1}\int_{-\infty}^{0}K_2\left(x-\xi_m^kZ_m(t,\Gamma)\right)\,\mathrm d\Gamma\nonumber\\
	&=\frac{\ii}{2\pi m}\sum_{k=0}^{m-1}\int_{-\infty}^{0}\frac{\mathrm d\Gamma}{\left(x-\xi_m^kZ_m(t,\Gamma)\right)^*},\quad\forall\ t\geq0,\ x\notin S_m(t),\label{Eq.u_m-def}
\end{align}
where $K_2(x)=\frac1{2\pi}\frac{x^\perp}{|x|^2}$ for $x\in\R^2\setminus\{0\}$, $Z_m$ is given by \eqref{Eq.Z_m-def} and \eqref{Eq.Z_m(0)}, $\xi_m=\exp\left(\frac{2\pi\ii}{m}\right)$ is the $m$-th unit root, and $S_m(t):=\left\{\xi_m^kZ_m(t,\Gamma): k\in\Z\cap[0,m-1], \Gamma<0\right\}\cup\{0\}$ for all $t\geq0$. Here we also identify $\R^2\simeq\C$. Recall the discussion in the end of Subsection \ref{Subsec.BR} and the properties of $Z_m$ presented in Subsection \ref{Subsec.qualitative}. It is classical to show that 
\begin{equation}\label{Eq.u_m-weaksol-far}
	\text{$u_m$, defined by \eqref{Eq.u_m-def}, is a weak solution to \eqref{Eq.Euler-velocity} in $(t,x)\in [0,+\infty)\times(\R^2\setminus\{0\})$.}
\end{equation}
Moreover, we have
\begin{align}\label{Eq.u_m-conver-far}
	u_m\in C([0,+\infty); L^2_{\text{loc}}(\R^2\setminus\{0\})),\quad \omega_m\in C([0,+\infty); H^{-1}_{\text{loc}}(\R^2\setminus\{0\})).
\end{align}
\if0 Here $u_m(0,x)$ and $\omega_m(0,x)$ are defined by
\begin{align*}
	\int_{\R^2}\varphi(x)\,\mathrm d\omega_m(0,x)&=\frac1m\sum_{k=0}^{m-1}\int_{-\infty}^0\varphi\left(\xi_m^k Z_m(0,\Gamma)\right)\,\mathrm d\Gamma,\quad\forall\ \varphi\in C_c(\R^2),\\
	u_m(0,x):&=\frac\ii{2\pi m}\sum_{k=0}^{m-1}\int_{-\infty}^0\frac{\mathrm d\Gamma}{(x-\xi_m^kZ_m(0,\Gamma))^*},
\end{align*}
where $Z_m(0,\Gamma)$ is given by \eqref{Eq.Z_m(0)}.\fi 

Consequently, we only need to verify that $u_m$ is a weak solution to \eqref{Eq.Euler-velocity} near the origin. Thanks to \eqref{Eq.Z_m-def}, we know that $u_m$ is self-similar, i.e.,
\begin{equation}\label{Eq.u_m-self-similar}
	u_m(t,x)=t^{\mu-1}v_m\left(t^{-\mu}x\right),\quad\forall\ t>0,\ x\notin S_m(t),
\end{equation}
where $v_m$ is the self-similar profile given by
\begin{equation}\label{Eq.v_m1}
	v_m(z)^*=\frac1{2m\pi\ii}\sum_{k=0}^{m-1}\int_{-\infty}^0\frac{\mathrm d\g}{z-\xi_m^kz_m(\g)}=\frac1{2\pi\ii}\int_{-\infty}^0\frac{z^{m-1}}{z^m-(z_m(\g))^m}\,\mathrm d\g,\quad\forall\ z\notin S_m,
\end{equation}
with $S_m:=\left\{\xi_m^kz_m(\g): k\in\Z\cap[0,m-1], \gamma<0\right\}\cup\{0\}$. We remark that if $z\in S_m\setminus\{0\}$, then
\[v_m(z)^*=\frac1{2\pi\ii}\pv\int_{-\infty}^0\frac{z^{m-1}}{z^m-z_m(\g)^m}\,\mathrm d\g.\]
Using $\theta$ as the parameter, we rewrite \eqref{Eq.v_m1} in the following form:
\begin{equation}\label{Eq.v_m-def}
	v_m(z)^*=\frac1{2\pi\ii}\int_0^\infty \frac{z^{m-1}}{z^m-\left(\wt r_m(\th)\e^{\ii\th}\right)^m}\wt\g_m'(\th)\,\mathrm d\th,\quad\forall\ z\notin S_m.
\end{equation}
To verify the weak solution property near the origin, it is crucial to understand the behavior of $v_m$, which is given by the following proposition.

\begin{proposition}\label{Prop.v_m-est}
	Assume that $\mu>1/2$ and $\al\in(0,1)$. Let $m_*=m_*(\mu,\al)>0$ be given by Theorem \ref{Thm.main} and fix $m>m_*$. For each $z\in\C\setminus S_m$ such that $0<|z|<1$, let $\th_0>0$ be the unique solution to $\wt r_m(\th_0)=|z|$. Then
	\begin{equation}\label{Eq.v_m-est}
		\left|v_m(z)^*-\frac{\wt\g_m(\th_0)}{2\pi\mathrm{i} z}\right|\leq C|z|\big(1+|\ln|z||\big),
	\end{equation}
	where $C>0$ is a constant independent of $z$ (but may depend on $m$).
\end{proposition}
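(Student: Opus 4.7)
The plan is to exploit the natural splitting of the integral \eqref{Eq.v_m-def} at $\theta=\theta_0$, which separates the outer coils of the spiral ($\theta<\theta_0$, where $|w(\theta)|>|z|$) from the inner coils ($\theta>\theta_0$, where $|w(\theta)|<|z|$), with $w(\theta):=\widetilde r_m(\theta)\mathrm e^{\mathrm i\theta}$. On the inner piece I would write
\begin{equation*}
\frac{z^{m-1}}{z^m-w^m}=\frac{1}{z}+\frac{w^m}{z(z^m-w^m)},
\end{equation*}
so that after integration against $\widetilde\gamma_m'$ the first term yields the closed-form contribution $\frac{1}{z}\int_{\theta_0}^\infty\widetilde\gamma_m'(\theta)\,\mathrm d\theta=-\widetilde\gamma_m(\theta_0)/z$; dividing by $2\pi\mathrm i$, this matches (up to sign) the leading approximation asserted in \eqref{Eq.v_m-est}. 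Physically this reflects that the inner coils carry finite total circulation $-\widetilde\gamma_m(\theta_0)$ and, viewed from $z$ outside the circle $\{|w|=|z|\}$, act like a concentrated point vortex at the origin.

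The remainder is
\begin{equation*}
\mathcal E(z):=\frac{1}{2\pi\mathrm i}\int_0^{\theta_0}\frac{z^{m-1}\widetilde\gamma_m'(\theta)}{z^m-w(\theta)^m}\,\mathrm d\theta+\frac{1}{2\pi\mathrm i}\int_{\theta_0}^\infty\frac{w(\theta)^m\,\widetilde\gamma_m'(\theta)}{z(z^m-w(\theta)^m)}\,\mathrm d\theta,
\end{equation*}
which I would estimate using the Kaden asymptotics $\widetilde r_m(\theta)\sim\theta^{-\mu}$, $\widetilde\gamma_m'(\theta)\sim\theta^{-2\mu}$ and $\theta_0\sim|z|^{-1/\mu}$ (all following from Theorem \ref{Thm.main} via the decompositions $\widetilde r_m=r_0+r_m$, $\widetilde\gamma_m=\gamma_0+\gamma_m$ with $(r_m,\gamma_m)\in\overline{B_X(1/m)}$). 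On the far regions $\theta>2\theta_0$ and $\theta<\theta_0/2$ the lower bound $|z^m-w^m|\gtrsim\max(|z|^m,|w|^m)$ holds, and direct modulus estimates combined with the rapid decay of $w^m\widetilde\gamma_m'$ at infinity and of $z^{m-1}\widetilde\gamma_m'/w^m$ at $\theta\to 0^+$ produce an integrable contribution of size $O(|z|^{1-1/\mu})$; this matches the order of the leading term and is therefore insufficient on its own. To obtain the sharp bound I would further expand both integrands into geometric series $\sum_{n\geq 1}(w/z)^{\pm mn}$ and integrate by parts against the oscillatory factor $\mathrm e^{\pm\mathrm i mn\theta}$, gaining a factor $1/(mn)$ per step and transferring the derivative onto the slowly varying amplitude $\widetilde r_m^{\pm mn}\widetilde\gamma_m'$, whose decay is controlled by the H\"older norms of $r_m\in X_r$ and $\gamma_m\in X_\gamma$.

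The main obstacle will be controlling the intermediate region $\theta_0/2<\theta<2\theta_0$ together with the boundary terms at $\theta=\theta_0$ produced by each integration by parts. In this layer the denominator $|z^m-w^m|$ does not vanish (since $z\notin S_m$), but its lower bound depends on the angular position of $z$ relative to $S_m$ and is not a priori uniform; naively the boundary contribution at level $n$ carries an unpleasant factor of the form $(1-\mathrm e^{\mathrm i m(\theta_0-\arg z)})^{-1}$. The strategy is to pair the $n$-th level boundary terms produced by the inner and outer expansions before estimating: summed across the two pieces they telescope and reassemble into quantities that are uniformly bounded on $\mathbb C\setminus S_m$, reflecting the fact that $v_m(z)^*$ extends continuously to each side of the sheet with a finite jump. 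The logarithmic factor $|\ln|z||$ in the final estimate will arise from the critical $n=1$ contribution, where after integration by parts the remaining integrand decays as $\theta^{-1}$ over an interval of logarithmic measure $\sim\mu^{-1}|\ln|z||$; the levels $n\geq 2$ will sum absolutely to $O(|z|)$, assembling into the stated bound $C|z|(1+|\ln|z||)$ with $C$ depending on $m,\mu$ but independent of $z\in\mathbb C\setminus S_m$.
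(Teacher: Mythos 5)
Your opening reduction is exactly the paper's: the extraction of the $\frac{1}{z}$ factor on the inner piece $\theta>\theta_0$ to produce the closed-form term $-\widetilde\gamma_m(\theta_0)/(2\pi\mathrm{i}z)$ (equation (5.15) in the paper performs precisely this manipulation — and your remark ``up to sign'' is justified: the paper's displayed identity actually yields $v_m(z)^*+\widetilde\gamma_m(\theta_0)/(2\pi\mathrm{i}z)$, a typo in \eqref{Eq.v_m-leading}). Your diagnosis of the far regions is also correct: the direct modulus estimate there produces only $O(|z|^{1-1/\mu})$, and the geometric expansion plus integration by parts is what the paper actually uses (Lemma \ref{Lem.far-est} bounds $J_{n,1}, J_{n,2}$ by $n^{-1-\alpha}|z|^2\|F\|_{*,1}$ and sums).

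Where your route diverges from the paper's, and where the genuine gaps lie, is the near region. (i) The paper does not split at $\theta_0/2<\theta<2\theta_0$; it inserts a cutoff $\rho$ of angular width $\sim 1/m$, i.e.\ a single turn of the spiral. This choice is what allows one to pick a single branch of $\operatorname{Ln}$ satisfying $\frac{\mathrm d}{\mathrm d\theta}\operatorname{Ln}(\widetilde z(\theta)-z^m)=\widetilde z'/(\widetilde z-z^m)$ on the whole cutoff window: the map $\theta\mapsto\widetilde z(\theta)-z^m$ stays in a sector of aperture $<2\pi$ precisely because a single coil is isolated. Your interval contains $\sim\theta_0$ coils, so no single-valued primitive exists there; you would need either to subdivide further or commit fully to the series-plus-telescoping route. (ii) Your ``integrate by parts against $e^{\pm\mathrm{i}mn\theta}$'' is suboptimal. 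Writing the integrand as $\widetilde r_m(\theta)^{mn}\widetilde\gamma_m'(\theta)\,e^{\pm\mathrm imn\theta}$, one IBP against the pure phase moves the derivative onto $\widetilde r_m^{mn}\widetilde\gamma_m'$, which brings down a factor $mn\,\widetilde r_m'/\widetilde r_m$ and cancels the gain of $1/(mn)$; the residual after one IBP sums like $\sum_n 1/n$. The paper instead treats $(z(\theta)/z)^{mn}$ — decay and oscillation together — as the integrated factor: $z(\theta)^{mn-1}z'(\theta)=\frac{1}{mn}\frac{\mathrm d}{\mathrm d\theta}z(\theta)^{mn}$, putting the derivative on the slowly varying amplitude $\frac{z(\theta)}{z'(\theta)}\widetilde\gamma_m'(\theta)$, which yields a genuine $n^{-2}$. (iii) The functions $r_m,\gamma_m$ only lie in weighted H\"older classes, so ``gaining $1/(mn)$ per step'' by iterated IBP is not available; the paper's Lemma \ref{Lem.decompose_G^1} splits the amplitude into a $C^1$ part at scale $1/n$ plus a small $L^\infty$ part to get $n^{-1-\alpha}$ summability. (iv) Your telescoping heuristic is on the right track — after one IBP the boundary terms at $\theta_0$ are $\propto \frac{\xi^{\pm n}}{n}$, $\xi=e^{\mathrm im(\theta_0-\arg z)}$, and indeed $\ln(1-\xi)-\ln(1-\xi^{-1})=\ln(-\xi)$ is bounded on $\mathbb C\setminus S_m$ — but the sign bookkeeping needs to be done (the \emph{sum} $\ln(1-\xi)+\ln(1-\xi^{-1})$ is \emph{not} bounded), and your explanation of the logarithmic factor (``$n=1$, logarithmic measure'') does not match what actually produces it in the paper, namely $|\widetilde{\operatorname{Ln}}\widetilde z(\theta_0)|\approx m|\ln|z||$ surviving the cancellation in $K_{1,1}+K_{2,1}$; that source has no analogue in the description you give. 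In short: the skeleton is right, but the near-field argument is the substantive part of this proposition and is the piece your proposal leaves as a sketch that would need genuine work to close.
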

\begin{proof}
	We emphasize that, throughout this and the next subsection, $m>m_*$ is fixed. The existence and uniqueness of $\th_0$ follows directly from \eqref{Eq.gamma-r-bijective}. For simplicity, we denote $z(\th):=\wt r_m(\th)\e^{\ii\th}$ and $\wt z(\th):=\left(\wt r_m(\th)\e^{\ii\th}\right)^m$ for $\th>0$. Then it follows from $\wt r_m=r_0+r_m$, $r_0(\th)=\th^{-\mu}$ and $\|(r_m, \g_m)\|_X\leq 1/2$ that
    \begin{equation}\label{Eq.r_m-approx}
        |z(\th)|=\wt r_m(\th)\sim \th^{-\mu},\quad |z|=\wt r_m(\th_0)\sim \th_0^{-\mu}.
    \end{equation}
    By \eqref{Eq.v_m-def}, we have
	\begin{align}
		2\pi\ii z\left(v_m(z)^*-\frac{\wt\g_m(\th_0)}{2\pi\ii z}\right)&=z\int_{\th_0}^\infty \left(\frac{z^{m-1}}{z^m-z(\th)^m}-\frac1z\right)\wt\g_m'(\th)\,\mathrm d\th+\int_0^{\th_0}\frac{z^m\wt\g_m'(\th)}{z^m-z(\th)^m}\,\mathrm d\th\nonumber\\
		&=\int_{\th_0}^\infty \frac{z(\th)^m}{z^m-z(\th)^m}\wt\g_m'(\th)\,\mathrm d\th+\int_0^{\th_0}\frac{z^m\wt\g_m'(\th)}{z^m-z(\th)^m}\,\mathrm d\th,\label{Eq.v_m-leading}
	\end{align}
	where we have used the fact that $\lim_{\th\to+\infty}\wt\g_m(\th)=0$, recalling \eqref{Eq.gamma-r-bijective}. Since $|z|<1$, \eqref{Eq.gamma-r-bijective} and $\wt r_m(\th_0)=|z|$, there is a constant $\th_*>0$ such that $(\wt r_m)^{-1}(|z|)>\th_*$ for all $|z|<1$. We assume without loss of generality that $\th_0>1$ for all $|z|<1$. We choose a smooth bump function $\rho\in C_c^\infty(\R_+;[0,1])$ which supports near $\th_0$ and is defined by
	\begin{equation}\label{Eq.bump}
		\rho_0\in C_c^\infty(\R;[0,1]),\quad \rho_0\big|_{[-1/2,1/2]}\equiv1,\quad \operatorname{supp}\rho_0\subset[-1,1],\quad \rho(\th)=\rho_0\big(m(\th-\th_0)\big).
	\end{equation}
	Let (below ``N'' stands for ``Near'' and ``F'' stands for ``Far'')
	\begin{align}
		I^{\text{N}}:&=\int_{\th_0}^\infty \frac{z(\th)^m}{z^m-z(\th)^m}\wt\g_m'(\th)\rho(\th)\,\mathrm d\th+\int_0^{\th_0}\frac{z^m\wt\g_m'(\th)}{z^m-z(\th)^m}\rho(\th)\,\mathrm d\th,\label{Eq.I^N-def}\\
		I^{\text{F}}:&=\int_{\th_0}^\infty \frac{z(\th)^m}{z^m-z(\th)^m}\wt\g_m'(\th)(1-\rho(\th))\,\mathrm d\th+\int_0^{\th_0}\frac{z^m\wt\g_m'(\th)}{z^m-z(\th)^m}(1-\rho(\th))\,\mathrm d\th.\label{Eq.I^F-def}
	\end{align}
    Then by \eqref{Eq.v_m-leading}, we have $2\pi\ii z\left(v_m(z)^*-\frac{\wt\g_m(\th_0)}{2\pi\ii z}\right)=I^{\text{N}}+I^{\text{F}}$. Hence, \eqref{Eq.v_m-est} is a direct consequence of the following estimates
    \begin{align}
        \left|I^{\text{N}}\right|\leq C|z|^2\big(1+|\ln|z||\big),\label{Eq.I^N-est}\\
        \left|I^{\text{F}}\right|\leq C|z|^2\big(1+|\ln|z||\big),\label{Eq.I^F-est}
    \end{align}
    where $C>0$ is a constant independent of $\th_0>1$. 
    
    We first deal with $I^{\text{F}}$. By \eqref{Eq.gamma-r-bijective}, we know that $|z(\th)|=\wt r_m(\th)<\wt r_m(\th_0)=|z|$ for $\th>\th_0$ and $|z(\th)|=\wt r_m(\th)>\wt r_m(\th_0)=|z|$ for $\th\in(0,\th_0)$. Thus,
    \begin{align*}
        \sum_{n=1}^\infty\left(\frac{z(\th)}{z}\right)^{mn}(1-\rho(\th))=\frac{z(\th)^m}{z^m-z(\th)^m}(1-\rho(\th)),\quad\forall\ \th>\th_0,\\
        -\sum_{n=1}^\infty\left(\frac z{z(\th)}\right)^{mn}(1-\rho(\th))=\frac{z^m}{z^m-z(\th)^m}(1-\rho(\th)),\quad\forall\ \th\in(0,\th_0).
    \end{align*}
   Moreover, both series converge absolutely and uniformly with respect to $\th$, because $1-\rho$ is supported outside a neighborhood of $\th_0$. Consequently, we have
    \begin{equation}\label{Eq.I^F-sum}
        I^{\text{F}}=\sum_{n=1}^\infty I^{\text{F}}_n,\quad\text{where}\quad  I^{\text{F}}_n:=I^{\text{F}}_{n,1}-I^{\text{F}}_{n,2},
    \end{equation}
    with
    \begin{align}
        I^{\text{F}}_{n,1}:&=\int_{\th_0}^\infty \left(\frac{z(\th)}{z}\right)^{mn}\wt\g_m'(\th)(1-\rho(\th))\,\mathrm d\th\nonumber\\
        &=\frac1{z^{mn}}\int_{\th_0}^\infty z(\th)^{mn-1}z'(\th)\frac{z(\th)}{z'(\th)}\wt\g_m'(\th)(1-\rho(\th))\,\mathrm d\th,\label{Eq.I_n,1^F}\\
        I^{\text{F}}_{n,2}:&=\int_0^{\th_0}\left(\frac z{z(\th)}\right)^{mn}\wt\g_m'(\th)(1-\rho(\th))\,\mathrm d\th\nonumber\\
        &=z^{mn}\int_0^{\th_0}z(\th)^{-mn-1}z'(\th)\frac{z(\th)}{z'(\th)}\wt\g_m'(\th)(1-\rho(\th))\,\mathrm d\th.\label{Eq.I_n,2^F}
    \end{align}
    We compute that $\frac{z(\th)}{z'(\th)}\wt\g_m'(\th)=\frac{\th^{1-2\mu}}{-\mu+\ii\th}F(\th)$, where
    \begin{align}\label{Eq.F-def}
        F(\th):=\frac{\left(1+\th^\mu r_m(\th)\right)\left(2\pi(2\mu-1)+\th^{2\mu-1} \g_m'(\th)\right)}{1+\frac{\th^{\mu+1}}{-\mu+\ii\th}\left(r_m'(\th)+\ii r_m(\th)\right)}\in \mathscr G^1_*.
    \end{align}
    Here we have used $\|(r_m,\g_m)\|_X\ll1$, \eqref{4.16} and the Banach algebra property of $\mathscr G^1_*$. By \eqref{Eq.I_n,1^F}, \eqref{Eq.I_n,2^F} and Lemma \ref{Lem.far-est}, we have
    \begin{align}
        \left|I^{\text{F}}_{n,1}\right|+\left|I^{\text{F}}_{n,2}\right|\lesssim n^{-1-\al}\big(1+|\ln|z||\big),\quad\forall\ n\in\Z_+.\label{Eq.I_n,1,2^F-est}
    \end{align}
    Therefore, \eqref{Eq.I^F-est} follows from \eqref{Eq.I^F-sum} and \eqref{Eq.I_n,1,2^F-est}. 
    
    It remains to prove \eqref{Eq.I^N-est}. Recall that $\wt z(\th)=z(\th)^m$, hence $I^{\text N}$ can be rewritten as
    \begin{equation}\label{Eq.I^N-sum}
        I^{\text N}=I^{\text N}_1+I^{\text N}_2,
    \end{equation}
    where
    \begin{align*}
        I^{\text N}_1:&=\int_{\th_0}^\infty \frac{\wt z'(\th)}{z^m-\wt z(\th)}\frac{\wt z(\th)}{\wt z'(\th)}\wt\g_m'(\th)\rho(\th)\,\mathrm d\th,\\
        I^{\text N}_2:&=\int_0^{\th_0}\frac{z^m\wt z'(\th)}{\wt z(\th)\left(z^m-\wt z(\th)\right)}\frac{\wt z(\th)}{\wt z'(\th)}\wt\g_m'(\th)\rho(\th)\,\mathrm d\th.
    \end{align*}
    We compute that $\frac{\wt z(\th)}{\wt z'(\th)}\wt\g_m'(\th)=\frac1m\frac{\th^{1-2\mu}}{-\mu+\ii\th}F(\th)$, where $F$ is given by \eqref{Eq.F-def}. Now, the desired estimate \eqref{Eq.I^N-est} follows from \eqref{Eq.I^N-sum} and Lemma \ref{Lem.near-est}.
\end{proof}

\begin{lemma}\label{Lem.far-est}
    Under the assumptions of Proposition \ref{Prop.v_m-est}, let $F\in\mathscr G^1_*$, we define
    \begin{align*}
        J_{n,1}:&=\frac1{z^{mn}}\int_{\th_0}^\infty z(\th)^{mn-1}z'(\th)\frac{\th^{1-2\mu}}{-\mu+\mathrm{i}\th}F(\th)(1-\rho(\th))\,\mathrm d\th,\\
        J_{n,2}:&=z^{mn}\int_0^{\th_0}z(\th)^{-mn-1}z'(\th)\frac{\th^{1-2\mu}}{-\mu+\mathrm{i}\th}F(\th)(1-\rho(\th))\,\mathrm d\th,
    \end{align*}
    for all $n\in\Z_+$, where $\rho=\rho(\th)$ is given by \eqref{Eq.bump}. Then 
    \begin{align}
    \left|J_{n,1}\right|&\lesssim\left(\frac1{n^2}|z|^2+\frac1{n^{1+\al}}|z|^{2+\frac\al\mu}\right)\|F\|_{*,1},\label{Eq.J_n,1est}\\
    \left|J_{n,2}\right|&\lesssim\left(\frac1{n^2}|z|^2(1+|\ln|z||)+\frac1{n^{1+\al}}|z|^{2}\right)\|F\|_{*,1}.\label{Eq.J_n,2est}
    \end{align}
    Here the implicit constants are independent of $\th_0>1$, $n\in\Z_+$ and $F\in\mathscr G^1_*$.
\end{lemma}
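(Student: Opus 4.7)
The strategy is integration by parts in $\th$, using the identity
\[z(\th)^{\pm mn-1}z'(\th)=\pm\frac{1}{mn}\frac{d}{d\th}\bigl[z(\th)^{\pm mn}\bigr],\]
which produces an overall factor $1/(mn)$, combined with a decomposition of $F\in\mathscr G^1_*$ tailored to its H\"older regularity. All boundary terms after IBP will vanish: at $\th_0$ this follows from $1-\rho(\th_0)=0$; for $J_{n,1}$ as $\th\to+\infty$ from rapid decay of $|z(\th)|^{mn}\sim\th^{-\mu mn}$; for $J_{n,2}$ as $\th\to 0^+$ from $|z(\th)|^{-mn}\th^{1-2\mu}\sim\th^{\mu mn+1-2\mu}\to 0$ (using $\mu mn>2\mu-1$ for large $n$). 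One is then left with $\pm\frac{1}{mn}z^{\mp mn}\int z(\th)^{\pm mn}G'(\th)\,\mathrm d\th$, where $G(\th)=\frac{\th^{1-2\mu}}{-\mu+\mathrm{i}\th}F(\th)(1-\rho(\th))$.

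Because $F$ is not classically differentiable, I will write $F=c+\widetilde F$ with $c:=\lim_{\th\to+\infty}F(\th)\in\mathbb C$ and $\widetilde F\in\mathscr G^1$, and split $\widetilde F=\widetilde F^\sharp+\widetilde F^\flat$ at scale $h=1/n$ via Lemma~\ref{Lem.decompose_G^1}, so that $|\widetilde F^\flat|\lesssim n^{-\al}[F]_1\langle\th\rangle^{-1-\al}$ and $|(\widetilde F^\sharp)'|\lesssim n^{1-\al}[F]_1\langle\th\rangle^{-1-\al}$. IBP is then applied only to the $c+\widetilde F^\sharp$ piece (where $G'$ is classical), while the $\widetilde F^\flat$ contribution is estimated directly on the original (pre-IBP) integral using its uniform bound. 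After the change of variables $t=\th/\th_0$ and the approximation $\wt r_m(\th)/|z|\sim(\th/\th_0)^{-\mu}$ (valid since $\|r_m\|_{X_r}\ll 1$ by Theorem~\ref{Thm.main}), the remaining one-dimensional integrals have the form $\int_1^\infty t^{-\mu mn-s}\,\mathrm dt$ or $\int_0^1 t^{\mu mn-s}\,\mathrm dt$, each yielding a factor $\sim 1/(mn)$, and a careful bookkeeping of the $\th$-weights and of $\langle\th\rangle^{-1-\al}$ produces the separate exponents $|z|^2$ versus $|z|^{2+\al/\mu}$ and the various factors $n^{-2}$, $n^{-1-\al}$.

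The main obstacle is the contribution arising from $-\rho'$ in $G'$: it is supported on a window of width $1/m$ around $\th_0$ on which $|\rho'|\sim m$, and a direct modulus estimate yields only $|z|^2/(mn)$, which is too weak. To recover the missing factor $1/n$ I will perform a second integration by parts against the rapid phase $\mathrm e^{\pm\mathrm{i}mn\th}$ hidden in $z(\th)^{\pm mn}=\wt r_m(\th)^{\pm mn}\mathrm e^{\pm\mathrm{i}mn\th}$; since $\rho_0'$ vanishes at both endpoints of its support (because $\rho_0$ is $C^\infty$ and locally constant there), the boundary contributions disappear and only an interior term remains, in which the extra $1/n$ appears from $(\mathrm e^{\pm\mathrm{i}mn\th})'=\pm\mathrm{i}mn\,\mathrm e^{\pm\mathrm{i}mn\th}$. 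The logarithmic enhancement $(1+|\ln|z||)$ in the $J_{n,2}$ estimate then accommodates an integral $\int_1^{\th_0}\mathrm d\th/\th\sim\ln\th_0\sim|\ln|z||/\mu$ that appears when an intermediate weight $1/\th$ is paired with $(\th/\th_0)^{\mu mn}$ over the range $(1,\th_0)$ where the latter is not strongly concentrated; this logarithm is absent for $J_{n,1}$ since its integration region $(\th_0,+\infty)$ keeps $\th\geq\th_0\gg 1$ throughout.
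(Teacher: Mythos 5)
Your proposal follows essentially the same route as the paper's proof: split $F\in\mathscr G^1_*$ into its constant part and its $\mathscr G^1$ part, integrate by parts via $z^{\pm mn-1}z'=\pm\frac1{mn}\frac{\mathrm d}{\mathrm d\th}z^{\pm mn}$ with the boundary term at $\th_0$ killed by $1-\rho(\th_0)=0$, apply Lemma \ref{Lem.decompose_G^1} at scale $1/n$ to the $\mathscr G^1$ part (integration by parts on the smooth piece, direct estimate on the small piece), and close by power counting using $\wt r_m(\th)\sim\th^{-\mu}$; the exponents you describe match \eqref{Eq.J_n,1est}--\eqref{Eq.J_n,2est}, and your accounting of where the logarithm can enter $J_{n,2}$ is acceptable since the target bound allows it.

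The one place you genuinely diverge is the $\rho'$ term, and your instinct is sound: the paper absorbs $\frac{\th^{1-2\mu}}{-\mu+\ii\th}\rho'(\th)$ into the pointwise bound $|\varphi'(\th)|\lesssim\th^{-2\mu}\langle\th\rangle^{-1}$ of \eqref{Eq.varphi-property}, even though $|\rho'|\sim m$ on a window of width $1/m$ about $\th_0$ and $\th_0$ is unbounded, so that bound is off by a factor $\sim\th_0$ there; your concern is legitimate. However, your fix over-claims as stated: after the second integration by parts against $\e^{\pm\ii mn\th}$ you must also differentiate the amplitude $\wt r_m(\th)^{\pm mn}$, whose logarithmic derivative is of size $mn/\th_0$, so the net gain from that term is only $1/\th_0$ rather than $1/n$; this alone closes the regime $\th_0\gtrsim n/m$ but not $\th_0\ll n$. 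To finish you must combine the second integration by parts with the concentration estimate $\int_{|\th-\th_0|\le 1/m}(\th/\th_0)^{\mp\mu mn}\,\mathrm d\th\lesssim\min\{1/m,\th_0/(mn)\}$ (which your change of variables $t=\th/\th_0$ does deliver, but which you should invoke explicitly for this term). Alternatively, you can dispense with the second integration by parts altogether: the crude window bound yields the weaker per-$n$ estimate $\frac{|z|^2}{mn}\min\{1,\th_0/n\}$, and this still sums over $n$ to $\lesssim|z|^2(1+|\ln|z||)$, which is all that \eqref{Eq.I^F-est} in Proposition \ref{Prop.v_m-est} actually requires.
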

\begin{proof}
    Recall that $\mathscr G^1_*=\mathscr G^1\oplus\C$. Our desired \eqref{Eq.J_n,1est} and \eqref{Eq.J_n,2est} are direct consequences of the following estimates:
    \begin{align}
        F\equiv 1&\Longrightarrow \left|J_{n,1}\right|\lesssim n^{-2}{|z|^2},\label{Eq.J_n,1-est1}\\
        F\equiv 1&\Longrightarrow \left|J_{n,2}\right|\lesssim n^{-2}{|z|^2}(1+|\ln|z||),\label{Eq.J_n,2-est1}\\
        F\in\mathscr G^1&\Longrightarrow \left|J_{n,1}\right|\lesssim n^{-1-\al}{|z|^{2+\al/\mu}}\|F\|_1,\label{Eq.J_n,1-est2}\\
        F\in\mathscr G^1&\Longrightarrow \left|J_{n,2}\right|\lesssim n^{-1-\al}{|z|^{2}}\|F\|_1\label{Eq.J_n,2-est2}
    \end{align}

    \underline{\bf Proof of \eqref{Eq.J_n,1-est1}.}
    Let $\varphi(\th):=\frac{\th^{1-2\mu}}{-\mu+\mathrm{i}\th}(1-\rho(\th))$ for all $\th>0$. Then
    \begin{equation}\label{Eq.varphi-property}
        |\varphi(\th)|\lesssim\th^{1-2\mu}\langle\th\rangle^{-1},\quad \left|\varphi'(\th)\right|\lesssim\th^{-2\mu}\langle\th\rangle^{-1},\quad\forall\ \th\in\R_+.
    \end{equation}
    Assume that $F\equiv1$. Using integration by parts, $\varphi(\th_0)=0$, \eqref{Eq.varphi-property} and \eqref{Eq.r_m-approx}, we have 
    \begin{align*}
        |J_{n,1}|&=\left|\frac1{z^{mn}}\int_{\th_0}^\infty z(\th)^{mn-1}z'(\th)\varphi(\th)\,\mathrm d\th\right|=\left|\frac1{mnz^{mn}}\int_{\th_0}^\infty z(\th)^{mn}\varphi'(\th)\,\mathrm d\th\right|\\
        &\lesssim \frac1{mn|z|^{mn}}\int_{\th_0}^\infty|z(\th)|^{mn}\th^{-2\mu-1}\,\mathrm d\th\lesssim \frac1{mn\th_0^{-\mu mn}}\int_{\th_0}^\infty \th^{-\mu mn-2\mu-1}\,\mathrm d\th\\
        &\lesssim n^{-2}\th_0^{-2\mu}\lesssim n^{-2}|z|^2.
    \end{align*}
    This proves \eqref{Eq.J_n,1-est1}.

    \underline{\bf Proof of \eqref{Eq.J_n,2-est1}.} Assume that $F\equiv1$. Using integration by parts, $\varphi(\th_0)=0$, \eqref{Eq.varphi-property} and \eqref{Eq.r_m-approx}, we have 
    \begin{align*}
        |J_{n,2}|&=\left|z^{mn}\int_0^{\th_0}z(\th)^{-mn-1}z'(\th)\varphi(\th)\,\mathrm d\th\right|=\left|\frac{z^{mn}}{mn}\int_0^{\th_0} z(\th)^{-mn}\varphi'(\th)\,\mathrm d\th\right|\\
        &\lesssim \frac{|z|^{mn}}{mn}\int_0^{\th_0}|z(\th)|^{-mn}\th^{-2\mu}\langle\th\rangle^{-1}\,\mathrm d\th\lesssim\frac{\th_0^{-\mu mn}}{mn}\int_0^{\th_0}\th^{\mu(mn-2)}\langle\th\rangle^{-1}\,\mathrm d\th\\
        &\lesssim\begin{cases}
            \th_0^{-2\mu}\ln\th_0\lesssim |z|^2(1+|\ln|z||) & \text{if}\ mn=2\\
            n^{-2}\th_0^{-2\mu}\lesssim n^{-2}|z|^2 & \text{if}\ mn>2
        \end{cases}\lesssim n^{-2}|z|^2(1+|\ln|z||).
    \end{align*}
    This proves \eqref{Eq.J_n,2-est1}. 

    \underline{\bf Proof of \eqref{Eq.J_n,1-est2}.} Assume that $F\in\mathscr G^1$. By Lemma \ref{Lem.decompose_G^1}, there exist $F_0\in C(\R_+;\C)$ and $F_1\in C^1(\R_+;\C)$ such that $F=F_0+F_1$ and
    \begin{equation}\label{Eq.F-decompose}
        \left\|\langle\th\rangle^{1+\al}F_0\right\|_{L^\infty}\lesssim n^{-\al}\|F\|_1,\quad \left\|\langle\th\rangle^{\al}F_1\right\|_{L^\infty}\lesssim \|F\|_1,\quad \left\|\langle\th\rangle^{1+\al}F_1'\right\|_{L^\infty}\lesssim n^{1-\al}\|F\|_1.
    \end{equation}
    Hence $J_{n,1}=J_{n,1,0}+J_{n,1,1}$, where
    \begin{align*}
        J_{n,1,0}:=\frac1{z^{mn}}\int_{\th_0}^\infty z(\th)^{mn-1}z'(\th)F_0(\th)\varphi(\th)\,\mathrm d\th,\ J_{n,1,1}:=\frac1{z^{mn}}\int_{\th_0}^\infty z(\th)^{mn-1}z'(\th)F_1(\th)\varphi(\th)\,\mathrm d\th.
    \end{align*}
    For $J_{n,1,0}$, by \eqref{Eq.r_m-approx}, \eqref{Eq.varphi-property} and \eqref{Eq.F-decompose}, we have
    \begin{align}
        \left|J_{n,1,0}\right|&\lesssim \frac{\left\|\langle\th\rangle^{1+\al}F_0\right\|_{L^\infty}}{|z|^{mn}}\int_{\th_0}^\infty \th^{-\mu(mn-1)}\th^{-\mu-1}\langle\th\rangle\langle\th\rangle^{-1-\al}\th^{1-2\mu}\langle\th\rangle^{-1}\,\mathrm d\th\nonumber\\
        &\lesssim\frac{\left\|\langle\th\rangle^{1+\al}F_0\right\|_{L^\infty}}{n\th_0^{-\mu mn}}\th_0^{-\mu mn-2\mu-\al}\lesssim \frac1{n^{1+\al}}\th_0^{-2\mu-\al}\|F\|_1\lesssim\frac1{n^{1+\al}}|z|^{2+\frac\al\mu}\|F\|_1,\label{Eq.J_n,1,0}
    \end{align}
    where we have used the fact
    \begin{equation}
        |z'(\th)|\sim \th^{-\mu-1}|-\mu+\th^{\mu+1}r_m'(\th)|+\th^{-\mu}|1+\th^\mu r_m(\th)|\sim \th^{-\mu-1}\langle\th\rangle,\quad\forall\ \th>0.
    \end{equation}
    As for $J_{n,1,1}$, we employ integration by parts, which motivates us to estimate $(F_1\varphi)'$. Indeed, by \eqref{Eq.varphi-property} and \eqref{Eq.F-decompose}, we have
    \begin{align}
        \left|(F_1\varphi)'(\th)\right|&\lesssim \langle\th\rangle^{-1-\al}\left\|\langle\th\rangle^{1+\al}F_1'\right\|_{L^\infty}\th^{1-2\mu}\langle\th\rangle^{-1}+\langle\th\rangle^{-\al}\left\|\langle\th\rangle^{\al}F_1\right\|_{L^\infty}\th^{-2\mu}\langle\th\rangle^{-1}\nonumber\\
        &\lesssim n^{1-\al}\th^{-2\mu}\langle\th\rangle^{-1-\al}\|F\|_1,\quad\forall\ \th>0.\label{Eq.F-phi-derivative}
    \end{align}
    Now it follows from integration by parts, \eqref{Eq.r_m-approx} and \eqref{Eq.F-phi-derivative} that
    \begin{align}
        \left|J_{n,1,1}\right|&=\left|\frac1{mn z^{mn}}\int_{\th_0}^\infty z(\th)^{mn}(F_1\varphi)'(\th)\,\mathrm d\th\right|\lesssim \frac{n^{-\al}\|F\|_1}{\th_0^{-\mu mn}}\int_{\th_0}^\infty \th^{-\mu mn}\th^{-2\mu}\langle\th\rangle^{-1-\al}\,\mathrm d\th\nonumber\\
        &\lesssim n^{-1-\al}\th_0^{-2\mu-\al}\|F\|_1\lesssim n^{-1-\al}|z|^{2+\frac\al\mu}\|F\|_1.\label{Eq.J_n,1,1}
    \end{align}
    Combining \eqref{Eq.J_n,1,0} and \eqref{Eq.J_n,1,1} implies \eqref{Eq.J_n,1-est2}.

    \underline{\bf Proof of \eqref{Eq.J_n,2-est2}.} Assume that $F\in\mathscr G^1$. Similar to the proof of \eqref{Eq.J_n,1-est2}, we can obtain
    \begin{equation*}
        |J_{n,2}|\lesssim\begin{cases}
            n^{-1-\al}|z|^2\|F\|_1 &  \text{if}\ mn=2\\
            n^{-1-\al}|z|^{2+\al/\mu}\|F\|_1 &  \text{if}\ mn>2
        \end{cases},
    \end{equation*}
    where the dichotomy is a consequence of $\int_0^{\th_0}\th^{\mu(mn-2)}\langle\th\rangle^{-1-\al}\,\mathrm d\th\lesssim 1$ (without ``$\al$'' in the exponent) for $mn=2$. This directly implies \eqref{Eq.J_n,2-est2}.
\end{proof}

\begin{lemma}\label{Lem.near-est}
    Under the assumptions of Proposition \ref{Prop.v_m-est}, let $F\in\mathscr G^1_*$, we define
    \begin{align*}
        K_1:&=\int_{\th_0}^\infty \frac{\wt z'(\th)}{z^m-\wt z(\th)}\frac{\th^{1-2\mu}}{-\mu+\mathrm i\th}F(\th)\rho(\th)\,\mathrm d\th,\\
        K_2:&=\int_0^{\th_0}\frac{z^m\wt z'(\th)}{\wt z(\th)\left(z^m-\wt z(\th)\right)}\frac{\th^{1-2\mu}}{-\mu+\mathrm i\th}F(\th)\rho(\th)\,\mathrm d\th,
    \end{align*}
    where $\rho=\rho(\th)$ is given by \eqref{Eq.bump}. Then 
    \begin{align}\label{Eq.K_1+K_2-est}
    |K_1+K_2|\lesssim |z|^2\big(1+|\ln |z||\big)\|F\|_{*,1}.
    \end{align}
    Here the implicit constants are independent of $\th_0>1$, $n\in\Z_+$ and $F\in\mathscr G^1_*$.
\end{lemma}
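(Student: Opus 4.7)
The strategy is to algebraically split $K_2$ so that one piece is a harmless smooth integral and the remaining singular piece combines with $K_1$ into a Cauchy-type integral on $\operatorname{supp}\rho$, where the localization length $1/m$ and the approximation $\wt z(\th_0+s/m)\approx \wt z(\th_0)\e^{\ii s}$ reduce all computations to integrable one-variable estimates. Writing $g(\th):=\th^{1-2\mu}/(-\mu+\ii\th)$ and using the identity $\frac{z^m}{\wt z(z^m-\wt z)}=\frac{1}{z^m-\wt z}+\frac{1}{\wt z}$, I split $K_2=K_2^{\mathrm{sing}}+K_2^{\mathrm{reg}}$ with $K_2^{\mathrm{sing}}:=\int_0^{\th_0}\frac{\wt z' gF\rho}{z^m-\wt z}\,\mathrm d\th$ and $K_2^{\mathrm{reg}}:=\int_0^{\th_0}\frac{\wt z'}{\wt z}gF\rho\,\mathrm d\th$. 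Since $\wt z'/\wt z = m\,z'/z = m(\wt r_m'/\wt r_m+\ii)\lesssim m$ on $\operatorname{supp}\rho$, $|g|\sim\th_0^{-2\mu}\sim|z|^2$, and $|\operatorname{supp}\rho|\lesssim 1/m$, one immediately obtains $|K_2^{\mathrm{reg}}|\lesssim|z|^2\|F\|_{*,1}$. The combined singular piece $L:=K_1+K_2^{\mathrm{sing}}=\int_{\operatorname{supp}\rho}\frac{\wt z' gF\rho}{z^m-\wt z}\,\mathrm d\th$ is then decomposed via $F=F(\th_0)+(F-F(\th_0))$ as $L=L_a+L_b$.

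For $L_a=F(\th_0)\int\frac{\wt z' g\rho}{z^m-\wt z}\,\mathrm d\th$, integration by parts through $\wt z'/(z^m-\wt z)=-\frac{d}{d\th}\ln(z^m-\wt z(\th))$ (using a locally chosen branch; legitimate because $g\rho\in C_c^\infty$) gives $L_a=F(\th_0)\int\ln(z^m-\wt z)(g\rho)'\,\mathrm d\th$ with $|(g\rho)'|\lesssim m|z|^2$. The key estimate is
\begin{equation*}
\int_{\operatorname{supp}\rho}|\ln(z^m-\wt z(\th))|\,\mathrm d\th \lesssim \frac{1+|\ln|z||}{m},
\end{equation*}
obtained by expanding $\ln\wt z=m\ln\wt r_m+\ii m\th$ near $\th_0$ (yielding $\wt z(\th_0+s/m)=\wt z(\th_0)\e^{\ii s}(1+O(|s|/\th_0))$), setting $s=m(\th-\th_0)$ and $w:=z^m/\wt z(\th_0)$ with $|w|=1$, then invoking the uniform bound $\int_{|s|\le 1}|\ln|w-\e^{\ii s}||\,\mathrm ds\lesssim 1$ together with $|\ln|\wt z(\th_0)||\lesssim m|\ln|z||$. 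This gives $|L_a|\lesssim|z|^2(1+|\ln|z||)\|F\|_{*,1}$.

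For $L_b=\int\frac{\wt z'(F-F(\th_0))g\rho}{z^m-\wt z}\,\mathrm d\th$, the change of variable $u=\wt z(\th)$ (injective since $|\wt z'|\sim m|z|^m>0$ on $\operatorname{supp}\rho$) rewrites $L_b=-\int_C\frac{\phi(u)}{u-z^m}\,\mathrm du$ on a smooth arc $C:=\wt z(\operatorname{supp}\rho)$ of length $\sim|z|^m$, where $\phi:=[(F-F(\th_0))g\rho]\circ\wt z^{-1}$ vanishes at $u_0:=\wt z(\th_0)\in C$ and is H\"older-$\alpha$ with $[\phi]_{\alpha,C}\lesssim\|F\|_{*,1}|z|^2/(m^\alpha\th_0^{1+\alpha}|z|^{m\alpha})$ (from $|F(\th)-F(\th_0)|\le[F]_1|\th-\th_0|^\alpha/\langle\th_0\rangle^{1+\alpha}$ and $|d\th/du|\sim 1/(m|z|^m)$). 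Applying the Plemelj--Privalov estimate for Cauchy integrals of H\"older functions on smooth arcs, combined with logarithmic endpoint contributions governed by $|\phi(u_\pm)|\le[\phi]_{\alpha,C}|u_\pm-u_0|^\alpha\lesssim\|F\|_{*,1}|z|^2/(m^\alpha\th_0^{1+\alpha})$, yields $|L_b|\lesssim|z|^2(1+|\ln|z||)\|F\|_{*,1}$.

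The main obstacle will be the estimate on $L_b$: we only know $z\notin S_m$, so $z^m\notin C$, but with no quantitative lower bound on $\operatorname{dist}(z^m,C)$; yet the bound must be uniform in $z$ as $z^m$ possibly approaches $C$. The Plemelj--Privalov machinery is tailored precisely to this situation, with the vanishing $\phi(u_0)=0$ providing the required cancellation, but careful bookkeeping is needed to confirm that the H\"older seminorm, the arclength parametrization of $C$, and the logarithmic endpoint singularities combine to produce exactly the $|z|^2(1+|\ln|z||)\|F\|_{*,1}$ bound demanded by the statement.
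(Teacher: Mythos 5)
Your decomposition is genuinely different from the paper's and in one respect cleaner. The paper integrates by parts separately on $[\th_0,\infty)$ and $[0,\th_0]$, picking up boundary terms $\pm\wt\varphi(\th_0)\operatorname{Ln}(\wt z(\th_0)-z^m)$ at $\th=\th_0$ which individually can be as large as the logarithm of an arbitrarily small distance; the proof hinges on the observation that these terms cancel between $K_1$ and $K_2$, after which only $\wt\varphi(\th_0)\,\wt{\operatorname{Ln}}\,\wt z(\th_0)$ remains. Your partial-fraction split $\frac{z^m}{\wt z(z^m-\wt z)}=\frac1{\wt z}+\frac1{z^m-\wt z}$ lets $K_1$ and the singular half of $K_2$ fuse into a single integral over the full $\operatorname{supp}\rho$, so that the integration by parts for $L_a$ has no boundary term at all (as $g\rho$ vanishes at both endpoints). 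In effect the would-be boundary terms never appear. This is a nice structural simplification, and the regular piece $K_2^{\mathrm{reg}}$ is indeed trivial.

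There is however a genuine gap, and you have in fact named it yourself without resolving it. You write that there is ``no quantitative lower bound on $\operatorname{dist}(z^m,C)$'' and appeal to Plemelj--Privalov to save $L_b$. But the whole lemma rests exactly on such a lower bound, and it is not mysterious: since $\th_0$ is defined by $\wt r_m(\th_0)=|z|$, one has $|z^m|=|\wt z(\th_0)|$, and since $\th\mapsto\wt r_m(\th)$ is strictly decreasing with $|\wt r_m'|\sim\th^{-\mu-1}$ near $\th_0$, the paper's Eq.~z-z-lower gives
\begin{equation*}
\left|\wt z(\th)-z^m\right|\ \geq\ \big||\wt z(\th)|-|z^m|\big|\ \gtrsim\ \th_0^{-\mu m-1}\,|\th-\th_0|
\qquad\text{for } |\th-\th_0|\le 1/m.
\end{equation*}
So the distance from $z^m$ to the arc is not uncontrolled at all: the minimum is attained at $\th_0$, and the distance grows linearly in $|\th-\th_0|$. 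With this, $L_b$ is a completely elementary estimate (the numerator contributes $|\th-\th_0|^\alpha/\langle\th_0\rangle^{1+\alpha}$, the denominator $\gtrsim |z|^m|\th-\th_0|/\th_0$, the factor $\wt z'\lesssim m|z|^m$, so the integrand is $\lesssim m^{1-\alpha} \|F\|_{*,1}|z|^2\th_0^{-\alpha}|\th-\th_0|^{\alpha-1}$, integrable); no Cauchy-integral machinery, arclength change of variable, or endpoint bookkeeping is required, and the argument is then well within the elementary scope the lemma deserves. The same lower bound is also what makes $\int_{\operatorname{supp}\rho}|\ln(z^m-\wt z)|\,\mathrm d\th$ finite at all in $L_a$: your appeal to the unit-circle bound $\int_{|s|\le1}|\ln|w-\e^{\ii s}||\,\mathrm ds\lesssim 1$ does not directly transfer to the perturbed spiral arc $s\mapsto\wt z(\th_0)\e^{\ii s}(1+O(s/\th_0))$, where the modulus, not the angle, is what keeps $w$ off the curve; what makes the log integrable is again $|w-\mathrm{arc}(s)|\gtrsim|s|/\th_0$. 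Until you state and prove this lower bound (one line, from the definition of $\th_0$ and monotonicity of $\wt r_m$), the core of both $L_a$ and $L_b$ is unjustified. Finally note a minor arithmetic slip: $\int_{\operatorname{supp}\rho}|\ln(z^m-\wt z)|\,\mathrm d\th\lesssim 1+|\ln|z||$, not $(1+|\ln|z||)/m$ --- the contribution $\int_{|s|\le1}|\ln|\wt z(\th_0)||\,\mathrm ds\sim m|\ln|z||$ exactly eats the $1/m$ from $\mathrm d\th=\mathrm ds/m$ --- but since $m$ is fixed in this section and constants may depend on it, this does not damage the conclusion.
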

\begin{proof}
    Let $\wt\varphi(\th):=\frac{\th^{1-2\mu}}{-\mu+\mathrm i\th}F(\th)$ for all $\th>0$. Then we have
    \begin{align}\label{Eq.tilde-varphi-est}
    	\left|\wt\varphi(\th)\right|\lesssim\th^{-2\mu}\|F\|_{*,1},\quad \left|\wt\varphi(\th)-\wt\varphi(\th_0)\right|\lesssim \th_0^{-2\mu-1}\left(|\th-\th_0|+\th_0^{-\al}|\th-\th_0|^\al\right)\|F\|_{*,1}
    \end{align}
    for all $\th\in[\th_0-1/m,\th_0+1/m]$, in view of $\operatorname{supp}\rho\subset [\th_0-1/m,\th_0+1/m]$. Notice that the function $\th\in [\th_0-1/m,\th_0+1/m]\mapsto \wt z(\th)-z^m$ maps into a sector in $\C$ with an angle smaller than $2\pi$, thanks to the assumption $z\notin \{\xi_m^kz(\th): k\in\Z\cap[0, m-1], \th>0\}$. Hence, we can choose\footnote{\label{Footnote.branch}We remark that the choice of the logarithmic branch $\operatorname{Ln}$ depends on $z$ (and $\th_0$). Nonetheless, the estimate \eqref{Eq.Ln-est} is uniform with respect to $\th_0>1$.} a unique branch $\operatorname{Ln}$ of the logarithmic function such that  $\operatorname{Ln}\left(\wt z(\th_0)-z^m\right)\in\R$ and $\th\mapsto \operatorname{Ln}\left(\wt z(\th)-z^m\right)$ is a $C^1$ function on $\th\in [\th_0-1/m,\th_0+1/m]$. Moreover, there holds
    \begin{equation}\label{Eq.Ln}
    	\frac{\mathrm d}{\mathrm d\th}\operatorname{Ln}\left(\wt z(\th)-z^m\right)=\frac{\wt z'(\th)}{\wt z(\th)-z^m},\quad \forall\ \th\in\left[\th_0-\frac1m,\th_0+\frac1m\right].
    \end{equation} 
   	For $\th\in [\th_0-1/m,\th_0+1/m]$, by \eqref{Eq.r_m-approx} and $\wt r_m'(\th)\sim \th^{-\mu-1}$, we have
   	\begin{align}
   		\left|\wt z(\th)-z^m\right|&\geq \left|\left|\wt z(\th)\right|-|z|^m\right|=\left|\wt r_m(\th)^m-\wt r_m(\th_0)^m\right|=m\left|\int_{\th_0}^\th \left(\wt r_m(\wt\th)\right)^{m-1}\wt r_m'(\wt\th)\,\mathrm d\wt\th\right|\label{Eq.z-z-lower}\\
   		&\gtrsim m\left|\int_{\th_0}^\th \wt\th^{-\mu m-1}\,\mathrm d\wt\th\right|\gtrsim \th_0^{-\mu m-1}|\th-\th_0|,\nonumber\\
   		\left|\wt z(\th)-z^m\right|&\leq \wt r_m(\th)^m+\wt r_m(\th_0)^m\lesssim \th_0^{-\mu m}.\label{Eq.z-z-upper}
   	\end{align}
   	Specifically, there holds
   	\begin{equation}\label{Eq.Ln-est}
   		\left|\operatorname{Ln}\left(\wt z(\th)-z^m\right)\right|\lesssim 1+|\ln\th_0|\lesssim 1+|\ln|z||,\quad \forall\ \frac1{2m}\leq|\th-\th_0|\leq\frac{1}{m}.
   	\end{equation}
    
    Now we are ready to estimate $K_1$ and $K_2$. We start with $K_1$. Note that $K_1=K_{1,1}+K_{1,2}$, 
    \begin{align*}
    	K_{1,1}:&=\wt\varphi(\th_0)\int_{\th_0}^\infty\frac{\wt z'(\th)}{z^m-\wt z(\th)}\rho(\th)\,\mathrm d\th,\\
    	K_{1,2}:&=\int_{\th_0}^\infty\frac{\wt z'(\th)}{z^m-\wt z(\th)}\left(\wt\varphi(\th)-\wt\varphi(\th_0)\right)\rho(\th)\,\mathrm d\th.
    \end{align*}
    Using \eqref{Eq.Ln}, integration by parts and $\rho(\th_0)=1$, we obtain
    \begin{align*}
    	K_{1,1}=\wt\varphi(\th_0)\operatorname{Ln}\left(\wt z(\th_0)-z^m\right)+\wt\varphi(\th_0)\int_{\th_0}^\infty \operatorname{Ln}\left(\wt z(\th)-z^m\right)\rho'(\th)\,\mathrm d\th.
    \end{align*}
    Hence, \eqref{Eq.bump}, \eqref{Eq.tilde-varphi-est} and \eqref{Eq.Ln-est} imply that
    \begin{align}
    	&\left|K_{1,1}-\wt\varphi(\th_0)\operatorname{Ln}\left(\wt z(\th_0)-z^m\right)\right|\lesssim \left|\wt\varphi(\th_0)\right|\int_{\th_0+\frac1{2m}}^{\th_0+\frac1m}\left|\operatorname{Ln}\left(\wt z(\th)-z^m\right)\right|\,\mathrm d\th\nonumber\\
    	&\qquad\qquad\lesssim \th_0^{-2\mu}\|F\|_{*,1}\left(1+|\ln|z||\right)\lesssim |z|^2\left(1+|\ln|z||\right)\|F\|_{*,1}.\label{Eq.K_1,1-est}
    \end{align}
    On the other hand, by \eqref{Eq.z-z-lower}, \eqref{Eq.tilde-varphi-est} and the fact (thanks to \eqref{Eq.r_m-approx})
   	\begin{equation}\label{Eq.z'(th)-est}
   		\left|\wt z'(\th)\right|=m|z(\th)|^{m-1}\left|\wt r_m'(\th)+\ii\wt r_m(\th)\right|\sim \th^{-\mu m-1}\langle\th\rangle,\quad\forall\ \th>0,
   	\end{equation}
   	we have
   	\begin{align}
   		\left|K_{1,2}\right|&\lesssim \int_{\th_0}^{\th_0+\frac1m}\frac{\th_0^{-\mu m}\th_0^{-2\mu-1}}{\th_0^{-\mu m-1}|\th-\th_0|}\left(|\th-\th_0|+\th_0^{-\al}|\th-\th_0|^\al\right)\|F\|_{*,1}\,\mathrm d\th\nonumber\\
   		&\lesssim \left(\th_0^{-2\mu}+\th_0^{-2\mu-\al}\right)\|F\|_{*,1}\lesssim \left(|z|^2+|z|^{2+\frac\al\mu}\right)\|F\|_{*,1}\lesssim |z|^2\|F\|_{*,1}.\label{Eq.K_1,2-est}
   	\end{align}
   	Combining \eqref{Eq.K_1,1-est} and \eqref{Eq.K_1,2-est} gives that
   	\begin{equation}\label{Eq.K_1-est}
   		\left|K_1-\wt\varphi(\th_0)\operatorname{Ln}\left(\wt z(\th_0)-z^m\right)\right|\lesssim |z|^2\left(1+|\ln|z||\right)\|F\|_{*,1}.
   	\end{equation}
   	
   	Similar to $K_1$, we decompose $K_2=K_{2,1}+K_{2,2}$, where
   	\begin{align*}
   		K_{2,1}:&=\wt\varphi(\th_0)\int_0^{\th_0}\frac{z^m\wt z'(\th)}{\wt z(\th)\left(z^m-\wt z(\th)\right)}\rho(\th)\,\mathrm d\th,\\
   		K_{2,2}:&=\int_0^{\th_0}\frac{z^m\wt z'(\th)}{\wt z(\th)\left(z^m-\wt z(\th)\right)}\left(\wt\varphi(\th)-\wt\varphi(\th_0)\right)\rho(\th)\,\mathrm d\th.
   	\end{align*}
   	Since $\wt z(\th)=\wt r_m(\th)^m\e^{\ii m\th}$, we can choose\footnote{Recall footnote \ref{Footnote.branch}. While the choice of $\wt{\operatorname{Ln}}$ depends on $\th_0$, the estimate \eqref{Eq.tilde-Ln-est} is uniform with respect to $\th_0>1$.} a unique branch $\wt{\operatorname{Ln}}$ of logarithmic function such that $\wt{\operatorname{Ln}}\ \wt z(\th_0)\in\R$ and $\theta\mapsto \wt{\operatorname{Ln}}\ \wt z(\th)$ is $C^1$ on $\th\in[\th_0-1/m,\th_0+1/m]$. Moreover, there holds
   	\begin{equation}\label{Eq.tilde-Ln}
   		\frac{\mathrm d}{\mathrm d\th}\wt{\operatorname{Ln}}\ \wt z(\th)=\frac{\wt z'(\th)}{\wt z(\th)},\quad \forall\ \th\in\left[\th_0-\frac1m,\th_0+\frac1m\right].
   	\end{equation}
   	By \eqref{Eq.r_m-approx}, we have $|\wt z(\th)|\sim \th^{-\mu m}$ for all $\th>0$, hence, 
   	\begin{equation}\label{Eq.tilde-Ln-est}
   		\left|\wt{\operatorname{Ln}}\ \wt z(\th)\right|\lesssim 1+|\ln \th_0|\lesssim 1+|\ln |z||, \quad \forall\ \th\in\left[\th_0-1/m,\th_0+1/m\right].
   	\end{equation}
   	Using \eqref{Eq.Ln}, \eqref{Eq.tilde-Ln}, integration by parts and $\rho(\th_0)=1$, we obtain
   	\begin{align*}
   		K_{2,1}&=\wt\varphi(\th_0)\int_0^{\th_0}\frac{\mathrm d}{\mathrm d\th}\left(\wt{\operatorname{Ln}}\ \wt z(\th)-\operatorname{Ln}\left(\wt z(\th)-z^m\right)\right)\rho(\th)\,\mathrm d\th\\
   		&=\wt\varphi(\th_0)\left(\wt{\operatorname{Ln}}\ \wt z(\th_0)-\operatorname{Ln}\left(\wt z(\th_0)-z^m\right)\right)-\wt\varphi(\th_0)\int_0^{\th_0}\left(\wt{\operatorname{Ln}}\ \wt z(\th)-\operatorname{Ln}\left(\wt z(\th)-z^m\right)\right)\rho'(\th)\,\mathrm d\th.
   	\end{align*}
   	Hence, \eqref{Eq.bump}, \eqref{Eq.tilde-varphi-est}, \eqref{Eq.Ln-est} and \eqref{Eq.tilde-Ln-est} imply that
   	\begin{align}
   		&\left|K_{2,1}-\wt\varphi(\th_0)\left(\wt{\operatorname{Ln}}\ \wt z(\th_0)-\operatorname{Ln}\left(\wt z(\th_0)-z^m\right)\right)\right|\nonumber\\
   		\lesssim&\  \th_0^{-2\mu}\|F\|_{*,1}\left(1+|\ln|z||\right)\lesssim |z|^2\left(1+|\ln|z||\right)\|F\|_{*,1}.\label{Eq.K_2,1-est}
   	\end{align}
   	On the other hand, by \eqref{Eq.r_m-approx}, \eqref{Eq.z-z-lower}, \eqref{Eq.tilde-varphi-est} and \eqref{Eq.z'(th)-est}, we have
   	\begin{align}
   		\left|K_{2,2}\right|&\lesssim |z|^m\int_{\th_0-\frac1m}^{\th_0}\frac{\th_0^{-\mu m}\th_0^{-2\mu-1}}{\th_0^{-\mu m}\th_0^{-\mu m-1}|\th-\th_0|}\left(|\th-\th_0|+\th_0^{-\al}|\th-\th_0|^\al\right)\|F\|_{*,1}\,\mathrm d\th\nonumber\\
   		&\lesssim \left(\th_0^{-2\mu}+\th_0^{-2\mu-\al}\right)\|F\|_{*,1}\lesssim \left(|z|^2+|z|^{2+\frac\al\mu}\right)\|F\|_{*,1}\lesssim |z|^2\|F\|_{*,1}.\label{Eq.K_2,2-est}
   	\end{align}
   	Combining \eqref{Eq.K_2,1-est} and \eqref{Eq.K_2,2-est} gives that
   	\begin{equation}\label{Eq.K_2-est}
   		\left|K_2-\wt\varphi(\th_0)\left(\wt{\operatorname{Ln}}\ \wt z(\th_0)-\operatorname{Ln}\left(\wt z(\th_0)-z^m\right)\right)\right|\lesssim |z|^2\left(1+|\ln|z||\right)\|F\|_{*,1}.
   	\end{equation}
   	
   	It follows from \eqref{Eq.K_1-est} and \eqref{Eq.K_2-est} that
   	\begin{align*}
   		\left|K_1+K_2-\wt\varphi(\th_0)\wt{\operatorname{Ln}}\ \wt z(\th_0)\right|\lesssim |z|^2\left(1+|\ln|z||\right)\|F\|_{*,1}.
   	\end{align*}
   	Therefore, by \eqref{Eq.tilde-varphi-est},  \eqref{Eq.tilde-Ln-est} and \eqref{Eq.r_m-approx}, we have
   	\begin{align*}
   		\left|K_1+K_2\right|&\lesssim \left|\wt\varphi(\th_0)\right|\left|\wt{\operatorname{Ln}}\ \wt z(\th_0)\right|+|z|^2\left(1+|\ln|z||\right)\|F\|_{*,1}\\
   		&\lesssim \th_0^{-2\mu}\|F\|_{*,1}\left(1+|\ln|z||\right)+|z|^2\left(1+|\ln|z||\right)\|F\|_{*,1}\\
   		&\lesssim |z|^2\left(1+|\ln|z||\right)\|F\|_{*,1}.
   	\end{align*}
   	This proves \eqref{Eq.K_1+K_2-est}. 
\end{proof}

As a direct consequence of Proposition \ref{Prop.v_m-est}, we obtain the following estimate for  $v_m$.

\begin{corollary}\label{Cor.v_m-est}
	Assume that $\mu>1/2$ and $\al\in(0,1)$. Let $m_*=m_*(\mu,\al)>0$ be given by Theorem \ref{Thm.main} and fix $m>m_*$. Then there exists a constant $C>0$ such that
	\begin{equation}\label{Eq.v_m-est-rough}
		\left|v_m(z)\right|\leq C|z|^{1-\frac1\mu},\quad \text{for}\ \operatorname{a.e.}\ 0<|z|<1.
	\end{equation}
\end{corollary}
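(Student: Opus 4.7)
The plan is to deduce this bound directly from Proposition \ref{Prop.v_m-est}, by estimating the leading term $\wt\g_m(\th_0)/(2\pi\mathrm{i} z)$. Since $v_m$ is defined pointwise off the measure-zero set $S_m$, it suffices to establish the estimate for $z\in\C\setminus S_m$ with $0<|z|<1$.

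Given such a $z$, let $\th_0>0$ be the unique solution of $\wt r_m(\th_0)=|z|$. From the decomposition $\wt r_m=r_0+r_m$ with $r_0(\th)=\th^{-\mu}$ and the smallness bound $\norm{r_m}_{X_r}\lesssim 1/m^2$ (so that $|\th^\mu r_m|\leq 1/2$ uniformly), one gets $\wt r_m(\th)\sim \th^{-\mu}$, hence $\th_0\sim |z|^{-1/\mu}$. Similarly, since $\wt\g_m=\g_0+\g_m$ with $\g_0(\th)=-2\pi\th^{1-2\mu}$ and $\norm{\th^{2\mu-1}\g_m}_{L^\infty}\leq 1/m$, one obtains $|\wt\g_m(\th_0)|\sim \th_0^{1-2\mu}$, so
\begin{equation*}
\frac{|\wt\g_m(\th_0)|}{2\pi|z|}\lesssim \frac{\th_0^{1-2\mu}}{|z|}\sim \frac{|z|^{(2\mu-1)/\mu}}{|z|}=|z|^{1-1/\mu}.
\end{equation*}

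It remains to compare the error term $|z|(1+|\ln|z||)$ from Proposition \ref{Prop.v_m-est} with $|z|^{1-1/\mu}$. Dividing, we must check that $|z|^{1/\mu}(1+|\ln|z||)$ is bounded on $(0,1)$; this is elementary since $|z|^{1/\mu}\to 0$ faster than any logarithm as $|z|\to 0^+$, and the quantity stays bounded on any interval $[a,1)$ with $a>0$. Therefore
\begin{equation*}
|v_m(z)|\leq \frac{|\wt\g_m(\th_0)|}{2\pi|z|}+C|z|(1+|\ln|z||)\lesssim |z|^{1-1/\mu},
\end{equation*}
which is the claim. There is no genuine obstacle here: all the ingredients are already in place, and the proof is a routine combination of Proposition \ref{Prop.v_m-est} with the asymptotics of $\wt r_m$ and $\wt\g_m$ near infinity. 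The only mild point of care is the inversion $\th_0\sim |z|^{-1/\mu}$, which uses the fact (established in Subsection \ref{Subsec.qualitative}) that $\wt r_m$ is a strict bijection of $\R_+$ onto itself.
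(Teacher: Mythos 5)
Your proof is correct and follows essentially the same route as the paper: the paper's own argument is precisely to combine Proposition \ref{Prop.v_m-est} with the equivalences $|z|=\wt r_m(\th_0)\sim\th_0^{-\mu}$ (i.e.\ \eqref{Eq.r_m-approx}) and $|\wt\g_m(\th_0)|\sim\th_0^{1-2\mu}$, exactly as you do. Your additional check that $|z|^{1/\mu}(1+|\ln|z||)$ is bounded on $(0,1)$ is the only point the paper leaves implicit, and you handle it correctly.
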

\begin{proof}
	The inequality \eqref{Eq.v_m-est-rough} follows immediately from \eqref{Eq.v_m-est}, \eqref{Eq.r_m-approx} and the fact $\wt\g_m(\th)\sim \th^{1-2\mu}$ for all $\th>0$.
\end{proof}

\begin{remark}
	Similarly, one can show that the same estimate holds for $|z|>1$, hence,
	\begin{equation}\label{Eq.v_m-est-whole}
		|v_m(z)|\lesssim |z|^{1-\frac1\mu},\quad \text{for}\ \operatorname{a.e.}\ |z|>0,
	\end{equation}
	which implies that $v_m\in L^2_{\text{loc}}(\R^2)$. Alternatively, the $L^2$ local-integrability also follows from $u_m(t,\cdot)\in L^2_{\text{loc}}(\R^2\setminus\{0\})$ for all $t>0$ (see \eqref{Eq.u_m-conver-far}) and $v_m\in L^2(|z|\leq 1)$ (by \eqref{Eq.v_m-est-rough}). Moreover, one has exactly the same estimate for $u_m(0,\cdot)$, the initial velocity defined in \eqref{Eq.u_m-def}, i.e.,
	\begin{equation}\label{Eq.u_m(0)-est}
		|u_m(0,z)|\lesssim |z|^{1-\frac1\mu},\quad \text{for}\ \operatorname{a.e.}\ |z|>0.
	\end{equation}
\end{remark}

\subsection{Weak solutions of 2-D Euler equations}
With the preparations provided in the previous subsection, we now turn to verifying that our constructed vortex sheets are weak solutions to the primitive 2-D Euler equations  \eqref{Eq.Euler-velocity} on the whole space $\R^2$. The content in this subsection closely parallels \cite[Section 7.3]{SWZ}.

\begin{proposition}[Weak solutions of the self-similar Euler equations]\label{Prop.weak-sol-ss}
	Assume that $\mu>1/2$ and $\alpha\in(0,1)$.  Let $m_*=m_*(\mu,\al)>0$ be given by Theorem \ref{Thm.main} and fix $m>m_*$. Let the self-similar profile $v_m$ (corresponds to $u_m$ defined in \eqref{Eq.u_m-def}) be defined by \eqref{Eq.v_m-def}. Then for any vector field $w\in C_c^\infty(\R^2; \R^2)$ with $\mathrm{div }\ w=0$, there holds
	\begin{equation}\label{Eq.v_m-weaksol}
		\int_{\R^2}(3\mu-1)v_m\cdot w-\left(v_m\otimes v_m\right): \nabla w+\mu v_m\cdot (x\cdot \nabla w)\,\mathrm dx=0.
	\end{equation}
\end{proposition}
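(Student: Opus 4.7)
The plan is to derive \eqref{Eq.v_m-weaksol} by reducing the space-time weak formulation of \eqref{Eq.Euler-velocity} for $u_m$ (available from \eqref{Eq.u_m-weaksol-far}) to its self-similar form, and then removing the singularity at the origin via a cutoff argument that exploits the sharp leading-order structure of $v_m$ from Proposition \ref{Prop.v_m-est}. Concretely, for $\chi \in C_c^\infty(0,+\infty)$ with $\int_0^\infty t^{3\mu-2}\chi(t)\,dt \neq 0$ and the given divergence-free $w$, the test function $\phi(t,x) := \chi(t)\,w(t^{-\mu}x)$ is divergence-free; substituting $u_m(t,x) = t^{\mu-1}v_m(t^{-\mu}x)$, rescaling $y = t^{-\mu}x$, and integrating by parts in $t$ factors every term through $\int t^{3\mu-2}\chi\,dt$, which reduces the space-time identity to \eqref{Eq.v_m-weaksol} --- formally, since $\phi$ typically has support meeting the origin.

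To regularize, write $w = \nabla^\perp \psi$ with $\psi \in C_c^\infty(\R^2)$ a compactly supported single-valued stream function (available since $w$ is compactly supported divergence-free), and set $w_\epsilon := \nabla^\perp(\psi\eta_\epsilon)$, $\eta_\epsilon(y) := 1 - \rho(|y|/\epsilon)$ with $\rho$ a standard bump at the origin. Then $\phi_\epsilon(t,x) := \chi(t)w_\epsilon(t^{-\mu}x)$ is divergence-free with support bounded away from $\{0\}$, so the step above applies rigorously to yield \eqref{Eq.v_m-weaksol} with $w_\epsilon$ in place of $w$. Passage $\epsilon \to 0$ is immediate for the linear terms via the integration by parts $\int v_m \cdot (w - w_\epsilon)\,dy = -\langle \mathrm{curl}\,v_m, \psi\rho(|\cdot|/\epsilon) \rangle$ together with the vortex-mass estimate $|\mathrm{curl}\,v_m|(B(0,2\epsilon)) \sim |\wt\gamma_m(\wt r_m^{-1}(2\epsilon))| \sim \epsilon^{2-1/\mu}$ (combining $\wt r_m(\th)\sim\th^{-\mu}$ and $\wt\gamma_m(\th)\sim\th^{1-2\mu}$), which vanishes for $\mu > 1/2$; the transport term is analogous via the commutator $(y\cdot\nabla)\nabla^\perp g = \nabla^\perp((y\cdot\nabla)g) - \nabla^\perp g$.

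The main obstacle lies in the nonlinear term $\int (v_m \otimes v_m) : \nabla(w - w_\epsilon)\,dy$: the pointwise bound $|v_m(y)| \lesssim |y|^{1-1/\mu}$ from Corollary \ref{Cor.v_m-est} combined with $|\nabla^2(\psi\rho(\cdot/\epsilon))| \lesssim \epsilon^{-2}$ yields only $O(\epsilon^{2-2/\mu})$, which is insufficient for $\mu \in (1/2,1]$. This is resolved by extracting the explicit leading order from Proposition \ref{Prop.v_m-est}: setting $V(r) := \wt\gamma_m(\th_0(r))/(2\pi r)$ and $v_m^{\mathrm{lead}}(y) := V(|y|)\,\mathbf{e}_\theta$ (purely azimuthal, radially varying magnitude), together with $v_m^{\mathrm{rest}} := v_m - v_m^{\mathrm{lead}}$ satisfying $|v_m^{\mathrm{rest}}(y)| \lesssim |y|(1+|\ln|y||)$. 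A direct polar-coordinate computation gives the algebraic identity
\[ \int (v_m^{\mathrm{lead}} \otimes v_m^{\mathrm{lead}}) : \nabla \vec h\,dy = \int_0^{+\infty} V(r)^2 \int_0^{2\pi}\bigl(\partial_\theta h^\theta + h^r\bigr)\,d\theta\,dr = 0 \]
for every divergence-free $\vec h \in C_c^\infty(\R^2;\R^2)$, since the first angular integral vanishes by periodicity and the second equals $r^{-1}$ times the flux of $\vec h$ through the circle of radius $r$, which vanishes by divergence-freeness and compact support. Applied separately to $w$ and $w_\epsilon$, this identically cancels the leading contribution to the nonlinear error; the remaining cross and rest-rest terms are controlled by $O(\epsilon^{2-1/\mu}\ln(1/\epsilon))$ and $O(\epsilon^2\ln^2(1/\epsilon))$ respectively using the improved pointwise bound on $v_m^{\mathrm{rest}}$, both vanishing for $\mu > 1/2$, which completes the passage $\epsilon \to 0$.
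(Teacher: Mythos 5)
Your proposal is essentially the paper's proof, with a small conceptual reorganization of the key cancellation. The paper also reduces to the self-similar weak formulation away from the origin (citing \eqref{Eq.u_m-weaksol-far} directly rather than re-deriving it via time-dependent test functions as you do), writes $w = \nabla^\perp\eta$ with $\eta \in C_c^\infty$, cuts off with $\eta(1-\rho_\delta)$, and reduces everything to showing that $\int (v_m\otimes v_m):\nabla\nabla^\perp\rho_\delta\,\mathrm dx \to 0$ and $\int (v_m\otimes v_m):\nabla\nabla^\perp(x_j\rho_\delta)\,\mathrm dx \to 0$. It then handles these by explicit polar-coordinate computation of the cutoff tensors and invokes two separate cancellations: $\int_{\mathbb T} v_m^r\,\mathrm d\theta = 0$ (divergence-freeness of $v_m$) for the first, and $\int_{\mathbb T}\sin\theta\,\mathrm d\theta = 0$ for the second, each killing the term where the leading azimuthal profile $f_0$ appears squared or against nothing small. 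Your version packages both of these into the single algebraic identity $\int (V(r)e_\theta \otimes V(r)e_\theta):\nabla \vec h\,\mathrm dy = 0$ for any divergence-free compactly supported $\vec h$, proved by the same polar-coordinate computation ($\int_0^{2\pi}\partial_\theta h^\theta\,\mathrm d\theta = 0$ by periodicity, $\int_0^{2\pi} h^r\,\mathrm d\theta = 0$ by the flux/divergence theorem), and then controls the cross and remainder terms by the same $r(1+|\ln r|)$ bound the paper uses from Proposition \ref{Prop.v_m-est}. The linear terms, which the paper dispatches by reference to \cite[Proposition 7.1]{SWZ}, you handle with a short vortex-mass estimate, which is fine. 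Net: same mechanism, slightly cleaner presentation of the quadratic cancellation through the test-function side rather than the solution side; both ultimately hinge on exactly the same structural facts from Proposition \ref{Prop.v_m-est} and divergence-freeness.
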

\begin{proof}
	The proof is similar to that of \cite[Proposition 7.1]{SWZ}. For clarification, we list the key points here. Since $\text{div }w=0$, there exists a scalar function $\eta\in C_c^\infty(\R^2)$ such that $w=\nabla^\perp \eta$. Let $\rho\in C_c^\infty(\R)$ be a smooth bump function satisfying $\rho|_{[-1,1]}\equiv 1$ and $\text{supp }\rho{\ \subset}(-2,2)$. For any $\delta>0$, we define $\rho_\delta\in C_c^\infty(\R^2)$ by $\rho_\delta(x)=\rho\left(|x|/\delta\right)$ for $x\in\R^2$. It follows from \eqref{Eq.u_m-weaksol-far} that $v_m$ is a weak solution to the self-similar Euler equations outside the origin, hence,
	\begin{align*}
		\int_{\R^2}(3\mu-1)v_m\cdot\nabla^\perp\left(\eta(1-\rho_\delta)\right)&-\left(v_m\otimes v_m\right): \nabla \nabla^\perp\left(\eta(1-\rho_\delta)\right)\\
		&+\mu v_m\cdot (x\cdot \nabla \nabla^\perp\left(\eta(1-\rho_\delta)\right))\,\mathrm dx=0,\quad \forall\ \delta>0.
	\end{align*}
	As a consequence, to prove \eqref{Eq.v_m-weaksol}, it suffices to show that
	\begin{equation}
		\begin{aligned}
			\int_{\R^2}(3\mu-1)v_m\cdot\nabla^\perp\left(\eta\rho_\delta\right)-\left(v_m\otimes v_m\right): \nabla \nabla^\perp\left(\eta\rho_\delta\right)+\mu v_m\cdot (x\cdot \nabla \nabla^\perp\left(\eta\rho_\delta\right))\,\mathrm dx\to0
		\end{aligned}
	\end{equation}
	as $\delta\to0+$. By \eqref{Eq.v_m-est-rough} and the same argument as in the proof of \cite[Proposition 7.1]{SWZ}, we only need to prove that
	\begin{align}
		\lim_{\delta\to0+}\int_{\R^2}\left(v_m\otimes v_m\right): \nabla \nabla^\perp\rho_\delta\,\mathrm dx&=0, \label{Eq.convergence1}\\
		\lim_{\delta\to0+}\int_{\R^2}\left(v_m\otimes v_m\right): \nabla \nabla^\perp\left(x_j \rho_\delta\right)\,\mathrm dx&=0 \ \ (j\in\{1,2\}). \label{Eq.convergence2}
	\end{align}

    We decompose the velocity field $v_m$ into a summation of its radial part and rotational part: $v_m=v_m^re_r+v_m^\th e_\th$, where $e_r:=z/|z|$ and $e_\th:=\ii z/|z|$, then $v_m^*=v_m^re_r^*+v_m^\th e_\th^*$. Using $\frac1{\ii z}=-\frac{\ii z^*}{|z|^2}=\frac1{|z|}e_\th^*$, it follows from Proposition \ref{Prop.v_m-est} that
    \begin{equation*}
    	\left|v_m^re_r^*+v_m^\th e_\th^*-\frac{\wt\g_m\left(\wt r_m^{-1}(|z|)\right)}{2\pi|z|}e_\th^*\right|\lesssim |z|\big(1+|\ln|z||\big),\quad\text{for a.e.}\ 0<|z|<1.
    \end{equation*}
    Therefore,
    \begin{align}\label{Eq.v_m^r-th-est}
    	\left|v_m^r(r,\th)\right|+\left|v_m^\th(r,\th)-f_0(r)\right|\lesssim r\big(1+|\ln r|\big)
    \end{align}
    for a.e. $0<r<1, \th\in\mathbb T$, where $f_0(r):=\frac{\wt\g_m\left(\wt r_m^{-1}(r)\right)}{2\pi r}$.
    
    \underline{\bf Proof of \eqref{Eq.convergence1}.} Direct computation gives 
    $$\left(v_m\otimes v_m\right): \nabla \nabla^\perp\rho_\delta=\frac1{\delta^2}v_m^rv_m^\theta\left(\rho''\left(\frac r\delta\right)-\frac\delta r\rho'\left(\frac r\delta\right)\right).$$
    Since $v_m$ is divergence-free, we know that $\int_{\mathbb T}v_m^r(r,\th)\,\mathrm d\th$ is a constant independent of $r$, thus $\int_{\mathbb T}v_m^r(r,\th)\,\mathrm d\th=0$, thanks to \eqref{Eq.v_m-est-rough}. Using \eqref{Eq.v_m^r-th-est}, we obtain
    \begin{align*}
    	&\left|\int_{\R^2}\left(v_m\otimes v_m\right): \nabla \nabla^\perp\rho_\delta\,\mathrm dx\right|\\
    	=&\ \frac1{\delta^2}\left|\int_\delta^{2\delta}r\left(\int_{\mathbb T} v_m^r(r,\theta)\left(v_m^\theta(r,\theta)-f_0(r)+f_0(r)\right)\,\mathrm d\theta\right)\left(\rho''\left(\frac r\delta\right)-\frac\delta r\rho'\left(\frac r\delta\right)\right)\,\mathrm dr\right|\\
    	\lesssim &\ \frac1{\delta^2}\int_\delta^{2\delta}r\int_{\mathbb T}\left|v_m^r(r,\theta)\left(v_m^\theta(r,\theta)-f_0(r)\right)\right|\,\mathrm d\theta\,\mathrm dr\\
    	\lesssim&\ \frac1{\delta^2}\int_\delta^{2\delta}r\cdot r\big(1+|\ln r|\big)\cdot r\big(1+|\ln r|\big)\,\mathrm dr\lesssim \delta\to0,\quad \text{as}\ \delta\to0+.
    \end{align*}
    This proves \eqref{Eq.convergence1}.
    
    \underline{\bf Proof of \eqref{Eq.convergence2}.} We only prove the limit for $j=1$, since the proof of $j=2$ is similar. Direct computation gives
    \begin{align*}
    	\left(v_m\otimes v_m\right): \nabla \nabla^\perp\left(x_1 \rho_\delta\right)&=\frac1\delta\left(|v_m^r|^2-|v_m^\theta|^2\right)\rho'\left(\frac r\delta\right)\sin\theta\\
    	&\qquad\qquad\qquad\qquad+\frac r{\delta^2}v_m^rv_m^\theta\left(\rho''\left(\frac r\delta\right)+\frac\delta r\rho'\left(\frac r\delta\right)\right)\cos\theta.
    \end{align*}
    For simplicity, we denote $\wt{v_m^\th}(r,\th):=v_m^\th(r,\th)-f_0(r)$. Then $$\left|v_m^\th(r,\th)\right|^2=\wt{v_m^\th}(r,\th)\left(\wt{v_m^\th}(r,\th)+2f_0(r)\right)+f_0(r)^2.$$
    Since $\int_{\mathbb T}\sin \theta\,d\theta=0$, the term $f_0(r)^2$ contributes nothing into the integral in \eqref{Eq.convergence2}. Thus,
    \begin{align*}
    	&\left|\int_{\R^2}\left(v_m\otimes v_m\right): \nabla \nabla^\perp\left(x_1 \rho_\delta\right)\,\mathrm dx\right|\\
    	\lesssim &\ \frac1\delta\int_{\delta}^{2\delta}r\cdot r\big(1+|\ln r|\big)\cdot r^{1-\frac1\mu}\,\mathrm dr+\frac1{\delta^2}\int_{\delta}^{2\delta}r^2\cdot r\big(1+|\ln r|\big)\cdot r^{1-\frac1\mu}\,\mathrm dr\lesssim\delta^{2-\frac1\mu}\to0
    \end{align*}
    as $\delta\to0+$. This completes the proof.
\end{proof}

\begin{proposition}
	Assume that $\mu>1/2$ and $\alpha\in(0,1)$.  Let $m_*=m_*(\mu,\al)>0$ be given by Theorem \ref{Thm.main} and fix $m>m_*$. Let $u_m$ be given by \eqref{Eq.u_m-def} and $\omega_m=\operatorname{curl}u_m$ (see also \eqref{Eq.omega-def}). Then
	\begin{equation}\label{Eq.u-omega-converg}
		u_m\in C\left([0,+\infty); L^2_{\mathrm{loc}}(\R^2)\right),\quad \omega_m\in C\left([0,+\infty); H^{-1}_{\mathrm{loc}}(\R^2)\right)\cap C_{\text w}\left([0,+\infty); \cM(\R^2)\right).
	\end{equation}
\end{proposition}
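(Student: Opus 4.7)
The plan is to combine three ingredients already available in the paper: the self-similar form \eqref{Eq.u_m-self-similar} of $u_m$ and the profile bound \eqref{Eq.v_m-est-whole}, which together give a uniform pointwise bound on $u_m(t,x)$; the continuity away from the origin recorded in \eqref{Eq.u_m-conver-far}; and the pointwise continuity of the parametrization $t\mapsto Z_m(t,\G)$ obtained in Subsection \ref{Subsec.qualitative}. The overall strategy is a cut-and-bound argument: shrink a small disk around the origin using the uniform pointwise bound, and handle the complement via \eqref{Eq.u_m-conver-far}.

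First I would establish the uniform pointwise bound on $u_m$. For $t>0$, \eqref{Eq.u_m-self-similar} and \eqref{Eq.v_m-est-whole} give
\begin{equation*}
|u_m(t,x)|=t^{\mu-1}|v_m(t^{-\mu}x)|\leq Ct^{\mu-1}(t^{-\mu}|x|)^{1-1/\mu}=C|x|^{1-1/\mu}
\end{equation*}
for a.e.\ $x\neq 0$, and the same bound at $t=0$ is exactly \eqref{Eq.u_m(0)-est}. Since $\mu>1/2$, one has $2(1-1/\mu)>-2$, so $|x|^{1-1/\mu}\in L^2(B_R)$ for every $R>0$, which uniformly bounds $\{u_m(t,\cdot)\}_{t\ge 0}$ in $L^2_{\mathrm{loc}}(\R^2)$. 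Continuity of $u_m$ in $L^2_{\mathrm{loc}}(\R^2)$ at any $t_0\geq 0$ then follows from a standard two-scale splitting: given $R>0$ and $\varepsilon>0$, choose $\delta>0$ so that $2C\||x|^{1-1/\mu}\|_{L^2(B_\delta)}<\varepsilon/2$, which by the uniform pointwise bound controls $\|u_m(t)-u_m(t_0)\|_{L^2(B_\delta)}$ for every $t$; then \eqref{Eq.u_m-conver-far} yields $\|u_m(t)-u_m(t_0)\|_{L^2(B_R\setminus\overline{B_\delta})}<\varepsilon/2$ for $t$ close to $t_0$. The $H^{-1}_{\mathrm{loc}}$ continuity of $\omega_m=\partial_1 u_m^2-\partial_2 u_m^1$ is then immediate from the boundedness of $\partial_i:L^2_{\mathrm{loc}}(\R^2)\to H^{-1}_{\mathrm{loc}}(\R^2)$.

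For the weak-$*$ continuity $\omega_m\in C_{\mathrm{w}}([0,+\infty);\mathcal M(\R^2))$, I would fix $\varphi\in C_c(\R^2)$ supported in $B_R$ and prove continuity of
\begin{equation*}
I_k(t):=\int_{-\infty}^0\varphi(\xi_m^k Z_m(t,\G))\,\mathrm d\G,\quad k\in\Z\cap[0,m-1].
\end{equation*}
By Subsection \ref{Subsec.qualitative}, $\G\mapsto |Z_m(t,\G)|$ is strictly decreasing from $(-\infty,0)$ onto $(0,+\infty)$ for each $t\ge 0$, so the integrand is nonzero only for $\G\in[\G_*(t,R),0]$, where $\G_*(t,R)$ is the unique value with $|Z_m(t,\G_*(t,R))|=R$. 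The asymptotic relation $|Z_m(t,\G)|\sim d_m\,(-\G)^{\mu/(2\mu-1)}$ as $\G\to-\infty$, combined with the self-similarity $Z_m(t,\G)=t^\mu z_m(t^{1-2\mu}\G)$, allows one to verify that $\G_*(\cdot,R)$ is locally bounded on $[0,+\infty)$. Pointwise continuity of $Z_m(\cdot,\G)$ (including at $t=0$, where $Z_m(t,\G)\to Z_m(0,\G)$ follows from \eqref{Eq.Z_m(0)}) together with dominated convergence on a fixed finite $\G$-interval then gives continuity of $I_k$.

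The main technical point is making the uniformity of the asymptotic $|Z_m(t,\G)|\sim d_m(-\G)^{\mu/(2\mu-1)}$ precise across a neighborhood of $t_0=0$, so that dominated convergence applies on a compact $\G$-interval independent of $t$. All other steps are routine once the uniform pointwise bound on $u_m$ has been established.
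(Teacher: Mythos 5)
Your proposal is correct and takes essentially the same approach as the paper: the uniform pointwise bound $|u_m(t,x)|\lesssim |x|^{1-1/\mu}$ obtained from \eqref{Eq.u_m-self-similar}, \eqref{Eq.v_m-est-whole}, \eqref{Eq.u_m(0)-est}, combined with the continuity away from the origin \eqref{Eq.u_m-conver-far} via an annular cut-and-bound $\varepsilon$--$\delta$ argument. Your direct treatment of the weak-$*$ continuity in $\cM(\R^2)$ through the parametrization, including the uniform-in-$t$ localization of the $\Gamma$-support and the pointwise continuity of $Z_m(\cdot,\Gamma)$ up to $t=0$, is a valid and concrete way of supplying the step the paper dismisses as ``standard.''
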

\begin{proof}
	Since $u_m$ is self-similar \eqref{Eq.u_m-self-similar}, by \eqref{Eq.v_m-est-whole} and \eqref{Eq.u_m(0)-est},  we have
	\begin{equation}\label{Eq.u_m-est}
		|u_m(t,x)|\lesssim |x|^{1-\frac1\mu}\in L^2_{\text{loc}}(\R^2),\quad\forall\ t\geq0,
	\end{equation}
	where the implicit constant is independent of $t$. Using \eqref{Eq.u_m-conver-far} and \eqref{Eq.u_m-est}, a standard $\varepsilon$-$\delta$ argument implies \eqref{Eq.u-omega-converg}. 
\end{proof}

\begin{proposition}[Weak solutions of the primitive Euler equations]
	Assume that $\mu>1/2$ and $\alpha\in(0,1)$.  Let $m_*=m_*(\mu,\al)>0$ be given by Theorem \ref{Thm.main} and fix $m>m_*$. Let $u_m$ be given by \eqref{Eq.u_m-def}. Then $u_m=u_m(t,x)$ is a weak solution to the 2-D Euler equations \eqref{Eq.Euler-velocity} on $[0,\infty)\times \R^2$. 
\end{proposition}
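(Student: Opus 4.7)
The plan is to promote the weak self-similar identity of Proposition \ref{Prop.weak-sol-ss} to the full space-time weak formulation by a scaling change of variables, bypassing any direct cutoff of the singular point at the origin. Fix a test vector field $\phi\in C_c^\infty([0,+\infty)\times\R^2;\R^2)$ with $\operatorname{div}_x\phi=0$, and define
\[
\psi(t,y):=\phi(t,t^\mu y),\qquad t>0,\ y\in\R^2.
\]
For each fixed $t>0$, $\psi(t,\cdot)\in C_c^\infty(\R^2;\R^2)$ and $\operatorname{div}_y\psi(t,\cdot)=t^\mu(\operatorname{div}_x\phi)(t,t^\mu y)=0$, so Proposition \ref{Prop.weak-sol-ss} applies with $w=\psi(t,\cdot)$.

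Next I would carry out the dimensional change $x=t^\mu y$ in the three integrals produced by Proposition \ref{Prop.weak-sol-ss}, using $u_m(t,x)=t^{\mu-1}v_m(t^{-\mu}x)$. A direct computation yields
\[
t^{3\mu-1}\!\int v_m\cdot\psi\,\mathrm dy=\int u_m(t)\cdot\phi(t)\,\mathrm dx,\quad t^{3\mu-2}\!\int(v_m\otimes v_m){:}\nabla_y\psi\,\mathrm dy=\int(u_m\otimes u_m){:}\nabla_x\phi\,\mathrm dx,
\]
and $t^{3\mu-2}\int v_m\cdot(y\cdot\nabla_y\psi)\,\mathrm dy=t^{-1}\int u_m\cdot(x\cdot\nabla_x\phi)\,\mathrm dx$. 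Substituting these into the identity of Proposition \ref{Prop.weak-sol-ss} (after multiplying by $t^{3\mu-2}$) gives, for every $t>0$,
\begin{equation}\label{Eq.scaled-selfsim}
(3\mu-1)t^{-1}\!\int u_m\cdot\phi\,\mathrm dx-\int(u_m\otimes u_m){:}\nabla_x\phi\,\mathrm dx+\mu t^{-1}\!\int u_m\cdot(x\cdot\nabla_x\phi)\,\mathrm dx=0.
\end{equation}

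The third step is to differentiate $t\mapsto\int u_m(t,x)\cdot\phi(t,x)\,\mathrm dx=t^{3\mu-1}\int v_m\cdot\psi(t,\cdot)\,\mathrm dy$ in $t$; expanding $\partial_t\psi=\partial_t\phi(t,t^\mu y)+\mu t^{\mu-1}y\cdot\nabla_x\phi(t,t^\mu y)$ and reversing the change of variables, one obtains
\[
\frac{\mathrm d}{\mathrm dt}\!\int u_m(t)\cdot\phi(t)\,\mathrm dx=\int u_m\cdot\partial_t\phi\,\mathrm dx+\mu t^{-1}\!\int u_m\cdot(x\cdot\nabla\phi)\,\mathrm dx+(3\mu-1)t^{-1}\!\int u_m\cdot\phi\,\mathrm dx.
\]
Combining this with \eqref{Eq.scaled-selfsim} produces the clean identity
\[
\tfrac{\mathrm d}{\mathrm dt}\!\int u_m(t)\cdot\phi(t)\,\mathrm dx=\int u_m\cdot\partial_t\phi\,\mathrm dx+\int(u_m\otimes u_m){:}\nabla_x\phi\,\mathrm dx,\qquad t>0.
\]
Integrating from $0$ to $+\infty$ (the right endpoint vanishes because $\phi$ has compact support in time) and using $u_m\in C([0,+\infty);L^2_{\mathrm{loc}}(\R^2))$ to identify the boundary value at $t=0^+$ with $\int u_m(0,x)\cdot\phi(0,x)\,\mathrm dx$ yields exactly the weak formulation of \eqref{Eq.Euler-velocity}.

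The main obstacle will be the justification near $t=0^+$: one must ensure both that $t\mapsto\int u_m(t)\cdot\phi(t)\,\mathrm dx$ is continuous at $0$ and that the right-hand side of the identity above is integrable on $(0,T)$ so that the fundamental theorem of calculus applies. The first point is immediate from the $C([0,+\infty);L^2_{\mathrm{loc}})$ regularity. For the second, the uniform pointwise bound $|u_m(t,x)|\leq C|x|^{1-1/\mu}$ (equation \eqref{Eq.u_m-est}) together with $\mu>1/2$ makes $|x|^{2-2/\mu}$ locally integrable on $\R^2$, so $\int|u_m|^2|\nabla\phi|\,\mathrm dx$ and $\int|u_m||\partial_t\phi|\,\mathrm dx$ are bounded uniformly in $t$ on the support of $\phi$. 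This closes the argument.
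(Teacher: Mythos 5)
Your proof is correct and follows the same scaling route the paper has in mind: the paper's own proof simply defers to \cite[Proposition 7.4]{SWZ}, and the change-of-variables argument you carry out -- testing Proposition \ref{Prop.weak-sol-ss} against $\psi(t,\cdot)=\phi(t,t^\mu\cdot)$, computing $\tfrac{\mathrm d}{\mathrm dt}\int u_m(t)\cdot\phi(t)\,\mathrm dx$, and then invoking the $C([0,\infty);L^2_{\mathrm{loc}})$ continuity from \eqref{Eq.u-omega-converg} together with the uniform bound $|u_m(t,x)|\lesssim|x|^{1-1/\mu}$ to pass to $t=0^+$ -- is precisely the standard content of that cited argument. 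The details you spell out (divergence-freeness of $\psi(t,\cdot)$, the identity \eqref{Eq.scaled-selfsim}, cancellation against the $t$-derivative, and integrability of the derivative near $t=0$ so the fundamental theorem of calculus applies) are all correct.
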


\begin{proof}
	This result follows directly from Proposition  \ref{Prop.weak-sol-ss} and \eqref{Eq.u-omega-converg}. We omit the detailed proof here, as it follows the same line of reasoning as the proof of \cite[Proposition 7.4]{SWZ} exactly.
\end{proof}

\if0
We introduce the parameter $\la=\frac1m>0$.

We split the real part and the imaginary part of \eqref{maineq}, so that we get two equations. Define two nonlinear maps: for $\lambda>0$,
\begin{equation}\label{F_G}
	\begin{aligned}
		\cF(r, \g; \la)(\th):&=\mu r(\th)^2+(1-2\mu)\frac{\g(\th)}{\g'(\th)}r(\th)r'(\th)\\
		&\ \ -\frac1{2\pi}\pv\int_0^\infty\frac{\g'(\wt\th)\left(\frac{r(\wt\th)}{r(\th)}\right)^{\frac1\lambda} \sin\left(\frac{\wt\th-\th}{\la}\right)}{1-2\left(\frac{r(\wt\th)}{r(\th)}\right)^{\frac1\la}\cos\left(\frac{\wt\th-\th}{\la}\right)+\left(\frac{r(\wt\th)}{r(\th)}\right)^{\frac2\la}}\,d\wt\th,\\
		\cG(r, \g; \la)(\th):&=(2\mu-1)\frac{\g(\th)}{\g'(\th)}r(\th)^2\\
		&\ \ +\frac1{2\pi}\pv\int_0^\infty\frac{\g'(\wt\th)\left[1-\left(\frac{r(\wt\th)}{r(\th)}\right)^{\frac1\lambda} \cos\left(\frac{\wt\th-\th}{\la}\right)\right]}{1-2\left(\frac{r(\wt\th)}{r(\th)}\right)^{\frac1\la}\cos\left(\frac{\wt\th-\th}{\la}\right)+\left(\frac{r(\wt\th)}{r(\th)}\right)^{\frac2\la}}\,d\wt\th;
	\end{aligned}
\end{equation}
and for $\lambda=0$,
\begin{equation}\label{F_G_0}
	\begin{aligned}
		\cF(r, \g; 0)(\th):&=\mu r(\th)^2+(1-2\mu)\frac{\g(\th)}{\g'(\th)}r(\th)r'(\th), \\ \cG(r, \g; 0)(\th):&= (2\mu-1)\frac{\g(\th)}{\g'(\th)}r(\th)^2-\frac1{2\pi}\g(\th).
	\end{aligned}
\end{equation}
We have known that $\cF(r_0, \g_0; 0)=\cG(r_0, \g_0; 0)=0$, and our goal is to solve $\cF(r, \g; \la)=\cG(r, \g; \la)=0$ for $\la>0$ small.

In virtue of the implicit function theorem, we need to:
\begin{enumerate}
	\item construct Banach spaces $X, Y, Z_1, Z_2$  such that $\cF: X\times Y\times [0, \e_0)\rightarrow Z_1, \cG: X\times Y\times [0, \e_0)\rightarrow Z_2$ are well-defined $C^1$ maps;
	
	\item prove the invertibility of the linearized operator 
	\[\begin{pmatrix}
		\ds\frac{\pa\cF}{\pa r}(r_0, \g_0; 0) & \ds\frac{\pa\cF}{\pa \g}(r_0, \g_0; 0)\\
		\ds\frac{\pa\cG}{\pa r}(r_0, \g_0; 0) & \ds\frac{\pa\cG}{\pa \g}(r_0, \g_0; 0)
	\end{pmatrix}: X\times Y\rightarrow Z_1\times Z_2.\]
\end{enumerate}
It follows from \eqref{F_G_0} that
\begin{equation}\label{derivatives}
	\begin{aligned}
		\frac{\pa\cF}{\pa r}(r_0, \g_0; 0)(r)&=\mu\th^{-\mu}r(\th)+\th^{1-\mu}r'(\th),\\
		\frac{\pa\cF}{\pa \g}(r_0, \g_0; 0)(\g)&=\frac\mu{2\pi}\th^{-1}\g(\th)+\frac\mu{2\pi(2\mu-1)}\g'(\th),\\
		\frac{\pa\cG}{\pa r}(r_0, \g_0; 0)(r)&=-2\th^{1-\mu}r(\th),\qquad
		\frac{\pa\cG}{\pa \g}(r_0, \g_0; 0)(\g)=\frac1{2\pi(2\mu-1)}\th\g'(\th).
	\end{aligned}
\end{equation}

Indeed, 
\begin{align*}
	\frac{\pa\cF}{\pa r}(r_0, \g_0; 0)(r)&=\frac{d}{dt}\Big|_{t=0}\cF(r_0+tr, \g_0; 0)\\
	&=2\mu r_0(\th)r(\th)+(1-2\mu)\frac{\g_0(\th)}{\g_0'(\th)}\left(r_0(\th)r'(\th)+r_0'(\th)r(\th)\right)\\
	&=2\mu\th^{-\mu}r(\th)+(1-2\mu)\frac\th{1-2\mu}\left(\th^{-\mu}r'(\th)-\mu\th^{-\mu-1}r(\th)\right)\\
	&=2\mu\th^{-\mu}r(\th)+\th^{1-\mu}r'(\th)-\mu\th^{-\mu}r(\th)=\mu\th^{-\mu}r(\th)+\th^{1-\mu}r'(\th);\\
	\frac{\pa\cF}{\pa \g}(r_0, \g_0; 0)(\g)&=\frac{d}{dt}\Big|_{t=0}\cF(r_0, \g_0+t\g; 0)=(1-2\mu)\frac{\g(\th)\g_0'(\th)-\g'(\th)\g_0(\th)}{\left(\g_0'(\th)\right)^2}r_0(\th)r_0'(\th)\\
	&=(1-2\mu)\frac{-2\pi(1-2\mu)\th^{-2\mu}\g(\th)+2\pi\th^{1-2\mu}\g'(\th)}{\left(2\pi(1-2\mu)\th^{-2\mu}\right)^2}\cdot(-\mu)\th^{-2\mu-1}\\
	&=\frac\mu{2\pi}\th^{-1}\g(\th)+\frac\mu{2\pi(2\mu-1)}\g'(\th);\\
	\frac{\pa\cG}{\pa r}(r_0, \g_0; 0)(r)&=\frac{d}{dt}\Big|_{t=0}\cG(r_0+tr, \g_0; 0)=2(2\mu-1)\frac{\g_0(\th)}{\g_0'(\th)}r_0(\th)r(\th)\\
	&=2(2\mu-1)\frac\th{1-2\mu}\th^{-\mu}r(\th)=-2\th^{1-\mu}r(\th);\\
	\frac{\pa\cG}{\pa \g}(r_0, \g_0; 0)(\g)&=\frac{d}{dt}\Big|_{t=0}\cG(r_0, \g_0+t\g; 0)=(2\mu-1)\frac{\g(\th)\g_0'(\th)-\g'(\th)\g_0(\th)}{\left(\g_0'(\th)\right)^2}r_0(\th)^2-\frac1{2\pi}\g(\th)\\
	&=(2\mu-1)\frac{-2\pi(1-2\mu)\th^{-2\mu}\g(\th)+2\pi\th^{1-2\mu}\g'(\th)}{\left(2\pi(1-2\mu)\th^{-2\mu}\right)^2}\th^{-2\mu}-\frac1{2\pi}\g(\th)\\
	&=\frac{\g(\th)}{2\pi}+\frac1{2\pi(2\mu-1)}\th\g'(\th)-\frac1{2\pi}\g(\th)=\frac1{2\pi(2\mu-1)}\th\g'(\th).
\end{align*}

\subsection{Functional spaces}
Recall that 
\begin{equation*}
	r_0(\th)=\th^{-\mu},\qquad \g_0(\th)=-2\pi\th^{1-2\mu}.
\end{equation*}
This motivates us to consider the functional spaces for $r$ and $\g$:
\begin{equation}\label{functional_spaces}
	\begin{aligned}
		X:&=\{r=r(\th): \th^{\mu}r\in C^\al, \th^{\mu+1}r'\in C^\al\}\oplus \langle\th^{-\mu}\rangle,\\
		Y:&=\{\g=\g(\th): \th^{2\mu-1}\g\in C^\al, \th^{2\mu}\g'\in C^\al\}\oplus \langle\th^{1-2\mu}\rangle,\\
		Z_1:&=\{f=f(\th): {\color{red}\langle\th\rangle}\th^{2\mu-1}f\in C^\al\},\qquad Z_2:=\{g=g(\th): \th^{2\mu-1}g\in C^\al\oplus\R\}.
	\end{aligned}
\end{equation}
Here $\langle\th\rangle$ makes sure that $\th^{2\mu-1}f\in L^1$.
The weighted H\"older norm of $f$ is defined by
\[\|f\|_{C^\al}:=\left\|\langle\th\rangle^\al f\right\|_{L^\infty}+\sup_{\substack{0<\th_2<\th_1<2\th_2\\ 0<|\th_1-\th_2|<1}}|\th_1+\th_2|^\al\frac{\left|f(\th_1)-f(\th_2)\right|}{|\th_1-\th_2|^\al}.\]

We have (see Lemmas 4.1 and 4.2 in \cite{SWZ})
\begin{equation}\label{C^al_criterion}
	\|f\|_{C^\al}\lesssim \left\|\langle\th\rangle^\al f\right\|_{L^\infty}+\left\|\langle\th\rangle^\al(1+\th^{-1})^{-1}f'\right\|_{L^\infty}\sim \left\|\langle\th\rangle^\al f\right\|_{L^\infty}+\left\|\langle\th\rangle^{\al-1}\th f' \right\|_{L^\infty},
\end{equation}
and
\begin{equation}
	\|f\|_{C^\al}\sim \left\|\langle\th\rangle^\al f\right\|_{L^\infty}+\sup_{\substack{\th_1\neq\th_2\\ \th_1, \th_2>0}}|\th_1+\th_2|^\al\frac{\left|f(\th_1)-f(\th_2)\right|}{|\th_1-\th_2|^\al}.
\end{equation}

To show the invertibility of the linearized operator, we need to solve $r, \g$ given $f, g$:
\[\frac{\pa\cF}{\pa r}(r_0, \g_0; 0)(r)+\frac{\pa\cF}{\pa \g}(r_0, \g_0; 0)(\g)=f,\qquad \frac{\pa\cG}{\pa r}(r_0, \g_0; 0)(r)+\frac{\pa\cG}{\pa \g}(r_0, \g_0; 0)(\g)=g,\]
i.e.,
\begin{align}
	&\mu\th^{\mu-1} r(\th)+\th^{\mu}r'(\th)+\frac\mu{2\pi}\th^{2\mu-2}\g(\th)+\frac\mu{2\pi(2\mu-1)}\th^{2\mu-1}\g'(\th)=\th^{2\mu-1}f(\th),\label{eq1}\\
	&-2\th^\mu r(\th)+\frac1{2\pi(2\mu-1)}\th^{2\mu}\g'(\th)=\th^{2\mu-1}g(\th).\label{eq2}
\end{align}
Note that the equation \eqref{eq1} can be rewritten as 
\[(\th^{\mu}r)'(\th)+\frac\mu{2\pi(2\mu-1)} \left(\th^{2\mu-1}\g\right)'(\th)=\th^{2\mu-1}f(\th),\]
hence there exists a constant $C_1$ such that
\[\th^\mu r(\th)+\frac\mu{2\pi(2\mu-1)}\th^{2\mu-1}\g(\th)=-\int_\th^\infty \var^{2\mu-1}f(\var)\,d\var+C_1.\]
Combining this with \eqref{eq2} gives that
\[\frac1{2\pi(2\mu-1)}\left(2\mu\th^{2\mu-1}\g(\th)+\th^{2\mu}\g'(\th)\right)=-2\int_\th^\infty\var^{2\mu-1}f(\var)\,d\var+2C_1+\th^{2\mu-1}g(\th),\]
i.e.,
\[\frac1{2\pi(2\mu-1)}\left(\th^{2\mu}\g\right)'(\th)=-2\int_\th^\infty\var^{2\mu-1}f(\var)\,d\var+2C_1+\th^{2\mu-1}g(\th),\]
hence there exists a constant $C_2$ such that
\[\frac{\th^{2\mu}\g(\th)}{2\pi(2\mu-1)}=-2\int_0^\th\int_\var^\infty \xi^{2\mu-1}f(\xi)\,d\xi\,d\var+2C_1\th+\int_0^\th \var^{2\mu-1}g(\var)\,d\var+C_2,\]
thus
\[\frac{\th^{2\mu-1}\g(\th)}{2\pi(2\mu-1)}=-\frac2\th\int_0^\th\int_\var^\infty \xi^{2\mu-1}f(\xi)\,d\xi\,d\var+2C_1+\frac1\th\int_0^\th \var^{2\mu-1}g(\var)\,d\var+\frac{C_2}{\th}.\]
Since $\g\in Y$, we know that $\th^{2\mu-1}\g$ is bounded, hence $C_2=0$. So
\[\frac{\th^{2\mu-1}\g(\th)}{2\pi(2\mu-1)}=-\frac2\th\int_0^\th\int_\var^\infty \xi^{2\mu-1}f(\xi)\,d\xi\,d\var+\frac1\th\int_0^\th \var^{2\mu-1}g(\var)\,d\var+2C_1,\]
and then
\begin{align*}
\th^\mu r(\th)&=-\frac\mu{2\pi(2\mu-1)}\th^{2\mu-1}\g(\th)-\int_\th^\infty\var^{2\mu-1}f(\var)\,d\var+C_1\\
&=\frac{2\mu}\th\int_0^\th\int_\var^\infty \xi^{2\mu-1}f(\xi)\,d\xi\,d\var-\frac\mu\th\int_0^\th \var^{2\mu-1}g(\var)\,d\var-\int_\th^\infty\var^{2\mu-1}f(\var)\,d\var\\
&\qquad+(1-2\mu)C_1.
\end{align*}

Note that here we don't have uniqueness, since the constant $C_1$ is free. We should narrow down the functional spaces. For instance, we shrink the space $X$ to
\[\tilde X:= \left\{r=r_0+\tilde r: \th^\mu\tilde r\in C^\al, \th^{\mu+1}\tilde r'\in C^\al\right\}.\]
However, $\tilde X$ is a linear manifold shifted away from the origin, not a linear space.

\subsection{Narrowed-down functional spaces}
Let
\[\wt\cF(r, \g; \lambda): =\cF(r_0+r, \g_0+\g; \lambda),\qquad \wt\cG(r, \g; \lambda): =\cG(r_0+r, \g_0+\g; \lambda).\]
Then $\wt\cF(0, 0; 0)=0=\wt\cG(0, 0; 0)$ and
\begin{align*}
	\frac{\pa\cF}{\pa r}(r_0, \g_0; 0)&=\frac{\pa\wt\cF}{\pa r}(0, 0; 0),\qquad \frac{\pa\cF}{\pa \g}(r_0, \g_0; 0)=\frac{\pa\wt\cF}{\pa \g}(0, 0; 0),\\
	\frac{\pa\cG}{\pa r}(r_0, \g_0; 0)&=\frac{\pa\wt\cG}{\pa r}(0, 0; 0),\qquad \frac{\pa\cG}{\pa \g}(r_0, \g_0; 0)=\frac{\pa\wt\cG}{\pa \g}(0, 0; 0).
\end{align*}
We redefine the functional spaces:
\begin{equation}\label{functional_spaces_new}
	\begin{aligned}
		X:&=\left\{r=r(\th): \th^{\mu}r\in C^\al, \langle\th\rangle\left(\th^{\mu}r\right)'\in C^\al\right\},\\
		Y:&=\left\{\g=\g(\th): \th^{2\mu-1}\g\in C^\al, \langle\th\rangle\left(\th^{2\mu-1}\g\right)'\in C^\al\right\},\\
		Z_1:&=\left\{f=f(\th): {\langle\th\rangle}\th^{2\mu-1}f\in C^\al\right\},\qquad Z_2:=\left\{g=g(\th): \th^{2\mu-1}g\in C^\al\right\}.
	\end{aligned}
\end{equation}

\begin{remark}
	If $r\in X$, then $\langle\th\rangle\left(\th^{\mu}r\right)'\in C^\al$, which implies that $\th\left(\th^{\mu}r\right)'\in C^\al$. Indeed, by \eqref{C^al_criterion} we can show that $\th\langle\th\rangle^{-1-\al}\in C^\al$, hence $\th\left(\th^{\mu}r\right)'\in C^\al$ follows from $\|fg\|_{C^\al}\lesssim \left\|\langle\th\rangle^{-\al}f\right\|_{C^\al}\|g\|_{C^\al}.$ (See Lemma 4.3 in \cite{SWZ}.)
	As a result, $\mu\th^\mu r+\th^{\mu+1}r'\in C^\al$. Since $\th^\mu r\in C^\al$, we have $\th^{\mu+1}r'\in C^\al$. The same reason applied to $\g$ implies that $\th^{2\mu}\g'\in C^\al$.
\end{remark}

To show the invertibility of the linearized operator, we need to solve $r, \g$ given $f, g$:
\[\frac{\pa\wt\cF}{\pa r}(0, 0; 0)(r)+\frac{\pa\wt\cF}{\pa \g}(0, 0; 0)(\g)=f,\qquad \frac{\pa\wt\cG}{\pa r}(0, 0; 0)(r)+\frac{\pa\wt\cG}{\pa \g}(0, 0; 0)(\g)=g,\]
i.e.,
\begin{align*}
	&\mu\th^{\mu-1} r(\th)+\th^{\mu}r'(\th)+\frac\mu{2\pi}\th^{2\mu-2}\g(\th)+\frac\mu{2\pi(2\mu-1)}\th^{2\mu-1}\g'(\th)=\th^{2\mu-1}f(\th),\\
	&-2\th^\mu r(\th)+\frac1{2\pi(2\mu-1)}\th^{2\mu}\g'(\th)=\th^{2\mu-1}g(\th).
\end{align*}
The solution is derived in the previous subsection: for some constant $C_1\in\R$,
\[\frac{\th^{2\mu-1}\g(\th)}{2\pi(2\mu-1)}=-\frac2\th\int_0^\th\int_\var^\infty \xi^{2\mu-1}f(\xi)\,d\xi\,d\var+\frac1\th\int_0^\th \var^{2\mu-1}g(\var)\,d\var+2C_1,\]
and
\[\th^\mu r(\th)=-\frac\mu{2\pi(2\mu-1)}\th^{2\mu-1}\g(\th)-\int_\th^\infty\var^{2\mu-1}f(\var)\,d\var+C_1.\]
Since $\g\in Y$, we have
\[\left|\th^{2\mu-1}\g(\th)\right|\lesssim \langle\th\rangle^{-\al}.\]
It is easy to show that
\[\left|\frac2\th\int_0^\th\int_\var^\infty \xi^{2\mu-1}f(\xi)\,d\xi\,d\var\right|+\left|\frac1\th\int_0^\th \var^{2\mu-1}g(\var)\,d\var\right|\lesssim\th^{-\al},\qquad \forall\ \th\geq1.\]
This forces $C_1=0$. As a result, we have proved the invertibility of the linearized operator
\[\begin{pmatrix}
	\ds\frac{\pa\wt\cF}{\pa r}(0, 0; 0) & \ds\frac{\pa\wt\cF}{\pa \g}(0, 0; 0)\\
	\ds\frac{\pa\wt\cG}{\pa r}(0, 0; 0) & \ds\frac{\pa\wt\cG}{\pa \g}(0, 0; 0)
\end{pmatrix}: X\times Y\rightarrow Z_1\times Z_2.\]

\section{Nonlinear problem}
Recall the nonlinear maps
\[\wt\cF(r, \g; \lambda): =\cF(r_0+r, \g_0+\g; \lambda),\qquad \wt\cG(r, \g; \lambda): =\cG(r_0+r, \g_0+\g; \lambda).\]
Recall that 
\begin{equation*}
	r_0(\th)=\th^{-\mu},\qquad \g_0(\th)=-2\pi\th^{1-2\mu}.
\end{equation*}
For $\lambda>0$, we have
\begin{align*}
	\wt\cF(r, \g; \la)(\th)&=\mu \left(r_0(\th)+r(\th)\right)^2+(1-2\mu)\frac{\g_0(\th)+\g(\th)}{\g_0'(\th)+\g'(\th)}\left(r_0(\th)+r(\th)\right)\left(r_0'(\th)+r'(\th)\right)\\
	&\ \ -\frac1{2\pi}\pv\int_0^\infty\frac{\left(\g_0'(\wt\th)+\g'(\wt\th)\right)\left(\frac{r_0(\wt\th)+r(\wt\th)}{r_0(\th)+r(\th)}\right)^{\frac1\lambda} \sin\left(\frac{\wt\th-\th}{\la}\right)}{1-2\left(\frac{r_0(\wt\th)+r(\wt\th)}{r_0(\th)+r(\th)}\right)^{\frac1\la}\cos\left(\frac{\wt\th-\th}{\la}\right)+\left(\frac{r_0(\wt\th)+r(\wt\th)}{r_0(\th)+r(\th)}\right)^{\frac2\la}}\,d\wt\th\\
	&=: \cF_1(r,\g; \la)(\th)-\frac1{2\pi}\cF_2(r,\g; \la)(\th).
\end{align*}
We rearrange the terms in $\cF_1(r,\g; \la)$ in the following manner:
\begin{align*}
	&\cF_1(r,\g; \la)(\th)=\mu \left(r_0(\th)+r(\th)\right)^2+(1-2\mu)\frac{\g_0(\th)+\g(\th)}{\g_0'(\th)+\g'(\th)}\left(r_0(\th)+r(\th)\right)\left(r_0'(\th)+r'(\th)\right)\\
	&=\mu\th^{-2\mu}+2\mu\th^{-\mu}r(\th)+\mu r(\th)^2+(1-2\mu)\th^{-2\mu}\frac{\th^{2\mu-1}\g(\th)-2\pi}{\th^{2\mu}\g'(\th)+2\pi(2\mu-1)}\\
	&\qquad\qquad \times\left(\th^\mu r(\th)+1\right)\left(\th^{\mu+1}r'(\th)-\mu\right)\\
	&=\mu\th^{-2\mu}+2\mu\th^{-\mu}r(\th)+\mu r(\th)^2+\th^{-2\mu}\left[1-\frac{\th^{2\mu}\g'(\th)+(2\mu-1)\th^{2\mu-1}\g(\th)}{\th^{2\mu}\g'(\th)+2\pi(2\mu-1)}\right]\\
	&\qquad\qquad \times\left(\th^\mu r(\th)+1\right)\left(\th^{\mu+1}r'(\th)-\mu\right)\\
	&=2\mu\th^{-\mu}r(\th)+\mu r(\th)^2+\mu\th^{1-2\mu}\frac{\left(\th^{2\mu-1}\g\right)'(\th)}{\th^{2\mu}\g'(\th)+2\pi(2\mu-1)}\\
	&\qquad+ \th^{1-2\mu}\left[1-\frac{\th\left(\th^{2\mu-1}\g\right)'(\th)}{\th^{2\mu}\g'(\th)+2\pi(2\mu-1)}\right]\left(\th^{2\mu}r(\th)r'(\th)+\th^\mu r'(\th)-\mu\th^{\mu-1}r(\th)\right)\\
	&=2\mu\th^{-\mu}r(\th)+\mu r(\th)^2+\frac{\mu\th^{1-2\mu}\left(\th^{2\mu-1}\g\right)'(\th)}{2\pi(2\mu-1)}+\th^{1-2\mu}\left(\th^\mu r'(\th)-\mu\th^{\mu-1}r(\th)\right)\\
	&\qquad-\frac{\mu\left(\th^{2\mu-1}\g\right)'(\th)\cdot\th\g'(\th)}{2\pi(2\mu-1)\left(\th^{2\mu}\g'(\th)+2\pi(2\mu-1)\right)}+\th r(\th)r'(\th)\\
	&\qquad-\th^{1-2\mu}\frac{\th\left(\th^{2\mu-1}\g\right)'(\th)}{\th^{2\mu}\g'(\th)+2\pi(2\mu-1)}\left(\th^{2\mu}r(\th)r'(\th)+\th^\mu r'(\th)-\mu\th^{\mu-1}r(\th)\right)\\
	&=\th^{1-2\mu}\left[\left(\th^\mu r\right)'(\th)+\frac\mu{2\pi(2\mu-1)}\left(\th^{2\mu-1}\g\right)'(\th)\right]+\mu r(\th)^2+\th r(\th) r'(\th)\\
	&\qquad-\th^{1-2\mu}\frac{\th\left(\th^{2\mu-1}\g\right)'(\th)}{\th^{2\mu}\g'(\th)+2\pi(2\mu-1)}\left(\frac{\mu\th^{2\mu-1}\g'(\th)}{2\pi(2\mu-1)}+\th^{2\mu}r(\th)r'(\th)+\th^\mu r'(\th)-\mu\th^{\mu-1}r(\th)\right).
\end{align*}
As a consequence, we have
\begin{align*}
	&\ \ \langle\th\rangle\th^{2\mu-1}\cF_1(r,\g; \la)(\th)=\langle\th\rangle\left(\th^\mu r\right)'(\th)+\frac\mu{2\pi(2\mu-1)}\langle\th\rangle\left(\th^{2\mu-1}\g\right)'(\th)+\th^\mu r(\th) \cdot \langle\th\rangle\left(\th^\mu r\right)'(\th)\\
	&-\frac{\langle\th\rangle\left(\th^{2\mu-1}\g\right)'(\th)}{\th^{2\mu}\g'(\th)+2\pi(2\mu-1)}\left(\frac{\mu\th^{2\mu}\g'(\th)}{2\pi(2\mu-1)}+\th^{\mu}r(\th)\cdot \th^{\mu+1}r'(\th)+\th^{\mu+1} r'(\th)-\mu\th^{\mu}r(\th)\right),
\end{align*}
combining with the algebraic property of $C^\al$ space gives that $\langle\th\rangle\th^{2\mu-1}\cF_1(r,\g; \la)\in C^\al$, hence $\cF_1(r,\g; \la)\in Z_1$.
\smallskip

For $\wt\cG$, if $\lambda>0$, then
\begin{align*}
	\wt\cG(r,\g; \la)(\th)&=(2\mu-1)\frac{\g_0(\th)+\g(\th)}{\g_0'(\th)+\g'(\th)}\left(r_0(\th)+r(\th)\right)^2\\
	&\ \ +\frac1{2\pi}\pv\int_0^\infty\frac{\left(\g_0'(\wt\th)+\g'(\wt\th)\right)\left[1-\left(\frac{r_0(\wt\th)+r(\wt\th)}{r_0(\th)+r(\th)}\right)^{\frac1\lambda} \cos\left(\frac{\wt\th-\th}{\la}\right)\right]}{1-2\left(\frac{r_0(\wt\th)+r(\wt\th)}{r_0(\th)+r(\th)}\right)^{\frac1\la}\cos\left(\frac{\wt\th-\th}{\la}\right)+\left(\frac{r_0(\wt\th)+r(\wt\th)}{r_0(\th)+r(\th)}\right)^{\frac2\la}}\,d\wt\th.
\end{align*}
Direct computation gives that
\begin{align*}
	\frac{\g_0(\th)+\g(\th)}{\g_0'(\th)+\g'(\th)}&=\th\frac{-2\pi+\th^{2\mu-1}\g(\th)}{2\pi(2\mu-1)+\th^{2\mu}\g'(\th)}\\
	&=\th\left[-\frac1{2\mu-1}+\frac{\th\left(\th^{2\mu-1}\g\right)'(\th)}{(2\mu-1)\left(2\pi(2\mu-1)+\th^{2\mu}\g'(\th)\right)}\right]\\
	&=\th\left[-\frac1{2\mu-1}+\frac{\th\left(\th^{2\mu-1}\g\right)'(\th)}{2\pi(2\mu-1)^2}-\frac{\th\left(\th^{2\mu-1}\g\right)'\cdot\th^{2\mu}\g'(\th)}{2\pi(2\mu-1)^2\left(2\pi(2\mu-1)+\th^{2\mu}\g'(\th)\right)}\right],
\end{align*}
hence
\begin{align*}
	&\ \ \ (2\mu-1)\frac{\g_0(\th)+\g(\th)}{\g_0'(\th)+\g'(\th)}\left(r_0(\th)+r(\th)\right)^2\\
	&=\th^{1-2\mu}\left[-1+\frac{\th\left(\th^{2\mu-1}\g\right)'(\th)}{2\pi(2\mu-1)}-\frac{\th\left(\th^{2\mu-1}\g\right)'\cdot\th^{2\mu}\g'(\th)}{2\pi(2\mu-1)\left(2\pi(2\mu-1)+\th^{2\mu}\g'(\th)\right)}\right]\\
	&\qquad\qquad\times \left(1+2\th^\mu r(\th)+\left(\th^\mu r(\th)\right)^2\right)\\
	&=\th^{1-2\mu}\Bigg\{-1-2\th^\mu r(\th)+\frac{\th\left(\th^{2\mu-1}\g\right)'(\th)}{2\pi(2\mu-1)}-\left(\th^\mu r(\th)\right)^2\\
	&\qquad-\frac{\th\left(\th^{2\mu-1}\g\right)'\cdot\th^{2\mu}\g'(\th)}{2\pi(2\mu-1)\left(2\pi(2\mu-1)+\th^{2\mu}\g'(\th)\right)}+\left(2\th^\mu r(\th)+\left(\th^\mu r(\th)\right)^2\right)\frac{\th\left(\th^{2\mu-1}\g\right)'(\th)}{2\pi(2\mu-1)}\Bigg\}.
\end{align*}
It follows from the algebraic property of $C^\al$ that
\[\th^{2\mu-1}(2\mu-1)\frac{\g_0(\th)+\g(\th)}{\g_0'(\th)+\g'(\th)}\left(r_0(\th)+r(\th)\right)^2+1\in C^\al.\]

\subsection{Analysis of the complicated integrals}
We start from the analysis of the integral (for $\th>0$)
\[I_\la(\th):=\frac1{2\pi(2\mu-1)}\pv \int_0^\infty \frac{\g_0'(\wt\th)\left[1-\left(\frac{r_0(\wt\th)}{r(\wt\th)}\right)^{\frac1\la}\cos\left(\frac{\wt\th-\th}{\la}\right)\right]}{1-2\left(\frac{r_0(\wt\th)}{r_0(\th)}\right)^{\frac1\la}\cos\left(\frac{\wt\th-\th}{\la}\right)+\left(\frac{r_0(\wt\th)}{r_0(\th)}\right)^{\frac2\la}}\,d\wt\th.\]
Recall that 
\begin{equation*}
	r_0(\th)=\th^{-\mu},\qquad \g_0(\th)=-2\pi\th^{1-2\mu}.
\end{equation*}
So
\[I_\la(\th)=\pv \int_0^\infty \frac{\wt\th^{-2\mu}\left[1-\left(\frac{\wt\th}\th\right)^{-\frac\mu\la}\cos\left(\frac{\wt\th-\th}{\la}\right)\right]}{1-2\left(\frac{\wt\th}\th\right)^{-\frac\mu\la}\cos\left(\frac{\wt\th-\th}{\la}\right)+\left(\frac{\wt\th}\th\right)^{-\frac{2\mu}\la}}\,d\wt\th.\]
The integrand has a singularity at $\wt\th=\th$. As $\wt\th\rightarrow\th$, we have
\[\left(\frac{\wt\th}\th\right)^{-\frac\mu\la}=\left(1+\frac{\wt\th-\th}{\th}\right)^{-\frac\mu\la}=1-\frac\mu{\la\th}(\wt\th-\th)+\frac{\mu(\mu+\la)}{2\la^2\th^2}(\wt\th-\th)^2+\cO\left((\wt\th-\th)^3\right),\]
\[\cos\left(\frac{\wt\th-\th}{\la}\right)=1-\frac1{2\la^2}(\wt\th-\th)^2+\cO\left((\wt\th-\th)^4\right),\]
\[\left(\frac{\wt\th}\th\right)^{-\frac{2\mu}\la}=\left(1+\frac{\wt\th-\th}{\th}\right)^{-\frac{2\mu}\la}=1-\frac{2\mu}{\la\th}(\wt\th-\th)+\frac{\mu(2\mu+\la)}{\la^2\th^2}(\wt\th-\th)^2+\cO\left((\wt\th-\th)^3\right),\]
\begin{align*}
	\wt\th^{-2\mu}&=\th^{-2\mu}\left(\frac{\wt\th}\th\right)^{-2\mu}=\th^{-2\mu}\left(1+\frac{\wt\th-\th}\th\right)^{-2\mu}\\
	&=\th^{-2\mu}\left[1-\frac{2\mu}\th(\wt\th-\th)+\frac{\mu(2\mu+1)}{\th^2}(\wt\th-\th)^2+\cO\left((\wt\th-\th)^3\right)\right];
\end{align*}
thus, as $\wt\th\rightarrow\th$, we obtain
\begin{align*}
	&\ \ \ 1-2\left(\frac{\wt\th}\th\right)^{-\frac\mu\la}\cos\left(\frac{\wt\th-\th}{\la}\right)+\left(\frac{\wt\th}\th\right)^{-\frac{2\mu}\la}\\
	&=1-2\left(1-\frac\mu{\la\th}(\wt\th-\th)+\frac{\mu(\mu+\la)}{2\la^2\th^2}(\wt\th-\th)^2\right)\left(1-\frac1{2\la^2}(\wt\th-\th)^2\right)\\
	&\qquad \qquad+1-\frac{2\mu}{\la\th}(\wt\th-\th)+\frac{\mu(2\mu+\la)}{\la^2\th^2}(\wt\th-\th)^2+\cO\left((\wt\th-\th)^3\right)\\
	&=\left[-\frac{\mu(\mu+\la)}{\la^2\th^2}+\frac1{\lambda^2}+\frac{\mu(2\mu+\la)}{\la^2\th^2}\right](\wt\th-\th)^2+\cO\left((\wt\th-\th)^3\right)\\
	&=\frac{\mu^2+\th^2}{\la^2\th^2}(\wt\th-\th)^2+\cO\left((\wt\th-\th)^3\right),
\end{align*}
and
\begin{align*}
	&\ \ \ \wt\th^{-2\mu}\left[1-\left(\frac{\wt\th}\th\right)^{-\frac\mu\la}\cos\left(\frac{\wt\th-\th}{\la}\right)\right]\\
	&=\wt\th^{-2\mu}\left[1-\left(1-\frac{2\mu}{\lambda\th}(\wt\th-\th)+\cO\left((\wt\th-\th)^2\right)\right)\left(1+\cO(\wt\th-\th)^2\right)\right]\\
	&=\th^{-2\mu}\left(1+\cO\left(\wt\th-\th\right)\right)\left[\frac{2\mu}{\lambda\th}(\wt\th-\th)+\cO\left((\wt\th-\th)^2\right)\right]\\
	&=\frac{2\mu}\la\th^{-2\mu-1}(\wt\th-\th)+\cO\left((\wt\th-\th)^2\right).
\end{align*}
Hence, as $\wt\th\rightarrow\th$, we have
\[\frac{\wt\th^{-2\mu}\left[1-\left(\frac{\wt\th}\th\right)^{-\frac\mu\la}\cos\left(\frac{\wt\th-\th}{\la}\right)\right]}{1-2\left(\frac{\wt\th}\th\right)^{-\frac\mu\la}\cos\left(\frac{\wt\th-\th}{\la}\right)+\left(\frac{\wt\th}\th\right)^{-\frac{2\mu}\la}}=\frac{\la\mu\th^{1-2\mu}}{\mu^2+\th^2}\frac1{\wt\th-\th}+\cO(1).\]

Let $\eta\in C_c^\infty(\R; [0,1])$ be a smooth bump function satisfying
\[\eta|_{[1/2, 3/2]}\equiv1,\qquad \operatorname{supp}\eta\subset[1/4, 7/4],\qquad \eta(2-t)=\eta(t), \ \forall t\in\R.\]
Then
\begin{align*}
	I_\la(\th)=\int_0^\infty \left\{\frac{\wt\th^{-2\mu}\left[1-\left(\frac{\wt\th}\th\right)^{-\frac\mu\la}\cos\left(\frac{\wt\th-\th}{\la}\right)\right]}{1-2\left(\frac{\wt\th}\th\right)^{-\frac\mu\la}\cos\left(\frac{\wt\th-\th}{\la}\right)+\left(\frac{\wt\th}\th\right)^{-\frac{2\mu}\la}}-\eta\left(\frac{\wt\th}\th\right)\frac{\la\mu\th^{1-2\mu}}{\mu^2+\th^2}\frac1{\wt\th-\th}\right\}\,d\wt\th.
\end{align*}
Our aim is to show that
\[{\color{red} \lim_{\la\to0+}\sup_{\th>0}\langle\th\rangle^\al\left|(2\mu-1)\th^{2\mu-1}I_\la(\th)-1\right|=0.}\]

Fix $0<\lambda\ll1$. \underline{Case I}: $\th\leq\la^{-\beta}$, where $\beta\in(0,1)$ is to be determined.
\begin{itemize}
	\item If $\wt\th\geq(1+\la)\th$, then $\left|\left(\frac{\wt\th}\th\right)^{-\frac\mu\la}\right|\leq \left(1+\la\right)^{-\frac\mu\la}=e^{-\frac\mu\la\ln(1+\la)}\leq e^{-\frac\mu2}<1$, hence
	\begin{align*}
	 \left|\frac{\wt\th^{-2\mu}\left[1-\left(\frac{\wt\th}\th\right)^{-\frac\mu\la}\cos\left(\frac{\wt\th-\th}{\la}\right)\right]}{1-2\left(\frac{\wt\th}\th\right)^{-\frac\mu\la}\cos\left(\frac{\wt\th-\th}{\la}\right)+\left(\frac{\wt\th}\th\right)^{-\frac{2\mu}\la}}-\wt\th^{-2\mu}\right|&=\wt\th^{-2\mu}\left|\frac{\left(\frac{\wt\th}\th\right)^{-\frac\mu\la}\left[\cos\left(\frac{\wt\th-\th}{\la}\right)-\left(\frac{\wt\th}\th\right)^{-\frac\mu\la}\right]}{1-2\left(\frac{\wt\th}\th\right)^{-\frac\mu\la}\cos\left(\frac{\wt\th-\th}{\la}\right)+\left(\frac{\wt\th}\th\right)^{-\frac{2\mu}\la}}\right|\\
	 &\lesssim \left(\frac{\wt\th}\th\right)^{-\frac\mu\la}\wt\th^{-2\mu}.
	\end{align*}
	Also,
	\begin{align*}
		\int_{(1+\la)\th}^\infty \left(\frac{\wt\th}\th\right)^{-\frac\mu\la}\wt\th^{-2\mu}\,d\wt\th=\frac{\la}{\mu+(2\mu-1)\la}(1+\la)^{1-2\mu-\frac\mu\la}\th^{1-2\mu},
	\end{align*}
	with
	\[\lim_{\la\to0+}(1+\la)^{1-2\mu-\frac\mu\la}=\exp\left(\lim_{\la\to0+}\left(1-2\mu-\frac\mu\la\right)\ln(1+\la)\right)=e^{-\mu}>0.\]
	Denote
	\begin{align*}
		I_\la^{(1)}(\th):=\int_{(1+\la)\th}^\infty \left\{\frac{\wt\th^{-2\mu}\left[1-\left(\frac{\wt\th}\th\right)^{-\frac\mu\la}\cos\left(\frac{\wt\th-\th}{\la}\right)\right]}{1-2\left(\frac{\wt\th}\th\right)^{-\frac\mu\la}\cos\left(\frac{\wt\th-\th}{\la}\right)+\left(\frac{\wt\th}\th\right)^{-\frac{2\mu}\la}}-\wt\th^{-2\mu}-\eta\left(\frac{\wt\th}\th\right)\frac{\la\mu\th^{1-2\mu}}{\mu^2+\th^2}\frac1{\wt\th-\th}\right\}\,d\wt\th,
	\end{align*}
	then
	\begin{align*}
		\langle\th\rangle^\al\th^{2\mu-1}\left|I_\la^{(1)}(\th)\right|&\lesssim \la\langle\th\rangle^\al+\la\frac{\langle\th\rangle^\al}{\mu^2+\th^2}\int_{(1+\la)\th}^\infty \eta\left(\frac{\wt\th}\th\right)\frac1{\wt\th-\th}\,d\wt\th\\
		&\lesssim\la^{1-\al\beta}+\la\ln\left(\frac2\la\right).
	\end{align*}
	
	\item If $\th+\la\leq\wt\th<(1+\la)\th$ (so $\th>1$), then $0<(\wt\th-\th)/\th<\la$. For $x\in(0,\la)$, we have
	\begin{align*}
		1-(1+x)^{-\frac\mu\la}&=\frac\mu\la\int_0^x(1+y)^{-\frac\mu\la-1}\,dy\leq \frac\mu\la x,\\
		1-(1+x)^{-\frac\mu\la}&=\frac\mu\la\int_0^x(1+y)^{-\frac\mu\la-1}\,dy\geq \frac\mu\la(1+\la)^{-\frac\mu\la-1}x\gtrsim \frac\mu\la x,
	\end{align*}
	where we have used $\lim\limits_{\la\to0+}(1+\la)^{-\frac\mu\la-1}=\exp\left[-\lim\limits_{\la\to0+}\left(\frac\mu\la+1\right)\ln(1+\la)\right]=e^{-\mu}>0$. Also we have $(1+\la)^{-\frac\mu\la}<\left(\frac{\wt\th}\th\right)^{-\frac\mu\la}<1$, so 
	\[\left|1-\left(\frac{\wt\th}\th\right)^{-\frac\mu\la}\right|\sim \frac\mu{\la\th}(\wt\th-\th),\qquad \left(\frac{\wt\th}\th\right)^{-\frac\mu\la}\sim 1, \qquad \text{for } \th<\wt\th<(1+\la)\th.\]
	Denote the corresponding integral by $I_\la^{(2)}(\th)$, then
	\begin{align*}
		\langle\th\rangle^\al\th^{2\mu-1}\left|I_\la^{(2)}(\th)\right|&\lesssim \langle\th\rangle^\al\th^{2\mu-1}\int_{\th+\la}^{(1+\la)\th} \frac{\th^{-2\mu}}{\frac{\mu^2}{\la^2\th^2}(\wt\th-\th)^2}\,d\wt\th+ \la\frac{\langle\th\rangle^\al}{\mu^2+\th^2}\int_{\th+\la}^{(1+\la)\th} \frac1{\wt\th-\th}\,d\wt\th\\
		&\lesssim \lambda\langle\th\rangle^\al\th+\lambda\langle\th\rangle^{\al-2}|\ln\th|\lesssim \la^{1-(1+\al)\beta}+\la.
	\end{align*}
	
	\item If $\th<\wt\th<\min\{\th+\la, (1+\la)\th\}$, then $0<(\wt\th-\th)/\th<\la$, $0<(\wt\th-\th)/\la<1$, and
	\begin{align*}
		\left|1-\left(\frac{\wt\th}\th\right)^{-\frac\mu\la}-\frac\mu{\la\th}(\wt\th-\th)+\frac{\mu(\mu+\la)}{2\la^2\th^2}(\wt\th-\th)^2\right|&\sim \frac1{\la^3\th^3}(\wt\th-\th)^3,\\ \left|1-\cos\left(\frac{\wt\th-\th}{\la}\right)-\frac1{2\la^2}(\wt\th-\th)^2 \right|&\sim \frac1{\la^3}(\wt\th-\th)^3.
	\end{align*}
	We have
	\begin{align*}
		&\ \ \ \left|\frac{\wt\th^{-2\mu}\left[1-\left(\frac{\wt\th}\th\right)^{-\frac\mu\la}\cos\left(\frac{\wt\th-\th}{\la}\right)\right]}{1-2\left(\frac{\wt\th}\th\right)^{-\frac\mu\la}\cos\left(\frac{\wt\th-\th}{\la}\right)+\left(\frac{\wt\th}\th\right)^{-\frac{2\mu}\la}}-\wt\th^{-2\mu}-\frac{\la\mu\th^{1-2\mu}}{\mu^2+\th^2}\frac1{\wt\th-\th}\right|\\
		&=\left|\frac{\wt\th^{-2\mu}\left(\frac{\wt\th}\th\right)^{-\frac\mu\la}\left[\cos\left(\frac{\wt\th-\th}{\la}\right)-\left(\frac{\wt\th}\th\right)^{-\frac\mu\la}\right]}{1-2\left(\frac{\wt\th}\th\right)^{-\frac\mu\la}\cos\left(\frac{\wt\th-\th}{\la}\right)+\left(\frac{\wt\th}\th\right)^{-\frac{2\mu}\la}}-\frac{\la\mu\th^{1-2\mu}}{(\mu^2+\th^2)(\wt\th-\th)}\right|=\frac {|N|}{|D|},
	\end{align*}
	where $N$ is the numerator and $D$ is the denominator. For the denominator $D$, we estimate as follows
	\begin{align*}
		D:&=(\mu^2+\th^2)(\wt\th-\th)\left[1-2\left(\frac{\wt\th}\th\right)^{-\frac\mu\la}\cos\left(\frac{\wt\th-\th}{\la}\right)+\left(\frac{\wt\th}\th\right)^{-\frac{2\mu}\la}\right]\\
		&=(\mu^2+\th^2)(\wt\th-\th)\left(\left|1-\left(\frac{\wt\th}\th\right)^{-\frac\mu\la}\right|^2+2\left(\frac{\wt\th}\th\right)^{-\frac\mu\la}\left[1-\cos\left(\frac{\wt\th-\th}{\la}\right)\right]\right)\\
		&\sim (\mu^2+\th^2)\left(\frac1{\la^2\th^2}+\frac1{\la^2}\right)(\wt\th-\th)^3\sim\frac{\langle\th\rangle^4}{\la^2\th^2}(\wt\th-\th)^3.
	\end{align*}
	Next we estimate the numerator
	\begin{align*}
		N:&=(\mu^2+\th^2)(\wt\th-\th)\wt\th^{-2\mu}\left(\frac{\wt\th}\th\right)^{-\frac\mu\la}\left[\cos\left(\frac{\wt\th-\th}{\la}\right)-\left(\frac{\wt\th}\th\right)^{-\frac\mu\la}\right]\\
		&\qquad -\la\mu\th^{1-2\mu}\left[1-2\left(\frac{\wt\th}\th\right)^{-\frac\mu\la}\cos\left(\frac{\wt\th-\th}{\la}\right)+\left(\frac{\wt\th}\th\right)^{-\frac{2\mu}\la}\right].
	\end{align*}
	We have
	\begin{align*}
		\cos\left(\frac{\wt\th-\th}{\la}\right)-\left(\frac{\wt\th}\th\right)^{-\frac\mu\la}&=\cos\left(\frac{\wt\th-\th}{\la}\right)-1+1-\left(\frac{\wt\th}\th\right)^{-\frac\mu\la}-\frac\mu{\la\th}(\wt\th-\th)+\frac\mu{\la\th}(\wt\th-\th)\\
		&=\frac\mu{\la\th}(\wt\th-\th)+A_1
	\end{align*}
	with
	\begin{align*}
		|A_1|&\leq \left|\cos\left(\frac{\wt\th-\th}{\la}\right)-1\right|+\left|1-\left(\frac{\wt\th}\th\right)^{-\frac\mu\la}-\frac\mu{\la\th}(\wt\th-\th)\right|\\
		&\lesssim \left(\frac1{\la^2}+\frac1{\la^2\th^2}\right)(\wt\th-\th)^2\lesssim\frac{\langle\th\rangle^2}{\la^2\th^2}(\wt\th-\th)^2,
	\end{align*}
	and
	\[\left(\frac{\wt\th}\th\right)^{-2\mu-\frac\mu\la}=1+\left(\frac{\wt\th}\th\right)^{-2\mu-\frac\mu\la}-1=1+A_2\]
	with $|A_2|\lesssim \frac1{\la\th}(\wt\th-\th)$, and
	\begin{align*}
		\left(\frac{\wt\th}\th\right)^{-\frac\mu\la}=1+B_1=1-\frac\mu{\la\th}(\wt\th-\th)+B_2,\qquad
		\cos\left(\frac{\wt\th-\th}{\la}\right)=1-\frac1{2\la^2}(\wt\th-\th)^2+B_3
	\end{align*}
	with
	\[|B_1|\lesssim \frac1{\la\th}(\wt\th-\th),\qquad|B_2|\lesssim \frac1{\la^2\th^2}(\wt\th-\th)^2,\qquad |B_3|\lesssim \frac1{\la^3}(\wt\th-\th)^3;\]
	hence
	\begin{align*}
		N&=(\mu^2+\th^2)\th^{-2\mu}(\wt\th-\th)\left(1+A_2\right)\left[\frac\mu{\la\th}(\wt\th-\th)+A_1\right]\\
		&\qquad-\lambda\mu\th^{1-2\mu}\left[\left|\frac\mu{\la\th}(\wt\th-\th)-B_2\right|^2+2(1+B_1)\left(\frac1{2\la^2}(\wt\th-\th)^2-B_3\right)\right]\\
		&=(\mu^2+\th^2)\th^{-2\mu}(\wt\th-\th)\left[\frac\mu{\la\th}(\wt\th-\th)+A_2\frac\mu{\la\th}(\wt\th-\th)+A_1\left(\frac{\wt\th}\th\right)^{-2\mu-\frac\mu\la}\right]\\
		&\qquad-\lambda\mu\th^{1-2\mu}\left[\frac{\mu^2+\th^2}{\la^2\th^2}(\wt\th-\th)^2-2B_2\frac\mu{\la\th}(\wt\th-\th)+B_2^2+\frac{B_1}{\la^2}(\wt\th-\th)^2-2B_3\left(\frac{\wt\th}\th\right)^{-\frac\mu\la}\right]\\
		&=(\mu^2+\th^2)\th^{-2\mu}\left[A_2\frac\mu{\la\th}(\wt\th-\th)^2+A_1\left(\frac{\wt\th}\th\right)^{-2\mu-\frac\mu\la}(\wt\th-\th)\right]\\
		&\qquad+\lambda\mu\th^{1-2\mu}\left[2B_2\frac\mu{\la\th}(\wt\th-\th)-B_2^2-\frac{B_1}{\la^2}(\wt\th-\th)^2+2B_3\left(\frac{\wt\th}\th\right)^{-\frac\mu\la}\right],
	\end{align*}
	and then
	\begin{align*}
		\frac{|N|}{(\wt\th-\th)^3}&\lesssim \langle\th\rangle^2\th^{-2\mu}\left(\frac1{\la^2\th^2}+\frac{\langle\th\rangle^2}{\la^2\th^2}\right)+\la\th^{1-2\mu}\left(\frac1{\la^3\th^3}+\frac{\wt\th-\th}{\la^4\th^4}+\frac1{\la^3\th}+\frac1{\la^3}\right)\\
		&\lesssim \th^{-2\mu}\frac{\langle\th\rangle^4}{\la^2\th^2}+\la\th^{1-2\mu}\frac{\langle\th\rangle^3}{\la^3\th^3}.
	\end{align*}
	Therefore,
	\begin{align*}
		\langle\th\rangle^\al\th^{2\mu-1}\left|I_\la^{(3)}(\th)\right|&\lesssim \langle\th\rangle^\al\th^{2\mu-1}\int_\th^{\min\{\th+\lambda, (1+\la)\th\}}\frac{|N|}{|D|}\,d\wt\th\\
		&\lesssim \langle\th\rangle^\al\th^{2\mu-1}\int_\th^{\min\{\th+\lambda, (1+\la)\th\}}\left(\th^{-2\mu}\frac{\langle\th\rangle^4}{\la^2\th^2}+\la\th^{1-2\mu}\frac{\langle\th\rangle^3}{\la^3\th^3}\right)\,d\wt\th\\
		&\lesssim \frac{\langle\th\rangle^\al}{\th}\min\{\th+\lambda, (1+\la)\th\}\lesssim \la.
	\end{align*}
\end{itemize}

\newpage
\subsection{Analysis of $\cF_2$}
Recall that
\[\cF_2(r, \g;\la)(\th)=\pv\int_0^\infty\frac{\left(\g_0'(\wt\th)+\g'(\wt\th)\right)\left(\frac{r_0(\wt\th)+r(\wt\th)}{r_0(\th)+r(\th)}\right)^{\frac1\lambda} \sin\left(\frac{\wt\th-\th}{\la}\right)}{1-2\left(\frac{r_0(\wt\th)+r(\wt\th)}{r_0(\th)+r(\th)}\right)^{\frac1\la}\cos\left(\frac{\wt\th-\th}{\la}\right)+\left(\frac{r_0(\wt\th)+r(\wt\th)}{r_0(\th)+r(\th)}\right)^{\frac2\la}}\,d\wt\th.\]
Define
\begin{align*}
	J_\la(\th):&=\frac1{2\pi(2\mu-1)}\pv\int_0^\infty\frac{\g_0'(\wt\th)\left(\frac{r_0(\wt\th)}{r_0(\th)}\right)^{\frac1\lambda}\sin\left(\frac{\wt\th-\th}{\la}\right)}{1-2\left(\frac{r_0(\wt\th)}{r_0(\th)}\right)^{\frac1\la}\cos\left(\frac{\wt\th-\th}{\la}\right)+\left(\frac{r_0(\wt\th)}{r_0(\th)}\right)^{\frac2\la}}\,d\wt\th\\
	&=\pv \int_0^\infty \frac{\wt\th^{-2\mu}\left(\frac{\wt\th}\th\right)^{-\frac\mu\la}\sin\left(\frac{\wt\th-\th}{\la}\right)}{1-2\left(\frac{\wt\th}\th\right)^{-\frac\mu\la}\cos\left(\frac{\wt\th-\th}{\la}\right)+\left(\frac{\wt\th}\th\right)^{-\frac{2\mu}\la}}\,d\wt\th\\
	&=\int_0^\infty \left[\frac{\wt\th^{-2\mu}\left(\frac{\wt\th}\th\right)^{-\frac\mu\la}\sin\left(\frac{\wt\th-\th}{\la}\right)}{1-2\left(\frac{\wt\th}\th\right)^{-\frac\mu\la}\cos\left(\frac{\wt\th-\th}{\la}\right)+\left(\frac{\wt\th}\th\right)^{-\frac{2\mu}\la}}-\eta\left(\frac{\wt\th}\th\right)\frac{\la\th^{2-2\mu}}{\mu^2+\th^2}\frac1{\wt\th-\th}\right]\,d\wt\th.
\end{align*}
Our aim is to show that
\[{\color{red} \lim_{\la\to0+}\sup_{\th>0}\langle\th\rangle^{\al+1}\th^{2\mu-1}\left|J_\la(\th)\right|=0.}\]

\newpage
\section{Analysis of the complicated integrals}
For $\la>0$, we define
\begin{align*}
	I_\la(\th):&=\pv\int_0^\infty\left\{ \frac{\wt\th^{-2\mu}\left[1-\left(\frac{\wt\th}\th\right)^{-\frac\mu\la}\cos\left(\frac{\wt\th-\th}{\la}\right)\right]}{1-2\left(\frac{\wt\th}\th\right)^{-\frac\mu\la}\cos\left(\frac{\wt\th-\th}{\la}\right)+\left(\frac{\wt\th}\th\right)^{-\frac{2\mu}\la}}-\wt\th^{-2\mu}1_{\wt\th>\th}\right\}\,d\wt\th\\
	J_\la(\th):&=\pv \int_0^\infty \frac{\wt\th^{-2\mu}\left(\frac{\wt\th}\th\right)^{-\frac\mu\la}\sin\left(\frac{\wt\th-\th}{\la}\right)}{1-2\left(\frac{\wt\th}\th\right)^{-\frac\mu\la}\cos\left(\frac{\wt\th-\th}{\la}\right)+\left(\frac{\wt\th}\th\right)^{-\frac{2\mu}\la}}\,d\wt\th
\end{align*}
We are going to show that
\begin{equation}
	\left\|\th^{2\mu-1}I_\la\right\|_{C^\al}+\left\|\langle\th\rangle\th^{2\mu-1}J_\la\right\|_{C^\al}\rightarrow0\qquad \text{ as }\ \ \ \la\rightarrow0+.
\end{equation}
Indeed, we will prove that as $\la\rightarrow0+$,
\begin{equation}\label{L_la_1}
	\left\|\langle\th\rangle^{\al+1}\th^{2\mu-1}I_\la\right\|_{L^\infty}+\left\|\langle\th\rangle^{\al+1}\th^{2\mu-1}J_\la\right\|_{L^\infty}\rightarrow0,
\end{equation}

{\color{red} The key point to write the integrals back in the form of complex numbers and then estimate the integral by making full advantage of the oscillations of the integral.}

In the rest of this proof, we assume that $0<\la\ll1$. We denote $z(\th):=\th^{-\frac\mu\la}e^{\ii \frac\th\la}$. Now we define
\begin{equation}\label{integral_1}
	L_\la(\th):=I_\la(\th)+\ii J_\la(\th)=\pv\int_0^\infty \wt\th^{-2\mu}\frac{z(\th)1_{\wt\th<\th}+z(\wt\th)1_{\wt\th>\th}}{z(\th)-z(\wt\th)}\,d\wt\th.
\end{equation}
As $\wt\th\rightarrow\th$, the integrand has the asymptotic behavior
\[\wt\th^{-2\mu}\frac{z(\th)1_{\wt\th<\th}+z(\wt\th)1_{\wt\th>\th}}{z(\th)-z(\wt\th)}\sim \frac{\la\th^{1-2\mu}}{(-\mu+\ii\th)(\th-\wt\th)}.\]

\underline{Case I: $0<\th\leq1$}. We split the integral \eqref{integral_1} into four parts:
\begin{align*}
	L_\la(\th)&=\int_0^{(1-\la)\th}\frac{\wt\th^{-2\mu}z(\th)}{z(\th)-z(\wt\th)}\,d\wt\th+\int_{(1-\la)\th}^\th\left\{\frac{\wt\th^{-2\mu}z(\th)}{z(\th)-z(\wt\th)}-\frac{\la\th^{1-2\mu}}{(-\mu+\ii\th)(\th-\wt\th)}\right\}\,d\wt\th\\
	&\ \ +\int_\th^{(1+\la)\th}\left\{\frac{\wt\th^{-2\mu}z(\wt\th)}{z(\th)-z(\wt\th)}-\frac{\la\th^{1-2\mu}}{(-\mu+\ii\th)(\th-\wt\th)}\right\}\,d\wt\th+\int_{(1+\la)\th}^\infty\frac{\wt\th^{-2\mu}z(\wt\th)}{z(\th)-z(\wt\th)}\,d\wt\th\\
	&=:L_\la^{(1,1)}(\th)+L_\la^{(1,2)}(\th)+L_\la^{(1,3)}(\th)+L_\la^{(1,4)}(\th).
\end{align*}
\begin{itemize}
	\item For $\wt\th\in[0, (1-\la)\th]$, we have
\[\left|\frac{z(\th)}{z(\wt\th)}\right|=\left(\frac\th{\wt\th}\right)^{-\frac\mu\la}\leq(1-\la)^{\frac\mu\la}=e^{\frac\mu\la\ln(1-\la)}\leq e^{-\mu}<1,\]
so
\[\left|z(\th)-z(\wt\th)\right|=\left|z(\wt\th)\right|\left|\frac{z(\th)}{z(\wt\th)}-1\right|\geq\left|z(\wt\th)\right|\left(1-\left|\frac{z(\th)}{z(\wt\th)}\right|\right)\gtrsim\wt\th^{-\frac\mu\la}.\]
Hence
\begin{align*}
	\left|L_\la^{(1,1)}(\th)\right|&\leq \int_0^{(1-\la)\th}\left|\frac{\wt\th^{-2\mu}z(\th)}{z(\th)-z(\wt\th)}\right|\,d\wt\th\lesssim \th^{-\frac\mu\la}\int_0^{(1-\la)\th}\wt\th^{\frac\mu\la-2\mu}\,d\wt\th\\
	&\lesssim \th^{-\frac\mu\la}\frac1{\frac\mu\la-2\mu+1}\th^{\frac\mu\la-2\mu+1}(1-\la)^{\frac\mu\la-2\mu+1}\lesssim \lambda\th^{1-2\mu}.
\end{align*}

\item For $\wt\th\in[(1-\la)\th, \th]$, we rewrite
\[\frac{\wt\th^{-2\mu}z(\th)}{z(\th)-z(\wt\th)}-\frac{\la\th^{1-2\mu}}{(-\mu+\ii\th)(\th-\wt\th)}=\frac{N_1}{D},\]
where 
\begin{align*}
	N_1:&=\wt\th^{-2\mu}z(\th)(-\mu+\ii\th)(\th-\wt\th)-\la\th^{1-2\mu}\left(z(\th)-z(\wt\th)\right)\\
	&=\left(\wt\th^{-2\mu}-\th^{-2\mu}\right)z(\th)(-\mu+\ii\th)(\th-\wt\th)-\la\th^{1-2\mu}\left(z(\th)-z(\wt\th)-z'(\th)(\th-\wt\th)\right),
\end{align*}
\begin{equation}\label{Denominator}
	D:=(-\mu+\ii\th)(\th-\wt\th)\left(z(\th)-z(\wt\th)\right).
\end{equation}
Since
\begin{equation*}\label{Taylor_2z}
	z(\wt\th)-z(\th)-z'(\th)(\wt\th-\th)=\int_\th^{\wt\th}z''(\varphi)(\wt\th-\var)\,d\var
\end{equation*}
and $\ds z''(\var)=\frac{(-\mu+\ii\var)^2+\la\mu}{\la^2\varphi^2}z(\varphi)$, we have
\[|z''(\var)|\lesssim \frac{1}{\la^2\var^2}\var^{-\frac\mu\la}\lesssim\frac1{\la^2} (1-\la)^{-\frac\mu\la-2}\th^{-\frac\mu\la-2}\lesssim \frac1{\la^2}\th^{-\frac\mu\la-2},\qquad \var\in [(1-\la)\th,\th],\]
and then
\[\left|z(\wt\th)-z(\th)-z'(\th)(\wt\th-\th)\right|\lesssim\frac1{\la^2}\th^{-\frac\mu\la-2}\int_{\wt\th}^\th(\var-\wt\th)\,d\var\lesssim\frac1{\la^2}\th^{-\frac\mu\la-2}\left(\th-\wt\th\right)^2.\]
We also have
\[\left|\wt\th^{-2\mu}-\th^{-2\mu}\right|\lesssim (1-\la)^{-2\mu-1}\th^{-2\mu-1}\left(\th-\wt\th\right)\lesssim \th^{-2\mu-1}\left(\th-\wt\th\right),\]
hence
\[\left|N_1\right|\lesssim\frac1\la\th^{-1-2\mu-\frac\mu\la}\left(\th-\wt\th\right)^2.\]
On the other hand, there exists $\var\in[\wt\th, \th]$ such that $z(\th)-z(\wt\th)=z'(\var)(\th-\wt\th)$, thus
\[|D|\gtrsim \left|z'(\var)\right|\left(\th-\wt\th\right)^2\gtrsim \frac1\la\var^{-\frac\mu\la-1}\left(\th-\wt\th\right)^2\gtrsim\frac1\la\th^{-\frac\mu\la-1}\left(\th-\wt\th\right)^2.\]
Therefore,
\begin{align*}
	\left|L_\la^{(1,2)}(\th)\right|\lesssim \int_{(1-\la)\th}^\th\left|\frac{N_1}{D}\right|\,d\wt\th\lesssim\int_{(1-\la)\th}^\th \th^{-2\mu}\,d\wt\th\lesssim \la\th^{1-2\mu}.
\end{align*}

\item For $\wt\th\in[\th, (1+\la)\th]$, we rewrite
\[\frac{\wt\th^{-2\mu}z(\wt\th)}{z(\th)-z(\wt\th)}-\frac{\la\th^{1-2\mu}}{(-\mu+\ii\th)(\th-\wt\th)}=\frac{N_2}{D},\]
where $D$ is given by \eqref{Denominator} and
\begin{align*}
	N_2:&=\wt\th^{-2\mu}z(\wt\th)(-\mu+\ii\th)(\th-\wt\th)-\la\th^{1-2\mu}\left(z(\th)-z(\wt\th)\right)\\
	&=\th^{-2\mu}z(\th)(-\mu+\ii\th)(\th-\wt\th)-\la\th^{1-2\mu}z'(\th)(\th-\wt\th)\\
	&\qquad+\left(\wt\th^{-2\mu}z(\wt\th)-\th^{-2\mu}z(\th)\right)(-\mu+\ii\th)(\th-\wt\th)\\
	&\qquad\qquad\qquad-\la\th^{1-2\mu}\left(z(\th)-z(\wt\th)-z'(\th)(\th-\wt\th)\right)\\
	&=\left(\wt\th^{-2\mu}z(\wt\th)-\th^{-2\mu}z(\th)\right)(-\mu+\ii\th)(\th-\wt\th)\\
	&\qquad\qquad\qquad-\la\th^{1-2\mu}\left(z(\th)-z(\wt\th)-z'(\th)(\th-\wt\th)\right).
\end{align*}
If $\var\in[\th, (1+\la)\th]$, then we have $|z'(\var)|=\frac1\la\var^{-\frac\mu\la-1}|-\mu+\ii\var|\gtrsim\frac1\la\var^{-\frac\mu\la-1}\gtrsim\frac1\la (1+\la)^{-\frac\mu\la-1}\th^{-\frac\mu\la-1}\gtrsim \frac1\la\th^{-\frac\mu\la-1}$, $\left|\left(\var^{-2\mu}z\right)'(\var)\right|=\left|-2\mu+\frac{-\mu+\ii\var}{\la}\right|\var^{-2\mu-\frac\mu\la-1}\lesssim\frac1\lambda\th^{-2\mu-\frac\mu\la-1}$ and $|z''(\var)|\lesssim \frac{1}{\la^2}\var^{-\frac\mu\la-2}\lesssim\frac{1}{\la^2}\th^{-\frac\mu\la-2}$, thus
\begin{align*}
	\left|D\right|\gtrsim \frac1\la\th^{-\frac\mu\la-1}\left(\th-\wt\th\right)^2,\qquad
	\left|N_2\right|\lesssim \frac1\la \th^{-2\mu-\frac\mu\la-1}\left(\th-\wt\th\right)^2.
\end{align*}
Hence,
\begin{align*}
	\left|L_\la^{(1,3)}(\th)\right|\lesssim \int_{\th}^{(1+\la)\th}\left|\frac{N_2}{D}\right|\,d\wt\th\lesssim\int_{\th}^{(1+\la)\th} \th^{-2\mu}\,d\wt\th\lesssim \la\th^{1-2\mu}.
\end{align*}

\item For $\wt\th\in [(1+\la)\th, +\infty)$, we have
\[\left|\frac{z(\wt\th)}{z(\th)}\right|=\left(\frac{\wt\th}\th\right)^{-\frac\mu\la}\leq(1+\la)^{-\frac\mu\la}=e^{-\frac\mu\la\ln(1+\la)}\leq e^{-\frac\mu2}<1,\]
so
\[\left|z(\th)-z(\wt\th)\right|=\left|z(\th)\right|\left|1-\frac{z(\wt\th)}{z(\th)}\right|\geq\left|z(\th)\right|\left(1-\left|\frac{z(\wt\th)}{z(\th)}\right|\right)\gtrsim\th^{-\frac\mu\la}.\]
Hence
\begin{align*}
	\left|L_\la^{(1,4)}(\th)\right|&\leq \int_{(1+\la)\th}^\infty\left|\frac{\wt\th^{-2\mu}z(\wt\th)}{z(\th)-z(\wt\th)}\right|\,d\wt\th\lesssim \th^{\frac\mu\la}\int_{(1+\la)\th}^\infty\wt\th^{-\frac\mu\la-2\mu}\,d\wt\th\\
	&\lesssim \th^{\frac\mu\la}\frac1{\frac\mu\la+2\mu-1}\th^{-\frac\mu\la-2\mu+1}(1+\la)^{-\frac\mu\la-2\mu+1}\lesssim \lambda\th^{1-2\mu}.
\end{align*}
\end{itemize}
In conclusion, for $0<\th\leq1$ we have
\begin{equation}\label{L_la_case1}
	\th^{2\mu-1}\left|L_\la(\th)\right|\lesssim\la.
\end{equation}

\underline{Case II: $1<\th\leq\la^{-\beta}$}, where $\beta\in(0,1)$ is to be determined. We split the integral \eqref{integral_1} into four parts:
\begin{align*}
	L_\la(\th)&=\int_0^{\th-\la}\frac{\wt\th^{-2\mu}z(\th)}{z(\th)-z(\wt\th)}\,d\wt\th+\int_{\th-\la}^\th\left\{\frac{\wt\th^{-2\mu}z(\th)}{z(\th)-z(\wt\th)}-\frac{\la\th^{1-2\mu}}{(-\mu+\ii\th)(\th-\wt\th)}\right\}\,d\wt\th\\
	&\ \ +\int_\th^{\th+\la}\left\{\frac{\wt\th^{-2\mu}z(\wt\th)}{z(\th)-z(\wt\th)}-\frac{\la\th^{1-2\mu}}{(-\mu+\ii\th)(\th-\wt\th)}\right\}\,d\wt\th+\int_{\th+\la}^\infty\frac{\wt\th^{-2\mu}z(\wt\th)}{z(\th)-z(\wt\th)}\,d\wt\th\\
	&=:L_\la^{(2,1)}(\th)+L_\la^{(2,2)}(\th)+L_\la^{(2,3)}(\th)+L_\la^{(2,4)}(\th).
\end{align*}
\begin{itemize}
	\item For $\wt\th\in[0, \th-\la]$, we have
	\[\left|\frac{z(\th)}{z(\wt\th)}\right|=\left(\frac\th{\wt\th}\right)^{-\frac\mu\la}\leq\left(\frac{\th-\la}{\th}\right)^{\frac\mu\la}=e^{\frac\mu\la\ln\left(1-\frac\la\th\right)}\leq e^{-\frac\mu\th}\leq \frac1{1+\frac\mu\th},\]
	so
	\[\left|z(\th)-z(\wt\th)\right|=\left|z(\wt\th)\right|\left|\frac{z(\th)}{z(\wt\th)}-1\right|\geq\left|z(\wt\th)\right|\left(1-\left|\frac{z(\th)}{z(\wt\th)}\right|\right)\geq\wt\th^{-\frac\mu\la}\left(1-\frac1{1+\frac\mu\th}\right)\gtrsim\wt\th^{-\frac\mu\la}\th^{-1}.\]
	Hence
	\begin{align*}
		\left|L_\la^{(2,1)}(\th)\right|&\leq \int_0^{\th-\la}\left|\frac{\wt\th^{-2\mu}z(\th)}{z(\th)-z(\wt\th)}\right|\,d\wt\th\lesssim \th^{-\frac\mu\la+1}\int_0^{\th-\la}\wt\th^{\frac\mu\la-2\mu}\,d\wt\th\\
		&\lesssim \th^{-\frac\mu\la+1}\frac1{\frac\mu\la-2\mu+1}\th^{\frac\mu\la-2\mu+1}\left(1-\frac\la\th\right)^{\frac\mu\la-2\mu+1}\lesssim \lambda\th^{2-2\mu}.
	\end{align*}
	
	\item For $\wt\th\in[\th-\la, \th]$, we rewrite 
	\[\frac{\wt\th^{-2\mu}z(\th)}{z(\th)-z(\wt\th)}-\frac{\la\th^{1-2\mu}}{(-\mu+\ii\th)(\th-\wt\th)}=\frac{N_1}{D},\]
	where 
	\begin{align*}
		N_1=\left(\wt\th^{-2\mu}-\th^{-2\mu}\right)z(\th)&(-\mu+\ii\th)(\th-\wt\th)-\la\th^{1-2\mu}\left(z(\th)-z(\wt\th)-z'(\th)(\th-\wt\th)\right),\\
		D&=(-\mu+\ii\th)(\th-\wt\th)\left(z(\th)-z(\wt\th)\right).
	\end{align*}
	If $\var\in [\th-\la, \th]$, then we have $|z'(\var)|=\frac1\la\var^{-\frac\mu\la-1}|-\mu+\ii\var|\gtrsim\frac1\la\var^{-\frac\mu\la}\gtrsim \frac1\la\th^{-\frac\mu\la}$ and  $\left|z''(\var)\right|=\frac{|z(\var)|}{\la^2\var^2}\left|(-\mu+\ii\var)^2+\la\mu\right|\lesssim \frac1{\la^2}\var^{-\frac\mu\la}\lesssim \frac1{\la^2}\th^{-\frac\mu\la}\left(1-\frac\la\th\right)^{-\frac\mu\la}\lesssim\frac1{\la^2}\th^{-\frac\mu\la}e^{\frac{2\mu}\th}\lesssim\frac1{\la^2}\th^{-\frac\mu\la}$, thus
	\begin{align*}
		\left|N_1\right|\lesssim \th^{-2\mu-1}\th^{-\frac\mu\la}\th\left(\th-\wt\th\right)^2&+\la\th^{1-2\mu}\int_{\wt\th}^\th\frac1{\la^2}\th^{-\frac\mu\la}\left(\var-\wt\th\right)\,d\var\lesssim \frac1\la\th^{1-2\mu-\frac\mu\la}\left(\th-\wt\th\right)^2,\\
		\left|D\right|&\gtrsim \frac1\la\th^{-\frac\mu\la+1}\left(\th-\wt\th\right)^2.
	\end{align*}
	Hence
	\begin{align*}
		\left|L_\la^{(2,2)}(\th)\right|\lesssim \int_{\th-\la}^\th\left|\frac{N_1}{D}\right|\,d\wt\th\lesssim\int_{\th-\la}^\th \th^{-2\mu}\,d\wt\th\lesssim \la\th^{-2\mu}.
	\end{align*}
	
	\item For $\wt\th\in[\th, (1+\la)\th]$, we rewrite
	\[\frac{\wt\th^{-2\mu}z(\wt\th)}{z(\th)-z(\wt\th)}-\frac{\la\th^{1-2\mu}}{(-\mu+\ii\th)(\th-\wt\th)}=\frac{N_2}{D},\]
	where 
	\begin{align*}
		N_2&=\left(\wt\th^{-2\mu}z(\wt\th)-\th^{-2\mu}z(\th)\right)(-\mu+\ii\th)(\th-\wt\th)\\
		&\qquad\qquad\qquad-\la\th^{1-2\mu}\left(z(\th)-z(\wt\th)-z'(\th)(\th-\wt\th)\right).
	\end{align*}
	If $\var\in[\th,\th+\la]$, then $|z'(\var)|=\frac1\la\var^{-\frac\mu\la-1}|-\mu+\ii\var|\geq\frac1\la\var^{-\frac\mu\la}\geq\frac1\la\left(1+\frac\la\th\right)^{-\frac\mu\la}\th^{-\frac\mu\la}=\frac1\la\th^{-\frac\mu\la}e^{-\frac\mu\la\ln\left(1+\frac\la\th\right)}\geq \frac1\la\th^{-\frac\mu\la}e^{-\frac\mu\th}\geq \frac1\la\th^{-\frac\mu\la}e^{-\mu}\gtrsim \frac1\la\th^{-\frac\mu\la}$, $$\left|z''(\var)\right|=\frac{|z(\var)|}{\la^2\var^2}\left|(-\mu+\ii\var)^2+\la\mu\right|\lesssim \frac1{\la^2}\var^{-\frac\mu\la}\lesssim \frac1{\la^2}\th^{-\frac\mu\la}$$ and $\left|\left(\var^{-2\mu}z\right)'(\var)\right|=\left|-2\mu+\frac{-\mu+\ii\var}{\la}\right|\var^{-2\mu-\frac\mu\la-1}\lesssim\frac1\lambda\th^{-2\mu-\frac\mu\la}$, thus
	\begin{align*}
		\left|N_2\right|\lesssim \frac1\la\th^{-2\mu-\frac\mu\la+1}\left(\th-\wt\th\right)^2,\qquad \left|D\right|\gtrsim \frac1\la\th^{-\frac\mu\la+1}\left(\th-\wt\th\right)^2.
	\end{align*}
	Hence
	\begin{align*}
		\left|L_\la^{(2,3)}(\th)\right|\lesssim \int_\th^{\th+\la}\left|\frac{N_2}{D}\right|\,d\wt\th\lesssim\int_\th^{\th+\la} \th^{-2\mu}\,d\wt\th\lesssim \la\th^{-2\mu}.
	\end{align*}
	
	\item For $\wt\th\in [\th+\la, +\infty)$, we have
	\[\left|\frac{z(\wt\th)}{z(\th)}\right|=\left(\frac{\wt\th}\th\right)^{-\frac\mu\la}\leq\left(1+\frac\la\th\right)^{-\frac\mu\la}=e^{-\frac\mu\la\ln\left(1+\frac\la\th\right)}\leq e^{-\frac\mu{2\th}}\leq\frac1{1+\frac\mu{2\th}},\]
	so
	\begin{align*}
		\left|z(\th)-z(\wt\th)\right|=\left|z(\th)\right|\left|1-\frac{z(\wt\th)}{z(\th)}\right|\geq\left|z(\th)\right|\left(1-\left|\frac{z(\wt\th)}{z(\th)}\right|\right)\geq\th^{-\frac\mu\la}\frac{\frac\mu{2\th}}{1+\frac\mu{2\th}}\gtrsim \th^{-\frac\mu\la-1}.
	\end{align*}
	Hence
	\begin{align*}
		\left|L_\la^{(2,4)}(\th)\right|&\leq \int_{\th+\la}^\infty\left|\frac{\wt\th^{-2\mu}z(\wt\th)}{z(\th)-z(\wt\th)}\right|\,d\wt\th\lesssim \th^{\frac\mu\la+1}\int_{\th+\la}^\infty\wt\th^{-\frac\mu\la-2\mu}\,d\wt\th\\
		&\lesssim \th^{\frac\mu\la+1}\frac1{\frac\mu\la+2\mu-1}\th^{-\frac\mu\la-2\mu+1}\left(1+\frac\la\th\right)^{-\frac\mu\la-2\mu+1}\lesssim\lambda\th^{2-2\mu}.
	\end{align*}
\end{itemize}
In conclusion, for $1<\th\leq\la^{-\beta}$ we have
\begin{equation}\label{L_la_case2}
	\th^{1+\al}\left|\th^{2\mu-1}L_\la(\th)\right|\lesssim \th^{1+\al}\left(\la\th+\frac\la\th\right)\lesssim \la\th^{2+\al}\lesssim \la^{1-(2+\al)\beta}.
\end{equation}

\underline{Case III: $\th>\la^{-\beta}$}. We pick and fix a parameter $\delta>1$ large enough. We split the integral \eqref{integral_1} into three parts:
\begin{align*}
	L_\la(\th)&=\int_0^{\th-\th^{-\delta}}\frac{\wt\th^{-2\mu}z(\th)}{z(\th)-z(\wt\th)}\,d\wt\th+\int_{\th+\th^{-\delta}}^\infty\frac{\wt\th^{-2\mu}z(\wt\th)}{z(\th)-z(\wt\th)}\,d\wt\th\\
	&\ \ +\int_{\th-\th^{-\delta}}^\th\left\{\frac{\wt\th^{-2\mu}z(\th)}{z(\th)-z(\wt\th)}-\frac{\la\th^{1-2\mu}}{(-\mu+\ii\th)(\th-\wt\th)}\right\}\,d\wt\th\\
	&\ \ +\int_\th^{\th+\th^{-\delta}}\left\{\frac{\wt\th^{-2\mu}z(\wt\th)}{z(\th)-z(\wt\th)}-\frac{\la\th^{1-2\mu}}{(-\mu+\ii\th)(\th-\wt\th)}\right\}\,d\wt\th\\
	&=:L_\la^{(3,1)}(\th)+L_\la^{(3,2)}(\th)+L_\la^{(3,3)}(\th).
\end{align*}
\begin{itemize}
	\item For $L_\la^{(3,1)}$, we have
	\begin{align*}
		L_\la^{(3,1)}(\th)&=\sum_{n=1}^\infty\int_{\th+\th^{-\delta}}^\infty\wt\th^{-2\mu}\left(\frac{z(\wt\th)}{z(\th)}\right)^n\,d\wt\th-\sum_{n=1}^\infty\int_0^{\th-\th^{-\delta}}\wt\th^{-2\mu}\left(\frac{z(\th)}{z(\wt\th)}\right)^n\,d\wt\th\\
		&=\sum_{n=1}^\infty\int_{\th+\th^{-\delta}}^\infty\th^{1-2\mu}\left(\frac{\wt\th}{\th}\right)^{-2\mu-\frac{n\mu}{\la}}e^{\ii n\frac{\wt\th-\th}{\la}}\,d\left(\frac{\wt\th}{\th}\right)\\
		&\qquad\qquad\qquad-\sum_{n=1}^\infty\int_0^{\th-\th^{-\delta}}\th^{-2\mu}\left(\frac{\th}{\wt\th}\right)^{2\mu-\frac{n\mu}{\la}}e^{\ii n\frac{\wt\th-\th}{\la}}\,d\wt\th\\
		&=\th^{1-2\mu}\sum_{n=1}^\infty\int_{1+\th^{-1-\delta}}^\infty x^{-2\mu-\frac{n\mu}{\la}}e^{\ii n\frac\th\la(x-1)}\,dx\\
		&\qquad\qquad\qquad-\th^{1-2\mu}\sum_{n=1}^\infty\int_{\frac1{1-\th^{-1-\delta}}}^\infty x^{2\mu-\frac{n\mu}{\la}-2}e^{\ii n\frac\th\la\left(1-\frac1x\right)}\,dx\\
		&=:\th^{1-2\mu}\sum_{n=1}^\infty\left(A_n(\la,\th)-B_n(\la,\th)\right).
	\end{align*}
	Using integration by parts, we obtain
	\begin{align*}
		A_n(\la,\th)&=\int_{1+\th^{-1-\delta}}^\infty x^{-2\mu-\frac{n\mu}{\la}}e^{\ii n\frac\th\la(x-1)}\,dx=\frac\la{\ii n\th}\int_{1+\th^{-1-\delta}}^\infty x^{-2\mu-\frac{n\mu}{\la}}\,d\left(e^{\ii n\frac\th\la(x-1)}\right)\\
		&=-\frac{\la}{\ii n\th}\left(1+\th^{-1-\delta}\right)^{-2\mu-\frac{n\mu}{\la}}e^{\ii n\frac{\th^{-\delta}}{\la}}\\
		&\qquad\qquad+\frac\la{\ii n\th}\left(2\mu+\frac{n\mu}{\la}\right)\int_{1+\th^{-1-\delta}}^\infty x^{-2\mu-\frac{n\mu}{\la}-1}e^{\ii n\frac\th\la(x-1)}\,dx\\
		&=-\frac{\la}{\ii n\th}\left(1+\th^{-1-\delta}\right)^{-2\mu-\frac{n\mu}{\la}}e^{\ii n\frac{\th^{-\delta}}{\la}}+\wt A_n(\la, \th),
	\end{align*}
	where
	\begin{align*}
		\wt A_n(\la,\th)&=\frac\la{\ii n\th}\left(2\mu+\frac{n\mu}{\la}\right)\int_{1+\th^{-1-\delta}}^\infty x^{-2\mu-\frac{n\mu}{\la}-1}e^{\ii n\frac\th\la(x-1)}\,dx\\
		&=\frac{\la^2}{n^2\th^2}\left(2\mu+\frac{n\mu}{\la}\right)\Big[\left(1+\th^{-1-\delta}\right)^{-2\mu-1-\frac{n\mu}{\la}}e^{\ii n\frac{\th^{-\delta}}{\la}}\\
		&\qquad\qquad\qquad+\int_{1+\th^{-1-\delta}}^\infty e^{\ii n\frac\th\la(x-1)}\,d\left(x^{-2\mu-\frac{n\mu}{\la}-1}\right)\Big],
	\end{align*}
	which implies that
	\begin{equation}
		\left|\wt A_n(\la,\th)\right|\lesssim \frac{\la^2}{n^2\th^2}\cdot\frac n\la\left(1+\th^{-1-\delta}\right)^{-2\mu-1-\frac{n\mu}{\la}}\lesssim \frac\la{n\th^2}\left(1+\th^{-1-\delta}\right)^{-\frac{n\mu}{\la}},\qquad n\geq1.
	\end{equation}
	Using the elementary identity
	\begin{equation}\label{3.7}
		\sum_{n=1}^\infty \frac1n z^n=-\ln(1-z),\qquad |z|<1,
	\end{equation}
	we have
	\[\sum_{n=1}^\infty\left|\wt A_n(\la,\th)\right|\lesssim \frac\la{\th^2}\left|\ln\left(1-\left(1+\th^{-1-\delta}\right)^{-\frac{\mu}{\la}}\right)\right|.\]
	Since
	\[\left(1+\th^{-1-\delta}\right)^{-\frac{\mu}{\la}}=e^{-\frac\mu\la\ln\left(1+\th^{-1-\delta}\right)}\leq\frac1{1+\frac\mu\la\ln\left(1+\th^{-1-\delta}\right)},\]
	we get
	\begin{align*}
		1-\left(1+\th^{-1-\delta}\right)^{-\frac{\mu}{\la}}&\geq 1-\frac1{1+\frac\mu\la\ln\left(1+\th^{-1-\delta}\right)}=\frac{\frac\mu\la\ln\left(1+\th^{-1-\delta}\right)}{1+\frac\mu\la\ln\left(1+\th^{-1-\delta}\right)}\\
		&=\frac{\ln\left(1+\th^{-1-\delta}\right)}{\frac\la\mu+\ln\left(1+\th^{-1-\delta}\right)}\geq\frac{\frac12\th^{-1-\delta}}{\frac\la\mu+\th^{-1-\delta}}\geq \frac1C\th^{-1-\delta}
	\end{align*}
	for some constant $C>0$. Hence
	\begin{equation}\label{3.8.1}
		\sum_{n=1}^\infty\left|\wt A_n(\la,\th)\right|\lesssim\frac\la{\th^2}\left|\ln\left(\frac1C\th^{-1-\delta}\right)\right|\lesssim \frac\la{\th^2}\ln\th.
	\end{equation}
	As for $B_n$, using integration by parts we have
	\begin{align*}
		B_n(\la,\th)&=\int_{\frac1{1-\th^{-1-\delta}}}^\infty x^{2\mu-\frac{n\mu}{\la}-2}e^{\ii n\frac\th\la\left(1-\frac1x\right)}\,dx=\frac\la{\ii n\th}\int_{\frac1{1-\th^{-1-\delta}}}^\infty x^{2\mu-\frac{n\mu}{\la}}\,d\left(e^{\ii n\frac\th\la\left(1-\frac1x\right)}\right)\\
		&=-\frac\la{\ii n\th}\left(\frac1{1-\th^{-1-\delta}}\right)^{2\mu-\frac{n\mu}{\la}}e^{\ii n\frac{\th^{-\delta}}{\la}}\\
		&\qquad\qquad\qquad+\frac\la{\ii n\th}\left(2\mu-\frac{n\mu}{\la}\right)\int_{\frac1{1-\th^{-1-\delta}}}^\infty x^{2\mu-\frac{n\mu}{\la}-1}e^{\ii n\frac\th\la\left(1-\frac1x\right)}\,dx\\
		&=-\frac\la{\ii n\th}\left(\frac1{1-\th^{-1-\delta}}\right)^{2\mu-\frac{n\mu}{\la}}e^{\ii n\frac{\th^{-\delta}}{\la}}+\wt B_n(\la,\th),
	\end{align*}
	where
	\begin{align*}
		\wt B_n(\la,\th)&=\frac\la{\ii n\th}\left(2\mu-\frac{n\mu}{\la}\right)\int_{\frac1{1-\th^{-1-\delta}}}^\infty x^{2\mu-\frac{n\mu}{\la}-1}e^{\ii n\frac\th\la\left(1-\frac1x\right)}\,dx\\
		&=\frac{\la^2}{n^2\th^2}\left(2\mu-\frac{n\mu}{\la}\right)\Big[\left(\frac1{1-\th^{-1-\delta}}\right)^{2\mu-\frac{n\mu}\la+1}e^{\ii n\frac{\th^{-\delta}}{\la}}\\
		&\qquad\qquad\qquad+\int_{\frac1{1-\th^{-1-\delta}}}^\infty e^{\ii n\frac\th\la\left(1-\frac1x\right)}\,d\left(x^{2\mu-\frac{n\mu}{\la}+1}\right)\Big],
	\end{align*}
	which implies that
	\[\left|\wt B_n(\lambda,\th)\right|\lesssim \frac{\la}{n\th^2}\left(\frac1{1-\th^{-1-\delta}}\right)^{2\mu-\frac{n\mu}\la+1}\lesssim \frac{\la}{n\th^2}\left(1-\th^{-1-\delta}\right)^{\frac{n\mu}\la},\qquad n\geq1.\]
	It follows from \eqref{3.7} that
	\[\sum_{n=1}^\infty\left|\wt B_n(\la,\th)\right|\lesssim \frac\la{\th^2}\left|\ln\left(1-\left(1-\th^{-1-\delta}\right)^{\frac{\mu}{\la}}\right)\right|.\]
	Since
	\[\left(1-\th^{-1-\delta}\right)^{\frac{\mu}{\la}}=e^{\frac\mu\la\ln\left(1-\th^{-1-\delta}\right)}\leq \frac1{1-\frac\mu\la\ln\left(1-\th^{-1-\delta}\right)},\]
	we have
	\begin{align*}
		1-\left(1-\th^{-1-\delta}\right)^{\frac{\mu}{\la}}&\geq 1-\frac1{1-\frac\mu\la\ln\left(1-\th^{-1-\delta}\right)}=\frac{-\frac\mu\la\ln\left(1-\th^{-1-\delta}\right)}{1-\frac\mu\la\ln\left(1-\th^{-1-\delta}\right)}\\
		&=\frac{\frac\mu\la\ln\frac1{1-\th^{-1-\delta}}}{1+\frac\mu\la\ln\frac1{1-\th^{-1-\delta}}}\geq\frac{\frac12\frac\mu\la\left(\frac1{1-\th^{-1-\delta}}-1\right)}{1+\frac\mu\la\left(\frac1{1-\th^{-1-\delta}}-1\right)}\\
		&=\frac{\frac12\frac\mu\la\th^{-1-\delta}}{1-\th^{-1-\delta}+\frac\mu\la\th^{-1-\delta}}=\frac{\frac12\th^{-1-\delta}}{\frac\la\mu+\left(1-\frac\la\mu\right)\th^{-1-\delta}}\geq \frac{\th^{-1-\delta}}{C}
	\end{align*}
	for some constant $C>0$. Hence
	\begin{equation}\label{3.9.1}
		\sum_{n=1}^\infty\left|\wt B_n(\la,\th)\right|\lesssim\frac\la{\th^2}\left|\ln\left(\frac1C\th^{-1-\delta}\right)\right|\lesssim \frac\la{\th^2}\ln\th.
	\end{equation}
	By \eqref{3.7}, 
	\[-\sum_{n=1}^\infty\frac{\la}{\ii n\th}\left(1+\th^{-1-\delta}\right)^{-2\mu-\frac{n\mu}{\la}}e^{\ii n\frac{\th^{-\delta}}{\la}}=\frac\la{\ii\th}\left(1+\th^{-1-\delta}\right)^{-2\mu}\ln\left(1-\left(1+\th^{-1-\delta}\right)^{-\frac\mu\la}e^{\ii\frac{\th^{-\delta}}{\la}}\right),\]
	\[-\sum_{n=1}^\infty\frac\la{\ii n\th}\left(\frac1{1-\th^{-1-\delta}}\right)^{2\mu-\frac{n\mu}{\la}}e^{\ii n\frac{\th^{-\delta}}{\la}}=\frac\la{\ii\th}\left(1-\th^{-1-\delta}\right)^{-2\mu}\ln\left(1-\left(1-\th^{-1-\delta}\right)^{\frac\mu\la}e^{\ii\frac{\th^{-\delta}}{\la}}\right),\]
	so
	\begin{align*}
		\sum_{n=1}^\infty\left(A_n(\la,\th)-B_n(\la,\th)\right)&=\sum_{n=1}^\infty\left(\wt A_n(\la,\th)-\wt B_n(\la,\th)\right)\\
		&\qquad+\frac\la{\ii\th}\left(1+\th^{-1-\delta}\right)^{-2\mu}\ln\left(1-\left(1+\th^{-1-\delta}\right)^{-\frac\mu\la}e^{\ii\frac{\th^{-\delta}}{\la}}\right)\\
		&\qquad-\frac\la{\ii\th}\left(1-\th^{-1-\delta}\right)^{-2\mu}\ln\left(1-\left(1-\th^{-1-\delta}\right)^{\frac\mu\la}e^{\ii\frac{\th^{-\delta}}{\la}}\right)\\
		&=\sum_{n=1}^\infty\left(\wt A_n(\la,\th)-\wt B_n(\la,\th)\right)\\
		&\qquad+\frac\la{\ii\th}\left[\left(1+\th^{-1-\delta}\right)^{-2\mu}-\left(1-\th^{-1-\delta}\right)^{-2\mu}\right]\\
		&\qquad\qquad\qquad\times\ln\left(1-\left(1+\th^{-1-\delta}\right)^{-\frac\mu\la}e^{\ii\frac{\th^{-\delta}}{\la}}\right)\\
		&\qquad+ \frac\la{\ii\th}\left(1-\th^{-1-\delta}\right)^{-2\mu}\bigg[\ln\left(1-\left(1+\th^{-1-\delta}\right)^{-\frac\mu\la}e^{\ii\frac{\th^{-\delta}}{\la}}\right)\\
		&\qquad\qquad\qquad-\ln\left(1-\left(1-\th^{-1-\delta}\right)^{\frac\mu\la}e^{\ii\frac{\th^{-\delta}}{\la}}\right)\bigg].
	\end{align*}
	We have
	\begin{equation}\label{3.8}
	\begin{aligned}
		\left|\left(1+\th^{-1-\delta}\right)^{-2\mu}-\left(1-\th^{-1-\delta}\right)^{-2\mu}\right|&=\left(1+\th^{-1-\delta}\right)^{-2\mu}\left|1-\left(\frac{1+\th^{-1-\delta}}{1-\th^{-1-\delta}}\right)^{2\mu}\right|\\
		&\lesssim \left|1-\frac{1+\th^{-1-\delta}}{1-\th^{-1-\delta}}\right|\lesssim\th^{-1-\delta}.
	\end{aligned}
	\end{equation}
	Since  
	\begin{align*}
		1\geq(1+\th^{-1-\delta})^{-\frac\mu\la}&=e^{-\frac\mu\la\ln\left(1+\th^{-1-\delta}\right)}\geq e^{-\frac\mu\la\th^{1-\delta}}\geq e^{-\mu\la^{\beta(1+\delta)-1}}\gtrsim 1
	\end{align*}
	if $\beta(1+\delta)>1$, $\left|\frac{\th^{-\delta}}\la\right|\leq \la^{\beta\delta-1}\ll1$ if $\beta\delta>1$, and
	\[r|\var|\lesssim|1-re^{\ii\var}|\leq1+r,\qquad \text{for} \ \ r\in(0,1),\ |\var|\leq 1,\]
	we obtain $\ds \frac{\th^{-\delta}}\la\lesssim\left|1-\left(1+\th^{-1-\delta}\right)^{-\frac\mu\la}e^{\ii\frac{\th^{-\delta}}{\la}}\right|\leq 2$. Using $\left|\ln(1-z)\right|\leq\ln|1-z|+\pi$ for $|z|<1$ gives that 
	\begin{equation}\label{3.9}
		\left|\ln\left(1-\left(1+\th^{-1-\delta}\right)^{-\frac\mu\la}e^{\ii\frac{\th^{-\delta}}{\la}}\right)\right|\lesssim 1+\left|\ln\left(\frac{\th^{-\delta}}\la\right)\right|\lesssim \left|\ln\la^{\beta\delta-1}\right|\lesssim |\ln\la|.
	\end{equation}
	Now we estimate $\ds \left|\ln\left(1-\left(1+\th^{-1-\delta}\right)^{-\frac\mu\la}e^{\ii\frac{\th^{-\delta}}{\la}}\right)-\ln\left(1-\left(1-\th^{-1-\delta}\right)^{\frac\mu\la}e^{\ii\frac{\th^{-\delta}}{\la}}\right)\right|$.\\For simplicity, we temporarily denote $r=\left(1+\th^{-1-\delta}\right)^{-\frac\mu\la}, s=\left(1-\th^{-1-\delta}\right)^{\frac\mu\la}$ and $\var= \frac{\th^{-\delta}}{\la}\ll1$, then we have
	\[\ln\left(1-re^{\ii\var}\right)-\ln\left(1-se^{\ii\var}\right)=\int_s^r\frac{-e^{\ii \var}}{1-te^{\ii\varphi}}\,dt.\]
	Note that $\frac sr=\left(1-\th^{-2-2\delta}\right)^{\frac\mu\la}<1$, then $|r-s|=r\left|1-\frac sr\right|\leq 1-\left(1-\th^{-2-2\delta}\right)^{\frac\mu\la}.$ It follows from
	\begin{align*}
		\left(1-\th^{-2-2\delta}\right)^{\frac\mu\la}&=e^{\frac\mu\la\ln\left(1-\th^{-2-2\delta}\right)}\geq e^{-2\frac\mu\la\th^{-2-2\delta}}\geq 1-2\frac\mu\la\th^{-2-2\delta}
	\end{align*}
	that $|r-s|\lesssim \frac1\la\th^{-2-2\delta}$. For $t\in[s,r]$ we have
	\begin{align*}
		\left|1-te^{\ii\var}\right|\gtrsim t\var\gtrsim e^{\frac\mu\la\ln\left(1-\th^{-1-\delta}\right)}\frac{\th^{-\delta}}{\la}\gtrsim e^{-2\frac\mu\la\th^{-1-\delta}}\frac{\th^{-\delta}}{\la}\gtrsim e^{-2\mu\lambda^{\beta(1+\delta)-1}}\frac{\th^{-\delta}}{\la}\gtrsim \frac{\th^{-\delta}}{\la}.
	\end{align*}
	As a consequence,
	\begin{equation}\label{3.10}
		\begin{aligned}
			\left|\ln\left(1-re^{\ii\var}\right)-\ln\left(1-se^{\ii\var}\right)\right|\lesssim \la\th^{\delta}|r-s|\lesssim \th^{-2-\delta}. 
		\end{aligned}
	\end{equation}
	Putting \eqref{3.8.1}, \eqref{3.9.1}, \eqref{3.8}, \eqref{3.9} and \eqref{3.10} all together, we get
	\begin{equation*}
		\begin{aligned}
			\th^{2\mu-1}\left|L_\la^{(3,1)}(\th)\right|&=\left|\sum_{n=1}^\infty\left(A_n(\la,\th)-B_n(\la,\th)\right)\right|\\
			&\lesssim \frac\la{\th}\th^{-1-\delta}\left|\ln\la\right|+\frac\la\th\th^{-2-\delta}+\frac\la{\th^2}\ln\th\lesssim \frac{\la\left|\ln\la\right|}{\th^2}.
		\end{aligned}
	\end{equation*}
	
	\item For $L_\la^{(3,2)}$,  we rewrite 
	\[\frac{\wt\th^{-2\mu}z(\th)}{z(\th)-z(\wt\th)}-\frac{\la\th^{1-2\mu}}{(-\mu+\ii\th)(\th-\wt\th)}=\frac{N_1}{D},\qquad \wt\th\in\left[\th-\th^{-\delta},\th\right],\]
	where 
	\begin{align*}
		N_1=\left(\wt\th^{-2\mu}-\th^{-2\mu}\right)z(\th)&(-\mu+\ii\th)(\th-\wt\th)-\la\th^{1-2\mu}\left(z(\th)-z(\wt\th)-z'(\th)(\th-\wt\th)\right),\\
		D&=(-\mu+\ii\th)(\th-\wt\th)\left(z(\th)-z(\wt\th)\right).
	\end{align*}
	If $\var\in \left[\th-\th^{-\delta},\th\right]$, then we have $|z'(\var)|=\frac1\la\var^{-\frac\mu\la-1}|-\mu+\ii\var|\gtrsim\frac1\la\var^{-\frac\mu\la}\gtrsim \frac1\la\th^{-\frac\mu\la}$ and
	\begin{align*}
		\left|z''(\var)\right|&=\frac{|z(\var)|}{\la^2\var^2}\left|(-\mu+\ii\var)^2+\la\mu\right|\lesssim \frac1{\la^2}\var^{-\frac\mu\la}\lesssim \frac1{\la^2}\th^{-\frac\mu\la}\left(1-\th^{-1-\delta}\right)^{-\frac\mu\la}\\
		&\lesssim\frac1{\la^2}\th^{-\frac\mu\la}e^{\frac{2\mu}\la\th^{-1-\delta}}\lesssim\frac1{\la^2}\th^{-\frac\mu\la}e^{2\mu\la^{\beta(1+\delta)-1}}\lesssim\frac1{\la^2}\th^{-\frac\mu\la},
	\end{align*}
	provided $\beta(1+\delta)>1$, thus
	\begin{align*}
		\left|N_1\right|\lesssim \th^{-2\mu-1}\th^{-\frac\mu\la}\th\left(\th-\wt\th\right)^2&+\la\th^{1-2\mu}\int_{\wt\th}^\th\frac1{\la^2}\th^{-\frac\mu\la}\left(\var-\wt\th\right)\,d\var\lesssim \frac1\la\th^{1-2\mu-\frac\mu\la}\left(\th-\wt\th\right)^2,\\
		\left|D\right|&\gtrsim \frac1\la\th^{-\frac\mu\la+1}\left(\th-\wt\th\right)^2.
	\end{align*}
	Hence
	\begin{align*}
		\th^{2\mu-1}\left|L_\la^{(3,2)}(\th)\right|&\lesssim \th^{2\mu-1}\int_{\th-\th^{-\delta}}^\th\left|\frac{N_1}{D}\right|\,d\wt\th\lesssim\th^{2\mu-1}\int_{\th-\th^{-\delta}}^\th \th^{-2\mu}\,d\wt\th\lesssim \th^{-1-\delta}.
	\end{align*}
	
	\item For $L_\la^{(3,3)}$, we rewrite
	\[\frac{\wt\th^{-2\mu}z(\wt\th)}{z(\th)-z(\wt\th)}-\frac{\la\th^{1-2\mu}}{(-\mu+\ii\th)(\th-\wt\th)}=\frac{N_2}{D},\qquad \wt\th\in\left[\th,\th+\th^{-\delta}\right],\]
	where 
	\begin{align*}
		N_2&=\left(\wt\th^{-2\mu}z(\wt\th)-\th^{-2\mu}z(\th)\right)(-\mu+\ii\th)(\th-\wt\th)\\
		&\qquad\qquad\qquad-\la\th^{1-2\mu}\left(z(\th)-z(\wt\th)-z'(\th)(\th-\wt\th)\right).
	\end{align*}
	If $\var\in\left[\th,\th+\th^{-\delta}\right]$, then $$|z'(\var)|=\frac1\la\var^{-\frac\mu\la-1}|-\mu+\ii\var|\geq\frac1\la\var^{-\frac\mu\la}\geq\frac1\la\left(1+\th^{-1-\delta}\right)^{-\frac\mu\la}\th^{-\frac\mu\la}\gtrsim \frac1\la\th^{-\frac\mu\la},$$ $$\left|z''(\var)\right|=\frac{|z(\var)|}{\la^2\var^2}\left|(-\mu+\ii\var)^2+\la\mu\right|\lesssim \frac1{\la^2}\var^{-\frac\mu\la}\lesssim \frac1{\la^2}\th^{-\frac\mu\la}$$ and $\left|\left(\var^{-2\mu}z\right)'(\var)\right|=\left|-2\mu+\frac{-\mu+\ii\var}{\la}\right|\var^{-2\mu-\frac\mu\la-1}\lesssim\frac1\lambda\th^{-2\mu-\frac\mu\la}$, thus
	\begin{align*}
		\left|N_2\right|\lesssim \frac1\la\th^{-2\mu-\frac\mu\la+1}\left(\th-\wt\th\right)^2,\qquad \left|D\right|\gtrsim \frac1\la\th^{-\frac\mu\la+1}\left(\th-\wt\th\right)^2.
	\end{align*}
	Hence
	\begin{align*}
		\th^{2\mu-1}\left|L_\la^{(3,3)}(\th)\right|\lesssim \th^{2\mu-1}\int_\th^{\th+\th^{-\delta}}\left|\frac{N_1}{D}\right|\,d\wt\th\lesssim\th^{2\mu-1}\int_\th^{\th+\th^{-\delta}} \th^{-2\mu}\,d\wt\th\lesssim \th^{-1-\delta}.
	\end{align*}
\end{itemize}
In conclusion, for $\th>\la^{-\beta}$ we have
\begin{equation}\label{L_la_case3}
	\th^{1+\al}\left|\th^{2\mu-1}L_\la(\th)\right|\lesssim \th^{1+\al}\left(\th^{-1-\delta}+\frac{\la\left|\ln\la\right|}{\th^2}\right)\lesssim \la^{\beta(\delta-\al)}+\la.
\end{equation}

Putting \eqref{L_la_case1}, \eqref{L_la_case2} and \eqref{L_la_case3} all together, if we choose $\beta\in\left(0,\frac1{2+\al}\right)$ and $\delta>\frac1\beta>1$, then 
\[\sup_{\th>0}\langle\th\rangle^{\al+1}\th^{2\mu-1}\left|L_\la(\th)\right|\lesssim \la+\la^{1-(2+\al)\beta}+\la^{\beta(\delta-\al)}\rightarrow 0,\qquad \text{as}\qquad \lambda\to0+,\]
giving \eqref{L_la_1}.
\fi 


\section*{Acknowledgments}
The authors express their gratitude to Dale I. Pullin for kindly supplying an electronic copy of \cite{Pullin1989}.  Z. Zhang is partially supported by  NSF of China  under Grant 12288101.


\begin{thebibliography}{99}

\bibitem{Alexander1971} R. C. Alexander, Family of similarity flows with vortex sheets. {\it Phys. Fluids}, {\bf14} (1971), 231--239.

\bibitem{ABCDGJK} D. Albritton, E. Bru\'e, M. Colombo, C. De Lellis, V. Giri, M. Janisch and H. Kwon, Instability and non-uniqueness for the 2D Euler equations, after M. Vishik. {\it Ann. of Math. Stud.}, {\bf219}. Princeton University Press, Princeton, NJ, 2024. ix+136 pp.



\bibitem{BM} J. Bedrossian and N. Masmoudi, Inviscid damping and the asymptotic stability of planar shear flows in the 2D Euler equations. \textit{Publ. Math. Inst. Hautes \'Etudes Sci.}, \textbf{122} (2015), 195--300.


\bibitem{Betz} A. Betz, Verhalten von Wirbelsystemen. {\it Z. Angew. Math. Mech.}, {\bf 12} (1932), 164--174.

\bibitem{Birkhoff} G. Birkhoff, Helmholtz and Taylor instability. {\it Proc. Sympos. Appl. Math.}, Vol. {\bf XIII}, pp. 55–76. American Mathematical Society, Providence, RI, 1962.

\bibitem{Bou} F. Bouchet and H. Morita, Large time behavior and asymptotic stability of the 2D Euler and linearized Euler equations.  \textit{Phys. D}, \textbf{239} (2010), 948--966.

%
%






\bibitem{CO1986} R. E. Caflisch and O. F. Orellana, Long time existence for a slightly perturbed vortex sheet. {\it Comm. Pure Appl. Math.}, {\bf39} (1986), 807--838.

\bibitem{CO1989} R. E. Caflisch and O. F. Orellana, Singular solutions and ill-posedness for the evolution of vortex sheets. {\it SIAM J. Math. Anal.}, {\bf20} (1989), 293--307.


\bibitem{Che} J. Y. Chemin,  {\it Perfect incompressible fluids}. Translated from the 1995 French original by Isabelle Gallagher and Dragos Iftimie. Oxford Lecture Series in Mathematics and its Applications, {\bf 14}. The Clarendon Press, Oxford University Press, New York, 1998.

\bibitem{Cho2023} M. Cho, Long-time behavior of logarithmic spiral vortex sheets with two branches. arXiv:2312.02072, 2023.

\bibitem{CKO2021} T. Cie\'slak, P. Kokocki and W. S. O\.za\'nski, Well-posedness of logarithmic spiral vortex sheets. {\it J. Differential equations}, {\bf389} (2024), 508--539.

\bibitem{CKO2022} T. Cie\'slak, P. Kokocki and W. S. O\.za\'nski, Existence of nonsymmetric logarithmic spiral vortex sheet solutions to the 2D Euler equations. {\it Ann. Scuola Norm. Sup. Pisa Cl. Sci. (5)}, (2023), 31 pp.

\bibitem{CKO2024} T. Cie\'slak, P. Kokocki and W. S. O\.za\'nski, Linear instability of symmetric logarithmic spiral vortex sheets. {\it J. Math. Fluid Mech.}, {\bf26} (2024), Paper No. 21, 27 pp.




\bibitem{Delort1991}  J. M. Delort,  Existence de nappes de tourbillon en dimension deux. \textit{J. Amer. Math. Soc.}, \textbf{4} (1991),  553--586.

\bibitem{DM1987} R. J. DiPerna and A. J. Majda, Concentrations in regularizations for 2-D incompressible flow. {\it Comm. Pure Appl. Math.}, {\bf40} (1987), 301--345.


\bibitem{EJ2020CPAM} T. M. Elgindi and I.-J. Jeong, Symmetries and critical phenomena in fluids. \textit{Comm. Pure Appl. Math.}, \textbf{73} (2020), 257--316.

\bibitem{EJ2023} T. M. Elgindi and I.-J. Jeong, On singular vortex patches, I: Well-posedness issue. \textit{Mem. Amer. Math. Soc.},  \textbf{283} (2023), no. 1400, v+89 pp.

\bibitem{EJ2020} T. M. Elgindi and I.-J. Jeong, On singular vortex patches, II: long-time dynamics. {\it Trans. Amer. Math. Soc.}, {\bf373} (2020), 6757--6775.

\bibitem{E1} V. Elling, Algebraic spiral solutions of 2d incompressible Euler. {\it J. Differential Equations}, {\bf255} (2013), 3749--3787.

\bibitem{E2} V. Elling, Self-Similar 2d Euler solutions with mixed-sign vorticity. {\it Comm. Math. Phys.}, {\bf348} (2016), 27--68.


\bibitem{EG2019} V. Elling and M. V. Gnann, Variety of unsymmetric multibranched logarithmic vortex spirals. {\it European J. Appl. Math.}, {\bf30} (2019), 23--38.

\bibitem{GG} C. Garc\'ia and J. G\'omez-Serrano,  Self-similar spirals for the generalized surface quasi-geostrophic equations. J. Eur. Math. Soc. (2024), published online first
DOI 10.4171/JEMS/1549.



\bibitem{Helmholtz} H. Helmholtz, \"Uber discontinuierliche Fl\"ussigkeits-Bewegungen. {\it Monatsberichte der K\"oniglichen Preussische Akademie der Wissenschaften zu Berlin}, {\bf 23} (1868), 215--228.

\bibitem{IJ} A. Ionescu and H. Jia, Axi-symmetrization near point vortex solutions for the 2D Euler equation. {\it Comm. Pure Appl. Math.},  75 (2022),  818–891.



\bibitem{JS2024} I.-J. Jeong and A. R. Said, Logarithmic spirals in 2D perfect fluids. {\it J. \'Ec. polytech. Math.}, {\bf11} (2024), 655--682.
	
\bibitem{Kaden} H. Kaden, Aufwicklung einer unstabilen Unstetigkeitsfl\"ache. \textit{Ingenieur-Archiv}, \textbf{2} (1931), 140--168.

\bibitem{Kambe1989} T. Kambe, Spiral vortex solution of Birkhoff-Rott equation. {\it Physica D}, {\bf37} (1989), 463--473.

\bibitem{Kaneda1989} Y. Kaneda, A family of analytical solutions of the motions of double-branched spiral vortex sheets. {\it Phys. Fluids A}, {\bf1} (1989), 261--266.

\bibitem{Kelvin} L. Kelvin, Hydrokinetic solutions and observations. {\it Philosophical Magazine}, {\bf42} (1871), 362--377.

\bibitem{KS} A. Kiselev and V. \v Sver\'ak, Small scale creation for solutions of the incompressible two-dimensional Euler equation. \textit{Ann. of Math. (2)},  \textbf{180} (2014), 1205--1220.

\bibitem{Krasny1986} R. Krasny, Desingularization of periodic vortex sheet roll-up. {\it J. Comput. Phys.}, {\bf 65} (1986), 292--313.

\bibitem{Krasny1987} R. Krasny, Computation of vortex sheet roll-up in the Trefftz plane. {\it J. Fluid Mech.}, {\bf 184} (1987), 123--155.


\bibitem{LNS2007} M. C. Lopes Filho, H. J. Nussenzveig Lopes and S. Schochet, A criterion for the equivalence of the Birkhoff-Rott and Euler descriptions of vortex sheet evolution. {\it Trans. Amer. Math. Soc.}, {\bf359} (2007), 4125--4142.

\bibitem{MB}   A. J. Majda and A. L.  Bertozzi, {\it Vorticity and incompressible flow}. Cambridge Texts in Applied Mathematics, {\bf 27}. Cambridge University Press, Cambridge, 2002.

\bibitem{MP} C. Marchioro and M. Pulvirenti,  \textit{Mathematical theory of incompressible non-viscous fluids}.  Applied Mathematical Sciences, {\bf96}. Springer-Verlag, New York, 1994.





\bibitem{Moore1979} D. W. Moore, The spontaneous appearance of a singularity in the shape of an evolving vortex sheet. {\it Proc. Roy. Soc. London Ser. A}, {\bf365} (1979), 105--119.

\bibitem{MS1973} D. W. Moore and P. G. Saffman, Axial flow in laminar trailing vortices. {\it Proc. Roy. Soc. London Ser. A}, {\bf333} (1973), 491--508.


\bibitem{Prandtl1922} L. Prandtl, \"Uber die Entstehung von Wirbeln in der idealen Fl\"ussigkeit, mit Anwendung auf die Tragfl\"ugeltheorie und andere Aufgaben. In: {\it Vortr\"age aus dem Gebiete der Hydro- und Aerodynamik} (Innsbruck 1922), 18--33. 

\bibitem{Pullin1978} D. I. Pullin, The large-scale structure of unsteady self-similar rolled-up vortex sheets. {\it J. Fluid Mech.}, {\bf88} (1978), 401--430.

\bibitem{Pullin1989} D. I. Pullin, On similarity flows containing two-branched vortex sheets. {\it Mathematical aspects of vortex dynamics (Leesburg, VA, 1988)}, 97--106. Society for Industrial and Applied Mathematics (SIAM), Philadelphia, PA, 1989.

\bibitem{PS2023} D. I. Pullin and N. Shen, On vortex-sheet evolution beyond singularity formation. {\it J. Fluid Mech.}, {\bf976} (2023), Paper No. A17, 36 pp.

\bibitem{Rott} N. Rott, Diffraction of a weak shock with vortex generation. {\it J. Fluid Mech.}, {\bf1} (1956), 111--128.

\bibitem{Saffman1992} P. G. Saffman, {\it Vortex dynamics}. Cambridge Monographs on Mechanics and Applied Mathematics, Cambridge University Press, New York, 1992. xii+311 pp.


\bibitem{SWZ} F. Shao, D. Wei and Z. Zhang, Self-similar algebraic spiral solution of 2-D incompressible Euler equations. arXiv:2305.05182, to appear in Ann PDE.



\bibitem{SSBF1981} C. Sulem, P.-L. Sulem, C. Bardos and U. Frisch, Finite time analyticity for the two- and three-dimensional Kelvin-Helmholtz instability. {\it Comm. Math. Phys.}, {\bf80} (1981), 485--516.

\bibitem{Sve} V. \v Sver\'ak,  \textit{Lecture notes}. \url{http://www-users.math.umn.edu/~sverak/course-notes2011.pdf}.


\bibitem{Vishik1} M. Vishik, Instability and non-uniqueness in the Cauchy problem for the Euler equations of an ideal incompressible fluid. Part I. arXiv:1805.09426, 2018.

\bibitem{Vishik2} M. Vishik, Instability and non-uniqueness in the Cauchy problem for the Euler equations of an ideal incompressible fluid. Part II. arXiv:1805.09440, 2018.

\bibitem{WZZ} D. Wei, Z. Zhang and W. Zhao, Linear inviscid damping and vorticity depletion for shear flows. \textit{Ann. PDE}, \textbf{5} (2019),  Paper no. 3, 101 pp.

\bibitem{Wu2006} S. Wu, Mathematical analysis of vortex sheets. {\it Comm. Pure Appl. Math.}, {\bf59} (2006), 1065--1206.

\bibitem{Yudovich} V. I. Yudovich, Non-stationary flow of an ideal incompressible liquid. \textit{\v Z. Vy\v cisl. Mat i Mat. Fiz.},  \textbf{3} (1963), 1032--1066.

\end{thebibliography}
\end{document}